\renewcommand*\backref[1]{\ifx#1\relax \else \mbox{\textcolor{gray}{Page #1}} \fi}
\newcommand\blfootnote[1]{%
	\begingroup
	\renewcommand\thefootnote{}\footnote{#1}%
	\addtocounter{footnote}{-1}%
	\endgroup
}
\newtheorem{theorem}{Theorem}[section]
\newtheorem{thmmain}{Theorem}
\newtheorem{thmmaindual}{Theorem}
\newtheorem{lemma}[theorem]{Lemma}
\newtheorem{proposition}[theorem]{Proposition}
\theoremstyle{remark}
\newtheorem{remark}[theorem]{Remark}
\numberwithin{equation}{section}
\newcommand{\N}{\mathbb{N}}
\newcommand{\Z}{\mathbb{Z}}
\newcommand{\R}{\mathbb{R}}
\newcommand{\Rn}{\R^n}
\newcommand{\s}{\mathbb{S}}
\newcommand{\sn}{\mathbb{S}^{n-1}}
\newcommand{\sN}{\mathbb{S}^{n}}
\newcommand{\K}{\mathcal{K}}
\newcommand{\Kn}{\K^n}
\newcommand{\KN}{\K^{n+1}}
\newcommand{\hm}{\mathcal H} 
\newcommand{\MA}{\mathrm{MA}} 
\newcommand{\MAp}{\mathrm{MA}^*} 
\newcommand{\fconvs}{{\mathrm{Conv}_{\mathrm{sc}}(\R^n)}} 
\newcommand{\fconvsm}{{\mathrm{Conv}_{\mathrm{sc}}(\R^{n-1})}} 
\newcommand{\fconvsO}{{\mathrm{Conv}_{\mathrm{sc}}(\R)}} 
\newcommand{\fconvf}{{\mathrm{Conv}(\R^n; \R)}} 
\newcommand{\fconvfk}{{\mathrm{Conv}(\R^k; \R)}} 
\newcommand{\fconvfE}{{\mathrm{Conv}(E; \R)}} 
\newcommand{\fconvfF}{{\mathrm{Conv}(F; \R)}} 
\newcommand{\fconvfz}{{\mathrm{Conv}_{(o)}(\R^n; \R)}} 
\newcommand{\oZ}{\operatorname{Z}}
\newcommand{\oz}{\operatorname{z}}
\newcommand{\oV}{\operatorname{V}}
\newcommand{\ot}{\operatorname{t}}
\newcommand{\dom}{\operatorname{dom}}
\newcommand{\proj}{\operatorname{proj}} 
\newcommand{\gnom}{\operatorname{gno}} 
\newcommand{\supp}{\operatorname{supp}} 
\newcommand{\infconv}{\mathbin{\Box}} 
\newcommand{\sq}{\mathbin{\vcenter{\hbox{\rule{.3ex}{.3ex}}}}} 
\newcommand{\cA}{\operatorname{\mathcal A}}
\newcommand{\cR}{\operatorname{\mathcal R}}
\newcommand{\cT}{\operatorname{\mathcal T}}
\newcommand{\SO}{\operatorname{SO}}
\newcommand{\SOn}{\SO(n)}
\newcommand{\SOnm}{\SO(n-1)}
\newcommand{\Om}{\operatorname{O}(n-1)}
\newcommand{\On}{\operatorname{O}(n)}
\newcommand{\Ot}{\operatorname{O}(2)}
\newcommand{\Oo}{\operatorname{O}(1)}
\newcommand{\Grass}[2]{\operatorname{G}(#2,#1)}
\renewcommand{\d}{\,\mathrm{d}} 
\newcommand{\Jac}{{\operatorname{D}}} 
\newcommand{\Hess}{{\operatorname{D}}^2} 
\newcommand{\ind}{{\mathbf{I}}} 
\begin{document}
	\title{The Vectorial Hadwiger Theorem on Convex Functions}
	\author{Mohamed A.\ Mouamine and Fabian Mussnig}
	\date{}
	\maketitle
	
	\begin{abstract}
		A complete classification of continuous, dually epi-translation invariant, and rotation\linebreak equivariant valuations on convex functions is established. This characterizes the recently introduced functional Minkowski vectors, which naturally extend the classical Minkowski relations. For this, the existence of these operators with singular densities is shown, along with additional representations involving mixed Monge--Amp\`ere measures, Kubota-type formulas, and area measures of higher dimensional convex bodies. Dual results are formulated for valuations on super-coercive convex functions.
				
		\blfootnote{{\bf 2020 AMS subject classification:} 	52B45 (26B12, 26B15, 26B25, 52A39, 52A41)}
		\blfootnote{{\bf Keywords:} Minkowski vector, valuation, convex function, Kubota formula}
	\end{abstract}
	
	{
		\hypersetup{linkcolor=black}
		\small
		\tableofcontents
	}
	
	\goodbreak
	
	\normalsize

	\section{Introduction}
	\label{se:introduction}
	\paragraph{Minkowski Relations}
	For a \textit{convex body} $K\in\Kn$, where $\Kn$ is the set of non-empty, compact, convex subsets of $\Rn$, let $S_{n-1}(K,\cdot)$ denote its \emph{surface area measure}, which is a Borel measure on the \textit{Euclidean unit sphere} $\sn$. When $K$ has positive volume, this measure is given by
	\begin{equation}
	\label{eq:def_sam}
	S_{n-1}(K,\omega)=\hm^{n-1}\big(\{x \in \operatorname{bd}(K) : K \text{ has an outer unit normal at } x \text{ belonging to } \omega \}\big)
	\end{equation}
	for Borel sets $\omega\subseteq \sn$, where $\operatorname{bd}(K)$ is the boundary of $K$ and $\hm^{n-1}$ denotes the $(n-1)$-dimensional \textit{Hausdorff measure}. It is elementary to check that the first moment of this measure vanishes, that is
	\[
	\int_{\sn}z \d S_{n-1}(K,z)=o
	\]
	for every $K\in\Kn$, with $o$ denoting the origin in $\Rn$. This property essentially characterizes surface area measures and is the core of the \textit{Minkowski problem}. Indeed, Minkowski's existence theorem \cite[Theorem 8.2.2]{schneider_cb} states that a Borel measure $\mu$ on $\sn$ is the surface area measure of a full-dimensional convex body $K$, i.e.\ $\mu=S_{n-1}(K,\cdot)$, if and only if $\mu$ satisfies $\int_{\sn} z \d\mu(z)=o$ and is not concentrated on any great subsphere.
	
	\medskip
	
	Generalizing the surface area measure, the \textit{area measures} $S_j(K,\cdot)$ with $j\in\{0,\ldots,n-1\}$ arise from the Steiner-type formula
	\begin{equation}
	\label{eq:def_sj}
	S_{n-1}(K+r B^n,\cdot)=\sum_{j=0}^{n-1} \binom{n-1}{j} r^{n-1-j} S_j(K,\cdot)
	\end{equation}
	for $K\in\Kn$ and $r>0$, where we write $B^n$ for \textit{Euclidean unit ball} and 
	\[
	K+L=\{x+y:x\in K, y\in L\}
	\]
	for the \textit{Minkowski sum} of two convex bodies $K$ and $L$. Again, the first moments of these measures vanish,
	\begin{equation}
	\label{eq:mink_rel}
	\int_{\sn} z \d S_j(K,z)=o,
	\end{equation}
	which is also referred to as the \textit{Minkowski relations} (see, for example, \cite[(5.30) and (5.52)]{schneider_cb}). While this shows that the disappearance of the first moment of a Borel measure $\mu$ on $\sn$ is necessary for it to be the $j$th area measure of some convex body $K$, it is, in contrast to the case $j=n-1$, not sufficient when $j<n-1$. In particular, determining such necessary and sufficient conditions for $1<j<n-1$, known as the \textit{Christoffel--Minkowski problem}, is a central open problem in convex geometry and geometric analysis. See, for example, \cite{guan_ma_chris_mink_I,hyz_bams}. We also refer to \cite{boeroeczky_henk_pollehn,boeroeczky_lyz_cpam_2020,cabezas_hu,gardner_hug_xing_ye,hlyz_acta,milman_log_bm_jems} for related results.

	\paragraph{Valuations}
	A different perspective on the Minkowski relations opens up when we look at the properties of the integral on the left side of \eqref{eq:mink_rel}, which defines a vector-valued map $\oz\colon\Kn\to\Rn$. It is a straightforward consequence of \eqref{eq:def_sam} and \eqref{eq:def_sj} that $\oz$ is \textit{translation invariant}, meaning $\oz(K+x)=\oz(K)$ for every $K\in\Kn$ and $x\in\Rn$, and \textit{rotation equivariant}, that is $\oz(\vartheta K)=\vartheta\oz(K)$ for every $K\in\Kn$ and $\vartheta\in\SOn$. Furthermore, $\oz$ is a \textit{valuation}, which means that
	\begin{equation}
		\label{eq:val}
		\oz(K)+\oz(L)=\oz(K\cup L)+\oz(K\cap L)
	\end{equation}
	for every $K,L\in\Kn$ such that also $K\cup L\in\Kn$. In addition, $\oz$ is continuous, where we equip $\Kn$ with the \textit{Hausdorff topology}. Notably, we have the following consequence of a characterization of the \textit{intrinsic moments} by Hadwiger and Schneider \cite[Hauptsatz]{hadwiger_schneider}, where we say that a map $\oz\colon\Kn\to\Rn$ is \textit{trivial} if $\oz\equiv o$.
	
	\begin{proposition}[Hadwiger--Schneider]
		\label{prop:hadwiger_schneider}
		A map $\oz\colon \Kn\to\Rn$ is a continuous, translation invariant, rotation equivariant valuation, if and only if $\oz$ is trivial.
	\end{proposition}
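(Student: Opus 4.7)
The plan is to reduce the vector-valued classification to Hadwiger's scalar characterization theorem by looking at the coordinate components of $\oz$ and then exploiting the clash between rotation invariance of the scalar components and rotation equivariance of the vector. The converse implication is immediate, so I focus on the forward direction.

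First, I would fix an orthonormal basis $e_1,\dots,e_n$ of $\Rn$ and set $\oz_k(K):=\langle \oz(K),e_k\rangle$, so that $\oz(K)=\sum_{k=1}^{n}\oz_k(K)\,e_k$. Each scalar map $\oz_k\colon\Kn\to\R$ inherits continuity, translation invariance, and the valuation property \eqref{eq:val} from $\oz$. By Hadwiger's classical characterization, each $\oz_k$ is therefore a linear combination of the intrinsic volumes $V_0,\dots,V_n$. Since every intrinsic volume is $\SOn$-invariant, so is every coordinate $\oz_k$; that is, $\oz_k(\vartheta K)=\oz_k(K)$ for all $\vartheta\in\SOn$ and $K\in\Kn$.

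Combining this with the rotation equivariance of $\oz$ yields, for all $\vartheta\in\SOn$ and $K\in\Kn$,
\[
\vartheta\,\oz(K)=\oz(\vartheta K)=\sum_{k=1}^{n}\oz_k(\vartheta K)\,e_k=\sum_{k=1}^{n}\oz_k(K)\,e_k=\oz(K).
\]
Hence $\oz(K)\in\Rn$ is fixed by every element of $\SOn$, and for $n\ge 2$ the only such vector is the origin. Thus $\oz\equiv o$.

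There is essentially no obstacle beyond quoting Hadwiger's theorem; the one conceptual observation is the mismatch between the rotation behavior of $\oz$ and that of its Cartesian components, which is what makes \emph{translation invariance} (as opposed to the weaker translation covariance classified by Hadwiger--Schneider) so restrictive. An alternative route would be to apply the Hadwiger--Schneider classification directly and then impose translation invariance to eliminate each intrinsic moment in turn, but the argument above is shorter because Hadwiger's scalar theorem already produces $\SOn$-invariant components.
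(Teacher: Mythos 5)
Your reduction to Hadwiger's scalar theorem has a fatal gap at the point where you apply it to the coordinate components $\oz_k$. Theorem~\ref{thm:hadwiger} requires the scalar valuation to be continuous, translation invariant \emph{and rotation invariant}; you correctly observe that $\oz_k$ inherits continuity, translation invariance, and the valuation property, but you never check rotation invariance -- and in fact it fails. Rotation equivariance of $\oz$ gives
\[
\oz_k(\vartheta K)=\langle\oz(\vartheta K),e_k\rangle=\langle\vartheta\,\oz(K),e_k\rangle=\langle\oz(K),\vartheta^{-1}e_k\rangle,
\]
which equals $\oz_k(K)$ only when $\vartheta$ fixes $e_k$, i.e.\ the coordinate $\oz_k$ is merely invariant under the stabilizer $\SO(n-1)$ of $e_k$, not under all of $\SOn$. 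Without full rotation invariance, Hadwiger's theorem does not apply, and the space of continuous, translation invariant scalar valuations on $\Kn$ is far larger than $\operatorname{span}\{V_0,\dots,V_n\}$ (it is infinite-dimensional). Your subsequent deduction that each $\oz_k$ is $\SOn$-invariant, and hence that $\oz(K)$ is a fixed vector of $\SOn$, therefore has no basis. The remark that ``there is essentially no obstacle beyond quoting Hadwiger's theorem'' understates the difficulty: the paper does not prove this proposition at all but cites it as a consequence of the Hauptsatz of Hadwiger and Schneider, which is a genuine classification of continuous, rotation equivariant, translation \emph{covariant} vector-valued valuations and requires substantial separate work; the present statement is the special case where translation covariance is strengthened to translation invariance, which forces triviality.
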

	
	From today's perspective, the work of Hadwiger and Schneider characterizes the rank 1 \textit{Mink\-ow\-ski tensors}, a natural family of isometry covariant valuations that has received considerable attention (see, for example, \cite{alesker_geom_ded_1999,bernig_hug_2018,boeroeczky_domokos_solanes_jfa_2021,hug_schneider_gafa_2014}). The model for such results is undoubtedly the characterization of the \textit{intrinsic volumes} $V_j\colon\Kn\to\R$, $j\in\{0,\ldots,n\}$, which are the Minkowski tensors of rank 0. For $j<n$, the $j$th intrinsic volumes $V_j(K)$ is proportional to the total mass $S_j(K,\sn)$ and $V_n(K)$ is the usual $n$-dimensional \textit{volume} of $K\in\Kn$. While intrinsic volumes are also continuous, translation invariant valuations, they are \textit{rotation invariant}, meaning $V_j(\vartheta K)=V_j(K)$ for every $K\in\Kn$ and $\vartheta\in\SOn$. We state Hadwiger's celebrated characterization theorem \cite{hadwiger} next.
	
	\begin{theorem}[Hadwiger]
		\label{thm:hadwiger}
		A map $\oZ\colon \Kn\to\R$ is a continuous, translation and rotation invariant valuation, if and only if there exist constants $c_0,\ldots,c_n\in\R$ such that
		\begin{equation}
			\label{eq:hadwiger}
			\oZ(K)=c_0 V_0(K)+\cdots+c_n V_n(K)
		\end{equation}
		for every $K\in\Kn$.
	\end{theorem}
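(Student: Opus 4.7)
The plan is to handle the two directions separately. For sufficiency, I would simply note that each intrinsic volume $V_j$ is itself a continuous, translation and rotation invariant valuation---both invariances being inherited from the rigid-motion covariance of the area measures that enter the Steiner-type formula \eqref{eq:def_sj}---and all of these properties pass to linear combinations. The substance is the necessity direction, which I would establish by induction on the dimension $n$. The base case $n=1$ follows because a continuous, translation-invariant valuation on $\K^1$ is determined by its values on a point and on a unit interval, forcing $\oZ = c_0 V_0 + c_1 V_1$.

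For the inductive step, assume the statement in dimension $n-1$ and let $\oZ$ satisfy the hypotheses on $\Kn$. My first move would be to restrict $\oZ$ to the set of convex bodies contained in an arbitrary hyperplane $E$ through the origin. The restriction is continuous, $E$-translation-invariant, and invariant under rotations of $E$, so after identifying $E\cong \R^{n-1}$ the induction hypothesis produces constants $c_0,\ldots,c_{n-1}$ with
\[
\oZ(K) \;=\; \sum_{j=0}^{n-1} c_j V_j(K)
\]
for all $K$ contained in $E$, where I use that each $V_j$ is intrinsic. Full rotation invariance of $\oZ$ forces the same constants $c_j$ to work on every hyperplane through the origin, and translation invariance then extends this identity to every affine hyperplane, hence to every convex body of dimension at most $n-1$. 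Subtracting $\sum_{j=0}^{n-1} c_j V_j$, I reduce the problem to proving that a continuous, translation and rotation invariant \emph{simple} valuation on $\Kn$ (one that vanishes on all lower-dimensional bodies) must be a scalar multiple of $V_n$.

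This last reduction is Hadwiger's celebrated volume theorem, and it is here that the real work lies. My strategy would be: (i) by simplicity and iterated inclusion-exclusion over a triangulation, $\oZ$ on any polytope is determined by its values on full-dimensional simplices; (ii) reduce further to orthoschemes via the standard dissection of an $n$-simplex into $n!$ right-angled simplices; (iii) exploit rotation invariance and the classical self-similar dissections of orthoschemes---expressing one such simplex as a union of rotated smaller copies modulo lower-dimensional intersections which contribute nothing by simplicity---to pin the ratio $\oZ(T)/V_n(T)$ to a single constant $c_n$; (iv) extend the resulting identity $\oZ = c_n V_n$ from polytopes to all of $\Kn$ by continuity together with the Hausdorff-density of polytopes.

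The principal obstacle is step (iii): extracting proportionality to volume on orthoschemes from the interplay of simplicity, rotation invariance, and the valuation property. All of the genuine geometric content of Hadwiger's theorem is concentrated in this dissection argument; the induction, the hyperplane restriction, and the continuity extension are essentially bookkeeping assembled around it.
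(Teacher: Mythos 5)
The paper does not prove Theorem~\ref{thm:hadwiger}; it records it as a classical result and refers to Hadwiger's monograph \cite{hadwiger} (and elsewhere to Klain's short proof \cite{klain_1995}), so there is no in-paper argument to compare against. Assessing your sketch on its own: the overall architecture is the standard modern one due to Klain --- induct on $n$, restrict to a hyperplane through the origin, apply the inductive hypothesis, use rotation and translation invariance to propagate the constants to all convex bodies of dimension at most $n-1$, subtract, and reduce to showing that a simple, continuous, translation and rotation invariant valuation is a multiple of $V_n$. Through step (ii) this is sound, including the base case and the hyperplane-restriction step.

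The gap is in step (iii). The proposal to ``express one such simplex as a union of rotated smaller copies modulo lower-dimensional intersections'' is a self-replicating (rep-tile) dissection, and generic $n$-orthoschemes for $n\ge 3$ do not admit such dissections; moreover, Dehn's negative solution to Hilbert's third problem shows that finite scissors congruence alone cannot pin a simple valuation to volume on simplices --- continuity and the full valuation identity (not just additivity over a dissection) have to enter in an essential way. The actual proofs of the simple-valuation volume theorem proceed quite differently. Hadwiger's original argument starts from rectangular boxes, derives multilinearity in the edge lengths from a Cauchy functional equation together with continuity, and then passes delicately from boxes to general polytopes. Klain's short proof \cite{klain_1995} avoids orthoscheme dissections entirely: it establishes, for simple continuous translation-invariant valuations, a Cauchy-type identity with respect to Minkowski sums with line segments, extracts a homogeneity/parity structure from this, and then invokes reflection (rotation) invariance to conclude proportionality to $V_n$. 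As written, your step (iii) would not go through; it should be replaced by one of these arguments, and it is worth being explicit that continuity is load-bearing there, not merely in the final polytope-to-body approximation.
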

	
	Let us point out that it is easy to see that linear combinations of intrinsic volumes are continuous, translation and rotation invariant valuations. The strength of Hadwiger's theorem lies in establishing the highly non-trivial fact that all such valuations must be as in \eqref{eq:hadwiger}. For applications of Theorem~\ref{thm:hadwiger}, including effortless proofs of formulas in integral geometry and geometric probability, see, for example, \cite{hadwiger,klain_rota}. See also \cite{alesker_annals_1999,bernig_gafa_2009,bernig_fu_2011,haberl_parapatits_2014,ludwig_reitzner_2010} for related results. We also mention \cite{goodey_weil} which revealed that solving the Christoffel--Minkowski problem is equivalent to characterizing a special family of \textit{Minkowski valuations}, the so-called \textit{mean section bodies}.	

	\paragraph{Extensions to Convex Functions} Having considered convex bodies so far, we will now move to the more general space of (finite-valued) convex functions on $\Rn$,
	\[
	\fconvf=\{v\colon\Rn\to\R : v \text{ is convex}\}.
	\]
	Convex bodies are naturally embedded into this space by associating with each $K\in\Kn$ its \textit{support function} $h_K\in\fconvf$, given by
	\[
	h_K(x)=\max\nolimits_{y\in K}\langle x,y\rangle
	\]
	for $x\in\Rn$, where $\langle \cdot,\cdot\rangle$ denotes the usual inner product on $\Rn$. The purpose of this article is to establish a non-trivial analytic analog of Proposition~\ref{prop:hadwiger_schneider} for operators on $\fconvf$. For a natural extension of area measures and related quantities from convex bodies to convex functions, we first note that when $h_K$ is of class $C^2$ on $\Rn\setminus\{o\}$, then $S_j(K,\cdot)$ is absolutely continuous with respect to $\hm^{n-1}$ with
	\begin{equation}
	\label{eq:s_j_hm}
	\d S_j(K,z)= \binom{n-1}{j}^{-1} [\Hess h_K(z)]_j \d\hm^{n-1}(z)
	\end{equation}
	on $\sn$ for $j\in\{0,\ldots,n-1\}$. Here, $[\Hess h_K(z)]_j$ denotes the $j$th elementary symmetric function of the eigenvalues of the Hessian matrix of $h_K$ at $z\in\sn$, with the convention $[\Hess h_K(z)]_0\equiv 1$. See, for example, \cite[Corollary 2.5.3 and (4.26)]{schneider_cb}. Based on \eqref{eq:s_j_hm}, functional counterparts of the intrinsic volumes on $\fconvf$ were introduced by Colesanti, Ludwig, and Mussnig in \cite{colesanti_ludwig_mussnig_5}. The so-called \textit{functional intrinsic volumes} take the form
	\begin{equation}
		\label{eq:oZZ}
		\oV_{j,\zeta}^*(v)= \int_{\Rn} \zeta(|x|) [\Hess v(x)]_j \d x, \quad j\in\{0,\ldots,n\},
	\end{equation}
	for $v\in\fconvf\cap C^2(\Rn)$, where $|x|$ denotes the \textit{Euclidean norm} of $x\in\Rn$ and $\zeta$ is a continuous function with bounded support on $(0,\infty)$, $\zeta\in C_b({(0,\infty)})$, satisfying additional assumptions at $0^+$. When $\zeta$ continuously extends to $0$, then it follows from earlier results \cite{colesanti_ludwig_mussnig_3} that $\oV_{j,\zeta}^*$ continuously extends to all of $\fconvf$, where we consider spaces of convex functions together with the topology associated with \textit{epi-convergence}, which coincides with pointwise convergence on $\fconvf$ (see, for example, \cite[Theorem 7.17]{rockafellar_wets}). For this extension of \eqref{eq:oZZ}, the measures $[\Hess v(x)]_j \d x$ are replaced by corresponding \textit{Hessian measures} (see Section~\ref{se:hessian_measures}), which play a fundamental role in PDEs \cite{caffarelli_nirenberg_spruck,trudinger_wang_hessian_ii}. Notably, when $j=n$, we retrieve the well-known \textit{Monge--Amp\`ere measure} $\MA(v;\cdot)$, which for $v\in\fconvf\cap C^2(\Rn)$ satisfies
	\begin{equation}
		\label{eq:ma_v_c2_det}
		\d\MA(v;\cdot)=\det(\Hess v(x))\d x
	\end{equation}
	on $\Rn$. Let us furthermore emphasize that $\oV_{j,\zeta}^*(h_K)$ is a multiple of $V_j(K)$ for every $K\in\Kn$, which is a consequence of \eqref{eq:s_j_hm} and \eqref{eq:oZZ}.
	
	\medskip
	
	Functional intrinsic volumes are motivated by the extension of geometric valuation theory from convex bodies to convex functions. We mention \cite{alesker_adv_geom_2019,colesanti_ludwig_mussnig_2,colesanti_ludwig_mussnig_5,knoerr_smooth,knoerr_support,knoerr_unitarily,knoerr_singular,knoerr_ma,mussnig_adv} as examples of this rapidly developing field, and refer to \cite{brauner_hofstaetter_ortega-moreno_zonal,hug_mussnig_ulivelli_support,hug_mussnig_ulivelli_cjm,knoerr_zonal,knoerr_ulivelli_math_ann_2024} for connections and applications with the geometry of convex bodies, as well as \cite{colesanti_pagnini_tradacete_villanueva_jfa_2021,falah_rotem,ludwig_ajm_2012,milman_rotem,mussnig_ulivelli,rotem_riesz} for related results. Here, a map $\oZ\colon\fconvf\to\R$ is a \textit{valuation} if
	\begin{equation}
		\label{eq:val_fconvf}
		\oZ(v\vee w) + \oZ(v\wedge w) = \oZ(v)+\oZ(w)
	\end{equation}
	for every $v,w\in\fconvf$ such that also their pointwise maximum $v\vee w$ and minimum $v\wedge w$ are elements of $\fconvf$. Since
	\[
	h_{K\cup L}=h_K \vee h_L \quad \text{and} \quad h_{K\cap L} = h_K\wedge h_L
	\]
	for $K,L,K\cup L\in\Kn$, it follows that \eqref{eq:val_fconvf} generalizes the classical valuation property \eqref{eq:val}. In addition, $\oZ\colon\fconvf\to\R$ is called \textit{dually epi-translation invariant} if $\oZ(v+a)=\oZ(v)$ for every $v\in\fconvf$ and every affine function $a$ on $\Rn$, and \textit{rotation invariant} if $\oZ(v\circ\vartheta^{-1})=\oZ(v)$ for every $v\in\fconvf$ and $\vartheta\in\SOn$. In an effort to classify all such valuations, it was discovered that continuous extensions of \eqref{eq:oZZ} also exist for densities $\zeta$ with possible singularities at $0^+$ \cite[Theorem 1.4]{colesanti_ludwig_mussnig_5}. We therefore speak of \textit{singular Hessian valuations} and denote the associated classes of possible densities by $D_j^n\subset C_b((0,\infty))$ (see \eqref{eq:def_d_j_n} for a definition). Alternative proofs of this existence result were given in \cite{colesanti_ludwig_mussnig_6} using integral geometry and in \cite{colesanti_ludwig_mussnig_7} using mixed Monge--Amp\`ere measures. Furthermore, Knoerr showed in \cite[Theorem 1.5]{knoerr_singular} that $\oV_{j,\zeta}^*$ can be expressed as a principal value integral with respect to a Hessian measure. Additional explanations for the appearance of singularities were recently given in \cite[Section 5]{hug_mussnig_ulivelli_cjm}.
	
	\medskip
	
	Analogous to Hadwiger's characterization theorem (Theorem~\ref{thm:hadwiger}), it was shown in \cite[Theorem 1.5]{colesanti_ludwig_mussnig_5} that combinations of functional intrinsic volumes are the only continuous, dually epi-translation and rotation invariant valuations on $\fconvf$. See also \cite{colesanti_ludwig_mussnig_4}.
	
	\begin{theorem}[Hadwiger theorem on $\fconvf$]
		\label{thm:hadwiger_fconvf}
		For $n\geq 2$, a map $\oZ\colon\fconvf\to\R$ is a continuous, dually epi-translation and rotation invariant valuation, if and only if there exist functions $\zeta_0\in D_0^n,\ldots,\zeta_n\in D_n^n$ such that
		\[
		\oZ(v)=\oV_{0,\zeta_0}^*(v)+\cdots+\oV_{n,\zeta_n}^*(v)
		\]
		for every $v\in\fconvf$. For $n=1$, the same representation holds if we replace
		rotation invariance with reflection invariance.
	\end{theorem}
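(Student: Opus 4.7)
The ``if'' direction is immediate from the cited literature: each $\oV_{j,\zeta_j}^*$ with $\zeta_j \in D_j^n$ is a continuous, dually epi-translation invariant, rotation invariant valuation on $\fconvf$, and these properties are preserved under finite linear combinations. The substance lies in the ``only if'' direction, which I attack in three stages.

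First, I would apply a McMullen-type homogeneous decomposition for continuous, dually epi-translation invariant valuations on $\fconvf$, writing $\oZ = \oZ_0 + \oZ_1 + \cdots + \oZ_n$ with each $\oZ_j$ homogeneous of degree $j$ under epi-multiplication. Since rotations commute with this decomposition, each $\oZ_j$ inherits rotation invariance, and it suffices to classify each degree separately. Next, for fixed $j$, I would restrict $\oZ_j$ to support functions: the map $K \mapsto \oZ_j(h_K)$ on $\Kn$ is a continuous, translation invariant (using that $h_{K+x}$ and $h_K$ differ by an affine function), rotation invariant valuation of degree $j$, so Hadwiger's Theorem~\ref{thm:hadwiger} yields $\oZ_j(h_K) = c_j V_j(K)$ for some $c_j \in \R$. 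Using \eqref{eq:s_j_hm} and \eqref{eq:oZZ}, I would then select $\zeta_j \in D_j^n$ such that $\oV_{j,\zeta_j}^*(h_K) = c_j V_j(K)$ for all $K \in \Kn$, reducing the problem to showing that $\oZ_j - \oV_{j,\zeta_j}^*$ vanishes on all of $\fconvf$.

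For this vanishing, I would use rotation invariance to pass to radially symmetric test functions $v(x) = \phi(|x|)$, on which the action of any continuous, dually epi-translation invariant, rotation invariant, $j$-homogeneous valuation reduces (via $[\Hess v]_j$ in polar coordinates) to an integral in $\phi$ and its first two derivatives against a measure on $(0, \infty)$. Probing with dilates of support functions of balls, together with dual epi-translation invariance, should pin down this measure; continuity plus density of the rotation orbits of radial functions in $\fconvf$ under epi-convergence then closes the argument. The case $n = 1$ replaces rotations with reflections and collapses to a direct one-dimensional computation.

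\textbf{Main obstacle.} The principal difficulty is the final vanishing step. Support functions alone do not suffice because they are positively $1$-homogeneous and smooth away from the origin, so they do not probe the singular behavior at $0^+$ that separates the class $D_j^n$ from the regular class $C_b([0,\infty))$. The workaround is the reduction to radial functions, where a singular density manifests itself directly, coupled with Knoerr's principal-value representation~\cite{knoerr_singular} of $\oV_{j,\zeta_j}^*$ to make the integration-by-parts and moment-matching arguments rigorous for genuinely singular $\zeta_j$.
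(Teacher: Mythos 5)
This theorem is cited by the paper from the literature (\cite[Theorem 1.5]{colesanti_ludwig_mussnig_5}, with an alternative proof in \cite{colesanti_ludwig_mussnig_8}) and is not reproved here, so I will compare your outline against those proofs and against the machinery the paper itself assembles for the vectorial analogue.

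Your first two steps — the McMullen-type homogeneous decomposition (Theorem~\ref{thm:mcmullen_fconvf}), the observation that each homogeneous piece inherits rotation invariance, and the restriction to support functions giving $\oZ_j(h_K)=c_jV_j(K)$ via the classical Hadwiger theorem — are correct and do appear in the literature. But the claimed ``reduction'' to showing that $\oZ_j-\oV_{j,\zeta_j}^*$ vanishes does not actually reduce anything. For fixed $j<n$, infinitely many distinct $\zeta_j\in D_j^n$ produce the same constant $c_j$ on support functions, so the difference of two such $\oV_{j,\zeta_j}^*$ is a nontrivial, continuous, dually epi-translation and rotation invariant, $j$-homogeneous valuation that vanishes on all support functions. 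Hence a valuation vanishing on support functions need not vanish; you have merely normalized one coefficient, not pinned down the density. You acknowledge this in the ``Main obstacle,'' but the proposed workaround has two independent fatal flaws. First, the set you describe — ``rotation orbits of radial functions'' — is just the set of radial functions (they are fixed by the action), and radial convex functions are manifestly \emph{not} dense in $\fconvf$ under epi-convergence, so continuity plus vanishing there tells you nothing. Second, the claim that a continuous, rotation invariant, $j$-homogeneous valuation is determined by its values on the one-parameter family $u_s$ (or on radial functions) is not an available lemma at this stage; it is essentially \emph{equivalent} to the theorem you are trying to prove, a point the paper makes explicitly after Proposition~\ref{prop:oz_determined_by_ut}. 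Using it here is circular.

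The genuinely missing idea is an inductive mechanism that handles the non-top degrees. The known proofs do this either via the Klain scheme (\cite{colesanti_ludwig_mussnig_8}): classify \emph{simple} valuations first by a Klain--Schneider theorem, restrict the general valuation to functions with $(n-1)$-dimensional domain, lift the resulting $(n-1)$-dimensional valuation back up using the Abel transform so that the difference becomes simple, and induct on $n$ — necessarily passing through smooth valuations, since the Abel transform does not map the singular density classes onto each other; or via the original direct approach of \cite{colesanti_ludwig_mussnig_5}. Your outline contains neither the simple-valuation classification, the Abel-transform lifting, the smooth-approximation step, nor the control of supports (\`a la \cite{knoerr_support}) needed to pass to the limit from smooth to merely continuous — all of which are load-bearing. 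Knoerr's principal-value representation is a genuine tool, but it does not by itself replace the induction on dimension; invoking it here does not close the gap.
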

	\noindent
	Here, $\oZ\colon\fconvf\to\R$ is \textit{reflection invariant} if $\oZ(v)=\oZ(v^-)$ for every $v\in\fconvf$, where $v^-(x)=v(-x)$ for $x\in\Rn$. We remark that this is a necessary assumption in the one-dimensional case as otherwise, additional valuations may appear in the representation (see \cite[Corollary 6]{colesanti_ludwig_mussnig_4}). Let us furthermore point out that in Theorem~\ref{thm:hadwiger_fconvf} the valuation $\oZ$ uniquely determines the densities $\zeta_1,\ldots,\zeta_n$ but $\oV_{0,\zeta_0}^*$ is constant on $\fconvf$ and does not uniquely determine $\zeta_0$.

	\paragraph{Functional Minkowski Vectors}
	Recently, the authors introduced \textit{functional Minkowski vectors} \cite{mouamine_mussnig_1} which extend the left side of the Minkowski relations \eqref{eq:mink_rel} and take the form
	\begin{equation}
		\label{eq:ot_j_zeta_dual}
		\ot_{j,\zeta}^*(v)=\int_{\Rn} \zeta(|x|)x \,[\Hess v(x)]_j \d x,\quad j\in\{0,\ldots,n\},
	\end{equation}
	for $v\in \fconvf\cap C^2(\Rn)$, where $\zeta$ is a continuous function with compact support on $[0,\infty)$, $\zeta\in C_c({[0,\infty)})$. Using appropriate Hessian measures, we remark that \eqref{eq:ot_j_zeta_dual} continuously extends to $\fconvf$.
	Given the Minkowski relations and Proposition~\ref{prop:hadwiger_schneider}, the natural question arises as to whether these integrals vanish. Indeed, $\ot_{0,\zeta}^*\equiv o$, but the remaining operators in this family are non-trivial. It is straightforward to check that $\ot_{n,\zeta}^*(v)$ does not vanish on translates of support functions (see \cite[Section 4]{mouamine_mussnig_1}) and explicit examples for the remaining cases will be given in Section~\ref{section:retrieving_the_densities}.
	
	In Theorem~\ref{thm:main_existence}, we show the existence of functional Minkowski vectors with singular densities. For $j\in\{1,\ldots,n\}$ let
	\[
	T_j^n=\left\{\zeta\in C_b((0,\infty)) : \lim_{s\to 0^+} s^{n-j+1} \zeta(s)=0 \right\}.
	\]
	In the following we say that a vector-valued map $\oz\colon\fconvf\to\Rn$ is \textit{rotation equivariant} if $\oz(v\circ \vartheta^{-1})=\vartheta \oz(v)$ for every $v\in\fconvf$ and $\vartheta\in\SOn$. Similarly, we define \textit{$\On$ equivariance}.
	
	\begin{thmmain}
		\label{thm:main_existence}
		For $j\in\{1,\ldots,n\}$ and $\zeta\in T_j^n$, there exists a unique, continuous, dually epi-translation invariant, rotation equivariant valuation $\ot_{j,\zeta}^*\colon\fconvf\to\Rn$ such that
		\[
		\ot_{j,\zeta}^*(v)=\int_{\Rn} \zeta(|x|)x\, [\Hess v(x)]_j \d x
		\]
		for every $v\in\fconvf\cap C^2(\Rn)$. In addition, $\ot_{j,\zeta}^*$ is $\On$ equivariant.
	\end{thmmain}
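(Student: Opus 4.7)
The plan is to prove existence by approximation, starting from the known case of continuous compactly supported densities. For $\zeta\in C_c([0,\infty))$ the operator $\ot_{j,\zeta}^*\colon\fconvf\to\Rn$ is already defined via the $j$th Hessian measure $\Phi_j(v;\cdot)$ and is a continuous, dually epi-translation invariant, $\On$ equivariant valuation, as noted before the statement. For general $\zeta\in T_j^n$, I would introduce cutoff approximations $\zeta_k=\chi_k\zeta\in C_c([0,\infty))$ using standard smooth cutoffs $\chi_k\colon[0,\infty)\to[0,1]$ that vanish on $[0,1/(2k)]$ and equal $1$ on $[1/k,\infty)$. The task then splits into (i) showing pointwise convergence of $\ot_{j,\zeta_k}^*(v)$ as $k\to\infty$ for every $v\in\fconvf$, and (ii) upgrading this to continuity of the limit operator.

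For (i), I would first establish convergence on the dense subset $\fconvf\cap C^2(\Rn)$ by exploiting the spherical cancellation inherent in the vectorial weight. Writing
\[
\int_{B_\epsilon}\zeta(|x|)\,x\,[\Hess v(x)]_j\d x = [\Hess v(o)]_j\int_{B_\epsilon}\zeta(|x|)\,x\d x + \text{error},
\]
the main term vanishes by spherical symmetry, while the error is $o(1)$ as $\epsilon\to 0^+$ by continuity of $[\Hess v]_j$ at $o$ together with the bound $r^n|\zeta(r)|\lesssim r^{j-1}\delta(r)$ with $\delta(r)\to 0$, which is integrable on $[0,\epsilon]$ for $j\geq 1$ and is precisely what the condition $r^{n-j+1}\zeta(r)\to 0$ delivers. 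This both establishes convergence on the $C^2$ dense subset and verifies the $C^2$ integral formula claimed by the theorem. Conceptually, the passage from the scalar class $D_j^n$ (characterized by $s^{n-j}\zeta(s)\to 0$) to the strictly larger $T_j^n$ is explained by the vectorial weight $x$ averaging to zero on spheres, which relaxes the radial integrability requirement by one power of $|x|$.

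The main obstacle is (ii): extending this $C^2$ convergence to a continuous operator on all of $\fconvf$. Here one uses the Hessian measure representation $\ot_{j,\zeta_k}^*(v)=\int\zeta_k(|x|)\,x\d\Phi_j(v;x)$ and the weak continuity of $\Phi_j(v;\cdot)$ under epi-convergence. The delicate point is a uniform local bound of the form $\Phi_j(v_l;B_\epsilon)\lesssim C(\mathcal{F})\epsilon^{n-j}$ on epi-compact subsets $\mathcal{F}\subset\fconvf$ (with the extremal behavior realized by $v(x)=c|x|$, whose $j$th Hessian density is proportional to $|x|^{-j}$ for $j\geq 1$), which combined with the cancellation estimate above guarantees that the limits $k\to\infty$ and $v_l\to v$ commute. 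With continuity in hand, the valuation property, dual epi-translation invariance, rotation equivariance, and $\On$ equivariance all transfer from the approximants as pointwise identities preserved under the limit, and uniqueness is immediate from density of $\fconvf\cap C^2(\Rn)$ in $\fconvf$.
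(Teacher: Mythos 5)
Your approximation scheme breaks down at step (ii), and the paper's proof takes a structurally different route precisely to avoid this. The paper does not attempt to control the singular Hessian integral at all. Instead, it observes (Lemma~\ref{le:r_trans_bij}) that the transform $\cR^{n-j}$ carries $T_j^n$ bijectively onto $T_n^n$, and since $\alpha\in T_n^n$ if and only if $x\mapsto\alpha(|x|)x$ extends continuously to the origin (Lemma~\ref{le:continuous_extension}), one simply \emph{defines} $\ot_{j,\zeta}^*(v)=\int_{\Rn}\alpha(|x|)x\d\MA_j(v;x)$ with $\alpha=\binom{n}{j}\cR^{n-j}\zeta$; by Proposition~\ref{prop:int_maj_is_a_vector-val} this is manifestly a continuous, dually epi-translation invariant, $\On$ equivariant valuation. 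The substantive content is then Proposition~\ref{prop:int_swap} (an iterated integration by parts built on Lemmas~\ref{le:int_symmetric}--\ref{le:int_swap_transform}), which shows that this $\MA_j$ integral agrees with $\int\zeta(|x|)x[\Hess v]_j\d x$ on $\fconvf\cap C^2(\Rn)$. Uniqueness is by density. No approximation of $\zeta$ and no passage to limits of valuations is ever performed.

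The gap in your step (ii) is concrete. Your uniform local bound $\Phi_j(v;B_\epsilon)\lesssim L^j\epsilon^{n-j}$ is correct (it follows from \eqref{eq:def_phi_j} and a local Lipschitz constant $L$), but combined with the mere hypothesis $s^{n-j+1}\zeta(s)\to 0$ it does \emph{not} yield $\int_{B_\epsilon}|\zeta(|x|)|\,|x|\d\Phi_j(v;x)\to 0$, nor that $\big(\ot_{j,\zeta_k}^*(v)\big)_k$ is Cauchy. Already in the extremal case $v(x)=c|x|$, where $\d\Phi_j(v;x)\propto|x|^{-j}\d x$, the tail integral is $\propto\int_0^\epsilon r^{n-j}|\zeta(r)|\d r$, which diverges for densities such as $\zeta(s)=s^{-(n-j+1)}/\log(1/s)$ that lie in $T_j^n$. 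So the absolute estimate fails, and one must genuinely exploit the spherical cancellation $\int_{\sn}z\,\d\hm^{n-1}(z)=o$. That cancellation is immediate for the radially symmetric worst case above, but $\Phi_j(v;\cdot)$ near $o$ is \emph{not} radially symmetric for general $v$, and your proposal offers no mechanism to quantify the near-symmetry uniformly on epi-compact subsets of $\fconvf$. This is a hard analytic problem in its own right (compare \cite[Theorem 1.5]{knoerr_singular}, which establishes a principal-value representation of the scalar operators $\oV_{j,\zeta}^*$ as a separate theorem); the paper sidesteps it entirely, and indeed only extends the pointwise Hessian representation to $\fconvfz$, not to all of $\fconvf$ (Lemma~\ref{le:t_j_zeta_fconvz}).

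A smaller inaccuracy in your step (i): for $v\in\fconvf\cap C^2(\Rn)$ the integral $\int\zeta(|x|)x[\Hess v(x)]_j\d x$ converges \emph{absolutely}, since near $o$ the integrand is controlled by $r^n|\zeta(r)|=r^{j-1}\cdot r^{n-j+1}|\zeta(r)|$, which is integrable at $0$ for $j\geq 1$. So the spherical cancellation plays no role there. Where the cancellation truly enters is at the level of the transformed density: $\alpha=\binom{n}{j}\cR^{n-j}\zeta$ may be unbounded at $0^+$ when $\zeta\in T_j^n\setminus D_j^n$, and it is only the extra factor of $x$ that makes $\alpha(|x|)x$ continuous at $o$. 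That is the correct explanation for why $T_j^n$ dispenses with the tail-integral condition built into $D_j^n$, and it points exactly to the $\MA_j$ reformulation rather than to an estimate on the singular Hessian integral.
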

	\noindent
	For the case $j=n$, we remark that $\zeta\in T_n^n$ if and only if $x\mapsto \zeta(|x|)x$ continuously extends to a continuous map with compact support on $\Rn$ (see Lemma~\ref{le:continuous_extension} below). In particular, this case was already treated in \cite{mouamine_mussnig_1}.
	
	In the course of proving Theorem~\ref{thm:main_existence}, we establish the following explicit representation of the operators $\ot_{j,\zeta}^*$, where we use the Monge--Amp\`ere-type measures $\MA_j(v;\cdot)$, which are defined through the relation
	\begin{equation}
		\label{eq:def_maj_steiner}
		\MA(v+r h_{B^n};\cdot)=\sum_{j=0}^n \binom{n}{j} r^{n-j}\MA_j(v;\cdot)
	\end{equation}
	for $v\in\fconvf$ and $r\geq 0$.
	
	\begin{thmmain}
		\label{thm:main_rep_maj}
		If $j\in\{1,\ldots,n\}$ and $\zeta\in T_j^n$, then
		\[
		\ot_{j,\zeta}^*(v)=\int_{\Rn} \alpha(|x|)x \d\MA_j(v;x)
		\] 
		for every $v\in \fconvf$, where $\alpha\in T_n^n$ is given by
		\begin{equation}
			\label{eq:alpha_zeta}
			\alpha(s)= \binom{n}{j}\left(s^{n-j} \zeta(s)+(n-j)\int_s^\infty t^{n-j-1}\zeta(t) \d t\right)
		\end{equation}
		for $s>0$.
	\end{thmmain}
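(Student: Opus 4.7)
My plan is to prove Theorem~B simultaneously with the existence half of Theorem~A, by taking the right-hand-side formula of Theorem~B as the definition of $\ot_{j,\zeta}^*$ and showing (i) it has all properties claimed in Theorem~A, and (ii) it agrees with $\int \zeta(|x|)\,x\,[\Hess v]_j\d x$ for $v \in \fconvf\cap C^2(\Rn)$. Concretely, set
\[
	\tilde T(v) \mathrel{:=} \int_{\Rn}\alpha(|x|)\,x\d\MA_j(v;x), \quad v\in\fconvf.
\]

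First I would check $\alpha\in T_n^n$: compact support of $\alpha$ is inherited from $\zeta$, and $\lim_{s\to 0^+} s\,\alpha(s) = 0$ follows from $\zeta(t) = o(t^{-(n-j+1)})$ as $t\to 0^+$ (which is $\zeta\in T_j^n$) by an elementary estimate of the tail integral in \eqref{eq:alpha_zeta}. By the lemma cited in the remark after Theorem~A, $x\mapsto\alpha(|x|)x$ then extends to a continuous, compactly supported vector field on $\Rn$. Assuming -- to be established earlier in the paper via \eqref{eq:def_maj_steiner} and the corresponding properties of $\MA$ -- that $v\mapsto\MA_j(v;\cdot)$ is a continuous (with respect to weak convergence of measures under epi-convergence of $v$), dually epi-translation invariant, $\SOn$-equivariant, measure-valued valuation on $\fconvf$, the integral defining $\tilde T$ inherits these properties: $\tilde T$ is a continuous, dually epi-translation invariant, rotation equivariant valuation, and $\On$-equivariance follows from the radial symmetry of $\alpha(|x|)$. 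Thus $\tilde T$ fulfills the conclusions of Theorem~A.

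To show $\tilde T(v) = \int\zeta(|x|)\,x\,[\Hess v(x)]_j\d x$ for $v\in\fconvf\cap C^2(\Rn)$, I would note that for smooth $v$ the Taylor expansion of $\det(\Hess(v+r h_{B^n}))$ in $r$, compared with \eqref{eq:def_maj_steiner}, gives $\d\MA_j(v;x) = D(\Hess v(x)[j], \Hess h_{B^n}(x)[n-j])\d x$ on $\Rn\setminus\{0\}$, with $\MA_j(v;\{0\}) = 0$ when $j\geq 1$. Writing $\Hess h_{B^n}(x) = |x|^{-1}P_x$ for $P_x = I - \hat x\hat x^T$, decomposing $I = P_x + \hat x\hat x^T$, and using that any mixed discriminant in which the rank-one factor $\hat x\hat x^T$ appears twice vanishes, one obtains
\[
	[\Hess v(x)]_j = \tbinom{n}{j}\,D(\Hess v[j],P_x[n-j]) + \tbinom{n}{j}(n-j)\,D(\Hess v[j],P_x[n-j-1],\hat x\hat x^T).
\]
Substituting these pointwise densities into both sides of the desired equality, passing to polar coordinates $x = s\theta$, and applying Fubini, the claim reduces to the one-variable identity $\frac{\d}{\d s}[s^{j+1}K(s)] = s^j H(s)$ for $s>0$, where $H(s) = \int_{\sn}\theta\,D(\Hess v(s\theta)[j],P_\theta[n-j])\d\theta$ and $K(s) = \int_{\sn}\theta\,D(\Hess v(s\theta)[j],P_\theta[n-j-1],\theta\theta^T)\d\theta$. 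The structural observation that makes this reduction line up is $\alpha'(s) = \binom{n}{j}\,s^{n-j}\zeta'(s)$, obtained by differentiating \eqref{eq:alpha_zeta} directly.

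The main obstacle is verifying this radial identity. I would establish it by differentiating the integrand of $s^{j+1}K(s)$ using multilinearity of the mixed discriminant -- with $\partial_s\Hess v(s\theta) = (\theta\cdot\nabla)\Hess v(s\theta)$ -- and then applying integration by parts on the unit sphere together with the splitting $P_\theta = I - \theta\theta^T$ to telescope the resulting terms into $s^j H(s)$. This is the only place where the spherical geometry of $h_{B^n}$ genuinely enters; everything else is either Steiner calculus or continuity. Once the $C^2$ identity is established, continuity of $\tilde T$ together with density of $C^2$ functions in $\fconvf$ under epi-convergence promotes the identity to all $v\in\fconvf$, yielding Theorem~B and simultaneously the existence half of Theorem~A.
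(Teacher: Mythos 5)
Your overall architecture matches the paper's: define $\ot_{j,\zeta}^*$ via the $\MA_j$-integral, check that $\alpha\in T_n^n$ (the paper's Lemma~\ref{le:r_trans_bij}), deduce continuity / dual epi-translation invariance / $\On$-equivariance from Proposition~\ref{prop:int_maj_is_a_vector-val}, establish the equality with the Hessian integral for $C^2$ functions, and extend by density. Where you diverge is in the key step of proving the $C^2$ identity, and this is also where your proposal has a genuine gap.

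The paper does \emph{not} pass to polar coordinates. It writes $[\Hess v]_j=\binom{n}{j}D(\Hess v[j],I_n[n-j])$ and then iterates a one-step ``swap'' lemma (Proposition~\ref{prop:int_swap_once}, applied $(n-j)$ times in Proposition~\ref{prop:int_swap}) that replaces a single factor $I_n$ by $\Hess h_{B^n}$ at the cost of applying the integral transform $\cR$ to the density. Each single swap is proven (Lemma~\ref{le:int_swap_transform}) by exhibiting an explicit compactly supported vector field $F$ with $\Jac F$ equal to the difference of the two relevant Hessian expressions, and then invoking the vanishing of $\int D(\ldots,\Jac F)\,\d x$ (Lemma~\ref{le:int_mixed_dis_jac_vanish}). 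The regularity and integrability bookkeeping (cutoffs $v_{1,\varepsilon}$, approximation of $\psi$ by $C^2_b$ functions, Lemma~\ref{le:int_xi_exists}/\ref{le:int_mixed_dis_finite}) is done carefully because of the possible singularity of $\zeta$ at $0^+$.

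Your route --- decompose $I_n=P_x+\hat x\hat x^T$, pass to polar coordinates, and reduce to the radial identity $\frac{\d}{\d s}\bigl[s^{j+1}K(s)\bigr]=s^jH(s)$ --- is plausible and is essentially a spherical-coordinates avatar of the same integration-by-parts mechanism. (I verified it directly for $n=2$, $j=1$: the identity reduces to $\int_{\s^1}\theta\,(g+\partial_\phi^2 g)\,\d\phi=0$, which holds by two integrations by parts since $\partial_\phi^2\theta=-\theta$.) But in your write-up this identity is \emph{asserted, not proven}; you yourself label it ``the main obstacle.'' For general $n$ and $j$ the verification requires differentiating $s^{j+1}$ times a sphere integral of an order-$n$ mixed discriminant with $n-j-1$ projection factors, then organizing a sphere integration by parts so that the extra terms telescope. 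This is a substantial calculation, and it is precisely the content that the paper's explicit-vector-field construction in Lemma~\ref{le:int_swap_transform} packages in a way that avoids polar decomposition entirely. Until you carry out that calculation (including the boundary terms at $s=0$ and $s=\infty$, which you also need to justify the Fubini/integration-by-parts reduction), the proof is incomplete. You should either fill in the radial identity in full generality, or switch to the paper's single-swap approach, which handles one factor at a time and keeps the mixed discriminant structure intact throughout.
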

	\noindent
	Note, that \eqref{eq:def_maj_steiner} and Theorem~\ref{thm:main_rep_maj} show that $\ot_{j,\zeta}^*(v)$ with $v\in\fconvf$ and $\zeta\in T_j^n$ appears as a coefficient in a Steiner formula that describes the polynomial expansion of $r\mapsto \ot_{n,\alpha}^*(v+r h_{B^n})$ with a suitable $\alpha\in T_n^n$. Conversely, since \eqref{eq:alpha_zeta} is a bijection between $T_j^n$ and $T_n^n$ (see Lemma~\ref{le:r_trans_bij} below), all coefficients in such a Steiner formula are functional Minkowski vectors. More precisely, if $\alpha\in T_n^n$, then
	\begin{equation}
		\label{eq:ot_n_alpha_steiner}
		\ot_{n,\alpha}^*(v+r h_{B^n})=\sum_{j=1}^n r^{n-j} \ot_{j,\zeta_j}^*(v)
	\end{equation}
	for every $v\in\fconvf$ and $r\geq 0$, where $\zeta_j\in T_j^n$ is given by
	\[
	\zeta_j(s)=\left(\frac{\alpha(s)}{s^{n-j}}-(n-j)\int_s^\infty \frac{\alpha(t)}{t^{n-j+1}}\d t \right)
	\]
	for $s>0$. In particular, every $\ot_{j,\zeta_j}^*(v)$ with $\zeta_j\in T_j^n$ appears exactly once as a coefficient of $r^{n-j}$ on the right side of \eqref{eq:ot_n_alpha_steiner} as $\alpha$ ranges in $T_n^n$.
	
	Let us further mention that using recent integral-geometric results for the measures $\MA_j(v;\cdot)$ from \cite{hug_mussnig_ulivelli_support}, we will see in Theorem~\ref{thm:ot_j_zeta_kubota} that Theorem~\ref{thm:main_rep_maj} is equivalent to a Kubota-type formula, thereby providing an additional representation of functional Minkowski vectors that avoids singular densities.
	
	\medskip
	
	In our main result, we give a complete characterization of the vector-valued valuations $\ot_{j,\zeta}^*$.
	
	\begin{thmmain}
		\label{thm:main_class}
		For $n\geq 3$, a map $\oz\colon \fconvf\to\Rn$ is a continuous, dually epi-translation invariant, rotation equivariant valuation, if and only if there exist unique functions $\zeta_1\in T_1^n, \ldots, \zeta_n\in T_n^n$ such that
		\begin{equation*}
			\oz(v)=\ot_{1,\zeta_1}^*(v)+\cdots+\ot_{n,\zeta_n}^*(v)
		\end{equation*}
		for every $v\in\fconvf$. For $n\leq 2$, the same representation holds if we replace rotation equivariance with $\On$ equivariance.
	\end{thmmain}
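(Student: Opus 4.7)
Sufficiency follows directly from Theorem~\ref{thm:main_existence}, so the plan is to prove necessity. Given $\oz\colon\fconvf\to\Rn$ satisfying the hypotheses, I would reduce to a family of scalar zonal valuations, apply a scalar Hadwiger-type classification, and reconstruct $\oz$ by spherical harmonic projection. For each $e\in\sn$, set $Z_e(v):=\langle\oz(v),e\rangle$. Then $Z_e$ is a continuous, dually epi-translation invariant scalar valuation, and the equivariance of $\oz$ yields the covariance $Z_{\vartheta e}(v)=Z_e(v\circ\vartheta)$; specializing to $\vartheta$ in the stabilizer of $e$ makes $Z_e$ invariant under this stabilizer. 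For $n\geq 3$ this stabilizer is $\SOnm$ acting non-trivially on $e^\perp$, which suffices to place $Z_e$ in the zonal class. For $n\leq 2$ the $\SOn$-stabilizer of $e$ is very small or trivial, and the $\On$-equivariance hypothesis must be invoked to produce the necessary reflection symmetries.

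Next, I would apply a Hadwiger-type classification for continuous, dually epi-translation invariant, zonal scalar valuations on $\fconvf$---of the kind established in \cite{brauner_hofstaetter_ortega-moreno_zonal}---to represent
\[
Z_e(v)=\sum_{j=0}^{n}\int_{\Rn}\phi_j(|x|,\langle x,e\rangle)\,\d\MA_j(v;x)
\]
with densities $\phi_j$ that, by the covariance above, do not depend on $e$. Since $e\mapsto\langle\oz(v),e\rangle$ is the restriction to $\sn$ of a linear functional on $\Rn$, it is a first-order spherical harmonic, hence $\oz(v)=\tfrac{n}{|\sn|}\int_{\sn}Z_e(v)\,e\,\d e$. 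Substituting the previous display and exchanging the two integrals, for each fixed $x$ the inner integral $\int_{\sn}\phi_j(|x|,\langle x,e\rangle)\,e\,\d e$ is parallel to $x$ by rotational symmetry around the $x$-axis and evaluates to $\alpha_j(|x|)\,x$, where $\alpha_j(r)$ is, up to normalization, a one-dimensional Funk-type integral extracting the odd-in-$\tau$ component of $\phi_j(r,\tau)$. This yields $\oz(v)=\sum_{j=0}^{n}\int_{\Rn}\alpha_j(|x|)\,x\,\d\MA_j(v;x)$.

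The $j=0$ summand vanishes because $\MA_0(v;\cdot)$ is supported at the origin (consistent with the remark $\ot_{0,\zeta}^*\equiv o$ in the introduction). For $1\leq j\leq n$, Theorem~\ref{thm:main_rep_maj} together with the bijection~\eqref{eq:alpha_zeta} between $T_n^n$ and $T_j^n$ identifies each remaining summand with $\ot_{j,\zeta_j}^*(v)$ for a unique $\zeta_j\in T_j^n$, yielding the desired representation. Uniqueness of $\zeta_1,\ldots,\zeta_n$ then follows from the injectivity of the correspondence $\zeta_j\mapsto\ot_{j,\zeta_j}^*$, which can be verified by evaluating on a sufficiently rich test family, for instance translates and dilates of support functions of smooth convex bodies, as in the scalar case (Section~\ref{section:retrieving_the_densities}).

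The hardest part will be the zonal Hadwiger step: applying a classification on $\fconvf$ whose density classes are compatible with $T_j^n$ through the transform in \eqref{eq:alpha_zeta}. In particular, one must verify that the odd-in-$\tau$ components $\alpha_j$ extracted by spherical averaging belong to $T_n^n$, which requires careful tracking of the admissible behavior of $\phi_j$ at $0^+$---including the singular density classes $D_j^n$ that appear in the scalar zonal classification---and of the reflection-symmetry subtleties that arise in dimensions $n\leq 2$.
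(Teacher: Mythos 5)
Your overall program---reduce $\oz$ to a family of scalar valuations $Z_e(v)=\langle\oz(v),e\rangle$, classify those, and reconstruct by spherical-harmonic projection---is a natural idea, and the projection formula $\oz(v)=\tfrac{n}{|\sn|}\int_{\sn}Z_e(v)\,e\,\d e$ together with the symmetry argument extracting $\alpha_j(|x|)x$ from the inner $e$-integral is correct. However, the crucial middle step has a genuine gap: you invoke a ``Hadwiger-type classification for continuous, dually epi-translation invariant, zonal scalar valuations on $\fconvf$,'' citing \cite{brauner_hofstaetter_ortega-moreno_zonal}, but that reference concerns zonal valuations on convex bodies, not on $\fconvf$. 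No zonal functional Hadwiger theorem is established in the literature the paper draws on, and such a theorem would in fact be substantially \emph{stronger} than Theorem~\ref{thm:main_class}: a zonal density $\phi_j(r,\tau)$ carries spherical-harmonic content of every degree in $\tau$, whereas the vectorial theorem only governs the first-degree component that you would subsequently extract by projection. You would thus be proving a weaker theorem by appealing to a harder unproved one, and you yourself flag this as ``the hardest part'' without indicating how to close it.

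The paper's actual route is quite different. After the homogeneous decomposition (Theorem~\ref{thm:mcmullen_fconvf}) and the extremal cases (Theorems~\ref{thm:class_0_hom} and \ref{thm:class_n_hom}), the core is: approximate a valuation of intermediate degree by \emph{smooth} rotation-equivariant valuations (Lemma~\ref{le:approx_smooth} and its equivariant refinement), classify these smooth valuations by induction on dimension via the Abel transform and a Klain--Schneider theorem for simple valuations (Theorem~\ref{thm:class_smooth}, using Lemmas~\ref{le:abel_smooth}, \ref{le:restriction_abel}, \ref{le:restrict_oz} and Theorem~\ref{thm:class_simple}), then pass to the limit using uniform compactness of supports (Knoerr's results, Lemma~\ref{le:supp_ot_supp_zeta}, Proposition~\ref{prop:compact_supports_sequence}) and the explicit recovery formula on the test family $w_s$ (Proposition~\ref{prop:retrieve_densities}). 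Note also that your proposed uniqueness check ``on translates and dilates of support functions'' would not suffice: the paper uses the specifically crafted non-radially-symmetric functions $w_s$ precisely because $\ot_{j,\zeta}^*$ vanishes on radially symmetric functions, and on $h_{D^n}=w_0$ these operators vanish as well, so support functions alone cannot distinguish the singular behavior of the densities.
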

	\noindent
	Let us mention that for $n\leq 2$, $\On$ equivariance is a necessary assumption, as demonstrated in \cite[Remark 3.7]{mouamine_mussnig_1}. We furthermore emphasize that Theorem~\ref{thm:main_class} characterizes a new family of operators that emerge exclusively in the broader setting of convex functions. In contrast, the corresponding question for convex bodies, as addressed in Proposition~\ref{prop:hadwiger_schneider}, admits only a trivial solution.
	
	To prove Theorem~\ref{thm:main_class}, we will first establish a version of this result under additional smoothness assumptions in Theorem~\ref{thm:class_smooth}. For this purpose, we will also derive another representation of the functional Minkowski vectors in Section~\ref{se:functions_to_bodies}, which uses integrals with respect to area measures of associated higher dimensional convex bodies.
	
	From Theorem~\ref{thm:main_rep_maj} together with Lemma~\ref{le:g_commutes} below, we immediately obtain the following equivalent version of Theorem~\ref{thm:main_class}. Here, $C_c(\Rn;\Rn)$ denotes the set of continuous functions $\xi\colon \Rn\to\Rn$ with compact support and we say that such a function is \textit{rotation equivariant} if $\xi(\vartheta x)=\vartheta \xi(x)$ for every $x\in\Rn$ and $\vartheta\in\SOn$. Similarly, we define $\On$ equivariance.
	
	\begin{thmmain}
		\label{thm:main_class_maj}
		For $n\geq 3$, a map $\oz\colon \fconvf\to\Rn$ is a continuous, dually epi-translation invariant, rotation equivariant valuation, if and only if there exist unique rotation equivariant functions $\xi_1,\ldots,\xi_n\in C_c(\Rn;\Rn)$ such that
		\begin{equation*}
			\oz(v)=\sum_{j=1}^n \int_{\Rn} \xi_j(x)\d\MA_j(v;x)
		\end{equation*}
		for every $v\in\fconvf$. For $n\leq 2$, the same representation holds if we replace rotation equivariance with $\On$ equivariance.
	\end{thmmain}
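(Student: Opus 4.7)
The plan is to deduce Theorem~\ref{thm:main_class_maj} directly from Theorem~\ref{thm:main_class}, using the representation provided by Theorem~\ref{thm:main_rep_maj} together with Lemma~\ref{le:g_commutes}, which identifies the structure of rotation (or $\On$) equivariant continuous maps $\Rn\to\Rn$.

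For the ``only if'' direction, I would start with a map $\oz\colon\fconvf\to\Rn$ that is a continuous, dually epi-translation invariant valuation and rotation equivariant (respectively, $\On$ equivariant, for $n\leq 2$). By Theorem~\ref{thm:main_class}, there exist unique $\zeta_1\in T_1^n,\ldots,\zeta_n\in T_n^n$ with $\oz(v)=\sum_{j=1}^n \ot_{j,\zeta_j}^*(v)$ for every $v\in\fconvf$. Theorem~\ref{thm:main_rep_maj} then rewrites each summand as
\[
\ot_{j,\zeta_j}^*(v)=\int_{\Rn}\alpha_j(|x|)\,x\d\MA_j(v;x),
\]
where $\alpha_j\in T_n^n$ is obtained from $\zeta_j$ via the transformation \eqref{eq:alpha_zeta}. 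Setting $\xi_j(x)=\alpha_j(|x|)\,x$, rotation equivariance is immediate since $\xi_j(\vartheta x)=\alpha_j(|\vartheta x|)\,\vartheta x=\vartheta\,\xi_j(x)$ for every $\vartheta\in\On$. Membership in $C_c(\Rn;\Rn)$ follows from the observation right after Theorem~\ref{thm:main_existence}: the condition $\alpha_j\in T_n^n$ precisely encodes that $x\mapsto\alpha_j(|x|)\,x$ extends continuously to the origin and has compact support.

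For the converse direction, I would assume $\xi_1,\ldots,\xi_n\in C_c(\Rn;\Rn)$ are rotation equivariant (respectively, $\On$ equivariant). Lemma~\ref{le:g_commutes} would produce functions $\alpha_j$ on $(0,\infty)$ with $\xi_j(x)=\alpha_j(|x|)\,x$, and the continuity plus compact support of $\xi_j$ translate into $\alpha_j\in T_n^n$. Inverting the bijection between $T_j^n$ and $T_n^n$ from Lemma~\ref{le:r_trans_bij} yields $\zeta_j\in T_j^n$ such that $\int_{\Rn}\xi_j(x)\d\MA_j(v;x)=\ot_{j,\zeta_j}^*(v)$ by Theorem~\ref{thm:main_rep_maj}. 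Each $\ot_{j,\zeta_j}^*$ is a continuous, dually epi-translation invariant, rotation (and, in fact, $\On$) equivariant valuation by Theorem~\ref{thm:main_existence}, hence so is their sum. Uniqueness of $(\xi_1,\ldots,\xi_n)$ is inherited from the uniqueness of $(\zeta_1,\ldots,\zeta_n)$ in Theorem~\ref{thm:main_class} along the chain of bijections $\xi_j\leftrightarrow\alpha_j\leftrightarrow\zeta_j$.

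All the analytic heavy lifting is already done by the cited results, so the only structural input to verify is the radial form $\xi(x)=\alpha(|x|)\,x$ for a continuous, rotation equivariant map $\xi\colon\Rn\to\Rn$ — the content of Lemma~\ref{le:g_commutes}. For $n\geq 3$ this is a short argument: the stabilizer of $x/|x|$ in $\SOn$ is isomorphic to $\SOnm$ and acts transitively on the unit sphere of $(\R x)^\perp$, forcing the fixed vector $\xi(x)$ to lie in $\R x$. For $n\leq 2$ the rotation subgroup is too small to pin down the direction, and one genuinely needs a reflection in $\On$ fixing $x$ to eliminate components orthogonal to $x$; this explains why $\On$ equivariance has to replace rotation equivariance in the low-dimensional case.
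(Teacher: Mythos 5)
Your proof is correct and follows exactly the route the paper indicates: it deduces Theorem~\ref{thm:main_class_maj} from Theorem~\ref{thm:main_class} by passing through the $\MA_j$-representation of Theorem~\ref{thm:main_rep_maj}, using Lemma~\ref{le:g_commutes} (and Lemma~\ref{le:continuous_extension}) to translate between rotation equivariant maps $\xi_j\in C_c(\Rn;\Rn)$ and radial densities $\alpha_j\in T_n^n$, and the bijection of Lemma~\ref{le:r_trans_bij} to pass back to $\zeta_j\in T_j^n$. The paper states this as an immediate consequence without spelling out the steps, and your write-up fills in precisely those steps, including the correct stabilizer argument for Lemma~\ref{le:g_commutes} and why $\On$ equivariance is needed for $n\leq 2$.
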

	\noindent
	Note that the appearance of the measures $\MA_j(v;\cdot)$ in Theorem~\ref{thm:main_class_maj} is not coincidental. As a recently established family of Kubota-type formulas from \cite{hug_mussnig_ulivelli_support} shows, these measures are generated from the Monge--Amp\`ere measure (in arbitrary dimensions) together with $\SOn$, and naturally encode the action of the rotation group (see Theorem~\ref{thm:kubota_fconvf}). In view of Theorem~\ref{thm:main_rep_maj} and Theorem~\ref{thm:main_class_maj}, one might, therefore, think that a representation of functional Minkowski vectors using Hessian measures is superfluous and that singular densities only lead to unnecessary complications. However, we will see in the classification of smooth valuations in Theorem~\ref{thm:class_smooth} that it is natural to work with Hessian measures, and in particular, our approach relies on this representation.
	
	\medskip
	
	The proofs of our main results build upon and extend techniques developed in \cite{colesanti_ludwig_mussnig_5,colesanti_ludwig_mussnig_6,colesanti_ludwig_mussnig_7,colesanti_ludwig_mussnig_8}, where functional intrinsic volumes were studied. While our strategy shares greater similarities with these works in proving existence, for the characterization result we incorporate several new key ideas beyond the existing framework. Notably, we use a new Klain--Schneider theorem, which we recently established in \cite{mouamine_mussnig_1}, and results on the supports of continuous, dually epi-translation invariant valuations due to \cite{knoerr_support}. Furthermore, compared with the scalar case, additional complexities arise in the vector-valued setting, necessitating more involved proofs. Particularly, since functional Minkowski vectors vanish on radially symmetric functions, recovering the density $\zeta$ from $\ot_{j,\zeta}^*$ is significantly more intricate than for the operators $\oV_{j,\zeta}^*$. In addition, restricting vector-valued valuations to a lower-dimensional setting introduces further challenges, which we address using the functional Hadwiger theorem in an equivalent form.
	
	\paragraph{Overview of This Article}
	Dual to the setting described above, we consider valuations on super-coercive convex functions in Section~\ref{se:dual_results}. After that, we collect preliminary results in Section~\ref{se:preliminaries}. We then prove the existence results, Theorem~\ref{thm:main_existence} and Theorem~\ref{thm:main_rep_maj}, in Section~\ref{se:existence}. Following this, we establish further representations of functional Minkowski vectors in Section~\ref{se:further_reps}, which we will then use for the proof of their characterization, Theorem~\ref{thm:main_class} and Theorem~\ref{thm:main_class_maj}, in Section~\ref{se:characterization}.
	
	\section{Dual Results}
	\label{se:dual_results}
	For a function $w\colon \Rn\to [-\infty,\infty]$ we denote by
	\[
	w^*(x)=\sup\nolimits_{y\in\Rn} \big(\langle y,x\rangle - w(y)\big),\quad  x\in\Rn,
	\]
	its \textit{convex conjugate} or \textit{Legendre--Fenchel transform}. Taking the convex conjugates of the functions in $\fconvf$ results in
	\begin{align}
	\fconvs&=\{v^* : v\in\fconvf \}\label{eq:fconvs_fconvf} \\
	&=\left\{u\colon \Rn\to (-\infty,\infty] : u \text{ is l.s.c., convex, } u\not\equiv \infty \text{, } \lim_{|x|\to\infty} \frac{u(x)}{|x|}=\infty\right\},\notag
	\end{align}
	the space of proper, lower semicontinuous, super-coercive convex functions on $\Rn$. We also equip $\fconvs$ with the topology associated with epi-convergence and refer to \cite[Lemma 5]{colesanti_ludwig_mussnig_1} and \cite[Theorem 3.1]{beer_rockafellar_wets} for an easy description of epi-convergence on this space. Equivalently to \eqref{eq:fconvs_fconvf}, $u\in\fconvs$, if and only if $u^*\in\fconvf$ and, in particular, convex conjugation is a continuous bijection between these two spaces (see \cite[Theorem 11.34]{rockafellar_wets} for the continuity). Valuations on $\fconvs$ are defined analogously to \eqref{eq:val_fconvf} and, by \cite[Proposition 3.5]{colesanti_ludwig_mussnig_3}, $\oz\colon\fconvs\to\Rn$ is a valuation if and only if $v\mapsto \oz^*(v)=\oz(v^*)$ is a valuation on $\fconvf$. Furthermore, $\oz$ is called \textit{epi-translation invariant} if and only if $\oz^*$ is dually epi-translation invariant, or, equivalently,
	\[
	\oz(u\circ \tau^{-1}+c)=\oz(u)
	\]
	for every translation $\tau$ on $\Rn$, $c\in\R$ and $u\in\fconvs$. Observe that this means that $u\mapsto \oz(u)$ is invariant under translations of the epi-graph of $u$ in $\R^{n+1}$. Lastly, rotation equivariance and $\On$ equivariance of a valuation on $\fconvs$ are defined as on $\fconvf$, which is compatible with the properties of convex conjugation.
	
	Considering the above, we note that studying valuations on $\fconvf$ is equivalent to studying valuations on $\fconvs$. In particular, it is often advantageous to switch between these two spaces, and therefore, we now state the equivalent dual results to our main results from Section~\ref{se:introduction}. We begin with the existence results for functional Minkowski vectors on $\fconvs$, which are equivalent to Theorem~\ref{thm:main_existence} and Theorem~\ref{thm:main_rep_maj}.
	
	\begin{thmmaindual}
		\label{thm:main_existence_fconvs}
		For $j\in\{1,\ldots,n\}$ and $\zeta\in T_j^n$, there exists a unique, continuous, epi-translation invariant, rotation equivariant valuation $\ot_{j,\zeta}\colon\fconvs\to\Rn$ such that
		\[
		\ot_{j,\zeta}(u)=\int_{\Rn} \zeta(|\nabla u(x)|)\nabla u(x)\, [\Hess u(x)]_{n-j} \d x
		\]
		for every $u\in\fconvs\cap C_+^2(\Rn)$. In addition, $\ot_{j,\zeta}$ is $\On$ equivariant.
	\end{thmmaindual}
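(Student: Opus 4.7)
The plan is to deduce Theorem~\ref{thm:main_existence_fconvs} directly from Theorem~\ref{thm:main_existence} via Legendre--Fenchel duality, exploiting the correspondence set up at the start of Section~\ref{se:dual_results}. For $u\in\fconvs$ I would define
\[
\ot_{j,\zeta}(u) := \ot_{j,\zeta}^*(u^*).
\]
By \cite[Proposition 3.5]{colesanti_ludwig_mussnig_3} and the continuity of conjugation between $\fconvs$ and $\fconvf$, the valuation property and the continuity of $\ot_{j,\zeta}$ follow from those of $\ot_{j,\zeta}^*$. Epi-translation invariance of $\ot_{j,\zeta}$ is, by the definition recalled in Section~\ref{se:dual_results}, equivalent to dual epi-translation invariance of $\ot_{j,\zeta}^*$, and rotation as well as $\On$ equivariance transfer directly via the identity $(u\circ\vartheta^{-1})^* = u^*\circ\vartheta^{-1}$ for every orthogonal $\vartheta$.

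The only real computation is to verify that on $u\in\fconvs\cap C_+^2(\Rn)$ this definition reproduces the integral formula stated in Theorem~\ref{thm:main_existence_fconvs}. For such $u$ the conjugate $v=u^*$ lies in $\fconvf\cap C^2(\Rn)$ with $\Hess v>0$ everywhere, the gradient $\nabla u\colon\Rn\to\Rn$ is a global $C^1$-diffeomorphism, and $\Hess v(\nabla u(y))=(\Hess u(y))^{-1}$. Performing the change of variables $x=\nabla u(y)$ in
\[
\ot_{j,\zeta}^*(v)=\int_{\Rn}\zeta(|x|)\,x\,[\Hess v(x)]_j\d x
\]
yields $\d x = \det(\Hess u(y))\d y$, and the elementary identity
\[
[A^{-1}]_j\,\det(A) = [A]_{n-j}
\]
for positive definite $A$ (immediate from $e_j(1/\lambda_1,\ldots,1/\lambda_n)=e_{n-j}(\lambda_1,\ldots,\lambda_n)/(\lambda_1\cdots\lambda_n)$) converts the integrand into $\zeta(|\nabla u(y)|)\,\nabla u(y)\,[\Hess u(y)]_{n-j}$, reproducing exactly the formula claimed in Theorem~\ref{thm:main_existence_fconvs}.

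Uniqueness then follows because any competitor $\tilde\ot$ on $\fconvs$ induces, via $v\mapsto \tilde\ot(v^*)$, a continuous, dually epi-translation invariant, rotation equivariant valuation on $\fconvf$ that agrees with $\ot_{j,\zeta}^*$ on the dense subclass of strictly convex functions in $\fconvf\cap C^2(\Rn)$ by the same change of variables, and therefore coincides with $\ot_{j,\zeta}^*$ everywhere by continuity. The main obstacle here is really just Theorem~\ref{thm:main_existence} itself; once that is in hand, the transition to Theorem~\ref{thm:main_existence_fconvs} reduces to the one-line eigenvalue identity above together with the smoothness of Legendre duality on $\fconvs\cap C_+^2(\Rn)$.
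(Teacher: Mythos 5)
Your proposal is correct and is essentially the argument the paper has in mind: Section 2 explains that Theorem~A\textsuperscript{*} follows from Theorem~A by setting $\ot_{j,\zeta}(u)=\ot_{j,\zeta}^*(u^*)$, transferring the valuation, continuity, invariance, and equivariance properties through Legendre--Fenchel duality, and using density of $\fconvs\cap C_+^2(\Rn)$ for uniqueness. The change of variables $x=\nabla u(y)$ together with the identity $[A^{-1}]_j\det A=[A]_{n-j}$ that you spell out is exactly what the paper encapsulates in \eqref{eq:psi_j_hessian} (the $C_+^2$ representation of the measures $\Psi_j$), so the two arguments coincide in substance; the only minor imprecision is that the relevant dense subclass is $\fconvs\cap C_+^2(\Rn)$ (super-coercive $C_+^2$ functions), not all strictly convex functions in $\fconvf\cap C^2(\Rn)$, but this does not affect the proof.
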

	\noindent
	Here, $C_+^2(\Rn)$ denotes the set of all functions in $C^2(\Rn)$ with a positive definite Hessian matrix. We remark that $u\in\fconvs\cap C_+^2(\Rn)$, if and only if $u^*\in \fconvs\cap C_+^2(\Rn) \subseteq \fconvf$. In particular, $\fconvs\cap C_+^2(\Rn)$ is, therefore, a proper subset of the convex conjugates of the elements from $\fconvf\cap C^2(\Rn)$, and the equivalence of Theorem~\ref{thm:main_existence} and Theorem~\ref{thm:main_existence_fconvs} follows from the fact that $\fconvs\cap C_+^2(\Rn)$ is a dense subset of $\fconvs$.
	
	\begin{thmmaindual}
		\label{thm:main_rep_maj_fconvs}
		If $j\in\{1,\ldots,n\}$ and $\zeta\in T_j^n$, then
		\[
		\ot_{j,\zeta}(u)=\int_{\Rn} \alpha(|y|)y \d\MAp_j(u;x)
		\] 
		for every $u\in \fconvs$, where $\alpha\in T_n^n$ is given by
		\[
		\alpha(s)= \binom{n}{j}\left(s^{n-j} \zeta(s)+(n-j)\int_s^\infty t^{n-j-1}\zeta(t) \d t\right)
		\]
		for $s>0$.
	\end{thmmaindual}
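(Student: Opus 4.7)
The plan is to derive this theorem directly from Theorem~\ref{thm:main_rep_maj} by transferring the identity along the Legendre--Fenchel bijection $u\mapsto u^*$ between $\fconvs$ and $\fconvf$. By construction (as hinted at in Section~\ref{se:dual_results}), the primal and dual functional Minkowski vectors are related by $\ot_{j,\zeta}(u)=\ot_{j,\zeta}^*(u^*)$ for every $u\in\fconvs$. On the smooth, nondegenerate subset $\fconvs\cap C_+^2(\Rn)$, this identity can be verified directly: the change of variables $y=\nabla u(x)$, whose Jacobian determinant equals $\det(\Hess u(x))$, combined with the standard identity $[\Hess u(x)]_{n-j}=[\Hess u^*(y)]_j/\det(\Hess u^*(y))$ relating elementary symmetric functions of a matrix and its inverse, converts the defining integral of $\ot_{j,\zeta}$ from Theorem~\ref{thm:main_existence_fconvs} into the defining integral of $\ot_{j,\zeta}^*$ from Theorem~\ref{thm:main_existence} evaluated at $v=u^*$. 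Continuity of both operators together with the continuity of convex conjugation then extends the identity to all of $\fconvs$.

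The second ingredient is the compatibility $\MAp_j(u;\cdot)=\MA_j(u^*;\cdot)$ between the two families of Monge--Amp\`ere-type measures. This follows from the dual Steiner formula defining $\MAp_j(u;\cdot)$ in analogy to \eqref{eq:def_maj_steiner}, which will be recorded among the preliminaries in Section~\ref{se:preliminaries}. Granting these two identities, applying Theorem~\ref{thm:main_rep_maj} to $v=u^*\in\fconvf$ immediately yields
\[
\ot_{j,\zeta}(u)=\ot_{j,\zeta}^*(u^*)=\int_{\Rn}\alpha(|y|)y\d\MA_j(u^*;y)=\int_{\Rn}\alpha(|y|)y\d\MAp_j(u;y),
\]
which is the claimed representation (with the dummy integration variable renamed accordingly).

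The main point requiring attention is therefore not the content of this dual theorem itself but the preparatory step of verifying the two duality identities, one at the level of the operators and one at the level of the measures. Once these are in place, no further argument is needed; in particular, no new singular-density analysis or valuation-theoretic input beyond what is already used for Theorem~\ref{thm:main_rep_maj} is involved. I expect the only mild subtlety to be the density of $\fconvs\cap C_+^2(\Rn)$ in $\fconvs$ with respect to epi-convergence, which is needed to pass from the smooth case to general $u\in\fconvs$.
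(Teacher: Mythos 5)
Your proposal is correct and follows the same route the paper takes: Theorem~\ref{thm:main_rep_maj_fconvs} is obtained from Theorem~\ref{thm:main_rep_maj} by conjugation, using the identities $\ot_{j,\zeta}(u)=\ot_{j,\zeta}^*(u^*)$ and $\MAp_j(u;\cdot)=\MA_j(u^*;\cdot)$, with the first identity checked on the dense class $\fconvs\cap C_+^2(\Rn)$ via the change of variables $y=\nabla u(x)$ and extended by continuity of both operators and of convex conjugation. The paper treats this as an immediate ``equivalent dual result'' and omits the details, but your elaboration (including the correct use of $[\Hess u(x)]_{n-j}=[\Hess u^*(y)]_j/\det(\Hess u^*(y))$, which is exactly \eqref{eq:psi_j_hessian} in disguise) matches the intended argument.
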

	\noindent
	Here, we use the measures $\MA_j^*(u;\cdot) = \MA_j(u^*;\cdot)$ for $u\in\fconvs$ and $j\in\{1,\ldots,n\}$. For more details, see \eqref{eq:map_steiner}. We refer to \cite[Theorem 5.5]{colesanti_ludwig_mussnig_7} for an alternative description.
	
	\medskip
	
	Next, we state equivalent formulations of Theorem~\ref{thm:main_class} and Theorem~\ref{thm:main_class_maj} concerning the classification of vector-valued valuations on $\fconvs$.
	
	\begin{thmmaindual}
		\label{thm:main_class_fconvs}
		For $n\geq 3$, a map $\oz\colon \fconvs\to\Rn$ is a continuous, epi-translation invariant, rotation equivariant valuation, if and only if there exist unique functions $\zeta_1\in T_1^n, \ldots, \zeta_n\in T_n^n$ such that
		\begin{equation*}
			\oz(u)=\ot_{1,\zeta_1}(u)+\cdots+\ot_{n,\zeta_n}(u)
		\end{equation*}
		for every $u\in\fconvs$. For $n\leq 2$, the same representation holds if we replace rotation equivariance with $\On$ equivariance.
	\end{thmmaindual}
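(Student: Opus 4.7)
The plan is to deduce Theorem~D\textsuperscript{*} directly from Theorem~\ref{thm:main_class} by conjugation, since essentially all of the required correspondences between $\fconvs$ and $\fconvf$ have already been set up in the preceding paragraphs of Section~\ref{se:dual_results}. Concretely, given $\oz\colon\fconvs\to\Rn$ satisfying the hypotheses, define $\oz^*\colon\fconvf\to\Rn$ by $\oz^*(v)=\oz(v^*)$. Each hypothesis then transfers: continuity holds because $v\mapsto v^*$ is a continuous bijection between $\fconvf$ and $\fconvs$; the valuation property transfers by the cited \cite[Proposition~3.5]{colesanti_ludwig_mussnig_3}; epi-translation invariance of $\oz$ is, by definition, the same as dual epi-translation invariance of $\oz^*$; and rotation (respectively $\On$) equivariance transfers because $(v\circ \vartheta^{-1})^*=v^*\circ \vartheta^{-1}$ for $\vartheta\in\On$, so $\oz^*(v\circ \vartheta^{-1})=\oz((v\circ\vartheta^{-1})^*)=\oz(v^*\circ\vartheta^{-1})=\vartheta\oz(v^*)=\vartheta\oz^*(v)$.

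Next I would apply Theorem~\ref{thm:main_class} (for $n\geq 3$) or its $\On$-equivariant version (for $n\leq 2$) to $\oz^*$, obtaining unique $\zeta_1\in T_1^n,\ldots,\zeta_n\in T_n^n$ with
\[
\oz^*(v)=\sum_{j=1}^n \ot_{j,\zeta_j}^*(v)
\]
for every $v\in\fconvf$. Writing $v=u^*$ with $u\in\fconvs$ and using the convention $\ot_{j,\zeta}(u)=\ot_{j,\zeta}^*(u^*)$, which is the defining relation between the operators introduced in Theorem~\ref{thm:main_existence} and Theorem~\ref{thm:main_existence_fconvs}, yields
\[
\oz(u)=\oz^*(u^*)=\sum_{j=1}^n \ot_{j,\zeta_j}^*(u^*)=\sum_{j=1}^n \ot_{j,\zeta_j}(u)
\]
for every $u\in\fconvs$. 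Uniqueness of the densities $\zeta_j$ transfers directly, since any representation of $\oz$ of this form on $\fconvs$ gives, upon precomposition with $*$, a representation of $\oz^*$ on $\fconvf$ to which the uniqueness clause of Theorem~\ref{thm:main_class} applies. The converse implication — that every such sum is a continuous, epi-translation invariant, rotation (or $\On$) equivariant valuation on $\fconvs$ — is the dual content of Theorem~\ref{thm:main_existence_fconvs}, which has already been established.

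There is no real obstacle here; the proof is a transcription along the conjugation dictionary. The only points that require a sentence of justification are the equivariance transfer, which I would verify as above, and the identification $\ot_{j,\zeta}(u)=\ot_{j,\zeta}^*(u^*)$, which follows either from the defining formulas on the dense subsets $\fconvs\cap C_+^2(\Rn)$ and $\fconvf\cap C^2(\Rn)$ (after a change of variables via the gradient map, as is standard in this framework) or, more cleanly, by invoking uniqueness in Theorem~\ref{thm:main_existence_fconvs} to characterize $\ot_{j,\zeta}$ as the continuous, epi-translation invariant, $\On$ equivariant valuation dual to $\ot_{j,\zeta}^*$.
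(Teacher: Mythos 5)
Your argument is correct and is exactly what the paper intends but leaves implicit: Theorem~\ref{thm:main_class_fconvs} is stated in Section~\ref{se:dual_results} as the dual form of Theorem~\ref{thm:main_class} via the conjugation dictionary $\oz\mapsto\oz^*$, with the transfer of continuity, the valuation property, epi-translation invariance, and $\On$ equivariance all already justified there. You have simply spelled out that dictionary, including the correct identity $(v\circ\vartheta^{-1})^*=v^*\circ\vartheta^{-1}$ and the uniqueness transfer, so the proof matches the paper's route.
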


	\begin{thmmaindual}
		For $n\geq 3$, a map $\oz\colon \fconvs\to\Rn$ is a continuous, epi-translation invariant, rotation equivariant valuation, if and only if there exist unique rotation equivariant functions $\xi_1,\ldots,\xi_n\in C_c(\Rn;\Rn)$ such that
		\begin{equation*}
			\oz(u)=\sum_{j=1}^n \int_{\Rn} \xi_j(x)\d\MAp_j(v;x)
		\end{equation*}
		for every $u\in\fconvs$. For $n\leq 2$, the same representation holds if we replace rotation equivariance with $\On$ equivariance.
	\end{thmmaindual}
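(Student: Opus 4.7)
The plan is to deduce this statement directly from Theorem \ref{thm:main_class_maj} by passing through the Legendre--Fenchel transform. Given $\oz\colon\fconvs\to\Rn$ satisfying the stated hypotheses, I would define $\oz^\vee\colon\fconvf\to\Rn$ by $\oz^\vee(v)=\oz(v^*)$. As recalled in Section~\ref{se:dual_results}, conjugation is a continuous bijection $\fconvf\to\fconvs$, so $\oz^\vee$ is continuous; by \cite[Proposition 3.5]{colesanti_ludwig_mussnig_3}, $\oz^\vee$ is a valuation; and epi-translation invariance of $\oz$ on $\fconvs$ translates exactly to dually epi-translation invariance of $\oz^\vee$. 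Since rotations commute with conjugation in the sense that $(v\circ\vartheta^{-1})^*=v^*\circ\vartheta^{-1}$ for $\vartheta\in\On$, the map $\oz^\vee$ inherits rotation equivariance (for $n\geq 3$) or $\On$ equivariance (for $n\leq 2$).

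Applying Theorem \ref{thm:main_class_maj} to $\oz^\vee$ then yields unique rotation equivariant (respectively $\On$ equivariant) functions $\xi_1,\ldots,\xi_n\in C_c(\Rn;\Rn)$ with
\[
\oz^\vee(v)=\sum_{j=1}^n\int_{\Rn}\xi_j(x)\d\MA_j(v;x)
\]
for every $v\in\fconvf$. Substituting $v=u^*$ for $u\in\fconvs$ and invoking the defining relation $\MAp_j(u;\cdot)=\MA_j(u^*;\cdot)$ gives the sought representation of $\oz(u)$. Uniqueness of the $\xi_j$ transfers from the uniqueness clause of Theorem \ref{thm:main_class_maj}.

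For the converse direction, I would verify that each summand $u\mapsto \int_{\Rn}\xi_j(x)\d\MAp_j(u;x)$, with $\xi_j$ rotation equivariant (or $\On$ equivariant) and compactly supported, defines a continuous, epi-translation invariant valuation on $\fconvs$ with the right equivariance. This is immediate from Theorem \ref{thm:main_existence_fconvs} and Theorem \ref{thm:main_rep_maj_fconvs}: every rotation equivariant $\xi\in C_c(\Rn;\Rn)$ has the form $x\mapsto\alpha(|x|)x$ for some $\alpha\in T_n^n$, and for each $j$ the bijection $\zeta\leftrightarrow\alpha$ from Theorem \ref{thm:main_rep_maj_fconvs} identifies the integral against $\MAp_j$ with the functional Minkowski vector $\ot_{j,\zeta}$, whose required properties have already been established.

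The main obstacle is not the argument itself, which is formally routine once the dualization dictionary is in hand, but rather the bookkeeping needed to confirm that the class of continuous, compactly supported, equivariant vector fields is correctly matched to the class of admissible densities in $T_j^n$ through the bijection from Theorem \ref{thm:main_rep_maj_fconvs}, and that all preservation properties of $v\mapsto v^*$ (continuity, valuation property, equivariance, and the exchange of epi- and dually epi-translation invariance) are cleanly in place. All genuinely new geometric and analytic content is housed in the primal Theorems \ref{thm:main_existence}--\ref{thm:main_class_maj}; the present theorem is, by design, a transparent consequence of those results combined with the property-preserving bijection provided by the Legendre--Fenchel transform.
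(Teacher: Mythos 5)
Your proposal is correct and is exactly the argument the paper has in mind: Section~\ref{se:dual_results} presents this theorem as an ``equivalent dual result'' to Theorem~\ref{thm:main_class_maj} without writing out a proof, relying on precisely the dualization dictionary you spell out (Legendre--Fenchel conjugation as a continuous bijection, valuation property via \cite[Proposition 3.5]{colesanti_ludwig_mussnig_3}, epi-translation $\leftrightarrow$ dually epi-translation invariance, $(v\circ\vartheta^{-1})^*=v^*\circ\vartheta^{-1}$, and $\MAp_j(u;\cdot)=\MA_j(u^*;\cdot)$). Your explicit treatment, including the converse via Lemma~\ref{le:g_commutes}, Lemma~\ref{le:r_trans_bij}, and Proposition~\ref{prop:int_maj_is_a_vector-val_fconvs}, fills in what the authors leave implicit, so there is nothing to add.
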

		
	\section{Preliminaries}
	\label{se:preliminaries}
	In this section, we explain relevant notations and results. In addition, we prove some smaller preliminary statements. Apart from the references listed below, we refer to the monographs \cite{rockafellar_wets,schneider_cb} as basic references. Concerning the general notation and in addition to the definitions of Section~\ref{se:introduction}, we write $e_i$ for the $i$th vector of the standard orthonormal basis of $\Rn$. Furthermore, $\kappa_m$ is the $m$-dimensional volume of the Euclidean unit ball $B^m$ in $\R^m$, with the convention $\kappa_0=1$.
	
	\subsection{Hessian Measures}
	\label{se:hessian_measures}	
	For a convex function $v\in\fconvf$ let
	\[
	\partial v(x) = \{y\in\Rn : v(z)\geq v(x)+\langle y,z-x\rangle \}
	\]
	denote the \textit{subdifferential} of $v$ at $x\in\Rn$. Note that if $v$ is differentiable at $x$ with gradient $\nabla v(x)$, then $\partial v(x)=\{\nabla v(x)\}$. The Monge--Amp\`ere measures associated with $v$ is now defined as
	\begin{equation}
	\label{eq:def_ma}
	\MA(v;B)=\hm^n\big(\{y : \exists \,x\in B \text{ such that } y\in \partial v(x)\}\big)
	\end{equation}
	for every Borel set $B\subset \Rn$ (cf.\ \cite[Definition 2.1]{figalli}). Under $C^2$ assumptions on $v$ this gives \eqref{eq:ma_v_c2_det}. More generally, we associate with each $v\in\fconvf$ the family of \textit{Hessian measures} $\Phi_j(v;\cdot)$, $j\in\{0,\ldots,n\}$, which are defined through the relation 
	\begin{equation}
		\label{eq:def_phi_j}
		\hm^{n}(\{x+r y : x\in B, y \in \partial v(x)\})=\sum_{j=0}^n r^j\, \Phi_j(v;B)
	\end{equation}
	for $r\geq 0$, where $B\subset \Rn$ is again an arbitrary Borel set. In particular, this defines a family of non-negative, locally finite Borel measures on $\Rn$. Note, that $\Phi_n(v;\cdot)=\MA(v;\cdot)$. When $v$ is of class $C^2$ on some open set $O\subseteq\Rn$, then $\Phi_j(v;\cdot)$ is absolutely continuous w.r.t.\ the Lebesgue measure with
	\begin{equation}
		\label{eq:phi_j_hess}
		\d\Phi_j(v;x)=[\Hess v(x)]_j \d x
	\end{equation}
	on $O$, which generalizes \eqref{eq:ma_v_c2_det}. Cf.\ \cite[Section 8.5]{colesanti_ludwig_mussnig_3}. See also \cite{colesanti_1997,trudinger_wang_hessian_ii}. Let us point out that \eqref{eq:phi_j_hess} and the weak continuity of Hessian measures with respect to epi-convergence show that these measures are \textit{locally determined}. This means that if $v_1,v_2\in\fconvf$ are such that $v_1\equiv v_2$ on some open set $O\subseteq \Rn$, then
	\begin{equation}
	\label{eq:loc_det}
	\Phi_j(v_1;B\cap O)=\Phi_j(v_2;B\cap O)
	\end{equation}
	for every relatively compact Borel set $B\subset \Rn$. See also \cite{knoerr_ma}. 
	
	\medskip
	
	If $E$ is a $k$-dimensional linear subspace of $\Rn$, we denote by $\fconvfE$ the set of all convex functions $v\colon E\to \R$. To avoid confusion and in case necessary, we will write $\Phi_j^{(k)}(v;\cdot)$ for $v\in\fconvfE$, where the superscript indicates the dimension of the ambient space on which we consider a Hessian measure. The following result on product measures was shown in \cite[Lemma 4.8]{colesanti_ludwig_mussnig_1}
	\begin{lemma}
		\label{le:product_measure}
		If $E$ and $F$ are orthogonal, complementary subspaces of $\Rn$ with $k=\dim E$, then
		\[
		\Phi_l^{(n)}(v_E+v_F;\cdot) = \sum_{i=0 \vee (l-k-n)}^{k\wedge l} \Phi_i^{(k)}(v_E;\cdot)\otimes \Phi_{l-i}^{(n-k)}(v_F;\cdot)
		\]
		for every $v_E\in\fconvfE$, $v_F\in\fconvfF$, and $l\in\{0,\ldots,n\}$, where $(v_E+v_F)(x_E+x_F)=v_E(x_E)+v_F(x_F)$ for $x_E\in E$, $x_F\in F$.
	\end{lemma}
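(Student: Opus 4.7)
My plan is to work directly from the defining Steiner-type relation \eqref{eq:def_phi_j} for Hessian measures and first verify the identity on product Borel sets $B = B_E \times B_F$ with $B_E \subset E$ and $B_F \subset F$ relatively compact. The key structural observation is that since $E$ and $F$ are orthogonal and complementary, the subdifferential of $v=v_E+v_F$ at a point $x=x_E+x_F$ decomposes as
\[
\partial v(x) = \partial v_E(x_E) + \partial v_F(x_F),
\]
where the sum on the right is a Minkowski sum of subsets of the orthogonal subspaces $E$ and $F$. This is immediate from the definition of the subdifferential together with orthogonality of $E$ and $F$.

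As a consequence, for every $r \ge 0$ the set $\{x+ry : x \in B_E \times B_F,\, y \in \partial v(x)\}$ is the Cartesian product of the corresponding sets in $E$ and $F$ associated with $v_E,B_E$ and $v_F,B_F$. Since $E \perp F$, the $n$-dimensional Hausdorff measure on $\Rn$ of such a product equals the product of the $k$- and $(n-k)$-dimensional Hausdorff measures on the factors. Substituting into \eqref{eq:def_phi_j} yields the polynomial identity
\[
\sum_{l=0}^{n} r^l\, \Phi_l^{(n)}(v;B_E\times B_F) \;=\; \Bigl(\sum_{i=0}^{k} r^i\, \Phi_i^{(k)}(v_E;B_E)\Bigr)\Bigl(\sum_{m=0}^{n-k} r^m\, \Phi_m^{(n-k)}(v_F;B_F)\Bigr),
\]
and matching coefficients of $r^l$ gives exactly the asserted equality, read on product sets $B_E \times B_F$, since terms with $i>k$ or $l-i>n-k$ do not appear.

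To conclude, I would extend the identity from product sets to arbitrary Borel sets. Both sides define locally finite Borel measures on $\Rn = E \oplus F$, and they coincide on the $\pi$-system of relatively compact product sets $B_E \times B_F$, which generates the Borel $\sigma$-algebra. A standard application of the uniqueness of (locally finite) measures agreeing on a generating $\pi$-system then yields equality on all Borel sets.

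The main technical obstacle I anticipate is justifying the Hausdorff-measure Fubini step cleanly: the sets $\{x_E + r y_E : x_E\in B_E,\, y_E \in \partial v_E(x_E)\}$ are in general only analytic (continuous images of Borel sections of the graph of the maximal monotone map $\partial v_E$) rather than Borel, so one has to invoke measurability of such projections in the graphs of maximal monotone maps. Alternatively, one can bypass this issue by first proving the identity for $v_E,v_F$ of class $C^2$, where $\Phi_j$ has an explicit Lebesgue density \eqref{eq:phi_j_hess} and the claim reduces to the elementary identity $[\Hess(v_E+v_F)]_l = \sum_i [\Hess v_E]_i [\Hess v_F]_{l-i}$ for the block-diagonal Hessian on $E\oplus F$, and then passing to arbitrary $v_E \in \fconvfE$, $v_F \in \fconvfF$ by approximation and weak continuity of Hessian measures under epi-convergence (which is preserved by tensor products of locally finite measures). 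Either route delivers the stated product decomposition.
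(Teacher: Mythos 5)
The paper does not actually prove this lemma; it cites \cite[Lemma 4.8]{colesanti_ludwig_mussnig_1}, so there is no internal proof to compare against, and your argument must be judged on its own merits. It is correct and essentially self-contained. The key structural fact $\partial(v_E+v_F)(x_E+x_F)=\partial v_E(x_E)+\partial v_F(x_F)$ for orthogonal complementary $E$, $F$ is an immediate computation, and it does make the Steiner set in \eqref{eq:def_phi_j} factor as a Cartesian product in $E\times F$; Fubini for Hausdorff measure on an orthogonal splitting and coefficient-matching in $r$ then give the convolution identity on product Borel sets. Your constraint $0\le i\le k$ and $0\le l-i\le n-k$ yields the index range $0\vee(l+k-n)\le i\le k\wedge l$, which is the right one (incidentally, the lower bound $0\vee(l-k-n)$ as printed in the statement is a typo, since $l-k-n\le 0$ always). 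The extension to all Borel sets by uniqueness of locally finite measures on a generating $\pi$-system of relatively compact product sets is standard and valid. Your alternative route is equally sound: for $v_E,v_F\in C^2$ the Hessian of $v_E+v_F$ is block diagonal, the factored characteristic polynomial gives $[\Hess v_E\oplus\Hess v_F]_l=\sum_i[\Hess v_E]_i[\Hess v_F]_{l-i}$, and then one passes to general $v_E,v_F$ by mollification together with weak continuity of Hessian measures under epi-convergence and continuity of the tensor product of locally finite measures under weak convergence. This second route cleanly avoids the measurability subtlety you flagged for the sections of the subdifferential graph, and is probably the safest way to write the argument down.
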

	
	The following consequence of the last result was used, for example, in the proof of \cite[Lemma 2.9]{colesanti_ludwig_mussnig_8}.
	
	\begin{lemma}
		\label{le:phi_j_lower_dim}
		Let $k\in\{1,\ldots,n-1\}$, $j\in\{0,\ldots,k\}$, and $v\in\fconvf$. If there exists $w\in\fconvfk$ such that
		\[
		v(x_1,\ldots,x_n)=w(x_1,\ldots,x_k)
		\]
		for every $(x_1,\ldots,x_n) \in\Rn$, then
		\[
		\d\Phi_j(v;(x_1,\ldots,x_n))=\d\Phi_j^{(k)}(w;(x_1,\ldots,x_k))\d x_{k+1}\cdots \d x_n.
		\]
	\end{lemma}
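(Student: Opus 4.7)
The plan is to reduce this to a direct application of Lemma~\ref{le:product_measure}. Decompose $\Rn = E \oplus F$ with $E = \operatorname{span}\{e_1,\ldots,e_k\}$ and $F = \operatorname{span}\{e_{k+1},\ldots,e_n\}$, so $E$ and $F$ are orthogonal and complementary, with $\dim E = k$ and $\dim F = n-k$. Set $v_E = w \in \fconvfE$ and $v_F \equiv 0 \in \fconvfF$. Then the hypothesis on $v$ can be rewritten as $v(x_E + x_F) = v_E(x_E) + v_F(x_F)$ for all $x_E \in E$, $x_F \in F$, which is precisely the setting of Lemma~\ref{le:product_measure}.

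Next I would compute the Hessian measures of the zero function on $F$. Since $\partial 0(x_F) = \{o\}$ for every $x_F \in F$, the set appearing in \eqref{eq:def_phi_j} equals $B$ for every $r \geq 0$, so $\hm^{n-k}(B) = \sum_{i=0}^{n-k} r^i \Phi_i^{(n-k)}(0;B)$ for every $r \geq 0$. Comparing coefficients yields $\Phi_0^{(n-k)}(0;\cdot) = \hm^{n-k}$ on $F$ and $\Phi_i^{(n-k)}(0;\cdot) \equiv 0$ for every $i \geq 1$.

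Applying Lemma~\ref{le:product_measure} with $l = j$, every summand with $l - i \geq 1$ vanishes by the computation above, so only the term with $i = j$ (which satisfies $i \leq k \wedge j = j$ since $j \leq k$) survives. This gives
\[
\Phi_j^{(n)}(v;\cdot) = \Phi_j^{(k)}(w;\cdot) \otimes \hm^{n-k}|_F,
\]
which is exactly the claimed product structure when written in coordinates adapted to $E$ and $F$.

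There is no serious obstacle here; the only point that deserves care is the identification of the Hessian measures of the zero function directly from the Steiner-type definition \eqref{eq:def_phi_j}, since one cannot invoke \eqref{eq:phi_j_hess} for $j = 0$ to conclude that $\Phi_0$ of the zero function is Lebesgue measure in any more elementary way. Everything else is a bookkeeping exercise on the index range in Lemma~\ref{le:product_measure}.
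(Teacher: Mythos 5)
Your proof is correct and follows essentially the same route as the paper: apply Lemma~\ref{le:product_measure} with $v_E = w$ and $v_F \equiv 0$, note that the Hessian measures of the zero function on $F$ of order $\geq 1$ vanish, and observe that only the $i=j$ term survives. The one place you diverge is in how you identify $\Phi_i^{(n-k)}(0;\cdot)$: you derive it directly from the Steiner-type definition \eqref{eq:def_phi_j}, whereas the paper simply cites \eqref{eq:phi_j_hess}. Both work, and your route is self-contained; but your closing remark that ``one cannot invoke \eqref{eq:phi_j_hess} for $j=0$'' is not quite right --- the paper adopts the convention $[\,\cdot\,]_0 \equiv 1$ (stated just after \eqref{eq:s_j_hm}), so \eqref{eq:phi_j_hess} does give $\d\Phi_0(v;x) = \d x$ on any open set where $v$ is $C^2$, including $v \equiv 0$. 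So the paper's citation of \eqref{eq:phi_j_hess} in fact covers the $j=0$ case too; your direct computation is an equally valid (and arguably more elementary) alternative, not a necessary workaround.
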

	\begin{proof}
		The statement immediately follows by applying Lemma~\ref{le:product_measure} with $E=\R^k$, $F=(\R^k)^\perp$, $v_E=w$ ,and $v_F\equiv 0$, together with the observation that 
		$\Phi_m^{(n-k)}(v_F;\cdot)\equiv 0$ for every $m\in\{1,\ldots,n-k\}$, which is an implication of \eqref{eq:phi_j_hess}.
	\end{proof}
	
	For a super-coercive convex function $u\in\fconvs$ we consider the measures 
	\begin{equation}
	\label{eq:psi_j_phi_j}
	\Psi_j(u;\cdot)=\Phi_j(u^*;\cdot)
	\end{equation}
	with $j\in\{0,\ldots,n\}$. When $u$ is of class $C_+^2$, then
	\begin{equation}
	\label{eq:psi_j_hessian}
	\int_{\Rn} b(y) \d\Psi_j(u;y)=\int_{\Rn} b(\nabla u(x)) [\Hess u(x)]_{n-j} \d x
	\end{equation}
	for every measurable $b\colon \Rn\to\R$ such that the integral above exists (cf.\ \cite[Theorem 8.2]{colesanti_ludwig_mussnig_3}). For the special case $j=n$ we also write $\Psi_n(u;\cdot)=\MAp(u;\cdot)=\MA(u^*;\cdot)$, which we call the \textit{conjugate Monge--Amp\`ere measure} associated with $u$. Similar to \eqref{eq:psi_j_hessian}, we have
	\begin{equation}
	\label{eq:map_grad}
	\int_{\Rn} b(y)\d\MAp(u;y) = \int_{\dom(u)} b(\nabla u(x))\d x
	\end{equation}
	for every measurable $b\colon \Rn\to\R$ and every $u\in\fconvs$ without additional $C^2$ assumptions, where $\dom(u)=\{x\in \Rn : u(x)<\infty\}$ denotes the \textit{domain} of $u$. Here, we also use the fact that every convex function is differentiable almost everywhere on the interior of its domain.
	
	\subsection{Mixed Monge--Amp\`ere Measures}
	The \textit{mixed Monge--Amp\`ere measure} $\MA(v_1,\ldots,v_n;\cdot)$ associated with $v_1,\ldots,v_n\in\fconvf$ is the unique measure that is symmetric in its entries, and that satisfies
	\[
	\MA(\lambda_1 w_1 + \cdots +\lambda_m w_m;\cdot)=\sum_{i_1,\ldots,i_n=1}^m \lambda_{i_1}\cdots \lambda_{i_n} \MA(w_{i_1},\ldots,w_{i_n};\cdot)
	\]
	for every $w_1,\ldots,w_m\in\fconvf$, $\lambda_1,\ldots,\lambda_m\geq 0$, and $m\in\N$. This measure generalizes the Monge--Amp\`ere measure via
	\[
	\MA(v,\ldots,v;\cdot)=\MA(v;\cdot)
	\]
	for $v\in\fconvf$. If $v_1,\ldots,v_n\in \fconvf$ are of class $C^2$ on some open set $O\subseteq \Rn$, then $\MA(v_1,\ldots,v_n;\cdot)$ is absolutely continuous w.r.t.\ the Lesbesgue measure such that
	\begin{equation}
	\label{eq:mixed_ma_dis}
	\d\MA(v_1,\ldots,v_n;\cdot)= D(\Hess v_1(x),\ldots,\Hess v_n(x))\d x
	\end{equation}
	on $O$, where for symmetric matrices $A_1,\ldots,A_n\in\R^{n\times n}$ we write $D(A_1,\ldots,A_n)$ for their \textit{mixed discriminant}. See \cite[Theorem 4.3]{colesanti_ludwig_mussnig_7}.
	
	The measures $\MA_j(v;\cdot)$ with $j\in\{0,\ldots,n\}$ and $v\in\fconvf$, which were defined in \eqref{eq:def_maj_steiner}, are special cases of mixed Monge--Amp\`ere measures since
	\begin{equation}
	\label{eq:ma_j_ma}
	\MA_j(v;\cdot)=\MA(v[j],h_{B^n}[n-j];\cdot).
	\end{equation}
	Here, $v[j]$ means that the entry $v$ is repeated $j$ times and, similarly, $h_{B^n}$ is repeated $(n-j)$ times. Furthermore, denoting the $n\times n$ identity matrix by $I_n$, we have
	\begin{equation}
		\label{eq:elem_symm_mixed_dis}
		[A]_j = \binom{n}{j} D(A[j],I_n[n-j])
	\end{equation}
	for every symmetric $A\in \R^{n\times n}$ and $j\in\{0,\ldots,n\}$. Thus, we also retrieve
	\begin{equation}
	\label{eq:phi_j_ma}
	\Phi_j(v;\cdot) = \binom{n}{j}\MA(v[j],\tfrac 12 h_{B^n}^2[n-j];\cdot)
	\end{equation}
	for every $v\in\fconvf$, where we have used that $\Hess \frac 12 h_{B^n}^2(x) = \Hess \frac 12 |x|^2 = I_n$ for $x\in\Rn$. See \cite[Section 4.2]{colesanti_ludwig_mussnig_7}.
	
	\medskip
	
	The analog of the mixed Monge--Amp\`ere measure on $\fconvs$ is the \textit{conjugate mixed Monge--Amp\`ere measure} $\MAp(u_1,\ldots,u_n;\cdot)$ which we associate to $u_1,\ldots,u_n\in\fconvs$. This is the unique measure that is symmetric in its entries such that
	\[
	\MAp((\lambda_1 \sq w_1) \infconv \cdots \infconv (\lambda_m \sq w_m);\cdot)=\sum_{i_1,\ldots,i_n=1}^m \lambda_{i_1}\cdots\lambda_{i_n} \MAp(w_{i_1},\ldots,w_{i_n};\cdot)
	\]
	for every $w_1,\ldots,w_m\in\fconvs$, $\lambda_1,\ldots,\lambda_m \geq 0$, and $m\in\N$. Here, $u_1\infconv  u_2\in\fconvs$ is the \textit{infimal convolution} or \textit{epi-sum} of $u_1,u_2\in\fconvs$, that is,
	\[
	(u_1 \infconv u_2)(x)= \inf\nolimits_{y\in\Rn} \big(u_1(x-y)+u_2(y) \big)
	\]
	for $x\in\Rn$, which means that $u_1\infconv u_2 = (u_1^*+u_2^*)^*$. Furthermore, $\lambda \sq u \in\fconvs$ denotes the \textit{epi-multiplication} of $u\in\fconvs$ with $\lambda\geq 0$, which can be defined as $\lambda \sq u = (\lambda u^*)^*$. Equivalently, for positive $\lambda$ we have
	\[
	\big(\lambda \sq u\big)(x)=\lambda\, u\left( \tfrac x\lambda \right)
	\]
	for $x\in\Rn$. In particular, it is now straightforward to see that
	\[
	\MAp(u_1,\ldots,u_n;\cdot)=\MA(u_1^*,\ldots,u_n^*;\cdot)
	\]
	for $u_1,\ldots,u_n\in\fconvs$.
	
	The convex conjugate of the support function $h_K$ with $K\in\Kn$ is the (convex) \textit{indicator function} $\ind_K\in\fconvs$ of $K$,
	\begin{equation}
		\label{eq:conj_support_indicator}
		\ind_K(x)=(h_K)^*(x)=\begin{cases}
			0\quad &\text{if } x\in K\\
			\infty\quad&\text{else}. 
	\end{cases}
	\end{equation}
	Thus, by \eqref{eq:def_maj_steiner}, the measures
	\[
	\MAp_j(u;\cdot)=\MA_j(u^*;\cdot)=\MA(u^*[j],h_{B^n}[n-j];\cdot)=\MAp(u[j],\ind_{B^n}[n-j];\cdot)
	\]
	with $u\in\fconvs$ and $j\in\{0,\ldots,n\}$ are equivalently obtained from the Steiner formula
	\begin{equation}
	\label{eq:map_steiner}
	\MAp(u\infconv (r\sq \ind_{B^n});\cdot)=\sum_{j=0}^n \binom{n}{j} r^{n-j}\MAp_j(u;\cdot)
	\end{equation}
	for $r\geq 0$.	
	
	\subsection{Valuations on Convex Functions}
	Valuations on the spaces $\fconvf$ and $\fconvs$ were already introduced in Section~\ref{se:introduction} and Section~\ref{se:dual_results}. A valuation $\oz\colon\fconvs\to\Rn$ is called \textit{epi-homogeneous} of degree $s\in\R$ if
	\[
	\oz(\lambda\sq u)=\lambda^s \oz(u)
	\]
	for every $u\in\fconvs$ and $\lambda>0$. Equivalently, the dual valuation $\oz^*$ on $\fconvf$, that is $\oz^*(v)=\oz(v^*)$ for $v\in\fconvf$, is \textit{homogeneous} of the same degree, which means that
	\[
	\oz^*(\lambda v)=\lambda^s \oz^*(v)
	\]
	for every $v\in\fconvf$ and $\lambda>0$. Examples of homogeneous valuations are given in the next result due to \cite[Proposition 4.4]{colesanti_ludwig_mussnig_3}.
	
	\begin{proposition}
		\label{prop:int_ma_is_a_val}
		Let $j\in\{0,\ldots,n\}$. If $\beta\in C_c(\Rn)$ and $v_1,\ldots,v_{n-j}\in\fconvf$, then
		\[
		v\mapsto \int_{\Rn} \beta(x)\d\MA(v[j],v_1,\ldots,v_{n-j};x)
		\]
		defines a continuous, dually epi-translation invariant valuation on $\fconvf$ that is homogeneous of degree $j$.
	\end{proposition}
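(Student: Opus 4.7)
The plan is to derive all four asserted properties from the corresponding properties of the Monge--Amp\`ere measure $\MA(v;\cdot)$ by polarization in the $n$ arguments of the mixed measure.

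Homogeneity of degree $j$ is immediate from the multilinearity built into the definition of $\MA(v_1,\ldots,v_n;\cdot)$: replacing each of the $j$ copies of $v$ by $\lambda v$ introduces a factor of $\lambda^j$, and this carries over to the integral against $\beta$. For dual epi-translation invariance I would first verify that $\MA(a,w_1,\ldots,w_{n-1};\cdot)\equiv 0$ whenever $a$ is affine and $w_1,\ldots,w_{n-1}\in\fconvf$. On $C^2$ data this follows from \eqref{eq:mixed_ma_dis} together with the multilinearity of the mixed discriminant, since $\Hess a\equiv 0$; standard mollification of the $w_i$ combined with weak continuity of the mixed Monge--Amp\`ere measure in each slot then extends the vanishing to arbitrary $w_i\in\fconvf$. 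Expanding
\[
\MA((v+a)[j],v_1,\ldots,v_{n-j};\cdot)=\sum_{k=0}^{j}\binom{j}{k}\MA(v[j-k],a[k],v_1,\ldots,v_{n-j};\cdot)
\]
by multilinearity, every term with $k\geq 1$ contains an affine slot and hence vanishes, leaving only $\MA(v[j],v_1,\ldots,v_{n-j};\cdot)$.

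For the valuation property, the key input is that $v\mapsto\MA(v;\cdot)$ is itself a measure-valued valuation on $\fconvf$; this follows from the subdifferential description \eqref{eq:def_ma} upon noting the pointwise identities $\partial(v\vee w) = \partial v$ on $\{v>w\}$ and $\partial(v\vee w) = \partial w$ on $\{v<w\}$, with the symmetric identities for $v\wedge w$, so that the $\hm^n$-measure of the relevant subgradient sets decomposes additively. Substituting a linear combination into this identity and comparing the coefficient of $\lambda^j\mu_1\cdots\mu_{n-j}$ in the polynomial expansion defining the mixed measure transfers the valuation identity to the map $v\mapsto \MA(v[j],v_1,\ldots,v_{n-j};\cdot)$ in the slot $v$. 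Integration against $\beta\in C_c(\Rn)$ preserves all three properties so far.

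Continuity follows from the weak continuity of the mixed Monge--Amp\`ere measure in each slot with respect to epi-convergence: if $v^{(k)}\to v$ in $\fconvf$, then $\MA(v^{(k)}[j],v_1,\ldots,v_{n-j};\cdot)\to\MA(v[j],v_1,\ldots,v_{n-j};\cdot)$ weakly, and since $\beta\in C_c(\Rn)$, the integrals converge; well-definedness is clear because mixed Monge--Amp\`ere measures are locally finite and $\supp\beta$ is compact. The main obstacle is the approximation argument needed to promote the vanishing on affine slots (and to invoke weak continuity) from the smooth setting to all of $\fconvf$; this rests on the standard mollification of convex functions combined with the weak continuity of mixed Monge--Amp\`ere measures in each argument, both of which are part of the established toolkit underlying the theory of Hessian measures recalled in Section~\ref{se:hessian_measures}.
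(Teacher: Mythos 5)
The paper does not prove Proposition~\ref{prop:int_ma_is_a_val}; it is stated with an attribution to \cite[Proposition 4.4]{colesanti_ludwig_mussnig_3}, so there is no in-paper proof for a literal comparison. Your reconstruction by polarization is the natural route and matches the spirit of the source: well-definedness (local finiteness of the mixed measure plus compactness of $\supp\beta$), degree-$j$ homogeneity and dual epi-translation invariance from multilinearity, the valuation property by substituting $\lambda v + \sum_i\mu_i v_i$ into the valuation identity for $\MA(\,\cdot\,;\cdot)$ and comparing coefficients, and continuity from weak continuity of the mixed measure in each slot plus $\beta\in C_c(\Rn)$. One remark on invariance: instead of first proving $\MA(a,w_1,\ldots,w_{n-1};\cdot)\equiv 0$ and then expanding $(v+a)[j]$, it is cleaner and avoids the smoothing step entirely to observe from \eqref{eq:def_ma} that $\MA(v+a;\cdot)=\MA(v;\cdot)$ for all $v\in\fconvf$ and affine $a$ (the subgradient set is translated by the constant $\nabla a$, and Lebesgue measure is translation invariant); polarizing that identity immediately gives $\MA((v+a)[j],v_1,\ldots,v_{n-j};\cdot)=\MA(v[j],v_1,\ldots,v_{n-j};\cdot)$.

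The one point that genuinely needs more care is your claim that $v\mapsto\MA(v;\cdot)$ is a measure-valued valuation based on the identities $\partial(v\vee w)=\partial v$ on $\{v>w\}$ and $\partial(v\vee w)=\partial w$ on $\{v<w\}$. That covers the open regions, but the whole difficulty sits on the coincidence set $\{v=w\}$, where $\partial(v\vee w)(x)$ is the convex hull of $\partial v(x)\cup\partial w(x)$ and $\partial(v\wedge w)(x)$ need not relate so simply; the images under the (multi-valued) gradient map of these three sets overlap in ways that require a genuine measure-theoretic argument (typically: reduce to the a.e.\ differentiability points, use that the gradients of $v$, $w$, $v\vee w$, $v\wedge w$ all agree a.e.\ on $\{v=w\}$, and control boundary contributions). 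This is a known, nontrivial result (it is contained in the fact that Hessian measures are measure-valued valuations, established in earlier work of Colesanti and of Colesanti--Ludwig--Mussnig). You should either carry out that argument or cite it explicitly; as written, your two displayed identities do not by themselves yield the valuation identity for $\MA$.
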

	
	We will use Proposition~\ref{prop:int_ma_is_a_val} together with the following consequence of \cite[Lemma 3.4]{mouamine_mussnig_1}.
	\begin{lemma}
		\label{le:continuous_extension}
		If $\alpha\in T_n^n$, then $\xi(x)= \alpha(|x|)x$ continuously extends from $\Rn\setminus\{o\}$ to $\Rn$ with $\xi(o)=o$. In particular, $\xi\in C_c(\Rn;\Rn)$.
	\end{lemma}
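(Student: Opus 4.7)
The plan is to verify the two claims directly from the definition of $T_n^n$, namely that $\alpha\in C_b((0,\infty))$ and $\lim_{s\to 0^+} s\,\alpha(s)=0$.

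First I would address continuity on $\Rn\setminus\{o\}$. Here the map $x\mapsto |x|$ is continuous with values in $(0,\infty)$, and $\alpha$ is continuous on $(0,\infty)$ by assumption, so $x\mapsto \alpha(|x|)$ is continuous on $\Rn\setminus\{o\}$; multiplying by the continuous vector-valued map $x\mapsto x$ shows $\xi$ is continuous off the origin.

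The only substantive point is continuity at $o$, and this is precisely what the defining condition of $T_n^n$ is designed to give. Setting $s=|x|$, I would estimate
\[
|\xi(x)| = |x|\cdot|\alpha(|x|)| = s\,|\alpha(s)|,
\]
and the hypothesis $\lim_{s\to 0^+} s\,\alpha(s)=0$ then yields $\xi(x)\to o$ as $x\to o$, so declaring $\xi(o)=o$ produces a continuous extension to all of $\Rn$.

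For the compact support claim, I would use that $\alpha$ has bounded support in $(0,\infty)$: choose $R>0$ with $\alpha(s)=0$ for $s>R$; then $\xi(x)=o$ whenever $|x|>R$, so $\supp\xi\subseteq B_R$ is compact and $\xi\in C_c(\Rn;\Rn)$ follows. There is no real obstacle in this argument; the lemma is essentially a restatement of the fact that $T_n^n$ is the correct class of radial profiles for which $\alpha(|\cdot|)(\cdot)$ extends continuously through the origin, the only subtlety being the bookkeeping distinction between \emph{bounded} support on $(0,\infty)$ (which permits a singularity of $\alpha$ at $0^+$) and \emph{compact} support on $\Rn$ for the radial extension $\xi$, which is bridged precisely by the growth condition in the definition of $T_n^n$.
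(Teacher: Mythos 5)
Your argument is correct and is the natural, direct verification of the lemma: continuity away from the origin is immediate, continuity at the origin follows from $|\xi(x)|=|x|\,|\alpha(|x|)|\to 0$ using the defining limit of $T_n^n$, and compact support follows from the bounded support of $\alpha$ together with $\xi(o)=o$. The paper does not spell out a proof but instead cites this as a consequence of Lemma~3.4 of \cite{mouamine_mussnig_1}; your reasoning is exactly what that reference carries out, so there is no meaningful difference in approach.
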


	We can now establish the needed properties of the valuations that are relevant to this article.
	
	\begin{proposition}
		\label{prop:int_maj_is_a_vector-val}
		Let $j\in\{0,\ldots,n\}$. If $\alpha\in T_n^n$, then
		\begin{equation}
		\label{eq:int_maj_val}
		v\mapsto \int_{\Rn} \alpha(|x|)x\,\d\MA_j(v;x)\quad \text{and}\quad v\mapsto \int_{\Rn} \alpha(|x|)x\,\d\Phi_j(v;x)
		\end{equation}
		define continuous, dually epi-translation invariant, $\On$ equivariant, vector-valued valuations on $\fconvf$ that are homogeneous of degree $j$.
	\end{proposition}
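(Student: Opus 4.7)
\medskip

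\noindent\textbf{Proof plan.} By Lemma~\ref{le:continuous_extension}, the vector field $\xi(x) = \alpha(|x|)x$ belongs to $C_c(\R^n;\R^n)$, so each component $\xi_i\in C_c(\R^n)$ for $i\in\{1,\ldots,n\}$. The plan is to reduce the vector-valued statement to $n$ scalar statements and to invoke Proposition~\ref{prop:int_ma_is_a_val} componentwise; the continuity, dual epi-translation invariance, and homogeneity properties then transfer automatically from each coordinate to the full vector. For the $\operatorname{O}(n)$ equivariance, the plan is to use the covariance of the measures $\MA_j(v;\cdot)$ and $\Phi_j(v;\cdot)$ under orthogonal maps together with the rotational symmetry of $\xi$.

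For the $\MA_j$ claim, I would start from the identity \eqref{eq:ma_j_ma}, $\MA_j(v;\cdot) = \MA(v[j], h_{B^n}[n-j]; \cdot)$, and apply Proposition~\ref{prop:int_ma_is_a_val} to each scalar integrand $\xi_i\in C_c(\R^n)$ with the fixed arguments $v_1=\cdots=v_{n-j}=h_{B^n}\in\fconvf$. This gives at once that each coordinate of the vector-valued integral is a continuous, dually epi-translation invariant valuation on $\fconvf$ that is homogeneous of degree $j$. Combining the $n$ coordinates, the full vector-valued map inherits all three properties. For the $\Phi_j$ claim, the same argument applies after invoking \eqref{eq:phi_j_ma}, which represents $\Phi_j(v;\cdot)$ as $\binom{n}{j}\MA(v[j],\tfrac{1}{2}h_{B^n}^2[n-j];\cdot)$, using that $\tfrac{1}{2}h_{B^n}^2(x)=\tfrac{1}{2}|x|^2\in\fconvf$.

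For $\operatorname{O}(n)$ equivariance, fix $\vartheta\in\operatorname{O}(n)$. Since $h_{B^n}$ and $\tfrac{1}{2}h_{B^n}^2$ are both invariant under orthogonal transformations, and since mixed Monge--Amp\`ere measures transform covariantly under linear substitutions of all their arguments, one obtains
\[
\MA_j(v\circ\vartheta^{-1}; B) = \MA_j(v; \vartheta^{-1} B) \quad\text{and}\quad \Phi_j(v\circ\vartheta^{-1}; B) = \Phi_j(v; \vartheta^{-1} B)
\]
for every Borel set $B\subseteq\R^n$; this can be verified from the defining Steiner-type formulas \eqref{eq:def_maj_steiner} and \eqref{eq:def_phi_j} together with the fact that subdifferentials and Lebesgue measure are covariant under $\vartheta$. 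A change of variables $x=\vartheta y$ then yields
\[
\int_{\R^n} \xi(x)\, \d\MA_j(v\circ\vartheta^{-1}; x) = \int_{\R^n} \xi(\vartheta y)\, \d\MA_j(v;y),
\]
and since $\xi(\vartheta y) = \alpha(|\vartheta y|)\vartheta y = \alpha(|y|)\vartheta y = \vartheta\,\xi(y)$, the orthogonal transformation $\vartheta$ can be pulled outside the integral, giving the desired equivariance. The same argument works for $\Phi_j$.

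The only nontrivial verification in this plan is the covariance identity for the measures under $\vartheta\in\operatorname{O}(n)$, but this is a direct consequence of their construction and will pose no real difficulty; the rest is a clean bookkeeping exercise reducing the vector-valued setting to the scalar statement of Proposition~\ref{prop:int_ma_is_a_val}.
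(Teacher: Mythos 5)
Your proposal is correct and follows essentially the same route as the paper's proof: use Lemma~\ref{le:continuous_extension} together with \eqref{eq:ma_j_ma}, \eqref{eq:phi_j_ma}, and Proposition~\ref{prop:int_ma_is_a_val} applied coordinate-wise to get continuity, dual epi-translation invariance, and degree-$j$ homogeneity, and then derive $\On$ equivariance from the covariance $\MA_j(v\circ\vartheta^{-1};B)=\MA_j(v;\vartheta^{-1}B)$ and the rotational symmetry $\xi(\vartheta y)=\vartheta\xi(y)$ via a change of variables. The paper's argument is identical in structure and substance.
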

	\begin{proof}
		It follows from \eqref{eq:ma_j_ma} and \eqref{eq:phi_j_ma} together with Proposition~\ref{prop:int_ma_is_a_val}, applied coordinate-wise, as well as Lemma~\ref{le:continuous_extension} that the maps in \eqref{eq:int_maj_val} define continuous, dually epi-translation invariant valuations that are homogeneous of degree $j$. Next, for $v\in\fconvf$ and $\vartheta\in \On$ it trivially follows from \eqref{eq:def_maj_steiner} and \eqref{eq:def_ma} that
		\[
		\MA_j(v\circ \vartheta^{-1};B)=\MA_j(v;\vartheta^{-1}B)
		\]
		for every Borel set $B\subset\Rn$. Thus,
		\[
		\int_{\Rn} \alpha(|x|)x \d\MA_j(v\circ \vartheta^{-1};x)= \int_{\Rn} \alpha(|\vartheta x|)\vartheta x \d\MA_j(v;x)=\vartheta \int_{\Rn} \alpha(|x|)x\,\d\MA_j(v;x),
		\]
		which shows the claimed $\On$ equivariance of the first map in \eqref{eq:int_maj_val}. The $\On$ equivariance of the integral with respect to $\Phi_j(v;\cdot)$ is shown analogously.
	\end{proof}

	We also need an equivalent version of the last result for valuations on super-coercive convex functions.
	
	\begin{proposition}
		\label{prop:int_maj_is_a_vector-val_fconvs}
		Let $j\in\{0,\ldots,n\}$. If $\alpha\in T_n^n$, then
		\begin{equation*}
			u\mapsto \int_{\Rn} \alpha(|y|)y\,\d\MAp_j(u;x)\quad \text{and}\quad u\mapsto \int_{\Rn} \alpha(|y|)y\,\d\Psi_j(u;x)
		\end{equation*}
		define continuous, epi-translation invariant, $\On$ equivariant, vector-valued valuations on\linebreak$\fconvs$ that are epi-homogeneous of degree $j$.
	\end{proposition}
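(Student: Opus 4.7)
The plan is to derive this statement directly from its already-established counterpart on $\fconvf$, Proposition~\ref{prop:int_maj_is_a_vector-val}, by transporting everything through the Legendre--Fenchel transform. The key observation is that, by definition, $\MAp_j(u;\cdot)=\MA_j(u^*;\cdot)$ and $\Psi_j(u;\cdot)=\Phi_j(u^*;\cdot)$ for every $u\in\fconvs$, so both maps in the statement are of the form $u\mapsto \oz^*(u^*)$, where $\oz^*$ denotes one of the two vector-valued valuations on $\fconvf$ provided by Proposition~\ref{prop:int_maj_is_a_vector-val}.

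Concretely, I would first record that convex conjugation is a continuous bijection between $\fconvs$ and $\fconvf$ (as noted after \eqref{eq:fconvs_fconvf}), which immediately upgrades continuity of $\oz^*$ on $\fconvf$ to continuity of $u\mapsto \oz^*(u^*)$ on $\fconvs$. The valuation property is transferred by invoking \cite[Proposition 3.5]{colesanti_ludwig_mussnig_3} (cited in Section~\ref{se:dual_results}), which states that $\oz\colon\fconvs\to\Rn$ is a valuation if and only if $\oz^*$ is a valuation on $\fconvf$. Epi-translation invariance follows from the definition recalled in Section~\ref{se:dual_results}: $\oz$ is epi-translation invariant on $\fconvs$ precisely when $\oz^*$ is dually epi-translation invariant on $\fconvf$, which is exactly the property furnished by Proposition~\ref{prop:int_maj_is_a_vector-val}.

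For $\On$ equivariance, I would use the fact that conjugation commutes with orthogonal transformations, i.e.\ $(u\circ\vartheta^{-1})^* = u^*\circ \vartheta^{-1}$ for every $\vartheta\in\On$, so
\[
\oz(u\circ\vartheta^{-1})=\oz^*(u^*\circ\vartheta^{-1})=\vartheta\,\oz^*(u^*)=\vartheta\,\oz(u).
\]
Finally, for the epi-homogeneity of degree $j$, I would use the identity $(\lambda\sq u)^*=\lambda\, u^*$ for $\lambda>0$, which yields
\[
\oz(\lambda\sq u)=\oz^*(\lambda\, u^*)=\lambda^j\,\oz^*(u^*)=\lambda^j\,\oz(u),
\]
where homogeneity of degree $j$ of $\oz^*$ is exactly the content of the second conclusion of Proposition~\ref{prop:int_maj_is_a_vector-val}.

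There is no real obstacle here: the statement is the pure dual of Proposition~\ref{prop:int_maj_is_a_vector-val}, and the work lies entirely in checking that each of the five properties is preserved under the bijection $u\mapsto u^*$, which is already laid out in Section~\ref{se:dual_results}. The only minor care needed is to match the measures $\MAp_j$ and $\Psi_j$ with $\MA_j$ and $\Phi_j$ applied to $u^*$, and to note that $\alpha\in T_n^n$ ensures, via Lemma~\ref{le:continuous_extension}, that $y\mapsto \alpha(|y|)y$ is a continuous, compactly supported, $\On$ equivariant vector field on $\Rn$, so the integrals in the statement are well defined.
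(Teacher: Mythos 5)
Your proposal is correct and matches the approach the paper implicitly takes: the paper states Proposition~\ref{prop:int_maj_is_a_vector-val_fconvs} without proof as the ``equivalent version'' of Proposition~\ref{prop:int_maj_is_a_vector-val}, relying on the duality machinery laid out in Section~\ref{se:dual_results}. You have simply made that transfer argument explicit, and each of the five steps (continuity, valuation property via \cite[Proposition 3.5]{colesanti_ludwig_mussnig_3}, epi-translation invariance, $\On$ equivariance via $(u\circ\vartheta^{-1})^*=u^*\circ\vartheta^{-1}$, and epi-homogeneity via $(\lambda\sq u)^*=\lambda u^*$) is verified correctly.
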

		
	The following decomposition result for real-valued valuations was shown in \cite[Theorem 4]{colesanti_ludwig_mussnig_4} and later also in \cite{knoerr_support}, where it was stated in a more general form. Let us emphasize that the version below can be trivially applied coordinate-wise to vector-valued valuations.
	
	\begin{theorem}
		\label{thm:mcmullen_fconvf}
		If $\oZ\colon\fconvf\to\R$ is a continuous, dually epi-translation invariant valuation, then there exist continuous, dually epi-translation invariant valuations $\oZ_0,\ldots,\oZ_n\colon\fconvf\to\R$ such that $\oZ_j$ is homogeneous of degree $j$, $j\in\{0,\ldots,n\}$, and $\oZ=\oZ_0+\cdots+\oZ_n$.
	\end{theorem}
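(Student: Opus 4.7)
The plan is to establish that for each fixed $v \in \fconvf$, the real-valued function $r \mapsto \oZ(r v)$ on $[0,\infty)$ is a polynomial in $r$ of degree at most $n$, and to define $\oZ_j(v)$ as its coefficient of $r^j$. Once polynomiality is in hand, the required properties of each $\oZ_j$ follow by matching polynomial coefficients. Dual epi-translation invariance holds because $r(v+a) = r v + r a$ with $r a$ again affine, so $\oZ(r(v+a)) = \oZ(r v)$ as polynomials in $r$. The valuation identity holds because $r(v \vee w) = (r v) \vee (r w)$ and $r(v \wedge w) = (r v) \wedge (r w)$, so $\oZ(r v) + \oZ(r w) = \oZ(r(v \vee w)) + \oZ(r(v \wedge w))$ for every $r$ whenever the four functions are all convex; extracting coefficients yields the identity for $\oZ_j$. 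The substitution $v \mapsto \lambda v$ rescales $r$ to $\lambda r$ in the polynomial, which gives the $j$-homogeneity of $\oZ_j$. Continuity is inherited from that of $\oZ$ together with the elementary fact that polynomials of uniformly bounded degree converging pointwise converge coefficientwise.

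To prove polynomiality, I would first restrict to the dense subclass of piecewise affine functions in $\fconvf$ (those with polyhedral epigraph), writing $v = \max(a_1, \ldots, a_k)$ for affine functions $a_i$. The idea is to apply the valuation property on pairs $(v, w)$ chosen so that $v \vee w$ and $v \wedge w$ both remain in $\fconvf$; this is arranged by cutting along supporting hyperplanes of $\epi(v)$, which is feasible in the polyhedral setting. An inductive decomposition on the combinatorial complexity of $\epi(v)$ reduces the evaluation of $\oZ(r v)$ to contributions coming from lower-dimensional faces, with polynomial behavior in $r$ emerging from the underlying affine structure. The resulting polynomial identity extends from the dense polyhedral class to all of $\fconvf$ using continuity of $\oZ$, together with the fact that a pointwise limit of polynomials of uniformly bounded degree is again a polynomial of that degree.

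The main obstacle will be securing the sharp degree bound $n$, rather than polynomiality itself. A naive induction on the number of affine pieces $k$ yields only a $k$-dependent degree bound, which can exceed $n$. To obtain the sharp bound, one must exploit the $n$-dimensional ambient geometry: each supporting hyperplane cut of $\epi(v)$ effectively reduces the problem by one dimension, and an induction on $n$ (with the one-dimensional case handled directly via the one-variable structure of $\fconvfk$ with $k=1$) propagates the degree bound $\leq n$ throughout. This is the step where the ambient dimension enters decisively and where careful bookkeeping on the polyhedral cuts, together with the interaction between dual epi-translation invariance and the affine pieces $a_i$, becomes essential; it is also the step where the references \cite{colesanti_ludwig_mussnig_4} and \cite{knoerr_support} supply the technical framework that allows the induction to be carried through cleanly.
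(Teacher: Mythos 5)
The paper does not prove Theorem~\ref{thm:mcmullen_fconvf}; it is quoted verbatim from \cite[Theorem 4]{colesanti_ludwig_mussnig_4} (see also \cite{knoerr_support}), so there is no in-paper proof to compare against. Your high-level outline --- show $r\mapsto\oZ(rv)$ is a polynomial of degree at most $n$, take coefficients, and verify that each coefficient inherits the valuation, invariance, homogeneity, and continuity properties --- is indeed the McMullen-type strategy used in the cited reference, and the ``soft'' parts of your argument (deriving the properties of $\oZ_j$ once polynomiality is granted, extending to all of $\fconvf$ by continuity and uniform degree bound) are correct.

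However, there is a genuine gap at exactly the point where the entire difficulty lives: you do not actually prove polynomiality, let alone the sharp degree bound $\le n$, and you explicitly acknowledge this by deferring to the references for ``the technical framework that allows the induction to be carried through cleanly.'' The sketch offered for this step is also not clearly workable as stated. Cutting $\epi(v)$ by a supporting hyperplane does not in an obvious way produce a pair $(v,w)\in\fconvf\times\fconvf$ with both $v\vee w$ and $v\wedge w$ convex: while $v\vee w$ is always convex, $v\wedge w$ generally is not, and the valuation identity in $\fconvf$ only applies when both belong to $\fconvf$. The actual argument in \cite{colesanti_ludwig_mussnig_4} works rather differently (on the dual side, with epi-sums and epi-multiplications on piecewise affine super-coercive functions, an inclusion--exclusion principle specifically established for that setting, and an iterated-difference argument to get the degree bound $\le n$, the last using a simplicity-type mechanism rather than a ``combinatorial complexity of $\epi(v)$'' induction). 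So the proposal is a reasonable outline of the known strategy but, as written, omits the proof of the central claim and relies on a decomposition scheme that is not verified to respect the valuation property. As a self-contained proof it is incomplete.
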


	The equivalent version on $\fconvs$ reads as follows.
	
	\begin{theorem}
		\label{thm:mcmullen_fconvs}
		If $\oZ\colon\fconvs\to\R$ is a continuous, epi-translation invariant valuation, then there exist continuous, epi-translation invariant valuations $\oZ_0,\ldots,\oZ_n\colon\fconvs\to\R$ such that $\oZ_j$ is epi-homogeneous of degree $j$, $j\in\{0,\ldots,n\}$, and $\oZ=\oZ_0+\cdots+\oZ_n$.
	\end{theorem}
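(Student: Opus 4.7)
The plan is to obtain this statement as a direct dualization of Theorem~\ref{thm:mcmullen_fconvf}, exploiting the bijective correspondence between valuations on $\fconvs$ and valuations on $\fconvf$ via Legendre--Fenchel conjugation that was recorded in Section~\ref{se:dual_results}. Since $v\mapsto v^*$ is a continuous bijection between $\fconvf$ and $\fconvs$ that interchanges the valuation property (a function $\oZ\colon\fconvs\to\R$ is a valuation iff $\widetilde\oZ(v):=\oZ(v^*)$ is a valuation on $\fconvf$) and interchanges epi-translation invariance with dual epi-translation invariance, no new analytic input is needed; the entire content is to transfer the homogeneous decomposition across conjugation.

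In detail, I would proceed as follows. Given $\oZ\colon\fconvs\to\R$ continuous, epi-translation invariant, and a valuation, set
\[
\widetilde\oZ(v):=\oZ(v^*)\qquad\text{for } v\in\fconvf.
\]
By the correspondences recalled in Section~\ref{se:dual_results}, $\widetilde\oZ$ is a continuous, dually epi-translation invariant, real-valued valuation on $\fconvf$. Applying Theorem~\ref{thm:mcmullen_fconvf} yields continuous, dually epi-translation invariant valuations $\widetilde\oZ_0,\ldots,\widetilde\oZ_n\colon\fconvf\to\R$, with each $\widetilde\oZ_j$ homogeneous of degree $j$, such that $\widetilde\oZ=\widetilde\oZ_0+\cdots+\widetilde\oZ_n$. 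I then define
\[
\oZ_j(u):=\widetilde\oZ_j(u^*)\qquad\text{for } u\in\fconvs,\ j\in\{0,\ldots,n\}.
\]
Each $\oZ_j$ is continuous, epi-translation invariant, and a valuation, again by the conjugation correspondence. Moreover, for $\lambda>0$ one has $(\lambda\sq u)^*=\lambda\,u^*$, hence
\[
\oZ_j(\lambda\sq u)=\widetilde\oZ_j(\lambda u^*)=\lambda^j\,\widetilde\oZ_j(u^*)=\lambda^j\,\oZ_j(u),
\]
so $\oZ_j$ is epi-homogeneous of degree $j$. Finally, $\oZ_0(u)+\cdots+\oZ_n(u)=\widetilde\oZ(u^*)=\oZ((u^*)^*)=\oZ(u)$, giving the desired decomposition.

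There is essentially no obstacle here beyond bookkeeping: the nontrivial McMullen-type decomposition was done in Theorem~\ref{thm:mcmullen_fconvf}, and everything else is the verification that conjugation intertwines the relevant structures (continuity via \cite[Theorem 11.34]{rockafellar_wets}, the valuation property via \cite[Proposition 3.5]{colesanti_ludwig_mussnig_3}, and (epi-)homogeneity via $(\lambda\sq u)^*=\lambda u^*$). If one prefers, uniqueness of the decomposition on $\fconvs$ also follows: any two such decompositions $\oZ=\sum_j\oZ_j=\sum_j\oZ_j'$ dualize to two decompositions of $\widetilde\oZ$ into homogeneous pieces on $\fconvf$, which are unique by the standard Euler-type argument (evaluate at $\lambda\sq u$, equivalently at $\lambda v$ on the primal side, and use linear independence of the monomials $\lambda^j$).
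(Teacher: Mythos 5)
Your proof is correct and is exactly what the paper intends: Theorem~\ref{thm:mcmullen_fconvs} is stated as ``the equivalent version on $\fconvs$'' immediately after Theorem~\ref{thm:mcmullen_fconvf}, with the transfer via Legendre--Fenchel conjugation already set up in Section~\ref{se:dual_results} and in the discussion of (epi-)homogeneity. The bookkeeping you carry out (continuity, valuation property, and $(\lambda\sq u)^*=\lambda u^*$ giving the degree-$j$ correspondence) is precisely the argument the paper leaves implicit.
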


	Classifications of vector-valued valuations of extremal degrees were established in \cite{mouamine_mussnig_1}. We start with the following consequence of \cite[Theorem 5.4]{mouamine_mussnig_1}, where the equivalent setting of valuations on $\fconvf$ was considered and where we say that $\oz\colon\fconvsO\to\R^1$ is \textit{reflection equivariant} if $\oz$ is $\Oo$ equivariant.
	
	\begin{theorem}
		\label{thm:class_0_hom}
		For $n\geq 2$, a map $\oz\colon\fconvs\to\Rn$ is a continuous, epi-translation invariant, rotation equivariant valuation that is epi-homogeneous of degree $0$, if and only if it is identically $o$. For $n= 1$, the same representation holds if $\oz$ is reflection equivariant.
	\end{theorem}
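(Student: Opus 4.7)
The plan is to combine epi-homogeneity of degree $0$ with continuity to show that $\oz$ must be constant on $\fconvs$, and then to use rotation or reflection equivariance to force that constant to be the origin. Only the direction "continuous, epi-translation invariant, equivariant, epi-homogeneous of degree $0$ $\Rightarrow \oz\equiv o$" is non-trivial; the converse is immediate since the zero map trivially satisfies all listed properties.

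For the constancy, I would pick any $u\in\fconvs$ and exploit epi-homogeneity of degree $0$ to write $\oz(u)=\oz(\lambda\sq u)$ for every $\lambda>0$. As $\lambda\to 0^+$, the function $\lambda\sq u=(\lambda u^*)^*$ should epi-converge to $\ind_{\{o\}}\in\fconvs$. The cleanest way to see this is to pass through conjugation: since $u^*\in\fconvf$ is finite-valued, $\lambda u^*$ converges pointwise to the zero function, hence epi-converges in $\fconvf$ (epi-convergence and pointwise convergence coincide on $\fconvf$), and then the continuity of the Legendre transform between $\fconvf$ and $\fconvs$ together with the identity $0^*=\ind_{\{o\}}$ gives the claim. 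Applying the continuity of $\oz$ then yields
\[
\oz(u)=\lim_{\lambda\to 0^+}\oz(\lambda\sq u)=\oz(\ind_{\{o\}})=:c
\]
for every $u\in\fconvs$, so $\oz$ is a constant vector-valued map.

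For the final step, I would use that $\ind_{\{o\}}$ is invariant under the full orthogonal group, that is $\ind_{\{o\}}\circ \vartheta^{-1}=\ind_{\{o\}}$ for every $\vartheta\in\On$; the relevant equivariance then forces $c=\vartheta c$ on the given class of $\vartheta$. For $n\ge 2$ this must hold for every $\vartheta\in\SOn$, and the only vector in $\Rn$ fixed by all rotations is the origin, so $c=o$. For $n=1$, applying reflection equivariance to $\vartheta=-1\in\Oo$ yields $c=-c$, again giving $c=0$. The main point requiring any care is the epi-convergence $\lambda\sq u\to \ind_{\{o\}}$, which is why I prefer the conjugation route above; once this is in place, everything else is algebraic and dimension-counting.
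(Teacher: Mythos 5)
Your proof is correct, and the argument is complete and self-contained. The paper itself does not reprove this result: it simply cites \cite[Theorem 5.4]{mouamine_mussnig_1} (phrased in the dual $\fconvf$ setting), so there is no in-text proof to compare against. Your route is the natural direct one: epi-homogeneity of degree $0$ means $\oz$ is constant along epi-scaling orbits, continuity together with $\lambda\sq u\to\ind_{\{o\}}$ as $\lambda\to 0^+$ collapses $\oz$ to the constant $c=\oz(\ind_{\{o\}})$, and then $\ind_{\{o\}}\circ\vartheta^{-1}=\ind_{\{o\}}$ forces $c=\vartheta c$ for all $\vartheta$ in the relevant group, giving $c=o$. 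Two small remarks worth noting. First, passing through conjugation to justify the epi-convergence is indeed the cleaner choice: it sidesteps the issue that $o$ need not lie in $\dom(u)$, which would make the direct pointwise analysis of $\lambda\sq u$ slightly awkward, and it appeals only to facts the paper already records (pointwise convergence equals epi-convergence on $\fconvf$, and continuity of the Legendre transform between $\fconvf$ and $\fconvs$). Second, your argument actually proves something formally stronger than the stated theorem: you never use the valuation property nor epi-translation invariance, so any continuous, rotation equivariant (resp.\ $\Oo$ equivariant for $n=1$) map $\fconvs\to\Rn$ that is epi-homogeneous of degree $0$ must be identically $o$. This is a harmless overreach but worth being aware of when the hypotheses are quoted later.
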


	Next, we state \cite[Theorem 5.5]{mouamine_mussnig_1}, which concerns valuations of top degree. Here, we also want to point out that if $\zeta\in T_n^n$, then
	\[
	\ot_{n,\zeta}(u)=\int_{\Rn} \zeta(|y|)y\d\Psi_n(u;y)
	\]
	for every $u\in\fconvs$.
	
	\begin{theorem}
		\label{thm:class_n_hom}
		For $n\geq 3$, a map $\oz\colon\fconvs\to\Rn$ is a continuous, epi-translation invariant, rotation equivariant valuation that is epi-homogeneous of degree $n$, if and only if there exists $\zeta\in T_n^n$ such that
		\[
		\oz(u)=\ot_{n,\zeta}(u)
		\]
		for every $u\in\fconvs$. For $n\leq 2$, the same representation holds if we replace rotation equivariance with $\On$ equivariance.
	\end{theorem}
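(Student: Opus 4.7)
The plan is to transfer the problem to $\fconvf$ via conjugation: set $\oz^*(v) := \oz(v^*)$, so $\oz^*$ is a continuous, dually epi-translation invariant, rotation (resp.\ $\On$ for $n\leq 2$) equivariant valuation on $\fconvf$, homogeneous of degree $n$. It suffices to exhibit $\zeta \in T_n^n$ with $\oz^*(v) = \int_{\Rn} \zeta(|x|)\, x \d\MA(v;x)$, since then dualizing and applying Theorem~\ref{thm:main_rep_maj} in the case $j=n$ (where $\MA_n = \MA$ and $\alpha = \zeta$) will recover $\oz = \ot_{n,\zeta}$.

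The first step is to invoke the known scalar characterization of top-degree valuations, established via Knoerr's Goodey--Weil-type embedding in \cite{knoerr_support,knoerr_ma} with precursors in \cite{colesanti_ludwig_mussnig_3,colesanti_ludwig_mussnig_5}: every continuous, dually epi-translation invariant, degree-$n$ homogeneous scalar valuation on $\fconvf$ is of the form $v \mapsto \int_{\Rn} \beta \d\MA(v;\cdot)$ for a unique $\beta \in C_c(\Rn)$. Applied coordinate-wise to $\oz^*$, this produces $\xi \in C_c(\Rn;\Rn)$ with
\[
\oz^*(v) = \int_{\Rn} \xi(x) \d\MA(v;x), \quad v \in \fconvf.
\]

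The second step converts equivariance into a radial structural constraint on $\xi$. Since $\MA(v \circ \vartheta^{-1}; \cdot)$ is the pushforward of $\MA(v;\cdot)$ by $\vartheta$, equivariance of $\oz^*$ gives
\[
\int_{\Rn} \bigl(\xi(\vartheta y) - \vartheta\, \xi(y)\bigr) \d\MA(v;y) = 0 \quad \text{for every } v \in \fconvf.
\]
Choosing $v(x) = \varepsilon\,|x - x_0|$ yields $\MA(v;\cdot) = \kappa_n\, \varepsilon^n\, \delta_{x_0}$, which as $x_0$ ranges over $\Rn$ forces $\xi(\vartheta y) = \vartheta\, \xi(y)$ pointwise. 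For $n \geq 3$ the $\SOn$-stabilizer of any $y \neq o$ is conjugate to $\SOnm$ and fixes only multiples of $y$; for $n \leq 2$ the reflection in $\R y$ lies in $\On$, fixes $y$, and negates $y^\perp$. In every case $\xi(y) = \zeta(|y|)\, y$ for some scalar function $\zeta$ on $(0,\infty)$.

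Finally, the three defining conditions of $T_n^n$ follow directly from the properties of $\xi$: continuity of $\xi$ away from the origin yields continuity of $\zeta$ on $(0,\infty)$; compactness of $\supp \xi$ makes $\supp \zeta$ bounded; and continuity of $\xi$ at $o$ with value $o$ gives $\lim_{s \to 0^+} s\, \zeta(s) = \lim_{x \to o} |\xi(x)| = 0$. Thus $\zeta \in T_n^n$, and $\oz^*(v) = \int_{\Rn} \zeta(|x|)\, x \d\MA(v;x) = \ot_{n,\zeta}^*(v)$, with $\zeta$ uniquely determined by the Dirac-separation argument of the second step. The main obstacle is precisely the scalar representation invoked at the outset: in a self-contained proof it is the technical heart, requiring either the Goodey--Weil embedding of \cite{knoerr_support} or a careful reduction to smooth test functions where Proposition~\ref{prop:int_ma_is_a_val} supplies the full class of examples and standard density/regularization arguments close the gap.
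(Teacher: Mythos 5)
The statement you are asked to prove is not proved in this paper at all: the authors cite it verbatim as \cite[Theorem 5.5]{mouamine_mussnig_1}, so there is no in-paper argument to compare against. What I can do is evaluate your proposal on its own terms.

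Your outline is correct and, as far as one can tell, plausibly close to what any proof of this result must do. Dualizing to $\fconvf$ and applying the scalar $n$-homogeneous classification coordinate-wise is the right first move. That scalar classification (every continuous, dually epi-translation invariant, degree-$n$ homogeneous valuation on $\fconvf$ is $v\mapsto\int_{\Rn}\beta\d\MA(v;\cdot)$ for a unique $\beta\in C_c(\Rn)$) is genuinely the technical heart, as you say; but the appropriate primary reference is \cite[Theorem 5]{colesanti_ludwig_mussnig_4} (building on \cite{colesanti_ludwig_mussnig_1}), rather than \cite{knoerr_support,knoerr_ma}, which postdate and generalize it. Your Dirac-measure test with $v(x)=\varepsilon|x-x_0|$ is sound: $\partial v(x_0)=\varepsilon B^n$ so $\MA(v;\cdot)=\kappa_n\varepsilon^n\delta_{x_0}$, and pushing forward by $\vartheta$ gives the pointwise identity $\xi(\vartheta x_0)=\vartheta\xi(x_0)$. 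The deduction that an $\SOn$-equivariant (or $\On$-equivariant for $n\leq 2$) $\xi\in C_c(\Rn;\Rn)$ must have the radial form $\xi(x)=\zeta(|x|)x$ with $\zeta\in T_n^n$ is precisely the content of the paper's Lemma~\ref{le:g_commutes}, which is itself quoted from \cite[Lemma 3.5]{mouamine_mussnig_1}; you reprove it ad hoc via stabilizers, which is fine and matches what is needed (you correctly note that $\SOn$ alone does not suffice for $n\leq 2$, because the stabilizer of a nonzero vector is trivial and one needs the reflection to kill the orthogonal component). Your verification that $\zeta\in T_n^n$ (continuity of $\zeta$ on $(0,\infty)$, bounded support, and $\lim_{s\to 0^+}s\zeta(s)=|\xi(o)|=0$ by equivariance at the origin) is exactly the content of Lemma~\ref{le:continuous_extension} run in reverse. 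Uniqueness of $\zeta$ follows from the Dirac test as you note. In short: the proposal is correct; the two substantive inputs (scalar top-degree representation, radial form of equivariant vector fields) are both results the present paper already treats as known, and your argument assembles them in the evident way.
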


	The following result is a consequence of a Klain--Schneider theorem for vector-valued valuations on $\fconvs$, which is due to \cite[Theorem 3.6 and Remark 3.7]{mouamine_mussnig_1}. Here, we say that $\oz\colon\fconvs\to\Rn$ is \textit{simple} if $\oz(u)=o$ for every $u\in\fconvs$ such that $\dim(\dom(u))<n$.
	
	\begin{theorem}
		\label{thm:class_simple}
		For $n\geq 3$, a map $\oz\colon\fconvs\to\Rn$ is a continuous, epi-translation invariant, rotation equivariant, simple valuation, if and only if there exists $\zeta\in T_n^n$ such that
		\[
		\oz(u)=\ot_{n,\zeta}(u)
		\]
		for every $u\in\fconvs$. For $n\leq 2$, the same representation holds if we replace rotation equivariance with $\On$ equivariance.
	\end{theorem}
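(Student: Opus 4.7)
The plan is to use the homogeneous decomposition and reduce to the previously classified extremal cases. For the sufficiency direction I would first verify that $\ot_{n,\zeta}$ is simple: if $u\in\fconvs$ satisfies $\dim(\dom(u))<n$, then $\dom(u)$ lies in an affine hyperplane with some unit normal $e$, and $u^*$ is constant along translates of $e$. Hence every element of $\partial u^*(x)$ is contained in $e^\perp$, so $\MAp(u;\cdot)=\MA(u^*;\cdot)$ is concentrated on a hyperplane in $\Rn$ and vanishes as a measure. This gives $\ot_{n,\zeta}(u)=\int_{\Rn}\zeta(|y|)y\,\d\Psi_n(u;y)=o$; the remaining properties of $\ot_{n,\zeta}$ (continuity, epi-translation invariance, $\On$ equivariance, epi-homogeneity of degree $n$) are provided by Theorem~\ref{thm:main_existence_fconvs}.

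For the necessity direction I would apply Theorem~\ref{thm:mcmullen_fconvs} componentwise to obtain a decomposition $\oz=\oz_0+\cdots+\oz_n$ into continuous, epi-translation invariant, vector-valued valuations $\oz_j\colon\fconvs\to\Rn$, each epi-homogeneous of degree $j$. Each $\oz_j$ inherits rotation (respectively $\On$) equivariance from $\oz$, and simplicity transfers as well, since for every simple $u\in\fconvs$ the polynomial $\lambda\mapsto\oz(\lambda\sq u)=\sum_{j}\lambda^{j}\oz_j(u)$ is identically $o$, forcing each coefficient to vanish. Theorem~\ref{thm:class_0_hom} then eliminates $\oz_0$, while Theorem~\ref{thm:class_n_hom} produces a unique $\zeta\in T_n^n$ with $\oz_n=\ot_{n,\zeta}$.

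The crux is to rule out the intermediate components, that is, to show $\oz_j\equiv o$ for $1\leq j\leq n-1$. Here I would invoke the vector-valued Klain--Schneider theorem of \cite[Theorem 3.6 and Remark 3.7]{mouamine_mussnig_1}: a continuous, epi-translation invariant, simple, $\Rn$-valued valuation that is epi-homogeneous of degree $j$ with $1\leq j\leq n-1$ is determined by its restrictions to functions whose domain lies in a $j$-dimensional subspace, and the imposed rotation (respectively $\On$) equivariance constrains the associated Klain data to transform equivariantly under the stabilizer of each such subspace. In the relevant homogeneity range no nontrivial such data exists, which yields $\oz_j\equiv o$ and completes the argument. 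The main obstacle is precisely this last step, as the vector-valued setting is more delicate than the scalar one: equivariance must be exploited on the Grassmannian rather than producing a mere multiple of a volume functional, and it is here that the distinction between $n\geq 3$ and $n\leq 2$, and correspondingly between rotation and $\On$ equivariance, becomes essential.
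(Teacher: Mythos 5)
The paper does not give a proof of Theorem~\ref{thm:class_simple}; it is stated as a direct consequence of the vector-valued Klain--Schneider theorem \cite[Theorem~3.6 and Remark~3.7]{mouamine_mussnig_1}, which classifies continuous, epi-translation invariant, simple, $\Rn$-valued valuations on $\fconvs$ (together with the observation, in the spirit of Lemma~\ref{le:g_commutes}, that rotation respectively $\On$ equivariance reduces the integral density to the form $\alpha(|x|)x$). Your proposal instead passes through the homogeneous decomposition and tries to rule out the intermediate degrees $1\leq j\leq n-1$ separately, which is a more roundabout route to the same destination.

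Your sufficiency argument is fine: the minor slip that $u^*$ is \emph{affine}, not constant, along the normal direction when $\dom(u)$ lies in an affine hyperplane does not affect the conclusion that $\partial u^*$ is contained in a hyperplane and hence $\MAp(u;\cdot)\equiv 0$.

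The necessity argument, however, has a genuine gap exactly where you flag the crux. You assert that the Klain--Schneider theorem says a simple, epi-homogeneous valuation of degree $j$ is ``determined by its restrictions to functions whose domain lies in a $j$-dimensional subspace,'' and then use equivariance to kill the resulting ``Klain data.'' This is a mischaracterization: the Klain--Schneider theorem is a \emph{classification of simple valuations}, not a Klain-type injectivity statement, and no such injectivity theorem for degree-$j$ valuations on $\fconvs$ is established in this paper or obviously available (indeed, if it were, the proof of Theorem~\ref{thm:main_class} could be substantially simplified). Moreover, if restrictions to $j$-dimensional domains did determine $\oz_j$, then simplicity alone (those restrictions are zero since $j<n$) would already give $\oz_j\equiv o$, with no role for equivariance, so the role you assign to equivariance in this step is also inconsistent with your own premise. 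The correct way to eliminate the intermediate degrees using the cited result is much shorter: the Klain--Schneider theorem shows that any continuous, epi-translation invariant, simple, $\Rn$-valued valuation has the form $u\mapsto\int_{\Rn}\xi(y)\d\MAp(u;y)$ for some $\xi\in C_c(\Rn;\Rn)$, and such a valuation is epi-homogeneous of degree $n$; since each $\oz_j$ with $j<n$ is itself simple and epi-homogeneous of degree $j$, it must vanish. More directly still---and this is essentially what the paper does---apply the Klain--Schneider theorem to $\oz$ once, obtain the representation $\oz(u)=\int_{\Rn}\xi(y)\d\MAp(u;y)$, transfer the rotation (resp.\ $\On$) equivariance of $\oz$ to $\xi$, and invoke Lemma~\ref{le:g_commutes} to conclude $\xi(x)=\alpha(|x|)x$ with $\alpha\in T_n^n$, i.e.\ $\oz=\ot_{n,\alpha}$. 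This avoids the homogeneous decomposition and the separate appeals to Theorems~\ref{thm:class_0_hom} and~\ref{thm:class_n_hom} altogether.
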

	
	Let us close this section with the following simple result due to \cite[Lemma 3.5]{mouamine_mussnig_1}.
	\begin{lemma}
		\label{le:g_commutes}
		Let $\xi\in C_c(\Rn;\Rn)$. For $n\geq 3$, the map $\xi$ satisfies
		\begin{equation}
			\label{eq:g_commutes}
			\xi(\vartheta x)=\vartheta \xi(x)
		\end{equation}
		for every $\vartheta\in\SOn$ and $x\in\Rn\setminus\{o\}$, if and only if there exists $\alpha\in T_n^n$ such that
		\[
		\xi(x)=\alpha(|x|)x
		\]
		for every $x\in\Rn\setminus\{o\}$. For $n\leq 2$, the same representation holds if $\xi$ is assumed to commute with $\On$ instead.
	\end{lemma}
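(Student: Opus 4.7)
The plan is to deduce this result by combining the previously stated dual classification, Theorem~\ref{thm:main_class_fconvs}, with the representation formula from Theorem~\ref{thm:main_rep_maj_fconvs} and the structural characterization of equivariant continuous fields on $\R^n$ given by Lemma~\ref{le:g_commutes}. Since the valuation property, continuity, epi-translation invariance, and equivariance are preserved under passing between the two formulations, the argument is essentially a translation between radial densities $\zeta_j\in T_j^n$ and continuous equivariant vector fields $\xi_j\in C_c(\Rn;\Rn)$.

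For the ``if'' direction, suppose $\xi_1,\ldots,\xi_n\in C_c(\Rn;\Rn)$ are rotation equivariant (respectively $\On$ equivariant for $n\leq 2$). By Lemma~\ref{le:g_commutes}, for each $j$ there is $\alpha_j\in T_n^n$ such that $\xi_j(x)=\alpha_j(|x|)x$ on $\Rn\setminus\{o\}$, and this extends continuously through the origin by Lemma~\ref{le:continuous_extension}. Applying Proposition~\ref{prop:int_maj_is_a_vector-val_fconvs} coordinate-wise (or directly in its vector-valued form), each map
\[
u\mapsto \int_{\Rn}\xi_j(x)\d\MAp_j(u;x)=\int_{\Rn}\alpha_j(|x|)x\d\MAp_j(u;x)
\]
is a continuous, epi-translation invariant, $\On$ equivariant valuation on $\fconvs$, and the sum over $j$ retains these properties.

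For the ``only if'' direction, let $\oz\colon\fconvs\to\Rn$ be a continuous, epi-translation invariant, rotation equivariant (resp.\ $\On$ equivariant) valuation. Theorem~\ref{thm:main_class_fconvs} yields unique $\zeta_j\in T_j^n$, $j\in\{1,\ldots,n\}$, with $\oz(u)=\sum_{j=1}^n \ot_{j,\zeta_j}(u)$. Theorem~\ref{thm:main_rep_maj_fconvs} then rewrites each summand as $\int_{\Rn}\alpha_j(|y|)y\d\MAp_j(u;y)$, where $\alpha_j\in T_n^n$ is determined by $\zeta_j$ via the formula \eqref{eq:alpha_zeta}. Defining $\xi_j(x)=\alpha_j(|x|)x$ and invoking Lemma~\ref{le:continuous_extension} places $\xi_j$ in $C_c(\Rn;\Rn)$, and by construction $\xi_j$ is $\On$ equivariant, giving the desired representation.

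Uniqueness of the $\xi_j$ reduces to uniqueness of the underlying densities. If two families $(\xi_j)$ and $(\tilde{\xi}_j)$ produce the same valuation, Lemma~\ref{le:g_commutes} associates to each $\xi_j,\tilde\xi_j$ a unique $\alpha_j,\tilde\alpha_j\in T_n^n$, and the bijection in Lemma~\ref{le:r_trans_bij} between $T_j^n$ and $T_n^n$ together with the uniqueness clause of Theorem~\ref{thm:main_class_fconvs} forces $\alpha_j=\tilde\alpha_j$, hence $\xi_j=\tilde\xi_j$. Since every step is a direct appeal to previously established results, there is no serious obstacle; the only delicate point is keeping track of the dimensional dichotomy ($n\geq 3$ versus $n\leq 2$), which is handled uniformly because both Lemma~\ref{le:g_commutes} and Theorem~\ref{thm:main_class_fconvs} make the same transition from rotation to $\On$ equivariance at the threshold $n=2$.
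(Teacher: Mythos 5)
You have proved the wrong statement. The statement to be shown is Lemma~\ref{le:g_commutes}, which is a purely pointwise assertion about continuous vector fields $\xi\in C_c(\Rn;\Rn)$: rotation equivariance of $\xi$ is equivalent to the existence of a radial density $\alpha\in T_n^n$ with $\xi(x)=\alpha(|x|)x$. It involves no valuations, no Monge--Amp\`ere measures, and no convex functions. Your argument instead proves a classification theorem for vector-valued valuations on $\fconvs$ (the dual of Theorem~\ref{thm:main_class_maj}), and it does so \emph{by citing Lemma~\ref{le:g_commutes}} -- the very statement you are supposed to prove -- in both the ``if'' direction (``By Lemma~\ref{le:g_commutes}, for each $j$ there is $\alpha_j\in T_n^n$\ldots'') and the uniqueness step. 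The proposal is therefore both off-target and circular.

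A correct proof of Lemma~\ref{le:g_commutes} is elementary and uses no machinery from the paper. For the nontrivial direction, fix $x\neq o$ and let $G_x\subset\SOn$ be the stabilizer of $x$. Equivariance forces $\vartheta\xi(x)=\xi(x)$ for all $\vartheta\in G_x$, and for $n\geq 3$ the fixed-point set of $G_x$ is exactly the line $\R x$, so $\xi(x)$ is a scalar multiple of $x$. Writing $\xi(x)=\alpha(|x|)x$ and using equivariance again shows $\alpha$ depends only on $|x|$; continuity and compact support of $\xi$ give $\alpha\in C_b((0,\infty))$ with bounded support, and continuity of $\xi$ at the origin (together with $\xi(o)=o$, which follows from $\SOn$-invariance of $\xi(o)$) gives $\lim_{s\to 0^+}s\alpha(s)=0$, i.e.\ $\alpha\in T_n^n$. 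For $n\leq 2$ the stabilizer of $x$ in $\SOn$ is trivial or does not pin down the direction, so one needs the reflection in $x^\perp$ from $\On$ to force $\xi(x)\parallel x$, whence the $\On$-hypothesis. The converse direction is immediate from the transformation rule $\alpha(|\vartheta x|)\vartheta x=\vartheta\alpha(|x|)x$ together with Lemma~\ref{le:continuous_extension}. The paper itself does not reprove the lemma but cites it from a companion paper; in either case the content is of the above form, not a deduction from the main classification theorems.
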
	
	
	\section{Existence}
	\label{se:existence}
	In this section, we prove Theorem~\ref{thm:main_existence}, that is, the existence of the operators $\ot_{j,\zeta}^*$, $\zeta\in T_j^n$, in terms of singular Hessian integrals. This goes hand in hand with establishing Theorem~\ref{thm:main_rep_maj}.
	
	\subsection{Integral Transforms Between the Classes \texorpdfstring{$\boldsymbol{T_j^n}$}{Tjn}}
	For $\zeta\in C_b((0,\infty))$ and $s>0$, let
	\[
	\cR \zeta(s)=s\zeta(s)+\int_s^\infty \zeta(t)\d t.
	\]
	It is easy to see that also $\cR \zeta\in C_b((0,\infty))$ and this integral transform was introduced in \cite{colesanti_ludwig_mussnig_6} in the context of Cauchy--Kubota formulas for functional intrinsic volumes. For $l\in\N$ we denote
	\[
	\cR^l \zeta = \underbrace{(\cR\circ \cdots\circ\cR)}_{l} \zeta
	\]
	and set $\cR^0 \zeta=\zeta$. It was shown in \cite[Lemma 3.6]{colesanti_ludwig_mussnig_6} that
	\begin{equation}
	\label{eq:r_l}
	\cR^l\zeta (s)=s^l \zeta(s)+l\int_s^\infty t^{l-1}\zeta(t)\d t
	\end{equation}
	for $s>0$. Using integration by parts (cf.\ \cite[Lemma 3.8]{colesanti_ludwig_mussnig_6} and its proof), it is furthermore straightforward to check that the inverse operation to $\cR^l$ is given by
	\begin{equation}
	\label{eq:r_inverse}
	\cR^{-l}\rho(s)=(\cR^{-1})^l\rho(s)=\frac{\rho(s)}{s^l}-l\int_s^\infty \frac{\rho(t)}{t^{l+1}}\d t
	\end{equation}
	for $s>0$, where $\rho\in C_b((0,\infty))$. Notably, for $j\in\{0,\ldots,n\}$ and $l\in\{0,\ldots,n-j\}$, the transform $\cR^l$ is a bijection between $D_j^n$ and $D_j^{n-l}$ \cite[Lemma 3.8]{colesanti_ludwig_mussnig_6}, where
	\begin{equation}
		\label{eq:def_d_j_n}
		D_j^n=\left\{\zeta\in C_b((0,\infty)): \lim_{s\to 0^+} s^{n-j}\zeta(s)=0, \lim_{s\to 0} \int_s^\infty t^{n-j-1}\zeta(t)\d t \text{ exists and is finite}\right\}
	\end{equation}
	for $j\in\{0,\ldots,n-1\}$.	In addition, $D_n^n$ is the set of $\zeta\in C_b((0,\infty))$ such that $\zeta(0)=\lim_{s\to 0^+} \zeta(s)$ exists and is finite. In the following, we want to show that $\cR^l$ is also a bijection between $T_j^n$ and $T_j^{n-l}$. To this end, we first need the following result, the proof of which works in the same way as the proof of \cite[Lemma 3.7]{colesanti_ludwig_mussnig_6}. Recall that for $j\in\{1,\ldots,n\}$ the set $T_j^n$ consists of all $\zeta\in C_b((0,\infty))$ such that $\lim_{s\to 0^+} s^{n-j+1}\zeta(s)=0$.
	
	\begin{lemma}
	\label{le:lhospital}
	Let $j\in\{1,\ldots,n-1\}$. If $\zeta\in T_j^n$, then
	\begin{equation}
	\label{eq:lim_s_n-j_int_zeta}
	\lim_{s\to 0^+} s^{n-j}\int_s^\infty \zeta(t)\d t = 0. 
	\end{equation}
	Furthermore, if $\rho\in T_j^{n-1}$, then
	\[
	\lim_{s\to 0^+} s^{n-j+1}\int_s^\infty \frac{\rho(t)}{t^2}\d t = 0.
	\]
	\end{lemma}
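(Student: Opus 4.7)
The plan is to prove both limits by L'Hôpital's rule, handling the two degenerate possibilities (the improper integral converges, or it diverges) separately. Since $\zeta,\rho\in C_b((0,\infty))$ means in particular that their supports are bounded, we may replace the improper integrals $\int_s^\infty$ by $\int_s^R$ for some $R>0$ with the functions vanishing beyond $R$.

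For the first limit, I would rewrite
\[
s^{n-j}\int_s^\infty \zeta(t)\d t = \frac{\int_s^R \zeta(t)\d t}{s^{-(n-j)}}.
\]
Since $j\leq n-1$, the denominator tends to $+\infty$ as $s\to 0^+$. If the numerator stays bounded, the limit is trivially $0$. Otherwise we are in a $\pm\infty/\infty$ indeterminate form, and L'Hôpital's rule gives
\[
\lim_{s\to 0^+}\frac{-\zeta(s)}{-(n-j)s^{-(n-j)-1}}=\frac{1}{n-j}\lim_{s\to 0^+} s^{n-j+1}\zeta(s),
\]
which vanishes by the defining property of $T_j^n$.

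The second limit is handled exactly the same way with $\rho\in T_j^{n-1}$ (which by definition yields $\lim_{s\to 0^+}s^{n-j}\rho(s)=0$). Writing
\[
s^{n-j+1}\int_s^\infty\frac{\rho(t)}{t^2}\d t = \frac{\int_s^R \rho(t)/t^2\d t}{s^{-(n-j+1)}},
\]
the denominator tends to $+\infty$ (since $n-j+1\geq 1$), and if the numerator is unbounded L'Hôpital gives
\[
\lim_{s\to 0^+} \frac{-\rho(s)/s^2}{-(n-j+1)s^{-(n-j+2)}}=\frac{1}{n-j+1}\lim_{s\to 0^+} s^{n-j}\rho(s)=0;
\]
if it is bounded, the limit is again $0$ for the trivial reason that $s^{n-j+1}\to 0$.

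I do not anticipate any genuine obstacle: the only subtlety is the dichotomy between a convergent and a divergent improper integral, which is why I split into two cases before invoking L'Hôpital. The hypotheses $\zeta\in T_j^n$ (resp.\ $\rho\in T_j^{n-1}$) are calibrated precisely so that the boundary term produced by L'Hôpital matches the asymptotic vanishing condition built into the class, so after the case split the argument collapses to one line in each case.
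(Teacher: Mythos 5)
Your proof is correct and follows essentially the same route as the paper: split on whether the improper integral converges, and apply L'H\^opital's rule in the divergent case, using the decay built into the definition of $T_j^n$ (resp.\ $T_j^{n-1}$) to conclude. The paper's version additionally passes to absolute values before invoking L'H\^opital, which makes the $\infty/\infty$ hypothesis transparent, but this is a cosmetic difference and the arguments are otherwise identical.
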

	\begin{proof}
	Let $\zeta\in T_j^n$. If $\lim_{s\to 0^+} \int_s^\infty \zeta(s)\d s$ exists and is finite, then \eqref{eq:lim_s_n-j_int_zeta} is immediate. In the other case, it follows from L'Hospital's rule and the definition of $T_j^n$ that
	\[
	\lim_{s\to 0^+}\left|s^{n-j}\int_s^\infty \zeta(t)\d t\right|\leq \lim_{s\to 0^+} \frac{\int_s^\infty |\zeta(t)|\d t}{\frac{1}{s^{n-j}}} = \lim_{s\to 0^+} \frac{|\zeta(s)|}{\frac{n-j}{s^{n-j+1}}}=\lim_{s\to 0^+} \frac{|s^{n-j+1}\zeta(s)|}{n-j} = 0.  
	\]
	Next, let $\rho\in T_j^{n-1}$. Again, if $\lim_{s\to 0^+} \int_s^\infty \frac{\rho(t)}{t^2}\d t$ exists and is finite, then the statement is trivial. Otherwise, we argue similarly to above to obtain
	\[
	\lim_{s\to 0^+} \left|s^{n-j+1}\int_s^\infty \frac{\rho(t)}{t^2}\d t \right|\leq \lim_{s\to 0^+} \frac{\int_s^\infty \left|\frac{\rho(t)}{t^2}\right|\d t}{\frac{1}{s^{n-j+1}}} = \lim_{s\to 0^+} \frac{\left|\frac{\rho(s)}{s^2}\right|}{\frac{n-j+1}{s^{n-j+2}}}=\lim_{s\to 0^+} \frac{|s^{n-j}\rho(s)|}{n-j+1}=0.
	\]
	\end{proof}
	
	We can now establish the relevant properties of the integral transform $\cR$.
	
	\begin{lemma}
		\label{le:r_trans_bij}
		For $j\in\{1,\ldots,n\}$ and $l\in\{0,\ldots,n-j\}$, the map $\cR^l\colon T_j^n\to T_j^{n-l}$ is a bijection.
	\end{lemma}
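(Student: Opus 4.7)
The plan is to reduce to the base case $l=1$ by induction, using the explicit inverse formula \eqref{eq:r_inverse}. Since $\cR^0=\mathrm{id}$, there is nothing to prove for $l=0$, so I would first verify that $\cR\colon T_j^n\to T_j^{n-1}$ is well-defined whenever $1\le j\le n-1$. Given $\zeta\in T_j^n$, the identity
\[
s^{n-j}(\cR\zeta)(s)=s^{n-j+1}\zeta(s)+s^{n-j}\int_s^\infty \zeta(t)\d t
\]
delivers the limit condition for membership in $T_j^{n-1}$: the first summand vanishes by hypothesis, and the second by the first part of Lemma~\ref{le:lhospital}. Continuity of $\cR\zeta$ on $(0,\infty)$ is immediate, and bounded support is preserved since $\supp\zeta\subseteq(0,R]$ forces $\cR\zeta\equiv 0$ on $[R,\infty)$.

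Next I would show that $\cR^{-1}$, defined by \eqref{eq:r_inverse} with $l=1$, sends $T_j^{n-1}$ back into $T_j^n$. For $\rho\in T_j^{n-1}$ the analogous computation reads
\[
s^{n-j+1}(\cR^{-1}\rho)(s)=s^{n-j}\rho(s)-s^{n-j+1}\int_s^\infty \frac{\rho(t)}{t^2}\d t,
\]
where the first term vanishes as $s\to 0^+$ by the defining condition of $T_j^{n-1}$ and the second by the second part of Lemma~\ref{le:lhospital}. Continuity and bounded support of $\cR^{-1}\rho$ follow as before. Since $\cR\circ\cR^{-1}=\cR^{-1}\circ\cR=\mathrm{id}$ already holds on all of $C_b((0,\infty))$ by the integration-by-parts argument from \cite[Lemma 3.8]{colesanti_ludwig_mussnig_6}, these two inclusions combine to give the bijection in the base case $l=1$.

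For general $l\in\{1,\ldots,n-j\}$, I would iterate: the chain
\[
T_j^n \xrightarrow{\cR} T_j^{n-1} \xrightarrow{\cR}\cdots \xrightarrow{\cR} T_j^{n-l}
\]
consists of bijections at every step, since the smallest intermediate dimension $n-l+1$ in the source still satisfies $j\le (n-l+1)-1$. Composing the base-case bijections yields the claim.

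The potential obstacle is purely technical, namely that the limit condition at $0^+$ must propagate through $\cR$ in both directions despite the improper integrals. Both checks, however, reduce precisely to the L'Hospital-type statements already encoded in Lemma~\ref{le:lhospital}, so after that preparatory lemma the argument is essentially bookkeeping.
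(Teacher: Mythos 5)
Your proof is correct and follows essentially the same route as the paper: verify the base case $l=1$ via the two limit computations from Lemma~\ref{le:lhospital} (once for $\cR$ on $T_j^n$, once for $\cR^{-1}$ on $T_j^{n-1}$), use the already-known invertibility of $\cR$ on $C_b((0,\infty))$, and iterate since the definition of $T_j^n$ depends only on the difference $n-j$. No substantive differences.
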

	\begin{proof}
		Since we already know that $\cR^l$ is invertible, it remains to prove that $\cR^l$ takes elements from $T_j^n$ to $T_j^{n-l}$ and, similarly, that \eqref{eq:r_inverse} maps from $T_j^{n-l}$ to $T_j^n$. Furthermore, the case $j=n$ (and thus $l=0$) is trivial, and we therefore assume that $j\in\{1,\ldots,n-1\}$.
		
		We start by showing that $\zeta\in T_j^n$ implies $\cR \zeta\in T_j^{n-1}$. First, we note that $\cR \zeta$ is clearly a continuous function with bounded support. Since the definition of $T_j^n$ and Lemma~\ref{le:lhospital} show that
		\[
		\lim_{s\to 0^+} s^{n-j}\cR \zeta(s)=\lim_{s\to 0^+}\left( s^{n-j+1}\zeta(s)+s^{n-j}\int_s^\infty \zeta(t) \d t \right) = 0,
		\]
		it follows that $\cR \zeta\in T_j^{n-1}$.
		
		Next, let $\rho\in T_j^{n-1}$ be given. We need to show that $\cR^{-1}\rho \in T_j^n$. Again, it is straightforward to see that $\cR^{-1}\rho\in C_b((0,\infty))$. Furthermore, it follows from the definition of $T_j^{n-1}$ together with the second part of Lemma~\ref{le:lhospital} that
		\[
		\lim_{s\to 0^+} s^{n-j+1}\cR^{-1}\rho(s)=\lim_{s\to 0^+}\left(s^{n-j}\rho(s)-s^{n-j+1}\int_s^\infty \frac{\rho(t)}{t^2} \d t\right)=0.
		\]
		Hence, $\cR^{-1}\rho\in T_j^n$.
		
		Finally, since the definition of $T_j^n$ only depends on the difference $(n-j)$, it easily follows by induction that $\cR^l \zeta \in T_j^{n-l}$ for $\zeta\in T_j^n$ and $l\in\{0,\ldots,n-j\}$, where the case $l=0$ is trivial. Similarly, we conclude that $\cR^{-l}$ maps elements from $T_j^{n-l}$ to $T_j^n$.
	\end{proof}
	
	\begin{remark}
		While the results of this section are similar to those of \cite[Section 3.2]{colesanti_ludwig_mussnig_6}, the authors are not aware that the former follow from the latter. In this context, it should be noted that $D_j^n\subset T_j^n$ and $D_j^{n+1}\subset T_j^n$ clearly hold, and we emphasize that these inclusions are strict. 
	\end{remark}

	\subsection{Connecting Hessian Measures and Mixed Monge--Amp\`ere Measures}
	\label{se:connecting_hessian_measures}
	The purpose of this section is to show how integrals of $x\mapsto \zeta(|x|)x$ with respect to the Hessian measures $\Phi_j(v;\cdot)$ can be rewritten in terms of the mixed Monge--Amp\`ere measures $\MA_j(v;\cdot)$. We note that results of this type have been established for functional intrinsic volumes in \cite[Section 6]{colesanti_ludwig_mussnig_7} and that this section is based on the same approach.
	
	We will frequently use the support function of the Euclidean unit ball, $h_{B^n}(x)=|x|$ for $x\in\Rn$. Let us remark that this function is twice continuously differentiable on $\Rn\setminus\{o\}$ with
	\[
	\Hess h_{B^n}(x)=\frac{1}{|x|}\left(I_n - \frac{x}{|x|}\otimes\frac{x}{|x|}\right)
	\]
	for $x\neq o$, where $y\otimes z$ denotes the tensor product of $y,z\in\Rn$.
	For the next result, which is due to \cite[Lemma 6.4]{colesanti_ludwig_mussnig_7}, we therefore emphasize that integration with respect to the Lebesgue measure on $\Rn$ is considered throughout the following and that it is sufficient for integrands to be defined a.e.\ on $\Rn$.
	
	\begin{lemma}
		\label{le:int_xi_exists}
		Let $k\in\{1,\ldots,n\}$. If $\varphi\in C_b((0,\infty))$ is such that $\lim_{s\to 0^+} s^{k-1} \varphi(s)$ exists and is finite, then the integral
		\[
		\int_{\Rn} \big|\varphi(|x|)\, D(\Hess v_1(x),\ldots,\Hess v_k(x),\Hess h_{B^n}(x)[n-k])\big|\d x
		\]
		is well-defined and finite for every $v_1,\ldots,v_k\in C^2(\Rn)$.
	\end{lemma}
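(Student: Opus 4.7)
\medskip

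\noindent\textbf{Proof plan.} The plan is to reduce the question to a one-dimensional radial estimate, exploiting the explicit form of $\Hess h_{B^n}$ near the origin. The only source of a possible singularity in the integrand is the factor $\Hess h_{B^n}(x)$, which appears with multiplicity $n-k$ in the mixed discriminant and whose operator norm blows up like $1/|x|$ as $x\to o$. The hypothesis that $\lim_{s\to 0^+} s^{k-1}\varphi(s)$ exists and is finite is precisely what is needed to absorb this singularity after taking the radial Jacobian $s^{n-1}$ into account.

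First I would use that the mixed discriminant is an $n$-linear form on the space of symmetric matrices; consequently there exists a constant $C_n>0$ such that
\[
|D(A_1,\ldots,A_n)| \le C_n \prod_{i=1}^n \|A_i\|
\]
for all symmetric $A_1,\ldots,A_n\in\R^{n\times n}$, where $\|\cdot\|$ denotes the operator norm. Applying this with $A_i=\Hess v_i(x)$ for $i\le k$ and $A_i=\Hess h_{B^n}(x)$ for $i>k$, and using the explicit formula recalled above to read off $\|\Hess h_{B^n}(x)\|\le 1/|x|$ for $x\ne o$, together with the local boundedness of $\|\Hess v_i\|$ coming from the $C^2$ assumption, yields
\[
\big|D\big(\Hess v_1(x),\ldots,\Hess v_k(x),\Hess h_{B^n}(x)[n-k]\big)\big| \le \frac{M}{|x|^{n-k}}
\]
on any fixed ball $r_0 B^n$, for a constant $M>0$ depending on the bounds of the $\Hess v_i$ on the closure of that ball.

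Next I would split the integration at a small radius $r_0>0$. Since $\varphi\in C_b((0,\infty))$ has bounded support, the integrand vanishes outside a bounded region, and on the set $\{|x|\ge r_0\}$ intersected with $\supp(\varphi(|\cdot|))$ every factor is continuous and bounded, so that contribution is finite. For the remaining piece, passing to polar coordinates gives
\[
\int_{r_0 B^n} |\varphi(|x|)|\,|x|^{-(n-k)}\d x = n\kappa_n \int_0^{r_0} s^{k-1}|\varphi(s)|\d s,
\]
and the right-hand side is finite because, by hypothesis, $s\mapsto s^{k-1}\varphi(s)$ extends continuously to $[0,r_0]$.

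The power count is the only computation that matters, and it works out exactly: the $n-k$ copies of the singular factor contribute $|x|^{-(n-k)}$, the radial Jacobian contributes $s^{n-1}$, and the net weight $s^{n-1-(n-k)}=s^{k-1}$ matches the one in the hypothesis. I do not anticipate a serious obstacle beyond verifying the standard multilinear bound on $D$, and for that reason the argument is essentially a careful book-keeping exercise rather than a substantial difficulty.
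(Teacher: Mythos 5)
Your proof is correct. The paper does not prove this lemma itself but cites it from \cite[Lemma 6.4]{colesanti_ludwig_mussnig_7}; your argument—bounding the mixed discriminant multilinearly, reading off $\|\Hess h_{B^n}(x)\|=1/|x|$ from the explicit formula, splitting the integral at a small radius, and balancing the $|x|^{-(n-k)}$ singularity against the radial Jacobian $s^{n-1}$ to reduce everything to the hypothesis on $s^{k-1}\varphi(s)$—is precisely the standard estimate used there, and the bookkeeping is accurate.
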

	
	A straightforward consequence of Lemma~\ref{le:int_xi_exists} is the following variant of this result, which shows that the relevant integrals of this section are well-defined.
	
	\begin{lemma}
		\label{le:int_mixed_dis_finite}
		Let $k\in\{1,\ldots,n-1\}$. If $\psi\in C_b((0,\infty))$ is such that $\lim_{s\to 0^+} s^k \psi(s)$ exists and is finite, then the integral
		\[
		\int_{\Rn} \big|\psi(|x|)x\, D(\Hess v_1(x),\ldots,\Hess v_k(x),\Hess h_{B^n}(x)[n-k])\big|\d x
		\]
		is well-defined and finite for every $v_1,\ldots,v_k\in C^2(\Rn)$.
	\end{lemma}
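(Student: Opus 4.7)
The plan is to reduce the statement directly to Lemma~\ref{le:int_xi_exists} by absorbing the factor $|x|$ coming from the vector $\psi(|x|)x$ into the radial function. First, I would observe that the mixed discriminant $D(\Hess v_1(x),\ldots,\Hess v_k(x),\Hess h_{B^n}(x)[n-k])$ is a scalar, so that
\[
\bigl|\psi(|x|)\,x\, D(\Hess v_1(x),\ldots,\Hess v_k(x),\Hess h_{B^n}(x)[n-k])\bigr| = \bigl|\varphi(|x|)\bigr|\,\bigl|D(\Hess v_1(x),\ldots,\Hess v_k(x),\Hess h_{B^n}(x)[n-k])\bigr|,
\]
where I set $\varphi(s)=s\,\psi(s)$ for $s>0$.

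Next I would verify that $\varphi$ satisfies the hypotheses of Lemma~\ref{le:int_xi_exists}. Since $\psi\in C_b((0,\infty))$ is continuous with bounded support, the same is true for $\varphi$, so $\varphi\in C_b((0,\infty))$. Moreover,
\[
\lim_{s\to 0^+} s^{k-1}\varphi(s)=\lim_{s\to 0^+} s^{k}\psi(s),
\]
which exists and is finite by assumption. Applying Lemma~\ref{le:int_xi_exists} with this $\varphi$ and the given $v_1,\ldots,v_k$ then yields that the integral of $|\varphi(|x|)|\cdot|D(\cdots)|$ is well-defined and finite, which by the identity above is precisely the integral in the statement.

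There is no real obstacle here; the argument is a one-line reduction, and all the analytic work (controlling the singularity of $\Hess h_{B^n}$ at the origin through the mixed discriminant) has already been done in Lemma~\ref{le:int_xi_exists}. The only point requiring minor care is the bookkeeping on the singular exponent: the extra factor of $|x|$ multiplying $\psi(|x|)$ shifts the admissible order of singularity of the radial weight from $s^{-(k-1)}$ up to $s^{-k}$, which is exactly the relaxation reflected in the hypothesis $\lim_{s\to 0^+} s^k\psi(s)<\infty$ of the present lemma compared with the hypothesis $\lim_{s\to 0^+} s^{k-1}\varphi(s)<\infty$ of Lemma~\ref{le:int_xi_exists}.
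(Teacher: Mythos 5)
Your proof is correct and follows exactly the paper's argument: set $\varphi(s)=s\psi(s)$, note that $\lim_{s\to 0^+}s^{k-1}\varphi(s)=\lim_{s\to 0^+}s^{k}\psi(s)$, and invoke Lemma~\ref{le:int_xi_exists}. There is nothing to add.
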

	\begin{proof}
		Let $\psi\in C_b((0,\infty))$ be as in the statement and set $\varphi(s)=s\psi(s)$ for $s>0$. Clearly, $\lim_{s\to 0^+} s^{k-1}\varphi(s)=\lim_{s\to 0^+} s^k\psi(s)$ and thus, the result follows from Lemma~\ref{le:int_xi_exists}.
	\end{proof}
	
	Next, we show that we can replace $\Hess h_{B^n}$ by $\Hess \frac 12 h_{B^n}^2=I_n$ in the mixed discriminant, if in addition we replace the density $\psi$ with $\cR^{-1}\psi$. For this, we first need two lemmas. We start with \cite[Lemma 6.2]{colesanti_ludwig_mussnig_7}.
	
	\begin{lemma}
		\label{le:int_symmetric}
		If $v_0,\ldots,v_n\in C^2(\Rn)$ are such that at least one of the functions has compact support, then
		\[
		\int_{\Rn} v_0(x)\,D(\Hess v_1(x),\ldots,\Hess v_n(x))\d x = \int_{\Rn} v_n(x)\,D(\Hess v_1(x),\ldots,\Hess v_{n-1}(x),\Hess v_0(x))\d x.
		\]
	\end{lemma}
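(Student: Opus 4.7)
My plan is to prove the identity by exploiting the cofactor-type divergence structure of the mixed discriminant and then integrating by parts twice. By the multilinearity and symmetry of $D$, I can attach to any fixed $v_1,\ldots,v_{n-1}\in C^2(\Rn)$ a symmetric matrix field $\tilde M^{ij}(x)$ that is multilinear in the Hessians $\Hess v_1(x),\ldots,\Hess v_{n-1}(x)$ and satisfies
\[
D(\Hess v_1(x),\ldots,\Hess v_n(x)) \;=\; \frac{1}{n}\sum_{i,j} \tilde M^{ij}(x)\,\partial_i \partial_j v_n(x).
\]
This $\tilde M$ is exactly the polarization of the classical cofactor matrix: when $v_1=\cdots=v_{n-1}=v$, $\tilde M$ reduces, up to a combinatorial factor, to $\operatorname{cof}(\Hess v)$.

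The key ingredient is then the divergence-free property. The classical identity $\sum_j \partial_j (\operatorname{cof}\Hess v)^{ij}=0$, valid for every $v\in C^2(\Rn)$, polarizes to
\[
\sum_{j=1}^n \partial_j \tilde M^{ij}(x) \;=\; 0, \quad i\in\{1,\ldots,n\},
\]
and by the symmetry of $\tilde M$ also to $\sum_i \partial_i \tilde M^{ij}=0$ for each $j$. With this in hand I would multiply the representation above by $v_0$, integrate, and apply integration by parts first in $\partial_j$ and then in $\partial_i$. In both steps, the Leibniz terms in which the derivative falls on $\tilde M^{ij}$ vanish thanks to the divergence-free property, yielding
\[
\int_{\Rn} v_0\, D(\Hess v_1,\ldots,\Hess v_n)\d x \;=\; \frac{1}{n}\int_{\Rn} v_n\, \tilde M^{ij}\,\partial_i \partial_j v_0\d x,
\]
which, by the symmetry of $D$ in its arguments, is exactly the right-hand side of the claim.

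The main obstacle will be justifying that the boundary terms arising in each integration by parts really drop out, since the only hypothesis available is that one among $v_0,\ldots,v_n$ has compact support. I would handle this by a case analysis on which function is compactly supported. If $v_0$ (respectively $v_n$) has compact support, then so do its partial derivatives, and every intermediate integrand is compactly supported in $x$. If instead $v_k$ with $k\in\{1,\ldots,n-1\}$ has compact support, the multilinearity of $\tilde M$ in the entries $\Hess v_\ell$ forces $\tilde M^{ij}$ to vanish outside $\supp v_k$, so once more every integrand is compactly supported. In each of the three cases, truncating by a ball of large radius and invoking the divergence theorem produces no residual contribution at infinity, and the identity follows.
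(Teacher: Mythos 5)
The paper does not prove this lemma itself; it is quoted verbatim from \cite[Lemma 6.2]{colesanti_ludwig_mussnig_7}, so there is no internal proof to compare against. Evaluated on its own merits, your strategy --- expanding the mixed discriminant via the mixed cofactor field $\tilde M^{ij}$, invoking the null-divergence of $\tilde M$, and integrating by parts twice --- is indeed the standard route to this identity, and your case analysis for why the compact-support hypothesis kills the boundary terms is sound in all three cases.

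There is, however, a genuine regularity gap. You assert that $\sum_j \partial_j(\operatorname{cof}\Hess v)^{ij}=0$ is ``valid for every $v\in C^2(\Rn)$,'' and you then use the polarized version $\sum_j \partial_j \tilde M^{ij}=0$ pointwise inside the Leibniz rule when integrating by parts. But for $v_1,\ldots,v_{n-1}\in C^2(\Rn)$ the entries of $\tilde M^{ij}$ are polynomial expressions in the second derivatives of these functions and are therefore only \emph{continuous}; $\partial_j \tilde M^{ij}$ does not exist classically, so the quantity you set to zero is not defined, and neither is the Leibniz expansion $\partial_j(v_0\tilde M^{ij}) = \partial_j v_0\,\tilde M^{ij} + v_0\,\partial_j\tilde M^{ij}$. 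The pointwise cofactor divergence identity requires $C^3$ data (so that $\tilde M^{ij}\in C^1$); for $C^2$ data it holds only distributionally. To close the argument you should first mollify $v_1,\ldots,v_{n-1}$, carry out the two integrations by parts for the smooth approximants (where your computation is correct), and then pass to the limit using dominated convergence --- the locally uniform convergence of the mollified Hessians and the compact-support case analysis you already gave make this limit passage routine. With that approximation step inserted, the proof is complete.
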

	
	We will also use the following reformulation of \cite[Lemma 6.3]{colesanti_ludwig_mussnig_7}. Cf.\ \cite[(3.3)]{colesanti_ludwig_mussnig_7}.
	
	\begin{lemma}
		\label{le:int_mixed_dis_jac_vanish}
		Let $v_1,\ldots,v_{n-1}\in C^2(\Rn)$ and let $F\colon\Rn\to\Rn$ be a continuously differentiable vector field with compact support. If $F$ is such that its Jacobian matrix $\Jac F$ is symmetric, then
		\[
		\int_{\Rn} D(\Hess v_1(x),\ldots,\Hess v_{n-1}(x),\Jac F(x))\d x = 0.
		\]
	\end{lemma}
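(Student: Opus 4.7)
The plan is to reduce the statement to Lemma~\ref{le:int_symmetric} by exhibiting a compactly supported scalar potential $\phi$ whose Hessian coincides with $\Jac F$. Since $\Jac F$ is symmetric and $F$ is continuously differentiable, the $1$-form $\sum_{i=1}^n F_i \d x_i$ is closed on $\Rn$. By Poincar\'e's lemma, there exists $\phi \in C^2(\Rn)$ with $\nabla \phi = F$, so that $\Hess \phi = \Jac F$ pointwise.

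Next I would arrange for $\phi$ itself to have compact support. For $n\geq 2$, the set $\Rn \setminus \supp F$ is open and connected (the complement of a compact set in $\Rn$ is connected in dimensions at least two), and $\nabla \phi = F$ vanishes there; hence $\phi$ is constant on this complement. Subtracting that constant produces a new $\phi\in C^2(\Rn)$ with compact support while preserving the identity $\Hess \phi = \Jac F$.

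With such a $\phi$ in hand, I would apply Lemma~\ref{le:int_symmetric} with $v_0\equiv 1$, the given $v_1,\ldots,v_{n-1}$, and $v_n=\phi$. The compact support assumption of that lemma is satisfied via $v_n$, and because $\Hess v_0 \equiv 0$ the right-hand side of the identity vanishes, while the left-hand side equals
\[
\int_{\Rn} D(\Hess v_1(x),\ldots,\Hess v_{n-1}(x),\Hess\phi(x))\d x = \int_{\Rn} D(\Hess v_1(x),\ldots,\Hess v_{n-1}(x),\Jac F(x))\d x,
\]
which gives the desired vanishing.

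The dimension $n=1$ must be treated separately, since there $\Rn\setminus\supp F$ is disconnected and the potential argument breaks down; but in that case the list $v_1,\ldots,v_{n-1}$ is empty, $D(\Jac F(x))=F'(x)$, and the identity reduces to $\int_\R F'(x)\d x = 0$, which is immediate from the fundamental theorem of calculus since $F$ has compact support. The only genuine subtlety I foresee is the promotion of $\phi$ to a compactly supported function: Poincar\'e's lemma supplies existence, but obtaining compact support relies on the topological fact that the complement of a compact set in $\Rn$ is connected for $n\geq 2$, which is the step where the dimensional split enters.
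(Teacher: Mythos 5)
Your reduction to Lemma~\ref{le:int_symmetric} via a Poincar\'e-lemma potential is correct in substance, and it is a clean self-contained argument; the paper itself does not prove this lemma here but cites it as a reformulation of \cite[Lemma 6.3]{colesanti_ludwig_mussnig_7}, so a direct comparison of proofs is not available.

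There is, however, one imprecision worth correcting. You invoke ``the complement of a compact set in $\Rn$ is connected for $n\geq 2$,'' which is false: if $\supp F$ is, say, a spherical shell, then $\Rn\setminus\supp F$ has a bounded component in addition to the unbounded one, and $\phi$ may take different constant values on each. The correct statement, and all you actually need, is that for $n\geq 2$ the set $\{|x|>R\}$ is connected whenever $R$ is large enough that $\supp F\subset B_R$; since $\nabla\phi=F$ vanishes there, $\phi$ is constant on $\{|x|>R\}$, and subtracting that one constant makes $\supp\phi\subseteq \overline{B_R}$ compact. The possible nonzero constant values of $\phi$ on bounded holes of the complement are harmless. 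With this small repair the application of Lemma~\ref{le:int_symmetric} with $v_0\equiv 1$ and $v_n=\phi$ goes through exactly as you wrote it, and your separate treatment of $n=1$ via the fundamental theorem of calculus is also fine.
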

	
	\begin{lemma}
		\label{le:int_swap_transform}
		Let $k\in\{1,\ldots,n-1\}$ and $\varepsilon>0$. If $v_1,\ldots,v_k\in C^2(\Rn)$ are such that $\Hess v_1 \equiv 0$ on $\varepsilon B^n$, then
		\begin{align}
			\begin{split}
			\label{eq:int_zeta_inverse_r}
			\int_{\Rn} \psi(|x|)x\,&D(\Hess v_1(x),\ldots,\Hess v_k(x),\Hess h_{B^n}(x)[n-k])\d x\\
			&=\int_{\Rn} (\cR^{-1}\psi)(|x|) x\, D(\Hess v_1(x),\ldots,\Hess v_k(x),\Hess h_{B^n}(x)[n-k-1],I_n)\d x
			\end{split}
		\end{align}
		for every $\psi\in C_b^2((0,\infty))$.
	\end{lemma}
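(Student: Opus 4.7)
The plan is to adapt the vector-field argument of \cite[Section~6]{colesanti_ludwig_mussnig_7} used for the scalar analog of this identity, introducing one new ingredient to handle the vector factor $x$ appearing on both sides. Let $\zeta := \cR^{-1}\psi$ and define the auxiliary radial function $h(s) := \int_s^\infty \psi(t)/t^2\,\d t$, so that $h'(s) = -\psi(s)/s^2$ and $\zeta(s) = \psi(s)/s - h(s)$. Fix a primitive $g$ with $g'(s) = s\,h(s)$ and with $g(s) = 0$ for $s$ larger than the (bounded) support of $\psi$. Because $\Hess v_1 \equiv 0$ on $\varepsilon B^n$, the mixed discriminant in the integrand vanishes identically there; the values of $g$ on $[0,\varepsilon)$ are therefore immaterial and I replace them by a smooth extension, so that $x\mapsto x_i\,g(|x|)$ becomes a compactly supported $C^2$ function on $\Rn$ for each $i\in\{1,\ldots,n\}$.

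Fix such an $i$ and set $W_i(x) := x_i\, g(|x|)$. A direct computation, using $g'(s)/s = h(s)$ and $|x|\,g''(|x|) - g'(|x|) = |x|^2 h'(|x|) = -\psi(|x|)$, yields
\[
\Hess W_i(x) \,=\, h(|x|)\,E_i(x) + \psi(|x|)\,x_i\,\Hess h_{B^n}(x) - \zeta(|x|)\,x_i\,I_n,
\]
where $E_i(x) := x\otimes e_i + e_i\otimes x$. Since $\nabla W_i$ has compact support and symmetric Jacobian $\Hess W_i$, applying Lemma~\ref{le:int_mixed_dis_jac_vanish} with the remaining $n-1$ slots filled by $\Hess v_1,\ldots,\Hess v_k,\Hess h_{B^n}[n-k-1]$ and expanding by multilinearity produces $A_i - B_i + C_i = 0$, where $A_i$ and $B_i$ are the $i$th coordinates of the two sides of \eqref{eq:int_zeta_inverse_r} and
\[
C_i \,:=\, \int_{\Rn} h(|x|)\,D\bigl(\Hess v_1,\ldots,\Hess v_k,\Hess h_{B^n}[n-k-1],E_i(x)\bigr)\,\d x.
\]

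The main obstacle is to prove $C_i = 0$; this term has no counterpart in the scalar case and requires a separate argument. Via the polarization identity $n\,D(A_1,\ldots,A_{n-1}, y\otimes z + z\otimes y) = 2\,z^{T}\, Q\, y$, where $Q = Q(\Hess v_1,\ldots,\Hess v_k,\Hess h_{B^n}[n-k-1])$ denotes the mixed cofactor, I rewrite $C_i = (2/n)\int h(|x|)\,(Q(x)\,x)_i\,\d x$. I then introduce the auxiliary vector field $\tilde W(x) := g(|x|)\,e_i$, whose Jacobian satisfies $\partial_l \tilde W_j = h(|x|)\,x_l\,\delta_{ij}$. Combined with the classical null-Lagrangian identity $\sum_l \partial_l Q_{lj} \equiv 0$ for mixed cofactors of Hessians, this yields the pointwise relation
\[
h(|x|)\,(Q(x)\,x)_i \,=\, \operatorname{div}\bigl(Q\,\tilde W\bigr)(x).
\]
Since $\tilde W$ has bounded support in $|x|$ and $Q$ is linear in $\Hess v_1$, and hence vanishes on $\varepsilon B^n$, the field $Q\,\tilde W$ is compactly supported; the divergence theorem therefore gives $C_i = 0$, and combining with $A_i - B_i + C_i = 0$ coordinatewise delivers \eqref{eq:int_zeta_inverse_r}. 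The null-Lagrangian identity nominally requires $C^3$-regularity of the $v_j$, which is arranged by a standard mollification together with continuity of both sides of \eqref{eq:int_zeta_inverse_r} in the $v_j$.
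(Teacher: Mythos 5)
Your proposal is correct, but it takes a genuinely different route from the paper. The paper first applies Lemma~\ref{le:int_symmetric} to move the factor $x_i$ out of the density and into the last slot of the mixed discriminant, turning the problem into comparing
\[
\int_{\Rn}|x|\,D\bigl(\ldots,\Hess(\psi(|x|)x_i)\bigr)\d x
\quad\text{and}\quad
\int_{\Rn}\tfrac{|x|^2}{2}\,D\bigl(\ldots,\Hess((\cR^{-1}\psi)(|x|)x_i)\bigr)\d x ;
\]
after computing the two Hessians, their weighted difference is recognized as $\tfrac12\Jac F$ for a single compactly supported vector field $F$ with symmetric Jacobian, and one application of Lemma~\ref{le:int_mixed_dis_jac_vanish} finishes. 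You instead avoid Lemma~\ref{le:int_symmetric} entirely, and build the field $W_i(x)=x_i g(|x|)$ directly so that $\Hess W_i$ expands into the two desired integrands plus the extra term $h(|x|)E_i(x)$; this makes the structure of the identity very transparent, but the price is that $C_i$ has no counterpart in the paper's proof and must be killed separately. Your argument for $C_i=0$ is sound, but it appeals to the divergence-free (Reilly/null-Lagrangian) identity $\sum_l\partial_l Q_{lj}\equiv 0$ for the mixed cofactor of Hessians — a classical fact, and the one that ultimately underlies Lemma~\ref{le:int_mixed_dis_jac_vanish}, but one the paper neither states nor cites in that form. A self-contained variant would note that $\tfrac12 h(|x|)E_i(x)$ is the symmetric part of $\Jac\tilde W$ and invoke a version of Lemma~\ref{le:int_mixed_dis_jac_vanish} valid for not-necessarily-symmetric Jacobians; as formulated in the paper, however, that lemma requires $\Jac F$ symmetric, so your separate divergence argument (or a citation for the null-Lagrangian identity) is genuinely needed. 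Your $C^3$-mollification remark and the use of the support hypothesis on $\Hess v_1$ to sidestep the singularity of $g$ near the origin are both appropriate and mirror maneuvers the paper also makes.
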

	\begin{proof}
		Since $\Hess v_1 \equiv 0$ on $\varepsilon B^n$, also the mixed discriminants in both integrals in \eqref{eq:int_zeta_inverse_r} vanish on this set. Together with the compact support of $\psi$, this shows that both integrals in \eqref{eq:int_zeta_inverse_r} are well-defined and finite. We will now prove the vector-valued equation \eqref{eq:int_zeta_inverse_r} coordinate-wise. For the $i$th coordinate, $i\in\{1,\ldots,n\}$, it follows from Lemma~\ref{le:int_symmetric} that \eqref{eq:int_zeta_inverse_r} is equivalent to
		\begin{align*}
			\int_{\Rn} |x| \, &D(\Hess v_1(x),\ldots,\Hess v_k(x),\Hess h_{B^n}(x)[n-k-1],\Hess (\psi(|x|) x_i)) \d x\\
			&=\int_{\Rn} \frac{|x|^2}{2} \, D(\Hess v_1(x),\ldots,\Hess v_k(x),\Hess h_{B^n}(x)[n-k-1],\Hess ((\cR^{-1}\psi)(|x|)x_i))\d x.
		\end{align*}
		Here we have used that the mixed discriminants vanish on $\varepsilon B^n$ and thus, we can replace $h_{B^n}$, $x\mapsto \psi(|x|)x_i$, and $x\mapsto (\cR^{-1}\psi)(|x|)x_i$ with suitable functions in $C^2(\Rn)$ without changing the values of the integrals. By the multilinearity of mixed discriminants, we therefore need to show that
		\begin{align}
		\begin{split}
			\label{eq:int_mixed_det_multilin_vanish}
			\int_{\Rn} D\Big(&\Hess v_1(x),\ldots,\Hess v_k(x), \Hess h_{B^n}(x)[n-k-1],\\
			&\quad|x|\,\Hess (\psi(|x|) x_i)-\frac{|x|^2}{2}\,\Hess ((\cR^{-1}\psi)(|x|)x_i)\Big) \d x = 0.
		\end{split}
		\end{align}
		Now observe that
		\begin{align*}
			\Hess (\psi(|x|)x_i) &= \left(\frac{\partial }{\partial x_m}\left( \frac{\psi'(|x|)}{|x|}x_i x_l + \delta_{il} \psi(|x|)\right)\right)_{l,m=1,\ldots,n}\\
			&= \left(\left(\frac{\psi''(|x|)}{|x|^2}-\frac{\psi'(|x|)}{|x|^3} \right)x_i x_l x_m + \frac{\psi'(|x|)}{|x|}(\delta_{lm} x_i +\delta_{im} x_l  + \delta_{il} x_m)\right)_{l,m=1,\ldots, n}\\
			&=\left(\frac{\psi''(|x|)}{|x|^2}-\frac{\psi'(|x|)}{|x|^3} \right)x_i (x\otimes x) + \frac{\psi'(|x|)}{|x|} \left( x_i I_n + x\otimes e_i + e_i \otimes x\right)
		\end{align*}
		for every $x\in\Rn\setminus\{o\}$, where $\delta_{pq}$ denotes the usual Kronecker delta. Since $\psi$ has bounded support, we have
		\[
		\frac{\d}{\d s}(\cR^{-1}\psi)(s)= \frac{\d}{\d s} \left(\frac{\psi(s)}{s}-\int_s^\infty \frac{\psi(t)}{t^2}\d t \right) = \frac{\psi'(s)}{s}
		\]
		for $s>0$, and therefore, similar to the above,
		\begin{align*}
			\Hess ((\cR^{-1}\psi)(|x|)x_i) &= \left(\frac{\partial }{\partial x_m}\left(\frac{\psi'(|x|)}{|x|^2}x_i x_l + \delta_{il} (\cR^{-1}\psi)(|x|)  \right)\right)_{l,m=1,\ldots,n}\\
			&=\left(\left(\frac{\psi''(|x|)}{|x|^3}-2\frac{\psi'(|x|)}{|x|^4}\right) x_i x_l x_m + \frac{\psi'(|x|)}{|x|^2}(\delta_{lm} x_i  +\delta_{im} x_l +\delta_{il} x_m)\right)_{l,m=1,\ldots,n}\\
			&= \left(\frac{\psi''(|x|)}{|x|^3}-2\frac{\psi'(|x|)}{|x|^4}\right) x_i (x\otimes x)+ \frac{\psi'(|x|)}{|x|^2} \left( x_i I_n + x\otimes e_i + e_i \otimes x\right)
		\end{align*}		
		for every $x\in\Rn\setminus\{o\}$. Hence,
		\begin{align*}
		|x| \Hess(\psi(|x|)x_i)-&\frac{|x|^2}{2} \Hess ((\cR^{-1}\psi)(|x|)x_i)\\
		&= \frac 12 \frac{\psi''(|x|)}{|x|}x_i (x\otimes x) + \frac 12 \psi'(|x|)\left( x_i I_n + x\otimes e_i + e_i \otimes x\right)\\
		&= \frac 12 \Jac F(x) 
		\end{align*}
		for $x\in\Rn\setminus\{o\}$, where $F\colon \Rn\setminus\{o\}\to\Rn$ is the vector field
		\[
		F(x)= \psi'(|x|)x_i\,x + \left(\psi(|x|)|x|+\int_{|x|}^\infty \psi(t)\right) e_i.
		\]
		Thus, we can rewrite the left side of \eqref{eq:int_mixed_det_multilin_vanish} as
		\[
		\int_{\Rn} D\Big(\Hess v_1(x),\ldots,\Hess v_k(x), \Hess h_{B^n}(x)[n-k-1],\tfrac 12 \Jac F(x)\Big) \d x.
		\]
		Using again that we may replace the integrands in a neighborhood of the origin, it follows from Lemma~\ref{le:int_mixed_dis_jac_vanish} that this integral vanishes, which completes the proof.
	\end{proof}
	
	Now that we have shown the desired equation under additional regularity assumptions, we use standard approximation arguments to remove said assumptions.
	
	\begin{proposition}
		\label{prop:int_swap_once}
		Let $k\in\{1,\ldots,n-1\}$. If $\psi\in T_{n-k+1}^n$, then
		\begin{align*}
		\int_{\Rn} \psi(|x|)x\,& D(\Hess v_1(x),\ldots,\Hess v_k(x),\Hess h_{B^n}(x)[n-k])\d x\\
		&= \int_{\Rn} (\cR^{-1}\psi)(|x|)x\, D(\Hess v_1(x),\ldots,\Hess v_k(x),\Hess h_{B^n}(x)[n-k-1],I_n)\d x
		\end{align*}
		for every $v_1,\ldots,v_k\in C^2(\Rn)$.
	\end{proposition}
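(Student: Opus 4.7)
My plan is to reduce Proposition~\ref{prop:int_swap_once} to Lemma~\ref{le:int_swap_transform} by simultaneous approximation of $\psi$ and $v_1$. For the density, given $\psi\in T_{n-k+1}^n$, I fix a smooth cutoff $\chi\in C^\infty([0,\infty))$ with $\chi\equiv 0$ on $[0,1]$ and $\chi\equiv 1$ on $[2,\infty)$, set $\chi_m(s)=\chi(ms)$, and let $\psi_m:=(\chi_m\psi)*\eta_m$, where $\eta_m$ is a $1$-D mollifier supported in $[-1/(4m),1/(4m)]$. Then $\psi_m\in C_c^\infty((0,\infty))\subset C_b^2((0,\infty))$, $\supp \psi_m\subset[a_m,b]$ for some $a_m>0$, and $\psi_m\to\psi$ pointwise on $(0,\infty)$. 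For $v_1\in C^2(\Rn)$, I replace it by its first-order Taylor polynomial at the origin on $(1/\ell)B^n$ and glue smoothly to $v_1$ outside $(2/\ell)B^n$ via a cutoff, producing $v_{1,\ell}\in C^2(\Rn)$ with $\Hess v_{1,\ell}\equiv 0$ on $(1/\ell)B^n$, $v_{1,\ell}=v_1$ outside $(2/\ell)B^n$, and $\|\Hess v_{1,\ell}\|_{L^\infty}$ uniformly bounded on compacta.

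For each pair $(m,\ell)$, Lemma~\ref{le:int_swap_transform} gives the identity with data $(\psi_m,v_{1,\ell},v_2,\ldots,v_k)$. I first let $\ell\to\infty$ with $m$ fixed. On the left side, once $1/\ell<a_m$ the integrand agrees identically with the one built from $v_1$, since $\psi_m(|x|)=0$ on $\{|x|<a_m\}$; thus the LHS is eventually constant in $\ell$. On the right side, $\cR^{-1}\psi_m$ is continuous, bounded, and compactly supported on $(0,\infty)$, and Lemma~\ref{le:int_mixed_dis_finite} (applied with $k+1$ in place of $k$ when $k\leq n-2$, and handled directly when $k=n-1$ because the mixed discriminant is then nonsingular) supplies an $\ell$-independent integrable majorant, so dominated convergence delivers the identity for $(\psi_m,v_1,\ldots,v_k)$.

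Finally, I send $m\to\infty$. Pointwise, $\psi_m(|x|)\to\psi(|x|)$ for $x\neq o$, and the explicit formula for $\cR^{-1}$ combined with uniform bounds on $\{\psi_m\}$ yields $\cR^{-1}\psi_m\to\cR^{-1}\psi$ pointwise on $\Rn\setminus\{o\}$. Lemma~\ref{le:int_mixed_dis_finite} then supplies integrable majorants for both sides: the LHS is controlled by the condition $\lim_{s\to 0^+}s^k\psi(s)=0$ built into $T_{n-k+1}^n$, and the RHS by the condition $\lim_{s\to 0^+}s^{k+1}(\cR^{-1}\psi)(s)=0$, which holds by Lemma~\ref{le:r_trans_bij} since $\cR^{-1}\psi\in T_{n-k+1}^{n+1}$. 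The main technical obstacle is producing these uniform-in-$m$ integrable majorants: one must verify that the approximating family $\{\psi_m\}$ respects the defining condition of $T_{n-k+1}^n$ uniformly in $m$, and that the corresponding condition on $\{\cR^{-1}\psi_m\}$ with weight $s^{k+1}$ survives the mollification-plus-cutoff construction, so that the bounds from Lemma~\ref{le:int_mixed_dis_finite} are available in the limit.
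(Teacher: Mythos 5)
Your strategy relies on the same ingredients as the paper (Lemma~\ref{le:int_swap_transform}, regularization of the density and of $v_1$, Lemma~\ref{le:int_mixed_dis_finite}, dominated convergence), but you reverse the order of the two limit operations. The paper first handles the case $\Hess v_1\equiv 0$ on $\varepsilon B^n$ with the density \emph{fixed}: since the mixed discriminants then vanish on $\varepsilon B^n$, the authors may replace $\psi$ by a cutoff $\psi_\varepsilon$ that agrees with $\psi$ on $[\varepsilon,\infty)$ and vanishes on $(0,\varepsilon/2]$ \emph{without changing either integral}, and then regularize $\psi_\varepsilon$ by functions $\psi_{\varepsilon,\delta}\in C^2_b$ converging \emph{uniformly}. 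This makes the $\delta\to 0^+$ limit essentially trivial, with no dominated convergence needed on the density side. Only afterwards do they truncate $v_1$ and invoke Lemma~\ref{le:int_mixed_dis_finite} together with DCT. You instead pass $\ell\to\infty$ first (removing the truncation of $v_1$) and only afterwards $m\to\infty$ (removing the mollification of $\psi$); this makes both steps genuine DCT applications, with the $m$-limit requiring a uniform-in-$m$ integrable majorant, which is exactly the part you flag as the ``main technical obstacle'' and then leave unverified.

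Two concrete issues remain. First, your claim that $\cR^{-1}\psi_m$ is ``compactly supported on $(0,\infty)$'' is false: for $s$ below the support of $\psi_m$ one has $\cR^{-1}\psi_m(s)=-\int_{a_m}^\infty \psi_m(t)/t^2\,\d t$, a generically nonzero constant, so $\cR^{-1}\psi_m$ only extends continuously to $[0,\infty)$ with bounded support. This does not break your Step~2, since boundedness plus the $O(1/|x|^{n-k-1})$ decay of the mixed discriminant already furnish a majorant once the $\Hess v_{1,\ell}$ are uniformly bounded, but the statement as written is incorrect. Second and more importantly, the final step is incomplete as presented: you need to actually verify that $\{\psi_m\}$ and $\{\cR^{-1}\psi_m\}$ admit uniform-in-$m$ integrable majorants. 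These bounds do hold for your mollification construction --- one has $|\psi_m(s)|\le \|\psi\|_\infty$ for all $s$, and a direct estimate on both terms of the formula for $\cR^{-1}$ gives $|\cR^{-1}\psi_m(s)|\le 2\|\psi\|_\infty/s$ uniformly in $m$, which together with the $O(1/|x|^{n-k})$ and $O(1/|x|^{n-k-1})$ behavior of the mixed discriminants produce majorants $\propto s^k$ near the origin in polar coordinates --- but your proposal stops precisely at the point where this verification is required. The paper's organization avoids this difficulty outright, which is why it is cleaner, but your route can be completed along the lines just indicated.
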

	\begin{proof}
		We start with the observation that since $\psi\in T_{n-k+1}^n\subset C_b((0,\infty))$, there exists $\gamma>0$ such that $\psi(s)=\cR^{-1}\psi(s)=0$ for every $s\geq \gamma$. Assume first that there exists $\varepsilon>0$ such that $\Hess v_1\equiv 0$ on $\varepsilon B^n$. Let $\psi_\varepsilon\in C_b((0,\infty))$ be such that $\psi_\varepsilon\equiv \psi$ on $[\varepsilon,\infty)$ and $\psi_\varepsilon\equiv 0$ on $(0,\frac{\varepsilon}{2}]$. Note that this implies that also \[
		\cR^{-1}\psi_{\varepsilon}\equiv \cR^{-1}\psi
		\]
		on $[\varepsilon,\infty)$. For $\delta>0$ we can find $\psi_{\varepsilon,\delta}\in C_b^2((0,\infty))$ such that $\psi_{\varepsilon,\delta}\equiv 0$ on $(0,\frac{\varepsilon}{2}] \cup [\gamma+\delta,\infty)$ and such that $\psi_{\varepsilon,\delta}$ converges uniformly to $\psi_{\varepsilon}$ on $(\frac{\varepsilon}{2},\gamma+\delta)$ as $\delta\to 0^+$. In particular, this gives uniform convergence of $\psi_{\varepsilon,\delta}$ to $\psi_{\varepsilon}$ and furthermore of $\cR^{-1}\psi_{\varepsilon,\delta}$ to $\cR^{-1}\psi_{\varepsilon}$ on $(0,\infty)$ as $\delta\to 0^+$. Considering the assumed condition on $v_1$ and observing that each of the following integrands is continuous with compact support, it now follows from Lemma~\ref{le:int_swap_transform} that
		\begin{align*}
			\int_{\Rn} \psi(|x|)x\, & D(\Hess v_1(x),\ldots,\Hess v_k(x),\Hess h_{Bn}(x)[n-k])\d x\\
			&=\int_{\Rn} \psi_{\varepsilon}(|x|)x\, D(\Hess v_1(x),\ldots,\Hess v_k(x),\Hess h_{B^n}(x)[n-k])\d x\\
			&=\lim_{\delta\to 0^+} \int_{\Rn} \psi_{\varepsilon,\delta}(|x|)x\, D(\Hess v_1(x),\ldots,\Hess v_k(x),\Hess h_{B^n}(x)[n-k])\d x\\
			&=\lim_{\delta\to 0^+} \int_{\Rn} (\cR^{-1}\psi_{\varepsilon,\delta})(|x|)x\, D(\Hess v_1(x),\ldots,\Hess v_k(x),\Hess h_{B^n}(x)[n-k-1],I_n)\d x\\
			&=\int_{\Rn} (\cR^{-1}\psi_{\varepsilon})(|x|)x\, D(\Hess v_1(x),\ldots,\Hess v_k(x),\Hess h_{B^n}(x)[n-k-1],I_n)\d x\\
			&=\int_{\Rn} (\cR^{-1}\psi)(|x|)x\, D(\Hess v_1(x),\ldots,\Hess v_k(x),\Hess h_{B^n}(x)[n-k-1],I_n)\d x.
		\end{align*}
		
		For general $v_1\in C^2(\Rn)$, we first observe that we may assume $v_1(o)=0$ and $\nabla v_1(o)=0$ since the equality to be shown depends only on the second derivatives of $v_1$. Thus, there exist $\beta,\varepsilon_0>0$ such that
		\begin{equation}
		\label{eq:v_1_estimates}
		|v_1(x)|\leq \beta|x|^2\quad \text{and}\quad |\nabla v_1(x)|\leq \beta|x|
		\end{equation}
		for $|x|\leq 2 \varepsilon_0 B^n$. Now let $\varphi\in C^2([0,\infty))$ be such that $\varphi(t)=0$ for $t\in [0,1]$ and $\varphi(t)=1$ for $t\in[2,\infty)$, and define $v_{1,\varepsilon}\in C^2(\Rn)$ as
		\[
		v_{1,\varepsilon}(x)=v_1(x)\,\varphi\left(\frac{|x|}{\varepsilon}\right)
		\]
		for $x\in\Rn$ and $\varepsilon\in(0,\varepsilon_0)$. By the choice of $\varphi$ we clearly have $\Hess v_{1,\varepsilon}\equiv 0$ on $\varepsilon B^n$ for every $\varepsilon\in(0,\varepsilon_0)$ and $\Hess v_{1,\varepsilon}(x)=\Hess v_{1}(x)$ for $|x|\geq 2 \varepsilon$. In particular, $\Hess v_{1,\varepsilon}$ converges to $\Hess v_1$ pointwise on $\Rn\setminus\{o\}$ as $\varepsilon\to 0^+$. Furthermore, it is straightforward to compute that
		\begin{align*}
		\Hess v_{1,\varepsilon}(x)&= \varphi\left(\frac{|x|}{\varepsilon}\right) \Hess v_1(x)+\frac{1}{\varepsilon|x|}\, \varphi'\left(\frac{|x|}{\varepsilon}\right) (\nabla v_1(x)\otimes x + x\otimes \nabla v_1(x)+v_1(x)\,I_n)\\
		&\quad +\frac{1}{\varepsilon^2 |x|^2}\, \varphi''\left(\frac{|x|}{\varepsilon}\right) v_1(x) ( x\otimes x) - \frac{1}{\varepsilon |x|^3}\, \varphi'\left(\frac{|x|}{\varepsilon}\right) v_1(x) (x\otimes x)
		\end{align*}
		for $x\in\Rn\setminus\{o\}$. Since for $|x|<2\varepsilon$ we have $\frac{1}{\varepsilon}<\frac{1}{2|x|}$, this shows together with \eqref{eq:v_1_estimates} and the above observations that $\Hess v_{1,\varepsilon}$ is uniformly bounded on $\gamma B^n$ for $\varepsilon\in(0,\varepsilon_0)$. Next, since $\psi\in T_{n-k+1}^n$, we have $\lim_{s\to 0^+} s^k \psi(s)=0$. Moreover, since $T_{n-k+1}^n=T_{n-k}^{n-1}$, it follows from Lemma~\ref{le:r_trans_bij} that $\cR^{-1}\psi\in T_{n-k}^n$ and therefore $\lim_{s\to 0^+} s^{k+1}\cR^{-1}\psi(s)=0$. Thus, by Lemma~\ref{le:int_mixed_dis_finite} and the dominated convergence theorem, together with the first part of the proof, we obtain
		\begin{align*}
			\int_{\Rn} &\psi(|x|)x\, D(\Hess v_1(x),\ldots,\Hess v_k(x),\Hess h_{B^n}(x)[n-k])\d x\\
			&=\lim_{\varepsilon\to 0^+} \int_{\Rn} \psi(|x|)x \, D(\Hess v_{1,\varepsilon}(x),\Hess v_2(x),\ldots,\Hess v_k(x),\Hess h_{B^n}(x)[n-k])\d x\\
			&=\lim_{\varepsilon\to 0^+} \int_{\Rn} (\cR^{-1}\psi)(|x|)x\,D(\Hess v_{1,\varepsilon}(x),\Hess v_2(x),\ldots,\Hess v_k(x),\Hess h_{B^n}(x)[n-k-1],I_n)\d x\\
			&=\int_{\Rn} (\cR^{-1}\psi)(|x|)x\, D(\Hess v_1(x),\ldots,\Hess v_k(x),\Hess h_{B^n}(x)[n-k-1],I_n)\d x,
		\end{align*}
		which concludes the proof.
	\end{proof}
	
	We now come to the main result of this section, which we obtain from an iterated application of the previous Proposition.
	
	\begin{proposition}
		\label{prop:int_swap}
		Let $j\in\{1,\ldots,n-1\}$. If $\zeta\in T_j^n$, then
		\[
		\int_{\Rn} \zeta(|x|)x \, [\Hess v(x)]_j \d x = \binom{n}{j} \int_{\Rn} (\cR^{n-j}\zeta)(|x|) x\, D(\Hess v(x)[j],\Hess h_{B^n}(x)[n-j])\d x
		\]
		for every $v\in C^2(\Rn)$.
	\end{proposition}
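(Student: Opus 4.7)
The plan is to prove the identity by induction on the number of times Proposition~\ref{prop:int_swap_once} is applied, starting from the right-hand side and converting one copy of $\Hess h_{B^n}$ into $I_n = \Hess(\tfrac{1}{2}|\cdot|^2)$ at each stage, with the density picking up an application of $\cR^{-1}$ each time. After exactly $n-j$ such conversions, the right-hand side will have become
\[
\binom{n}{j}\int_{\Rn} \zeta(|x|)\,x\, D\big(\Hess v(x)[j], I_n[n-j]\big)\d x,
\]
and the identity \eqref{eq:elem_symm_mixed_dis} will identify the integrand with $\zeta(|x|)\,x\,[\Hess v(x)]_j$, thereby yielding the left-hand side.

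Concretely, at stage $m \in \{0,\ldots,n-j-1\}$ the expression takes the form
\[
\binom{n}{j}\int_{\Rn} (\cR^{n-j-m}\zeta)(|x|)\,x\, D\big(\Hess v(x)[j], \Hess h_{B^n}(x)[n-j-m], I_n[m]\big)\d x.
\]
To advance from stage $m$ to stage $m+1$, I would apply Proposition~\ref{prop:int_swap_once} with its parameter $k = j+m$, taking the $j$ copies of $\Hess v$ together with the $m$ copies of $I_n = \Hess(\tfrac{1}{2}|\cdot|^2)$ as the $k$ arbitrary $C^2(\Rn)$ Hessian entries; the symmetry of the mixed discriminant permits the required reordering. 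The admissible range $k\in\{1,\ldots,n-1\}$ of Proposition~\ref{prop:int_swap_once} is respected because $j\in\{1,\ldots,n-1\}$ and $m\in\{0,\ldots,n-j-1\}$ force $j+m\in\{1,\ldots,n-1\}$.

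The one piece of bookkeeping that must be verified is that the density $\cR^{n-j-m}\zeta$ lies in the class $T_{n-k+1}^n = T_{n-j-m+1}^n$ demanded by Proposition~\ref{prop:int_swap_once} at every stage. By Lemma~\ref{le:r_trans_bij}, $\cR^{n-j-m}\zeta$ belongs to $T_j^{n-(n-j-m)} = T_j^{j+m}$, and unpacking the defining decay condition shows that $T_j^{j+m}$ coincides with $T_{n-m}^n$ as a subset of $C_b((0,\infty))$. Since $j\geq 1$, the inclusion $T_{n-m}^n \subseteq T_{n-j-m+1}^n$ follows immediately from the monotonicity of the classes $T_l^n$ in $l$ (which is itself a direct consequence of the defining limit condition).

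The main obstacle I expect is nothing conceptual, but precisely this class-tracking: making sure that each intermediate density lives in the correct $T^n$-class so that Proposition~\ref{prop:int_swap_once} may be invoked. Once this accounting is in place, the proof reduces to an $(n-j)$-fold mechanical iteration of Proposition~\ref{prop:int_swap_once} closed out by \eqref{eq:elem_symm_mixed_dis}.
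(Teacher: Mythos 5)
Your proposal is correct and follows essentially the same route as the paper: iterate Proposition~\ref{prop:int_swap_once} exactly $(n-j)$ times from the right-hand side, at stage $m$ taking $k=j+m$ with the $j$ copies of $v$ and $m$ copies of $\tfrac{1}{2}h_{B^n}^2$ as the free $C^2$ entries, and then close with \eqref{eq:elem_symm_mixed_dis}. The class-tracking you identify as the key bookkeeping step is precisely the paper's chain $\cR^{n-k}\zeta\in T_j^k=T_{n-k+j}^n\subseteq T_{n-k+1}^n$ via Lemma~\ref{le:r_trans_bij} and $j\geq 1$, so your argument matches the paper's proof in both structure and detail.
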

	\begin{proof}
		Let $\zeta\in T_j^n$ be given. For every $k\in\{j,\ldots,n-1\}$ it follows from Lemma~\ref{le:r_trans_bij} together with the fact that $T_j^k=T_{n-k+j}^n\subseteq T_{n-k+1}^n$, that $\cR^{n-k} \zeta\in T_{n-k+1}^n$. Thus, applying Proposition~\ref{prop:int_swap_once} $(n-j)$ times with $\psi=\cR^{n-k}\zeta$, $v_1\ldots,v_j=v$, $v_{j+1},\ldots,v_k=\frac 12 h_{B^n}^2$, and $k\in\{j,\ldots,n-1\}$, shows that
		\begin{align*}
			\int_{\Rn} (\cR^{n-j}\zeta)(|x|)x\,& D(\Hess v(x)[j],\Hess h_{B^n}(x)[n-j])\d x\\
			&=\int_{\Rn} (\cR^{n-(j+1)}\zeta)(|x|)x\, D(\Hess v(x)[j],\Hess h_{B^n}(x)[n-(j+1)],I_n)\d x\\
			&\;\;\vdots\\
			&=\int_{\Rn} \zeta(|x|)x\, D(\Hess v(x)[j],I_n[n-j])\d x
		\end{align*}
		for every $v\in C^2(\Rn)$. Together with \eqref{eq:elem_symm_mixed_dis} this completes the proof.
	\end{proof}

	\subsection{Proof of Theorem~\ref{thm:main_existence} and Theorem~\ref{thm:main_rep_maj}}
	Let $j\in\{1,\ldots,n\}$ and $\zeta\in T_j^n$ be given, and set $\alpha=\binom{n}{j}\cR^{n-j}\zeta$. Since $T_j^j=T_n^n$, it follows from Lemma~\ref{le:r_trans_bij} that $\alpha\in T_n^n$. Thus, Proposition~\ref{prop:int_maj_is_a_vector-val} shows that
	\[
	\oz(v)=\int_{\Rn} \alpha(|x|)x\d\MA_j(v;x),\quad v\in\fconvf,
	\]
	defines a continuous, dually epi-translation invariant, rotation and $\On$ equivariant valuation {$\oz\colon\fconvf\to\Rn$}. Furthermore, by \eqref{eq:mixed_ma_dis} and Proposition~\ref{prop:int_swap} we have
	\[
		\oz(v)=\binom{n}{j} \int_{\Rn} (\cR^{n-j}\zeta)(|x|)x\, D(\Hess v(x)[j],\Hess h_{B^n}(x)[n-j])\d x =\int_{\Rn} \zeta(|x|)x\, [\Hess v(x)]_j \d x
	\]
	for every $v\in\fconvf\cap C^2(\Rn)$. Since $\fconvf\cap C^2(\Rn)$ is a dense subset of $\fconvf$, the valuation $\oz$ is uniquely determined by the last equality and thus, Theorem~\ref{thm:main_existence} follows. Together with \eqref{eq:r_l}, this also shows Theorem~\ref{thm:main_rep_maj}.
	\qed
	
	\subsection{Extended Representation Formulas}
	\label{se:extended_rep_formulas}
	Let $j\in\{1,\ldots,n\}$. By Theorem~\ref{thm:main_existence} and \eqref{eq:phi_j_hess} we have
	\begin{equation}
	\label{eq:t_j_zeta_hessian_c2}
	\ot_{j,\zeta}^*(v)=\int_{\Rn} \zeta(|x|)x\d\Phi_j(v;x)
	\end{equation}
	for every $v\in \fconvf\cap C^2(\Rn)$ and $\zeta\in T_j^n$. Furthermore, it follows from Theorem~\ref{thm:main_existence} and Proposition~\ref{prop:int_maj_is_a_vector-val} that \eqref{eq:t_j_zeta_hessian_c2} also holds whenever $v\in\fconvf$ and $\zeta\in T_n^n \subset T_j^n$.	Similar to \cite[Lemma 3.23]{colesanti_ludwig_mussnig_5}, we want to extend this representation, for general $\zeta\in T_j^n$, to
	\[
	\fconvfz = \{v\in \fconvf : v \text{ is of class } C^2 \text{ in a neighborhood of the origin}\}.
	\]
	
	\begin{lemma}
		\label{le:t_j_zeta_fconvz}
		Let $j\in\{1,\ldots,n\}$. If $\zeta\in T_j^n$, then
		\[
		\ot_{j,\zeta}^*(v)=\int_{\Rn} \zeta(|x|)x\d\Phi_j(v;x)
		\]
		for every $v\in\fconvfz$.
	\end{lemma}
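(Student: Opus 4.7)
The strategy is to split $\zeta$ into a piece supported away from the origin—where the singularity is absent and the $T_n^n$ case of the identity is already known—and a piece localized near the origin, where $v$ is smooth and one can reduce to Theorem~\ref{thm:main_existence} via an approximation argument.

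Since $v \in \fconvfz$, choose $\delta>0$ such that $v$ is of class $C^2$ on the open ball of radius $2\delta$ around $o$. Pick a continuous cutoff $\eta\colon(0,\infty)\to[0,1]$ with $\eta\equiv 1$ on $(0,\delta/2]$ and $\eta\equiv 0$ on $[\delta,\infty)$, and decompose $\zeta=\zeta_1+\zeta_2$ with $\zeta_1=\eta\zeta$ and $\zeta_2=(1-\eta)\zeta$. Since $\zeta_2$ vanishes near $0$, it extends continuously to $0$ by setting $\zeta_2(0)=0$ and hence $\zeta_2\in T_n^n\subset T_j^n$. By the observation recorded just before the lemma, the identity
\[
\ot_{j,\zeta_2}^*(v) = \int_{\Rn} \zeta_2(|x|)\,x\,\d\Phi_j(v;x)
\]
holds for every $v\in\fconvf$. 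Since the assignment $\zeta\mapsto \ot_{j,\zeta}^*$ is linear, it remains to establish the analogous identity for $\zeta_1\in T_j^n$, whose radial extension $\zeta_1(|\cdot|)$ is supported in $\delta B^n$.

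For this, approximate $v$ by mollifications $v_k=v\ast\phi_k\in\fconvf\cap C^\infty(\Rn)$, where $\phi_k$ is a smooth radial mollifier with $\supp(\phi_k)\subset B_{1/k}(o)$. Convolution preserves convexity and $v_k\to v$ locally uniformly, hence in the epi-topology, so continuity of $\ot_{j,\zeta_1}^*$ yields $\ot_{j,\zeta_1}^*(v_k)\to\ot_{j,\zeta_1}^*(v)$. For $k$ large enough that $1/k<\delta/2$, differentiation under the convolution integral gives $\Hess v_k(x)=\int \Hess v(x-y)\phi_k(y)\,\d y$ for $x\in\delta B^n$, and uniform continuity of $\Hess v$ on the compact set $\{x:|x|\le 3\delta/2\}$ implies that $\Hess v_k\to \Hess v$ uniformly on $\overline{\delta B^n}$. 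By Theorem~\ref{thm:main_existence} applied to the smooth $v_k$,
\[
\ot_{j,\zeta_1}^*(v_k) = \int_{\delta B^n} \zeta_1(|x|)\,x\,[\Hess v_k(x)]_j\,\d x.
\]

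The main technical step is now dominated convergence. The integrands are uniformly bounded by $C\,|\zeta_1(|x|)|\,|x|$ for some $C>0$, and integrability of this majorant over $\delta B^n$ reduces via polar coordinates to checking $\int_0^\delta|\zeta_1(r)|\,r^n\,\d r<\infty$. This follows from $\zeta_1\in T_j^n$: the condition $\lim_{s\to 0^+}s^{n-j+1}\zeta_1(s)=0$ gives $|\zeta_1(r)|\,r^n = o(r^{j-1})$ near $0$, which is integrable since $j\ge 1$—here the extra factor $|x|$ in the integrand (compared with the scalar case of Lemma~3.23 of~\cite{colesanti_ludwig_mussnig_5}) is exactly what saves one power and makes the bound work uniformly across $j\in\{1,\ldots,n\}$. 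Passing to the limit and using $\d\Phi_j(v;x)=[\Hess v(x)]_j\,\d x$ on $B_{2\delta}(o)$ via \eqref{eq:phi_j_hess}, we obtain
\[
\ot_{j,\zeta_1}^*(v) = \int_{\Rn}\zeta_1(|x|)\,x\,\d\Phi_j(v;x),
\]
and adding this to the $\zeta_2$ identity completes the proof.
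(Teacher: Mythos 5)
Your proof is correct and follows essentially the same strategy as the paper's: mollify $v$ near the origin, use the $C^2$ convergence of $\Hess v_k$ there, and control the singular part of the integrand via the fact that $s^n\zeta(s)\to 0$ as $s\to 0^+$ (a consequence of $\zeta\in T_j^n$ with $j\ge 1$). The only organizational difference is that you split the density $\zeta=\zeta_1+\zeta_2$ and apply dominated convergence, whereas the paper splits the integration domain at radius $r$ and makes a uniform-in-$k$ smallness estimate on $\{|x|<r\}$; these are two phrasings of the same estimate.
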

	\begin{proof}
		Let $\zeta\in T_j^n$ and $v\in\fconvfz$ be given. By the definition of $\fconvfz$ there exists $r_0>0$ such that $v$ is of class $C^2$ on $r_0 B^n$. Using a standard mollification procedure, we can find a sequence $v_k\in\fconvf\cap C^2(\Rn)$, $k\in\N$, such that $v_k$ epi-converges to $v$ as $k\to\infty$ and such that we have convergence in the $C^2$-norm on $r_0 B^n$. Since $\zeta$ is continuous on $(0,\infty)$, it follows from the weak continuity of $w\mapsto \Phi_j(w;\cdot)$ that
		\begin{equation}
			\label{eq:lim_k_int_x_geq_r}
			\lim_{k\to\infty} \int_{\{x: |x|\geq r\}} \zeta(|x|)x \d\Phi_j(v_k;x)=\int_{\{x: |x|\geq r\}} \zeta(|x|)x \d\Phi_j(v;x)
		\end{equation}
		for every $r>0$ such that $\Phi_j(v;\{x: |x|=r\})=0$, which is satisfied for a.e.\ $r>0$ since $\Phi_j(v;\cdot)$ is locally finite. Furthermore, it follows from \eqref{eq:phi_j_hess} together with using polar coordinates that
		\begin{align*}
			\left|\int_{\{x: |x|<r\}} \zeta(|x|)x \d\Phi_j(v_k;x) \right| &= \left|\int_{\{x: |x|<r\}} \zeta(|x|)x\,[\Hess v_k(x)]_j \d x \right|\\
			&\leq \int_{\{x: |x|<r\}} |\zeta(|x|)|x| \,\left| [\Hess v_k(x)]_j\right| \d x\\
			&\leq n \kappa_n \sup_{|x|<r}\left| [\Hess v_k(x)]_j\right| \int_0^r |\zeta(s)|s^n \d s
		\end{align*}
		for every $k\in\N$ and $r>0$. Since $\lim_{s\to 0^+} \zeta(s)s^n=0$ and since $v_k$ converges to $v$ in the $C^2$-norm on $r_0 B^n$, this shows that for every $\varepsilon>0$ we can find $r_1\in(0,r_0)$ such that
		\[
		\left|\int_{\{x: |x|<r_1\}} \zeta(|x|)x \d\Phi_j(v_k;x) \right|<\varepsilon\quad\text{and}\quad \left|\int_{\{x: |x|<r_1\}} \zeta(|x|)x \d\Phi_j(v;x) \right|<\varepsilon
		\]
		for every $k\in\N$. Together with \eqref{eq:t_j_zeta_hessian_c2} and \eqref{eq:lim_k_int_x_geq_r} this gives
		\[
		\ot_{j,\zeta}^*(v)=\lim_{k\to\infty} \ot_{j,\zeta}^*(v_k) = 
		\lim_{k\to\infty} \int_{\Rn} \zeta(|x|)x\d\Phi_j(v_k;x) = \int_{\Rn} \zeta(|x|)x\d\Phi_j(v;x),
		\]
		where we have used the continuity of $\ot_{j,\zeta}^*$, due to Theorem~\ref{thm:main_existence}.
	\end{proof}
	
	\section{Further Representations}
	\label{se:further_reps}
	The purpose of this section is to find further representations of functional Minkowski vectors.
	First, in Section~\ref{se:kubota}, we will use recent results from \cite{hug_mussnig_ulivelli_support} to establish Kubota-type formulas for the operators $\ot_{j,\zeta}^*$, which also yields another explicit representation of functional Minkowski vectors, forgoing singular integrals. After this, we represent functional Minkowski vectors in terms of integrals with respect to area measures of $(n+1)$-dimensional convex bodies in Section~\ref{se:functions_to_bodies}. This will be needed in the context of smooth valuations in Section~\ref{se:smooth_vals}. As an intermediate step towards this representation, we first need a result similar to Proposition~\ref{prop:int_swap}, where one treats the case where integrals with respect to Hessian measures are transformed into integrals with respect to
	\[
	D(\Hess v(x)[j],\Hess v_{B}(x)[n-j])\d x,
	\]
	where $v_B\in \fconvf$ is given by
	\begin{equation}
	\label{eq:def_v_b}
	v_B(x)=h_{B^{n+1}}(x,-1)=\sqrt{1+|x|^2}
	\end{equation}
	for $x\in\Rn$. While the procedure for this is largely analogous to Section~\ref{se:existence}, there are also essential differences. In particular, the necessary transformation of the integral densities, which we introduce in Section~\ref{se:transform_t}, is not a bijection between different classes $T_j^n$ and $T_j^{n-l}$, but from $T_j^n$ onto itself.	
	
	\subsection{Kubota-type Formulas}
	\label{se:kubota}	
	The following Kubota-type formula for the measures $\MA_j(v;\cdot)$ is a consequence of
	\cite[Theorem 1.3]{hug_mussnig_ulivelli_support}. Here, for $j\in\{1,\ldots,n-1\}$ we denote by $\Grass{j}{n}$ the Grassmannian of $j$-dimensional linear subspaces of $\Rn$, on which we consider integration with respect to the Haar probability measure. For $E\in\Grass{j}{n}$ and $w\in\fconvfE$ we furthermore write $\MA_E(w;\cdot)$ for the Monge--Amp\`ere measure with respect to the ambient space $E$.
	\begin{theorem}
		\label{thm:kubota_fconvf}
		If $j\in\{1,\ldots,n-1\}$ and $\beta\colon \Rn\to [0,\infty)$ is measurable, then
		\[
		\frac{1}{\kappa_n}\int_{\Rn} \beta(x)\d\MA_j(v;x) = \frac{1}{\kappa_j} \int_{\Grass{j}{n}}\int_E \beta(x_E)\d\MA_E(v\vert_E;x_E)\d E
		\]
		for every $v\in\fconvf$.
	\end{theorem}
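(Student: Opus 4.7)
The plan is to deduce this directly from the Kubota-type formula \cite[Theorem 1.3]{hug_mussnig_ulivelli_support}. That result treats mixed Monge--Amp\`ere measures $\MA(v_1,\ldots,v_n;\cdot)$, expressing the integral of a test function against such a measure as an iterated integral over the Grassmannian of a Monge--Amp\`ere measure on the lower-dimensional subspace, for a suitable choice of the arguments. Since \eqref{eq:ma_j_ma} gives $\MA_j(v;\cdot)=\MA(v[j],h_{B^n}[n-j];\cdot)$, the first step is to specialize the cited formula to $v_1=\cdots=v_j=v$ and $v_{j+1}=\cdots=v_n=h_{B^n}$. The $(n-j)$ copies of the support function $h_{B^n}$ are exactly what encodes the reduction from the $n$-dimensional ambient space to the $j$-dimensional subspaces $E\in\Grass{j}{n}$, so that after integration over the Grassmannian only the $j$-fold Monge--Amp\`ere measure of the restriction $v\vert_E$ remains on the right-hand side.

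The second step is to promote the identity from the class of test functions for which the cited theorem is stated (typically continuous with compact support, or bounded Borel) to arbitrary non-negative measurable $\beta$. This is routine: approximate $\beta$ pointwise from below by continuous non-negative functions with compact support, and apply the monotone convergence theorem on both sides; here the local finiteness of $\MA_j(v;\cdot)$ and of each $\MA_E(v\vert_E;\cdot)$ ensures finite approximating integrals. For the inner integral on the right one additionally needs that $E\mapsto \int_E \beta(x_E)\d\MA_E(v\vert_E;x_E)$ is measurable on $\Grass{j}{n}$, which follows from joint measurability of the Haar-integrand together with Tonelli's theorem and the weak continuity of $w\mapsto \MA_E(w;\cdot)$ in $w$.

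The main obstacle is the bookkeeping: verifying that the normalization constants from the cited theorem collapse to the ratio $\kappa_n/\kappa_j$ in the present statement, checking that the specialization of the $(n-j)$ remaining entries to $h_{B^n}$ indeed corresponds, via the Kubota averaging, to passing from $\Rn$ to $E$ and replacing $v$ by its restriction $v\vert_E$ (rather than, say, by a projection-type operation on the conjugate side), and confirming that the conventions for Haar measure on $\Grass{j}{n}$ agree. Once this comparison is made and the extension to measurable $\beta$ is carried out, the formula follows.
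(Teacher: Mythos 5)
The paper offers no proof of this theorem beyond a single attribution: it is stated that the formula ``is a consequence of'' Theorem~1.3 of Hug--Mussnig--Ulivelli. Your proposal takes the same route -- deduce the result from that reference -- so in that sense it agrees with the paper. The additional scaffolding you describe, however, is speculative and likely unnecessary: Theorem~1.3 of the cited reference is already a Kubota-type formula for the measures $\MA_j$ themselves (formulated for measurable nonnegative $\beta$ with the normalization $\kappa_n/\kappa_j$), rather than a general formula for $\MA(v_1,\ldots,v_n;\cdot)$ that needs to be specialized at $v_{j+1}=\cdots=v_n=h_{B^n}$; the phrase ``is a consequence of'' in the paper most plausibly reflects a trivial adaptation (e.g.\ duality between $\fconvf$ and $\fconvs$, or rewriting $\proj_E u$ as $u^*\vert_E$) rather than the multi-step reduction you outline. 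In particular, the monotone-convergence extension from continuous compactly supported to measurable test functions and the Grassmannian measurability argument are probably not required, since the cited theorem is already stated for measurable $\beta$. Your list of ``obstacles'' is thus mostly self-imposed, but the overall strategy -- derive the identity from the cited Kubota-type formula -- is exactly what the paper does.
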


	We now establish Kubota-type formulas for functional intrinsic moments, where we emphasize that we consider $E\in\Grass{j}{n}$ as a subspace of $\Rn$.

	\begin{theorem}
		\label{thm:ot_j_zeta_kubota}
		If $j\in\{1,\ldots,n\}$ and $\zeta\in T_j^n$, then
		\[
		\ot_{j,\zeta}^*(v)=\int_{\Grass{j}{n}} \int_E \alpha(|x_E|)x_E \d\MA_E(v\vert_E;x_E)\d E
		\] 
		for every $v\in \fconvf$, where $\alpha\in T_n^n$ is given by
		\[
		\alpha(s)= \binom{n}{j}\frac{\kappa_n}{\kappa_j}\left(s^{n-j} \zeta(s)+(n-j)\int_s^\infty t^{n-j-1}\zeta(t) \d t\right)
		\]
		for $s>0$.
	\end{theorem}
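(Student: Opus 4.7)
The plan is to derive this directly from Theorem~\ref{thm:main_rep_maj} combined with the Kubota-type identity of Theorem~\ref{thm:kubota_fconvf}, applied coordinate-wise. By Theorem~\ref{thm:main_rep_maj}, for $\zeta \in T_j^n$ we have
\[
\ot_{j,\zeta}^*(v) = \int_{\Rn} \tilde\alpha(|x|)\, x \d\MA_j(v;x),
\]
where $\tilde\alpha(s) = \binom{n}{j}\bigl(s^{n-j}\zeta(s) + (n-j)\int_s^\infty t^{n-j-1}\zeta(t)\d t\bigr)$. The density $\alpha$ in the statement is exactly $(\kappa_n/\kappa_j)\tilde\alpha$, so it suffices to transfer the $\MA_j$-integral to a Grassmannian integral picking up the factor $\kappa_n/\kappa_j$. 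By Lemma~\ref{le:r_trans_bij} we have $\tilde\alpha \in T_n^n$, and then by Lemma~\ref{le:continuous_extension} the map $x \mapsto \tilde\alpha(|x|)x$ extends to an element of $C_c(\Rn;\Rn)$; in particular, each of its coordinates is a bounded, compactly supported, continuous real-valued function.

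Next, I apply Theorem~\ref{thm:kubota_fconvf} in each coordinate. Fix $i \in \{1,\ldots,n\}$ and set $\beta_i(x) = \tilde\alpha(|x|)x_i$. Writing $\beta_i = \beta_i^+ - \beta_i^-$ with $\beta_i^{\pm} \geq 0$ continuous and compactly supported (so in particular integrable against the locally finite measures $\MA_j(v;\cdot)$ and $\MA_E(v|_E;\cdot)$), Theorem~\ref{thm:kubota_fconvf} applied separately to $\beta_i^+$ and $\beta_i^-$ and then subtracted yields
\[
\frac{1}{\kappa_n}\int_{\Rn} \beta_i(x) \d\MA_j(v;x) = \frac{1}{\kappa_j}\int_{\Grass{j}{n}} \int_E \beta_i(x_E) \d\MA_E(v|_E; x_E) \d E
\]
for $j \in \{1,\ldots,n-1\}$. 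Multiplying by $e_i$ and summing over $i$, the vector-valued Kubota identity reads
\[
\int_{\Rn} \tilde\alpha(|x|)x \d\MA_j(v;x) = \frac{\kappa_n}{\kappa_j}\int_{\Grass{j}{n}}\int_E \tilde\alpha(|x_E|)x_E \d\MA_E(v|_E;x_E) \d E,
\]
and combining with the expression for $\ot_{j,\zeta}^*(v)$ above and the definition $\alpha = (\kappa_n/\kappa_j)\tilde\alpha$ gives the desired formula. The case $j = n$ is trivial, since $\Grass{n}{n} = \{\Rn\}$ and $\kappa_n/\kappa_n = 1$, so the identity reduces directly to Theorem~\ref{thm:main_rep_maj}.

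The only real subtlety is passing the Kubota formula of Theorem~\ref{thm:kubota_fconvf}, which is stated for non-negative measurable integrands, to the signed vector-valued density $\tilde\alpha(|\cdot|)\cdot$; but this is handled routinely by the positive/negative decomposition of each coordinate, using that continuity together with compact support (guaranteed by Lemma~\ref{le:continuous_extension}) makes both parts integrable against all measures appearing. No other technical difficulty is anticipated.
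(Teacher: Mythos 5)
Your proof is correct and follows essentially the same route as the paper: apply Theorem~\ref{thm:main_rep_maj} to obtain the $\MA_j$-representation, then feed each coordinate of $x\mapsto\tilde\alpha(|x|)x$ into the Kubota formula of Theorem~\ref{thm:kubota_fconvf}. You are in fact a bit more careful than the paper's own proof, which does not explicitly justify the sign decomposition needed since Theorem~\ref{thm:kubota_fconvf} is stated only for non-negative integrands, nor does it single out the (trivial) case $j=n$, which falls outside the stated range of Theorem~\ref{thm:kubota_fconvf}; your treatment of both points is correct and welcome.
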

	\begin{proof}
		Let $j\in\{1,\ldots,n\}$, $\zeta\in T_j^n$, and $\alpha\in T_n^n$ be given and fix some $i\in\{1,\ldots,n\}$. Let
		\[
		\beta(x)=\langle \alpha(|x|)x,e_i\rangle
		\]
		for $x\in\Rn$, which by Lemma~\ref{le:continuous_extension} defines a continuous function with compact support on $\Rn$. By Theorem~\ref{thm:main_rep_maj} and Theorem~\ref{thm:kubota_fconvf} we now have
		\[
		\langle \ot_{j,\zeta}^*(v),e_i\rangle = \int_{\Rn} \beta(x)\d\MA_j(v;x) = \int_{\Grass{j}{n}}\int_E \beta(x_E)\d\MA_E(v\vert_E;x)
		\]
		for every $v\in\fconvf$, which completes the proof.
	\end{proof}

	For $u\in\fconvs$ and $E\in\Grass{j}{n}$ we define the \textit{projection function} $\proj_E u\colon E\to\R$ as
	\[
	\proj_E u(x_E)=\min\nolimits_{z\in E^\perp} u(x_E+z),\quad x_E\in E,
	\]
	which is again a proper, lower semicontinuous, convex function. Since
	\[
	(\proj_E u)^*=u^*\vert_E,
	\]
	where on the left side the convex conjugate is taken with respect to the ambient space $E$ (see \cite[Theorem 11.23]{rockafellar_wets}), we immediately obtain the following representation for the dual operators $\ot_{j,\zeta}$ from Theorem~\ref{thm:ot_j_zeta_kubota} together with \eqref{eq:map_grad}. Here, we write $\nabla_E$ for the gradient with respect to the ambient space $E$.
	
	\begin{theorem}
		\label{thm:ot_j_zeta_kubota_fconvs}
		If $j\in\{1,\ldots,n\}$ and $\zeta\in T_j^n$, then
		\[
		\ot_{j,\zeta}(u)=\int_{\Grass{j}{n}} \int_{\dom(\proj_E u)} \alpha(|\nabla_E \proj_E u(x_E)|)\nabla_E \proj_E u(x_E) \d x_E \d E
		\] 
		for every $u\in \fconvs$, where $\alpha\in T_n^n$ is given by
		\[
		\alpha(s)= \binom{n}{j}\frac{\kappa_n}{\kappa_j}\left(s^{n-j} \zeta(s)+(n-j)\int_s^\infty t^{n-j-1}\zeta(t) \d t\right)
		\]
		for $s>0$.
	\end{theorem}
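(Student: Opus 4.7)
The plan is to deduce this directly from Theorem~\ref{thm:ot_j_zeta_kubota} by conjugating, exactly as the short paragraph preceding the statement suggests. By the duality discussed in Section~\ref{se:dual_results}, we have $\ot_{j,\zeta}(u)=\ot_{j,\zeta}^*(u^*)$ for every $u\in\fconvs$. Setting $v:=u^*\in\fconvf$, the goal is therefore to manipulate
\[
\ot_{j,\zeta}^*(v)=\int_{\Grass{j}{n}} \int_E \alpha(|x_E|)x_E \d\MA_E(v\vert_E;x_E)\d E
\]
into the form claimed for $\ot_{j,\zeta}(u)$.

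The first step is to apply Theorem~\ref{thm:ot_j_zeta_kubota} itself. Since that theorem is stated for scalar nonnegative $\beta$, but here the integrand $x_E\mapsto \alpha(|x_E|)x_E$ is vector-valued, I would argue coordinate by coordinate. By Lemma~\ref{le:continuous_extension}, the map $x\mapsto \alpha(|x|)x$ extends continuously to $\Rn$ with compact support, so each coordinate $\beta_i(x)=\langle\alpha(|x|)x,e_i\rangle$ is a bounded Borel function (in fact a difference of two nonnegative measurable functions), and Theorem~\ref{thm:ot_j_zeta_kubota} therefore applies to each $\beta_i$ via linearity, giving the representation above for $v=u^*$.

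Next I would use the identity $(\proj_E u)^*=u^*\vert_E$ recorded in the excerpt (where convex conjugation on the left is taken in the ambient space $E$). This is precisely the statement that, viewing $E\in\Grass{j}{n}$ as a $j$-dimensional subspace of $\Rn$, the restriction $v\vert_E=u^*\vert_E$ is the conjugate of $\proj_E u\in\mathrm{Conv}_{\mathrm{sc}}(E)$. Consequently, the Monge--Amp\`ere measure $\MA_E(v\vert_E;\cdot)=\MA_E((\proj_E u)^*;\cdot)$ is, by definition, the conjugate Monge--Amp\`ere measure $\MAp_E(\proj_E u;\cdot)$ on $E$.

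The final step is to rewrite the inner integral using \eqref{eq:map_grad} applied in the ambient space $E$ rather than $\Rn$. For each measurable $b\colon E\to\R$ for which the integral makes sense,
\[
\int_E b(y)\d\MAp_E(\proj_E u;y)=\int_{\dom(\proj_E u)} b(\nabla_E \proj_E u(x_E))\d x_E,
\]
and applying this coordinate-wise with $b(y)=\langle\alpha(|y|)y,e_i\rangle$ and summing yields
\[
\int_E \alpha(|x_E|)x_E\d\MA_E(v\vert_E;x_E)=\int_{\dom(\proj_E u)}\alpha(|\nabla_E \proj_E u(x_E)|)\nabla_E \proj_E u(x_E)\d x_E.
\]
Inserting this into the Grassmannian integral gives the claim. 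The only mild care needed is in verifying that the coordinate functions of $\alpha(|y|)y$ fit the scope of \eqref{eq:map_grad} and Theorem~\ref{thm:ot_j_zeta_kubota}; this is handled by Lemma~\ref{le:continuous_extension}, which ensures the relevant integrability and the absence of a singularity at the origin. I do not anticipate a real obstacle beyond this bookkeeping.
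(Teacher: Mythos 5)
Your proposal is correct and follows essentially the same route as the paper: duality $\ot_{j,\zeta}(u)=\ot_{j,\zeta}^*(u^*)$, then Theorem~\ref{thm:ot_j_zeta_kubota}, then the identity $(\proj_E u)^*=u^*\vert_E$ to identify $\MA_E(v\vert_E;\cdot)$ with the conjugate Monge--Amp\`ere measure of $\proj_E u$ on $E$, and finally \eqref{eq:map_grad} applied in the ambient space $E$. One small slip: Theorem~\ref{thm:ot_j_zeta_kubota} is already stated in the needed vector-valued form, so the coordinate-wise reduction you re-derive (which is really the passage from Theorem~\ref{thm:kubota_fconvf} to Theorem~\ref{thm:ot_j_zeta_kubota}, not a feature of the latter) is redundant here and can be omitted by citing Theorem~\ref{thm:ot_j_zeta_kubota} directly.
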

	
	\subsection{The Transform \texorpdfstring{$\boldsymbol{\cT}$}{T}}
	\label{se:transform_t}
	
	Instead of the transform $\cR$, the transform
	\[
	\cT \zeta(s)=\sqrt{1+s^2}\,\zeta(s) + \int_s^\infty \frac{t \zeta(t)}{\sqrt{1+t^2}}\d t
	\]
	with $\zeta\in C_b((0,\infty))$ and $s>0$ needs to be considered when integrals with respect to Hessian measures are transformed into integrals with respect to the measures $\MA(v[j],v_B[n-j];\cdot)$.	We start with a description of $\cT^l$ for $l\in\N$.
	
	\begin{lemma}
	\label{le:t_l}
	If $l\in\N$ and $\zeta\in C_b((0,\infty))$, then
	\[
	\cT^l \zeta(s)=\underbrace{(\cT\circ \cdots\circ\cT)}_l \zeta(s) = (1+s^2)^{\frac{l}{2}}\zeta(s)+l\int_s^\infty t(1+t^2)^{\frac{l}{2}-1} \zeta(t)\d t
	\]
	for every $s>0$.
	\end{lemma}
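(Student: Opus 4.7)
The approach is a straightforward induction on $l \in \N$, with the transformation of the iterated integral via Fubini's theorem as the main (though routine) computation.

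For the base case $l=1$, the claimed formula
\[
\cT\zeta(s) = (1+s^2)^{1/2}\zeta(s) + \int_s^\infty t(1+t^2)^{-1/2}\zeta(t)\d t
\]
coincides with the definition of $\cT$, since $\frac{t}{\sqrt{1+t^2}} = t(1+t^2)^{1/2 - 1}$.

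For the inductive step, assume the formula is valid for some $l \in \N$. Then I would write $\cT^{l+1}\zeta(s) = \cT(\cT^l \zeta)(s)$ and substitute the inductive hypothesis into both summands of the definition of $\cT$. This splits $\cT^{l+1}\zeta(s)$ into four terms: a leading term $(1+s^2)^{(l+1)/2}\zeta(s)$ (obtained as $\sqrt{1+s^2}\cdot (1+s^2)^{l/2}\zeta(s)$); a single integral $\int_s^\infty t(1+t^2)^{(l-1)/2}\zeta(t)\d t$; and two double integrals, one of the form $l\sqrt{1+s^2}\int_s^\infty r(1+r^2)^{l/2-1}\zeta(r)\d r$ and one of the form $l\int_s^\infty \frac{t}{\sqrt{1+t^2}}\!\int_t^\infty r(1+r^2)^{l/2-1}\zeta(r)\d r\d t$.

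The key step is to apply Fubini's theorem to the last double integral, which is justified by the boundedness and compact support of $\zeta$. Swapping the order of integration over the region $\{(t,r):s\leq t\leq r\}$ and using
\[
\int_s^r \frac{t}{\sqrt{1+t^2}}\d t = \sqrt{1+r^2} - \sqrt{1+s^2},
\]
this double integral becomes
\[
l\int_s^\infty r(1+r^2)^{(l-1)/2}\zeta(r)\d r - l\sqrt{1+s^2}\int_s^\infty r(1+r^2)^{l/2-1}\zeta(r)\d r.
\]
The second term cancels exactly with the remaining double integral mentioned above, and adding the surviving terms yields
\[
\cT^{l+1}\zeta(s) = (1+s^2)^{(l+1)/2}\zeta(s) + (l+1)\int_s^\infty t(1+t^2)^{(l+1)/2 - 1}\zeta(t)\d t,
\]
completing the induction. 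There is no genuine obstacle here; the only bookkeeping point is verifying that the exponents combine correctly, using $(1+r^2)^{l/2-1}\cdot (1+r^2)^{1/2} = (1+r^2)^{(l-1)/2}$ when absorbing the $\sqrt{1+r^2}$ factor coming from the inner integral.
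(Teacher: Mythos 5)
Your proof is correct and follows essentially the same inductive strategy as the paper, the only cosmetic differences being the order of decomposition (you expand $\cT^{l+1}=\cT\circ\cT^l$ while the paper expands $\cT^l=\cT^{l-1}\circ\cT$) and that you collapse the iterated integral via Fubini while the paper uses integration by parts on the same term --- two ways of performing the identical manipulation. Both arrive at the same cancellation and the same surviving terms.
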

	\begin{proof}
		We prove the statement by induction on $l$. The statement trivially holds for $l=1$. For $l\geq 2$ and assuming that the statement holds for $l-1$, we now have
		\begin{align}
			\begin{split}
			\label{eq:calc_t_l_induction}
			\cT^l \zeta(s) &= \cT^{l-1}\cT \zeta(s)\\
			&= (1+s^2)^{\frac{l-1}{2}} \left(\sqrt{1+s^2}\zeta(s)+\int_s^\infty \frac{t \zeta(t)}{\sqrt{1+t^2}} \d t \right)\\
			&\quad+ (l-1) \int_s^\infty t(1+t^2)^{\frac{l-1}{2}-1} \left(\sqrt{1+t^2} \zeta(t) + \int_t^\infty \frac{r \zeta(r)}{\sqrt{1+r^2}} \d r \right) \d t\\
			&=(1+s^2)^{\frac{l}{2}} \zeta(s) + (l-1) \int_s^\infty t(t+t^2)^{\frac{l}{2}-1}\zeta(t)\d t\\
			&\quad + (1+s^2)^{\frac{l-1}{2}} \int_s^\infty \frac{t\zeta(t)}{\sqrt{1+t^2}}\d t + (l-1)\int_s^\infty  t (1+t^2)^{\frac{l-1}{2}-1} \int_t^\infty \frac{r \zeta(r)}{\sqrt{1+r^2}} \d r \d t
			\end{split}
		\end{align}
		for every $s>0$. Considering that $l\geq 2$ and that $\zeta$ has bounded support, integration by parts gives
		\begin{align*}
			(l-1) \int_s^\infty t (1+t^2)^{\frac{l-1}{2}-1} &\int_t^\infty \frac{r \zeta(r)}{\sqrt{1+r^2}} \d r \d t\\
			&= - (1+s^2)^{\frac{l-1}{2}} \int_s^\infty \frac{t \zeta(t)}{\sqrt{1+t^2}} \d t + \int_s^\infty (1+t^2)^{\frac{l}{2}-1} t \zeta(t) \d t
		\end{align*}
	for every $s>0$, which together with \eqref{eq:calc_t_l_induction} gives the desired result.
	\end{proof}

	Before we consider how the transform $\cT$ acts on the classes $T_j^n$, we need the following result.
	
	\begin{lemma}
		\label{le:limit_t_trans}
		Let $j\in\{1,\ldots,n\}$. If $\zeta\in T_j^n$, then
		\[
		\lim_{s\to 0^+} s^{n-j+1}\int_s^\infty t(1+t^2)^{\frac{m}{2}-1} \zeta(t) \d t =0
		\]
		for every $m\in\Z$.
	\end{lemma}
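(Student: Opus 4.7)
The plan is to follow exactly the strategy used in the proof of Lemma~\ref{le:lhospital}, splitting into the case where the relevant integral converges trivially and the case where one must invoke L'Hospital's rule. The key structural observation is that the factor $(1+t^2)^{m/2-1}$ is continuous at $t=0$ with value $1$, hence bounded on any bounded neighborhood of the origin, so the singular behavior of the integrand $t(1+t^2)^{m/2-1}\zeta(t)$ near $t=0$ is governed entirely by $t\zeta(t)$. Moreover, since $\zeta\in T_j^n\subset C_b((0,\infty))$, $\zeta$ vanishes outside some bounded set, so convergence at infinity is not an issue.

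First I would dispose of the easy case: if $\lim_{s\to 0^+}\int_s^\infty t(1+t^2)^{m/2-1}\zeta(t)\d t$ exists and is finite, the conclusion is immediate since $n-j+1\geq 1$ and hence $s^{n-j+1}\to 0$ as $s\to 0^+$. Otherwise, pass to absolute values and bound
\[
\left|s^{n-j+1}\int_s^\infty t(1+t^2)^{\frac{m}{2}-1}\zeta(t)\d t\right|\leq \frac{\int_s^\infty t(1+t^2)^{\frac{m}{2}-1}|\zeta(t)|\d t}{s^{-(n-j+1)}},
\]
where both numerator and denominator tend to $\infty$ as $s\to 0^+$. L'Hospital's rule then converts the right-hand side to
\[
\lim_{s\to 0^+}\frac{s(1+s^2)^{\frac{m}{2}-1}|\zeta(s)|}{(n-j+1)s^{-(n-j+2)}}=\lim_{s\to 0^+}\frac{s^2(1+s^2)^{\frac{m}{2}-1}}{n-j+1}\cdot s^{n-j+1}|\zeta(s)|,
\]
which equals $0$ because $s^2(1+s^2)^{m/2-1}\to 0$ and $s^{n-j+1}\zeta(s)\to 0$ by the defining property of $T_j^n$.

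I do not expect any genuine obstacle: the argument is a direct adaptation of Lemma~\ref{le:lhospital}, and the only new feature is the weight $(1+t^2)^{m/2-1}$, which contributes a bounded continuous factor at the origin for every $m\in\Z$ and so does not affect the asymptotics. The single point to be careful about is that the hypotheses of L'Hospital (indeterminate form, differentiability, non-vanishing derivative of the denominator) are checked in the diverging case; passing to absolute values ensures monotonicity of the antiderivative so that the $\infty/\infty$ setup is legitimate.
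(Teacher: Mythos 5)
Your proposal is correct and follows essentially the same approach as the paper: dispose of the trivially convergent case, then pass to absolute values and apply L'Hospital's rule to the $\infty/\infty$ form, concluding via $\lim_{s\to 0^+}s^{n-j+1}\zeta(s)=0$ and the boundedness of $(1+s^2)^{m/2-1}$ near the origin. The only cosmetic difference is that you factor the final expression as $s^2(1+s^2)^{m/2-1}\cdot s^{n-j+1}|\zeta(s)|$, while the paper writes $(1+s^2)^{m/2-1}|s^{n-j+3}\zeta(s)|$; the underlying reasoning is identical.
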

	\begin{proof}
		Let $\zeta\in T_j^n$ and $m\in\Z$ be given. If $\zeta$ is such that $\lim_{s\to 0^+}  \int_s^{\infty} t(1+t^2)^{\frac{m}{2}-1} \zeta(t) \d t$ exists and is finite, then the statement is trivial. In the remaining case, we use L'Hospital's rule together with the definition of $T_j^n$ to obtain
		\begin{align*}
		\lim_{s\to 0^+} \left| s^{n-j+1} \int_s^\infty t(1+t^2)^{\frac{m}{2}-1} \zeta(t)\d t \right| &\leq \lim_{s\to 0^+} \frac{\int_s^\infty |t(1+t^2)^{\frac{m}{2}-1} \zeta(t)|\d t}{\frac{1}{s^{n-j+1}}}\\
		&= \lim_{s\to 0^+} \frac{|s (1+s^2)^{\frac{m}{2}-1}\zeta(s)|}{\frac{n-j+1}{s^{n-j+2}}}\\
		&= \lim_{s\to 0^+} \frac{(1+s^2)^{\frac{m}{2}-1}}{n-j+1} |s^{n-j+3}\zeta(s)|\\
		&= 0.
		\end{align*}
	\end{proof}
	
	\begin{lemma}
		\label{le:prop_t_j_n}
		Let $j\in\{1,\ldots,n\}$ and $l\in\N$. The map $\cT^l\colon T_j^n\to T_j^n$ is a bijection with inverse
		\begin{equation}
		\label{eq:t_trans_inverse}
		\cT^{-l}\rho(s) = (\cT^{-1})^l \rho(s) = \frac{\rho(s)}{(1+s^2)^{\frac{l}{2}}} - l \int_s^\infty \frac{t \rho(t)}{(1+t^2)^{\frac{l}{2}+1}} \d t
		\end{equation}
		for $\rho\in T_j^n$ and $s>0$.
	\end{lemma}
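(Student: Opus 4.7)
The plan is to follow the template used for Lemma~\ref{le:r_trans_bij}, since the required statements for $\cT$ parallel those for $\cR$. Concretely, one needs three things: (a) the forward map $\cT^l$ sends $T_j^n$ to itself; (b) the formula \eqref{eq:t_trans_inverse} does give the left and right inverse of $\cT^l$; and (c) the inverse map $\cT^{-l}$ also sends $T_j^n$ to itself. The case $l=0$ is trivial, so one assumes $l\geq 1$ throughout, and $j\in\{1,\ldots,n\}$ is fixed.

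For (a), let $\zeta\in T_j^n$. Using the closed form from Lemma~\ref{le:t_l},
\[
\cT^l \zeta(s)=(1+s^2)^{l/2}\zeta(s)+l\int_s^\infty t(1+t^2)^{l/2-1}\zeta(t)\d t,
\]
one sees that $\cT^l\zeta$ is continuous on $(0,\infty)$ and vanishes for all sufficiently large $s$ (the integral term vanishes past the support of $\zeta$, and so does the multiplicative term). Multiplying by $s^{n-j+1}$ and letting $s\to 0^+$, the first summand tends to $0$ because $\zeta\in T_j^n$ and $(1+s^2)^{l/2}\to 1$, while the second tends to $0$ by Lemma~\ref{le:limit_t_trans} applied with $m=l$. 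Hence $\cT^l\zeta\in T_j^n$.

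For (b), I would first verify that $\cT^{-1}$ as displayed in \eqref{eq:t_trans_inverse} with $l=1$ satisfies $\cT\circ \cT^{-1}=\mathrm{id}$ on $C_b((0,\infty))$. Writing out $\cT(\cT^{-1}\rho)(s)$ produces four terms; the cross terms combine via integration by parts on
\[
\int_s^\infty \frac{t}{\sqrt{1+t^2}}\int_t^\infty \frac{r\rho(r)}{(1+r^2)^{3/2}}\d r\d t
\]
(taking $u=\int_t^\infty \tfrac{r\rho(r)}{(1+r^2)^{3/2}}\d r$ and $\d v=\tfrac{t}{\sqrt{1+t^2}}\d t$, so $v=\sqrt{1+t^2}$), where the boundary term at infinity vanishes because $\rho$ has bounded support. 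The remaining terms cancel, leaving $\rho(s)$. Having established $\cT^{-1}$, the general formula for $\cT^{-l}=(\cT^{-1})^l$ with exponent $-l$ is obtained by induction on $l$, exactly parallel to the induction proving Lemma~\ref{le:t_l}: one feeds the displayed formula for $\cT^{-(l-1)}$ into $\cT^{-1}$ and collapses the nested integral using a single integration by parts (again with vanishing boundary terms thanks to the support condition on $\rho$).

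For (c), one repeats the argument of (a) with the inversion formula in place of Lemma~\ref{le:t_l}: the first summand $\rho(s)(1+s^2)^{-l/2}$ multiplied by $s^{n-j+1}$ tends to $0$, while the integral term multiplied by $s^{n-j+1}$ tends to $0$ by Lemma~\ref{le:limit_t_trans} applied with $m=-l\in\Z$ (which is covered since that lemma is stated for arbitrary integer $m$). This gives $\cT^{-l}\rho\in T_j^n$, completing the bijectivity. The main obstacle is step (b): verifying the closed-form inverse for $l=1$ and then propagating it to general $l$ requires careful bookkeeping of iterated integrals, but all the integrations by parts have vanishing infinity-boundary terms because elements of $T_j^n$ have bounded support, so the computation is routine rather than delicate.
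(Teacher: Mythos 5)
Your proposal is correct and follows essentially the same route as the paper: use Lemma~\ref{le:t_l} for the closed form of $\cT^l$, Lemma~\ref{le:limit_t_trans} for the endpoint limit at $0^+$ (the paper uses this with $m=\pm 1$ after reducing to $l=1$, you use $m=\pm l$ directly — both work), and an integration by parts with vanishing boundary terms thanks to bounded support. The only organizational difference is in verifying that \eqref{eq:t_trans_inverse} is the inverse: the paper substitutes the closed form of $\cT^l\zeta$ into the displayed formula for $\cT^{-l}$ and collapses the resulting double integral with a single integration by parts at general $l$ (showing it is a left inverse, with the right inverse noted as similar), whereas you first establish $\cT\circ\cT^{-1}=\mathrm{id}$ for $l=1$ and then obtain the closed form of $(\cT^{-1})^l$ by an induction parallel to Lemma~\ref{le:t_l}. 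Both are sound; one small point you should make explicit is that you also need $\cT^{-1}\circ\cT=\mathrm{id}$ (or the injectivity of $\cT$ on $C_b((0,\infty))$) to conclude that $\cT^{-1}$ is a genuine two-sided inverse before iterating — the paper flags this with a ``similarly''.
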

	\begin{proof}
		Without loss of generality, let $l>0$. We start by showing that if $\zeta\in T_j^n$, then also $\cT^l\zeta\in T_j^n$. Note that it is enough to show this for the case $l=1$. First, it is easy to see that $\cT \zeta\in C_b((0,\infty))$. We now use the definition of $T_j^n$ together with Lemma~\ref{le:limit_t_trans} to obtain
		\[
			\lim_{s\to 0^+} s^{n-j+1} \cT \zeta(s) =\lim_{s\to 0^+} \sqrt{1+s^2} s^{n-j+1} \zeta(s) + \lim_{s\to 0^+} s^{n-j+1} \int_{s}^\infty \frac{t \zeta(t)}{\sqrt{1+t^2}}\d t = 0,
		\]
		which shows that $\cT \zeta\in T_j^n$.
		
		Next, we show that \eqref{eq:t_trans_inverse} is the inverse operation to $\cT^l$. For $\zeta\in T_j^n$ it follows from Lemma~\ref{le:t_l} that
		\begin{align}
			\begin{split}
			\label{eq:t_inv_t_l}
			\frac{\cT^l \zeta(s)}{(1+s^2)^{\frac ls}} - &l \int_s^\infty \frac{t \cT^l \zeta(t)}{(1+t^2)^{\frac{l}{2}+1}} \d t\\
			&= \zeta(s) + \frac{l}{(1+s^2)^{\frac{l}{2}}} \int_s^\infty t(1+t^2)^{\frac{l}{2}-1}\zeta(t)\d t\\
			&\quad - l \int_s^\infty \frac{t \zeta(t)}{1+t^2} \d t - l^2 \int_s^\infty \frac{t}{(1+t^2)^{\frac{l}{2}+1}}\int_t^\infty r (1+r^2)^{\frac{l}{2}-1} \zeta(r) \d r  \d t
			\end{split}
		\end{align}
		for every $s>0$. Taking into account that $\zeta$ has bounded support, integration by parts gives
		\begin{align*}
			l^2 \int_s^\infty \frac{t}{(1+t^2)^{\frac{l}{2}+1}}&\int_t^\infty r (1+r^2)^{\frac{l}{2}-1} \zeta(r) \d r  \d t\\
			&= l \frac{1}{(1+s^2)^\frac{l}{2}} \int_s^\infty t(1+t^2)^{\frac{l}{2}-1}\zeta(t)\d t-l\int_s^\infty \frac{t \zeta(t)}{1+t^2} \d t
		\end{align*}
		for $s>0$, which together with \eqref{eq:t_inv_t_l} shows that the left inverse of $\cT^l$ is given by \eqref{eq:t_trans_inverse}. Similarly, one shows that $\cT^l$ is the inverse of \eqref{eq:t_trans_inverse}.
		
		Lastly, let $\rho\in T_j^n$. We need to show that $\cT^{-l}\rho\in T_j^n$, where it is again sufficient to consider the case $l=1$. Clearly, $\cT^{-1}\rho\in C_b((0,\infty))$. Furthermore, by the definition of $T_j^n$ and Lemma~\ref{le:limit_t_trans},
		\[
		\lim_{s\to 0^+} s^{n-j+1}\cT^{-1}\rho(s)=\lim_{s\to 0^+}\frac{s^{n-j+1}\rho(s)}{\sqrt{1+s^2}} - \lim_{s\to 0^+} s^{n-j+1} \int_s^\infty \frac{t\rho(t)}{(1+t^2)^{\frac{3}{2}}}\d t = 0,
		\]
		which shows that $\cT^{-1}\rho\in T_j^n$ and completes the proof.
	\end{proof}	
	
	\subsection{From Convex Functions to Convex Bodies}
	\label{se:functions_to_bodies}	
	Having established the relevant properties of the transform $\cT$ in the previous subsection, we proceed analogously to Section~\ref{se:connecting_hessian_measures}. We omit the proofs as they are almost identical to those in Section~\ref{se:connecting_hessian_measures}, noting that the vector field	 
	\[
	F(x) = \left(\frac{\psi'(|x|)}{|x|}+\frac{1}{2} \psi'(|x|)|x| \right) x_i\, x + \left(\psi(|x|) + \frac{1}{2}\psi(|x|)|x|^2 + \int_{|x|}^\infty \psi(t) t \d t\right) e_i
	\]
	with $x\in\Rn\setminus\{o\}$ and $i\in\{1,\ldots,n\}$ needs to be used in the analog to Lemma~\ref{le:int_swap_transform}, which states that
	\begin{align*}
		\int_{\Rn} \psi(|x|)x\,&D(\Hess v_1(x),\ldots,\Hess v_k(x),I_n[n-k])\d x\\
		&=\int_{\Rn} (\cT \psi)(|x|) x\, D(\Hess v_1(x),\ldots,\Hess v_k(x),I_n(x)[n-k-1],\Hess v_{B}(x))\d x
	\end{align*}
	for $k\in\{1,\ldots,n-1\}$, $v_1,\ldots,v_k\in C^2(\Rn)$ such that $\Hess v_1\equiv 0$ in a neighborhood of the origin, and $\psi\in C_b^2((0,\infty))$. Eventually, we obtain the following result.
	
	\begin{proposition}
		\label{prop:int_swap_ball}
		Let $j\in\{1,\ldots,n-1\}$. If $\zeta\in T_j^n$, then
		\[
		\int_{\Rn} \zeta(|x|)x \, [\Hess v(x)]_j \d x = \binom{n}{j} \int_{\Rn} (\cT^{n-j}\zeta)(|x|) x\, D(\Hess v(x)[j],\Hess v_B(x)[n-j])\d x
		\]
		for every $v\in C^2(\Rn)$.
	\end{proposition}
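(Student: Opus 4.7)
The strategy mirrors Section~\ref{se:connecting_hessian_measures} step-by-step, with the transform $\cT$ replacing $\cR$ and the reference function $v_B$ replacing $\frac12 h_{B^n}^2$. First I would prove a single-step swap analogous to Lemma~\ref{le:int_swap_transform}: for $\psi\in C_b^2((0,\infty))$, $k\in\{1,\ldots,n-1\}$, and $v_1,\ldots,v_k\in C^2(\Rn)$ with $\Hess v_1\equiv 0$ on some $\varepsilon B^n$,
\[
\int_{\Rn}\psi(|x|)x\,D(\Hess v_1,\ldots,\Hess v_k,I_n[n-k])\d x = \int_{\Rn}(\cT\psi)(|x|)x\,D(\Hess v_1,\ldots,\Hess v_k,I_n[n-k-1],\Hess v_B)\d x.
\]
Arguing coordinate-wise and invoking Lemma~\ref{le:int_symmetric} on both sides (writing $I_n=\Hess(\tfrac12|x|^2)$ on the left and using $\Hess v_B$ on the right to swap the scalar factor inside), I reduce to showing that
\[
\int_{\Rn} D\!\left(\Hess v_1,\ldots,\Hess v_k,I_n[n-k-1],\tfrac12|x|^2\Hess(\psi(|x|)x_i)-v_B(x)\Hess((\cT\psi)(|x|)x_i)\right)\d x = 0.
\]

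The heart of the argument is the computational identification of the matrix in the last slot as $\tfrac12\Jac F$ with the vector field $F$ supplied in the text. Using the same expansion as in Lemma~\ref{le:int_swap_transform} for $\Hess(\psi(|x|)x_i)$ together with the key observation $(\cT\psi)'(s)=\sqrt{1+s^2}\,\psi'(s)$ (a direct differentiation of the definition of $\cT$), I get an analogous tensor decomposition for $\Hess((\cT\psi)(|x|)x_i)$ in terms of $x\otimes x$, $x_i I_n$, and $x\otimes e_i+e_i\otimes x$. Matching the scalar coefficients term by term against $\Jac F$ and verifying that the radial coefficients $\phi_1(s)=\frac{\psi'(s)}{s}+\tfrac12\psi'(s)s$ and $\phi_2(s)=\psi(s)+\tfrac12\psi(s)s^2+\int_s^\infty t\psi(t)\d t$ of $F$ satisfy $\phi_1(s)=\phi_2'(s)/s$ (ensuring symmetry of $\Jac F$) then lets me apply Lemma~\ref{le:int_mixed_dis_jac_vanish} to kill the integral.

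Next, I remove the regularity assumptions exactly as in Proposition~\ref{prop:int_swap_once}: first truncate $\psi\in T_{n-k+1}^n$ away from the origin and mollify to get a sequence in $C_b^2((0,\infty))$ with uniform convergence of both $\psi_{\varepsilon,\delta}$ and $\cT\psi_{\varepsilon,\delta}$ on compact sets; then approximate a general $v_1\in C^2(\Rn)$ by $v_{1,\varepsilon}(x)=v_1(x)\varphi(|x|/\varepsilon)$ to ensure $\Hess v_{1,\varepsilon}\equiv 0$ on $\varepsilon B^n$, and use dominated convergence. The required integrability at the origin is provided by the analog of Lemma~\ref{le:int_mixed_dis_finite} (actually easier here, since $I_n$ and $\Hess v_B$ are bounded near the origin, in contrast to $\Hess h_{B^n}$), combined with the fact that $\cT$ preserves $T_j^n$ by Lemma~\ref{le:prop_t_j_n}, so the whole machinery remains within a single class $T_j^n$ throughout.

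Finally, starting from $[\Hess v]_j=\binom{n}{j}D(\Hess v[j],I_n[n-j])$ via \eqref{eq:elem_symm_mixed_dis} and iterating the one-step swap $(n-j)$ times with $\psi=\cT^i\zeta$ for $i=0,\ldots,n-j-1$, each iteration converts one factor $I_n$ into $\Hess v_B$ and applies $\cT$ to the density, producing $\cT^{n-j}\zeta$ in the limit. The main obstacle is the tensor-level verification that the specific linear combination of second derivatives equals a symmetric Jacobian of the prescribed $F$; apart from that, the proof is a direct transcription of Section~\ref{se:connecting_hessian_measures} using $\cT$ and $v_B$ in place of $\cR$ and $\tfrac12 h_{B^n}^2$.
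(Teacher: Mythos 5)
Your proposal matches the paper's own (intentionally terse) argument: the paper states in Section~\ref{se:functions_to_bodies} that the proof is ``almost identical'' to that of Section~\ref{se:connecting_hessian_measures} with the transform $\cT$, the reference function $v_B$, and the displayed vector field $F$ in place of $\cR$, $\tfrac12 h_{B^n}^2$, and the field used in Lemma~\ref{le:int_swap_transform}, and you have reconstructed exactly this: the one-step swap via Lemma~\ref{le:int_symmetric} and Lemma~\ref{le:int_mixed_dis_jac_vanish}, the key identity $(\cT\psi)'(s)=\sqrt{1+s^2}\,\psi'(s)$, the symmetry check $\phi_1=\phi_2'/s$, the two-stage removal of the regularity assumptions as in Proposition~\ref{prop:int_swap_once}, and the $(n-j)$-fold iteration starting from \eqref{eq:elem_symm_mixed_dis}. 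Your observations that integrability near the origin is easier here (since $I_n$ and $\Hess v_B$ are bounded, unlike $\Hess h_{B^n}$) and that Lemma~\ref{le:prop_t_j_n} keeps all densities inside $T_j^n$ are both correct and pertinent. One small algebraic slip, which does not affect the argument: carrying out the expansion one finds $\tfrac12|x|^2\Hess(\psi(|x|)x_i)-v_B(x)\Hess((\cT\psi)(|x|)x_i)=-\Jac F(x)$, not $\tfrac12\Jac F(x)$; since only the symmetry of the matrix in the last slot matters for Lemma~\ref{le:int_mixed_dis_jac_vanish}, the constant is immaterial.
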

	
	\begin{remark}
		Proposition~\ref{prop:int_swap_ball} shows that in many cases the functional Minkowski vectors $\ot_{j,\zeta}^*(v)$ with $\zeta\in T_j^n$ and $v\in\fconvf$ can also be represented as integrals with respect to the measures $\MA(v[j],v_{B}[n-j];\cdot)$, for example, when $v\in \fconvfz$ (cf. Lemma~\ref{le:t_j_zeta_fconvz}).
		However, in contrast to using the measures $\MA_j(v;\cdot)$, this alternative representation would still involve singular densities since $\cT^{n-j}\zeta\in T_j^n$. This is not surprising insofar as a representation of the family of functional intrinsic volumes that uses continuous density functions, without singularities, is essentially only possible with mixed Monge--Amp\`ere measures which arise from functions that are not differentiable at the origin (such as $h_{B^n}$). See \cite[Theorem 8.2]{colesanti_ludwig_mussnig_3}.
	\end{remark}

	Next, we connect mixed Monge--Amp\`ere measures of convex functions with mixed area measures of associated convex bodies in $\KN$. Here, the \textit{mixed area measure} $S(K_1,\ldots,K_n,\cdot)$ associated to $K_1,\ldots,K_n\in\KN$ is the unique Borel measure on $\sN$ that is symmetric in its entries such that
	\[
	S_n(\lambda_1 L_1 + \cdots+\lambda_m L_m,\cdot)=\sum_{i_1,\ldots,i_n=1}^m \lambda_{i_1}\cdots \lambda_{i_n} S(L_{i_1},\ldots,L_{i_n},\cdot)
	\]
	for every $L_1,\ldots,L_m\in\KN$, $\lambda_1,\ldots,\lambda_m\geq 0$, and $m\in\N$. The $j$th area measure of a given convex body $K\in\KN$ is obtained as the special case 
	\begin{equation}
	\label{eq:s_j_mixed_s}
	S_j(K,\cdot)=S(K[j],B^{n+1}[n-j],\cdot)
	\end{equation}
	for $j\in\{0,\ldots,n\}$. We furthermore use the \textit{gnomonic projection} $\gnom\colon \sN_-\to\Rn$,
	\[
	\gnom(z)=\frac{(z_1,\ldots,z_n)}{|z_{n+1}|}
	\]
	for $z=(z_1,\ldots,z_{n+1})\in\sN_-$, which is a $C^\infty$ diffeomorphism between the open lower half-sphere $\sN_-=\{z=(z_1,\ldots,z_{n+1})\in\sN : z_{n+1} <0 \}$ and $\Rn$. The result below is a reformulation of \cite[Corollary 4.9 and Lemma 4.10]{hug_mussnig_ulivelli_support}.
	
	\begin{proposition}
		\label{prop:ma_bodies_functions}
		For every measurable $\varphi\colon \Rn\to {[0,\infty)}$ with compact support and every $v_1,\ldots,v_n\in\fconvf$ there exist convex bodies $K^{v_1},\ldots,K^{v_n}\in\KN$ such that
		\[
		\int_{\Rn} \varphi(x)\d\MA(v_1,\ldots,v_n;x)= \int_{\sN_-}  \varphi(\gnom(z)) |z_{n+1}| \d S(K^{v_1},\ldots,K^{v_n};z).
		\]
		In addition, if for $j\in\{1,\ldots,n\}$ we have $v_j(x)=h_K(x,-1)$, $x\in\Rn$, for some $K\in\K^{n+1}$, then we may choose $K^{v_j}=K$.
	\end{proposition}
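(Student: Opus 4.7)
My plan is to reduce the proof to the diagonal case
\[
\int_{\Rn} \varphi(x)\,\d\MA(h_K(\cdot,-1);x) = \int_{\sN_-}\varphi(\gnom(z))\,|z_{n+1}|\,\d S_n(K,z) \qquad (K\in\KN),
\]
extend it by polarization, and then globalize via a localization argument. For the diagonal identity, the key observation is that by $1$-homogeneity of $h_K$, the subdifferential of $v:=h_K(\cdot,-1)$ at $x$ is the projection to $\Rn$ of $\partial h_K(u(x))$, where $u(x):=(x,-1)/\sqrt{1+|x|^2}\in\sN_-$ is the inverse of the gnomonic projection $\gnom$. Parametrizing graph pieces of $\partial K$ over $\Rn$ shows that the Jacobian of the vertical projection $\partial K \to\Rn$ at a boundary point with outer unit normal $z\in\sN_-$ equals $|z_{n+1}|$. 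Combining this with \eqref{eq:def_ma} and viewing $S_n(K,\cdot)$ as the push-forward of the surface measure on $\partial K$ under the spherical image map, a direct change of variables yields the diagonal identity.

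Polarization then extends the identity: replacing $K$ by $\sum_{i=1}^m \lambda_i K_i$ (so that $v$ becomes $\sum_i \lambda_i h_{K_i}(\cdot,-1)$) in the diagonal case produces a polynomial identity in $\lambda_1,\ldots,\lambda_m\geq 0$; matching the coefficient of $\lambda_1\cdots\lambda_n$ on both sides, using the multilinearity that defines both $\MA(v_1,\ldots,v_n;\cdot)$ and $S(K_1,\ldots,K_n;\cdot)$, proves the claim whenever every $v_j$ is of the form $h_{K_j}(\cdot,-1)$. In that case one may take $K^{v_j}=K_j$, which already verifies the \emph{In addition} clause.

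For general $v_j\in\fconvf$, I would localize. Since $\varphi$ has compact support, fix a bounded open neighborhood $O$ of $\supp(\varphi)$ and choose $R>0$ with $\partial v_j(O)\subseteq R B^n$ for every $j$, which is possible because finite convex functions are locally Lipschitz. For each $j$ for which $v_j$ is not already of support-function type, set
\[
K^{v_j} := \bigl\{(y,t)\in\R^{n+1} : |y|\leq R,\ v_j^*(y)\leq t\leq T\bigr\}
\]
for a sufficiently large $T>0$; a direct computation using $v_j=v_j^{**}$ shows that $h_{K^{v_j}}(x,-1)=v_j(x)$ for all $x\in O$. For those $j$ with $v_j=h_{K_j}(\cdot,-1)$ we instead set $K^{v_j}=K_j$. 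Since the mixed Monge--Amp\`ere measure is locally determined in each entry (by \eqref{eq:loc_det} applied to the polynomial expansion defining it), the left-hand side is unchanged when each $v_j$ is replaced by $h_{K^{v_j}}(\cdot,-1)$ on $O$, and the previous step completes the argument.

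The main technical hurdle is the rigorous verification of the diagonal identity for arbitrary $K\in\KN$, since $\partial K$ need not be smooth and $\partial h_K$ is in general multi-valued, so the change of variables must be interpreted measure-theoretically. I would handle this by first establishing the identity for $K$ of class $C_+^2$, where it reduces to a classical change of variables along the inverse Gauss map, and then passing to the limit through approximation by smooth strictly convex bodies, using the weak continuity of $K\mapsto S_n(K,\cdot)$ and of $K\mapsto\MA(h_K(\cdot,-1);\cdot)$ under Hausdorff convergence.
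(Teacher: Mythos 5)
The paper does not give an in-paper proof of this proposition; it is stated as a reformulation of Corollary~4.9 and Lemma~4.10 of \cite{hug_mussnig_ulivelli_support}, so there is no paper argument to compare against. Judged on its own, your outline is sound and follows what is essentially the standard route: the diagonal identity $\int_{\Rn}\varphi\,\d\MA(h_K(\cdot,-1);\cdot)=\int_{\sN_-}\varphi(\gnom(z))|z_{n+1}|\,\d S_n(K,z)$ is the gnomonic dictionary between the Monge--Amp\`ere measure of a restricted support function and the surface area measure, your change-of-variables computation (vertical projection of the lower graph of $\partial K$ composed with the inverse Gauss map, Jacobian $|z_{n+1}|$, and $\nabla v=(\nabla f)^{-1}$) is correct, and polarization plus localization via the bodies $K^{v_j}=\{(y,t): |y|\leq R,\ v_j^*(y)\leq t\leq T\}$ is the right way to pass to arbitrary $v_j\in\fconvf$. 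The construction of $K^{v_j}$ does yield $h_{K^{v_j}}(\cdot,-1)=v_j$ on $O$ once $R$ dominates the Lipschitz constant of the $v_j$ on $O$ and $T$ dominates $\sup_{x\in O,\,y\in\partial v_j(x)}(\langle y,x\rangle-v_j(x))$, and local determination of the mixed measure then follows from \eqref{eq:loc_det} and multilinearity.

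One genuine (though easily repaired) gap: the proposition allows $\varphi$ to be merely measurable, but both your limiting argument in the diagonal case and the matching of coefficients in the polarization step only deliver the identity for \emph{continuous} $\varphi$ with compact support, since weak convergence of $S_n(K_m,\cdot)$ and of $\MA(h_{K_m}(\cdot,-1);\cdot)$ controls integrals of continuous test functions only. You should make explicit that proving the identity for all $\varphi\in C_c(\Rn)$ is equivalent to the equality of the two Borel measures
\[
\MA(v_1,\ldots,v_n;\cdot)\qquad\text{and}\qquad \gnom_{\#}\bigl(|z_{n+1}|\,\d S(K^{v_1},\ldots,K^{v_n};z)\vert_{\sN_-}\bigr)
\]
on $\Rn$, from which the case of nonnegative measurable $\varphi$ follows by the usual monotone class argument. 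Relatedly, when verifying the weak convergence step you should note that the integrand $z\mapsto\varphi(\gnom(z))|z_{n+1}|$, extended by $0$ to $\sN\setminus\sN_-$, is continuous on all of $\sN$ because compact support of $\varphi$ forces $|z_{n+1}|$ to be bounded away from $0$ on $\{z:\varphi(\gnom(z))\neq 0\}$; without that observation the extension by zero could fail to be continuous at the equator.
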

	
	We obtain the following alternative representation of functional Minkowski vectors with densities in $T_n^n$.
	
	\begin{theorem}
		\label{thm:bodies_fcts}
		Let $\alpha\in T_n^n$ and $j\in\{1,\ldots,n\}$. For every $v\in\fconvf$ there exists a convex body $K^v\in \KN$ such that for the $i$th coordinate of $\ot_{j,\alpha}^*(v)$, $i\in\{1,\ldots,n\}$, we have
		\[
		\big(\ot_{j,\alpha}^*(v)\big)_i = \binom{n}{j} \int_{\sN_-} (\cT^{n-j} \alpha)(|\gnom(z)|) z_i \d S_j(K^v,z). 
		\]
		In addition, if $v(x)=h_{K}(x,-1)$, $x\in\Rn$, for some $K\in\KN$, then we may choose $K^v=K$.
	\end{theorem}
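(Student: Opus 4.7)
The plan is to transfer the functional Minkowski vector from a Hessian-measure integral to an integral against a mixed Monge--Amp\`ere measure in which $v_B$ appears, and then convert the latter into a spherical integral against a mixed area measure via Proposition~\ref{prop:ma_bodies_functions}.

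First, observe that $T_n^n\subseteq T_j^n$ (since any bounded $\alpha$ satisfies $s^{n-j+1}|\alpha(s)|\to 0$ as $s\to0^+$), so $\ot_{j,\alpha}^*$ is defined on $\fconvf$ by Theorem~\ref{thm:main_existence}. For $v\in\fconvf\cap C^2(\Rn)$, combine \eqref{eq:t_j_zeta_hessian_c2} and \eqref{eq:phi_j_hess} with Proposition~\ref{prop:int_swap_ball} and \eqref{eq:mixed_ma_dis} to obtain
\[
\ot_{j,\alpha}^*(v)=\binom{n}{j}\int_{\Rn}(\cT^{n-j}\alpha)(|x|)\,x\, \d\MA(v[j],v_B[n-j];x).
\]
By Lemma~\ref{le:prop_t_j_n} we have $\cT^{n-j}\alpha\in T_n^n$, and Lemma~\ref{le:continuous_extension} then shows that $x\mapsto(\cT^{n-j}\alpha)(|x|)\,x$ extends to an element of $C_c(\Rn;\Rn)$. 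Applied coordinate-wise, Proposition~\ref{prop:int_ma_is_a_val} implies that the right-hand side above is continuous in $v$ on $\fconvf$, so the identity extends from the dense subset $\fconvf\cap C^2(\Rn)$ to all of $\fconvf$ by continuity of $\ot_{j,\alpha}^*$.

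Next, I apply Proposition~\ref{prop:ma_bodies_functions} with $v_1=\cdots=v_j=v$ and $v_{j+1}=\cdots=v_n=v_B$. Since $v_B(x)=h_{B^{n+1}}(x,-1)$, the ``In addition'' clause lets us choose $K^{v_{j+1}}=\cdots=K^{v_n}=B^{n+1}$; call the common body for the first $j$ slots $K^v$. Viewing the proposition as asserting that the measure $\MA(v_1,\dots,v_n;\cdot)$ is the pushforward under $\gnom$ of $|z_{n+1}|\,\d S(K^{v_1},\dots,K^{v_n};z)$, the identity extends by linearity from non-negative to signed integrands (or equivalently, decompose $\varphi=\varphi^+-\varphi^-$ and apply the proposition to each piece). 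With $\varphi(x)=(\cT^{n-j}\alpha)(|x|)\,x_i$, and using \eqref{eq:s_j_mixed_s} to identify $S(K^v[j],B^{n+1}[n-j],\cdot)$ with $S_j(K^v,\cdot)$, we arrive at
\[
\big(\ot_{j,\alpha}^*(v)\big)_i=\binom{n}{j}\int_{\sN_-}(\cT^{n-j}\alpha)(|\gnom(z)|)\,\gnom(z)_i\,|z_{n+1}|\,\d S_j(K^v,z).
\]
Since $\gnom(z)_i\,|z_{n+1}|=z_i$ by the very definition of the gnomonic projection, this is the stated formula. For the additional assertion, if $v(x)=h_K(x,-1)$ then the corresponding clause of Proposition~\ref{prop:ma_bodies_functions} permits the choice $K^v=K$.

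The only delicate point is the sign issue when invoking Proposition~\ref{prop:ma_bodies_functions} with the signed integrand $\varphi(x)=(\cT^{n-j}\alpha)(|x|)\,x_i$; this is resolved by the canonical (i.e.\ $\varphi$-independent) nature of the bodies $K^{v_i}$, which justifies applying the identity separately to $\varphi^+$ and $\varphi^-$ and subtracting. The remaining steps are a straightforward application of Proposition~\ref{prop:int_swap_ball}, a density/continuity argument, and the bookkeeping that turns a mixed area measure with $(n-j)$ copies of $B^{n+1}$ into $S_j(K^v,\cdot)$.
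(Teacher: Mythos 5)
Your proof is correct and follows essentially the same route as the paper: pass from Hessian-measure form to $\MA(v[j],v_B[n-j];\cdot)$ via Proposition~\ref{prop:int_swap_ball}, extend by continuity/density, and then apply Proposition~\ref{prop:ma_bodies_functions} together with \eqref{eq:def_v_b} and \eqref{eq:s_j_mixed_s}. Your explicit remark about handling the signed integrand by splitting into positive and negative parts is a reasonable clarification that the paper leaves implicit.
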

	\begin{proof}
		By Theorem~\ref{thm:main_existence}, Proposition~\ref{prop:int_swap_ball}, and \eqref{eq:mixed_ma_dis} we have
		\begin{align}
			\begin{split}
			\label{eq:ot_j_alpha_ma_v_b}
			\big(\ot_{j,\alpha}^*(v)\big)_i &= \int_{\Rn} \alpha(|x|)x_i [\Hess v(x)]_j \d x\\
			&=\binom{n}{j} \int_{\Rn} (\cT^{n-j}\alpha)(|x|) x_i\, D(\Hess v(x)[j],\Hess v_B(x)[n-j])\d x\\
			&=\binom{n}{j} \int_{\Rn} (\cT^{n-j}\alpha)(|x|) x_i \d\MA(v[j],v_B[n-j];x)
			\end{split}
		\end{align}
		for every $v\in \fconvf\cap C^2(\Rn)$ and $i\in\{1,\ldots,n\}$. Since $\alpha\in T_n^n$ it follows from Lemma~\ref{le:prop_t_j_n} that also $\cT^{n-j}\alpha\in T_n^n$ and thus, by Lemma~\ref{le:continuous_extension}, the integrand $x\mapsto (\cT^{n-j}\alpha)(|x|) x_i$ is continuous with compact support on $\Rn$. Using the continuity of $\ot_{j,\alpha}^*$ and Proposition~\ref{prop:int_ma_is_a_val} we may therefore continuously extend \eqref{eq:ot_j_alpha_ma_v_b} from $\fconvf\cap C^2(\Rn)$ to $\fconvf$. The statement now follows from Proposition~\ref{prop:ma_bodies_functions} together with \eqref{eq:def_v_b} and \eqref{eq:s_j_mixed_s}.
	\end{proof}

	\begin{remark}
		\label{re:mistake_hadwiger4_measures}
		The observant reader might suspect at this point that Theorem~\ref{thm:bodies_fcts} can also be derived directly from \cite[Lemma 4.4]{colesanti_ludwig_mussnig_8}. However, the latter is not correct in the full generality with which it is stated in \cite{colesanti_ludwig_mussnig_8} and can, therefore, not be applied here.
	\end{remark}
	
	\section{Characterization}
	\label{se:characterization}
	
	\subsection{Retrieving the Densities}
	\label{section:retrieving_the_densities}
	Let $j\in\{1,\ldots,n\}$ and let $\zeta\in D_j^n$. As was shown in \cite[Lemma 2.15]{colesanti_ludwig_mussnig_5}, it is relatively straightforward to retrieve the density $\zeta$ from the functional intrinsic volume $\oV_{j,\zeta}^*$. This is done using the family of functions $v_s\in\fconvf$, $s\geq 0$, given by
	\[
	v_s(x)=\begin{cases}
		0\quad &\text{if } |x| \leq s,\\
		|x|-s &\text{if } |x|>s,
	\end{cases}
	\]
	and showing that $\oV_{j,\zeta}^*(v_s)$ equals a multiple of $\cR^{n-j}\zeta(s)$. Since $\cR^{n-j}$ is a bijection between $D_j^n$ and $C_c({[0,\infty)})$ (see \cite[Lemma 3.8]{colesanti_ludwig_mussnig_6}), this uniquely determines $\zeta$.
	
	In the following, we want to establish a similar result for the operators $\ot_{j,\zeta}^*$ with $\zeta\in T_j^n$. The difficulty here, however, is that these operators vanish on the radially symmetric functions $v_s$. We will, therefore, resort to a modification of the functions $v_s$, which entails considerably more complex calculations.	
	
	Let $n\geq 2$. For $s\geq 0$, let $w_s\in\fconvf$ be given by
	\[
	w_s(x)=\begin{cases}
		0\quad &\text{if } |x|\leq s \text{ and } x_n\geq 0,\\
		|x|-s\quad &\text{if } |x|> s \text{ and } x_n \geq 0,\\
		0\quad &\text{if } |(x_1,\ldots,x_{n-1})|\leq s \text{ and } x_n <0,\\
		|(x_1,\ldots,x_{n-1})|-s \quad &\text{if } |(x_1,\ldots,x_{n-1})|> s \text{ and } x_n < 0,
	\end{cases}
	\]
	where we write $x=(x_1,\ldots,x_n)$ for $x\in\Rn$. It is straightforward to check that $w_s$ is of class $C^2$ almost everywhere on $\Rn$, with the subdifferential at the exceptions for $s>0$ being
	\begin{equation}
	\label{eq:subdiff_w_s_1}
	\partial w_s(x) = \left\{r \frac{x}{|x|} : r \in [0,1] \right\}
	\end{equation}
	if $|x|=s$ and $x_n \geq 0$, and
	\begin{equation}
	\label{eq:subdiff_w_s_2}
	\partial w_s(x) = \left\{r \frac{(x_1,\ldots,x_{n-1},0)}{|(x_1,\ldots,x_{n-1})|} : r \in [0,1] \right\}
	\end{equation}
	when $|(x_1,\ldots,x_{n-1})|=s$ and $x_n <0$.
	
	We start with an auxiliary result, where we note that the case $j\in\{1,\ldots,n-1\}$ is a consequence of \cite[Lemma 2.15]{colesanti_ludwig_mussnig_1} and its proof. For the case $j=n$, we will use that
	\begin{equation}
	\label{eq:v_s_conjugate}
	v_s^*(x)=\ind_{B^n}(x)+s|x|
	\end{equation}
	for $x\in\Rn$ and $s\geq 0$, cf.\ \cite[(2.1) and (2.2)]{colesanti_ludwig_mussnig_1}.
	\begin{lemma}
		\label{le:phi_j_v_s}
		If $j\in\{1,\ldots,n\}$, then
		\[
		\Phi_j(v_s;s\sn)=\kappa_n \binom{n}{j}s^{n-j}
		\]
		for every $s\geq 0$.
	\end{lemma}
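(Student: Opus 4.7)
My plan is to evaluate the quantity $\Phi_j(v_s; s\sn)$ directly from the defining Steiner-type relation \eqref{eq:def_phi_j} and to compare coefficients of the resulting polynomial in $r$. The key observation is that the ``parallel'' set appearing in \eqref{eq:def_phi_j}, for the Borel set $B = s\sn$ and the function $v_s$, has a particularly simple geometric description as a spherical shell.

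Assume first that $s > 0$. Points $x \in s\sn$ can be written as $x = s\omega$ with $\omega \in \sn$, and from (a formula analogous to) \eqref{eq:subdiff_w_s_1} applied to $v_s$ one has
\[
\partial v_s(s\omega) = \{t\omega : t \in [0,1]\}.
\]
Consequently, for any $r \geq 0$,
\[
\{x + ry : x \in s\sn,\, y \in \partial v_s(x)\} = \{(s + rt)\omega : \omega \in \sn,\, t \in [0,1]\} = \{z \in \Rn : s \leq |z| \leq s+r\}.
\]
The $n$-dimensional Hausdorff (= Lebesgue) measure of this shell equals $\kappa_n\bigl((s+r)^n - s^n\bigr)$. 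Expanding via the binomial theorem and comparing with the right-hand side of \eqref{eq:def_phi_j} then yields
\[
\sum_{j=0}^n r^j \Phi_j(v_s; s\sn) = \kappa_n \sum_{j=1}^n \binom{n}{j} s^{n-j} r^j,
\]
and equating the coefficients of $r^j$ gives $\Phi_j(v_s; s\sn) = \kappa_n \binom{n}{j} s^{n-j}$ for every $j \in \{1,\ldots,n\}$ (and incidentally $\Phi_0(v_s; s\sn) = 0$).

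For the remaining case $s = 0$ we have $s\sn = \{o\}$ and $\partial v_0(o) = B^n$, since $v_0(x) = |x|$. Hence
\[
\{o + ry : y \in \partial v_0(o)\} = r B^n,
\]
whose measure is $\kappa_n r^n$. Comparing coefficients in \eqref{eq:def_phi_j} gives $\Phi_n(v_0; \{o\}) = \kappa_n$ and $\Phi_j(v_0; \{o\}) = 0$ for $j < n$, which matches $\kappa_n \binom{n}{j}\, 0^{n-j}$ with the usual convention $0^0 = 1$. I do not expect any significant obstacle here: the main point is simply to verify the explicit form of $\partial v_s$ on $s\sn$, after which the computation reduces to evaluating $(s+r)^n - s^n$.
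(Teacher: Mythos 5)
Your proof is correct, but it takes a genuinely different route from the paper. The paper disposes of $j\in\{1,\ldots,n-1\}$ by citing \cite[Lemma 2.15]{colesanti_ludwig_mussnig_1} and its proof, and then handles $j=n$ separately via the conjugate picture: using $\Psi_n(u;\cdot)=\Phi_n(u^*;\cdot)$, the gradient formula \eqref{eq:map_grad}, and the explicit conjugate $v_s^*=\ind_{B^n}+s|\cdot|$ to see that $\nabla v_s^*$ maps $\operatorname{int}B^n\setminus\{o\}$ onto $s\sn$, giving $\Phi_n(v_s;s\sn)=\kappa_n$ immediately. You instead work entirely on the primal side with the defining Steiner formula \eqref{eq:def_phi_j}: since $\partial v_s(s\omega)=\{t\omega:t\in[0,1]\}$, the parallel set $\{x+ry:x\in s\sn,\ y\in\partial v_s(x)\}$ is exactly the spherical shell $\{s\le|z|\le s+r\}$ of volume $\kappa_n\bigl((s+r)^n-s^n\bigr)$, and comparing coefficients of $r^j$ gives all the $\Phi_j(v_s;s\sn)$ at once, with the case $s=0$ (where $\partial v_0(o)=B^n$) handled by the same mechanism. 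Your argument is more elementary and self-contained and treats all degrees $j$ uniformly, at the cost of re-deriving something the paper is content to import; the paper's version is shorter but relies on the external reference for $j<n$. Both are valid; there is no gap.
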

	\begin{proof}
		As explained above, the case $j\in\{1,\ldots,n-1\}$ has already been treated. It remains to consider the case $j=n$, in which it follows from \eqref{eq:psi_j_phi_j}, \eqref{eq:map_grad}, and \eqref{eq:v_s_conjugate} that
		\[
		\Phi_n(v_s;s\sn)=\Psi_n(v_s^*;s\sn)=\int_{B^n} \chi_{s\sn}(\nabla s |x|)\d x = \kappa_n,
		\]
		where $\chi_{s\sn}$ denotes the characteristic function of $s \sn$.
	\end{proof}
	
	In the following, we will also use that
	\begin{equation}
	\label{eq:cauchy_proj}
	\int_{\{z\in \sn : z_n \geq 0\}} z_n \d\hm^{n-1}(z) = \frac{1}{2} \int_{\sn} |\langle z,e_n\rangle| \d S_{n-1}(B^n,z) = V_{n-1}(\proj_{e_n^\perp}B^n)=\kappa_{n-1} 
	\end{equation}
	which is a consequence of Cauchy's projection formula (see \cite[(5.80)]{schneider_cb} or \cite[(A.45)]{gardner_gt}).
	
	\begin{proposition}
		\label{prop:retrieve_densities}
		Let $n\geq 2$ and let $j\in\{1,\ldots,n-1\}$. If $\zeta\in T_j^n$, then
		\[
		\ot_{j,\zeta}^*(w_s)=\frac{\kappa_{n-1}}{n} \binom{n}{j} s^{n-j+1}\zeta(s)\, e_n
		\]
		for every $s\geq 0$.
	\end{proposition}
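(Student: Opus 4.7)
The plan is as follows. For $s>0$, the function $w_s$ vanishes on a neighborhood of the origin (in fact, on the interior of $sB^n$), so $w_s\in\fconvfz$ and Lemma~\ref{le:t_j_zeta_fconvz} gives
\[
\ot_{j,\zeta}^*(w_s)=\int_{\Rn}\zeta(|x|)\,x\,\d\Phi_j(w_s;x).
\]
The case $s=0$ will then follow by continuity of $\ot_{j,\zeta}^*$ together with the epi-convergence $w_s\to w_0$ as $s\to 0^+$ and the fact that $s^{n-j+1}\zeta(s)\to 0$ by definition of $T_j^n$. Since $w_s$ is invariant under every orthogonal transformation fixing $e_n$, the $\On$-equivariance of $\ot_{j,\zeta}^*$ forces the integral to be a scalar multiple of $e_n$, so everything reduces to computing $\int_{\Rn}\zeta(|x|)\,x_n\,\d\Phi_j(w_s;x)$.

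The integrand $\zeta(|x|)\,x_n$ vanishes on the hyperplane $\{x_n=0\}$, so whatever $\Phi_j(w_s;\cdot)$ does on this interface is irrelevant, and the integration splits over the open half-spaces $H^+=\{x_n>0\}$ and $H^-=\{x_n<0\}$. On $H^+$ the function $w_s$ coincides with $v_s$, so by local determination~\eqref{eq:loc_det} we have $\Phi_j(w_s;\cdot)|_{H^+}=\Phi_j(v_s;\cdot)|_{H^+}$; on $H^-$ the function $w_s$ depends only on $(x_1,\ldots,x_{n-1})$, so Lemma~\ref{le:phi_j_lower_dim} identifies $\Phi_j(w_s;\cdot)|_{H^-}$ with $\Phi_j^{(n-1)}(v_s^{(n-1)};\tilde x)\otimes \d x_n|_{H^-}$. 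Each of these two measures further decomposes via~\eqref{eq:phi_j_hess} into a smooth bulk part (with densities $\binom{n-1}{j}|x|^{-j}$ on $\{|x|>s,\,x_n>0\}$ and $\binom{n-2}{j}|\tilde x|^{-j}$ on $\{|\tilde x|>s,\,x_n<0\}$) and a singular part concentrated on the upper hemisphere $\{|x|=s,\,x_n>0\}$, respectively on the open cylinder $\{|\tilde x|=s,\,x_n<0\}$. By rotational symmetry these singular measures are uniform with respect to $\hm^{n-1}$ on the hemisphere, respectively $\hm^{n-2}\otimes \d x_n$ on the cylinder, and Lemma~\ref{le:phi_j_v_s}, applied in dimensions $n$ and $n-1$, fixes their total densities.

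The four resulting integrals are then evaluated in spherical coordinates. The upper singular part contributes exactly $\tfrac{\kappa_{n-1}}{n}\binom{n}{j}s^{n-j+1}\zeta(s)$ after the Cauchy-type identity~\eqref{eq:cauchy_proj} is applied to the angular factor, which matches the claim. The remaining three pieces must therefore cancel: the upper bulk gives $\binom{n-1}{j}\kappa_{n-1}\int_s^\infty r^{n-j}\zeta(r)\,\d r$; the lower singular, after the substitution $r=\sqrt{s^2+x_n^2}$, gives $-\binom{n-1}{j}\kappa_{n-1}\,s^{n-j-1}\int_s^\infty r\zeta(r)\,\d r$; and the lower bulk, once it is reduced to a double integral over $(r,\phi)$ with $\phi$ the polar angle measured from $-e_n$ and then evaluated by Fubini, splits into precisely those two types of terms. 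For $j\leq n-2$ the cancellation hinges on the identity $\tfrac{n-1}{n-j-1}\binom{n-2}{j}=\binom{n-1}{j}$; the borderline case $j=n-1$ is handled by $\binom{n-2}{n-1}=0$, which collapses the lower bulk, and the upper bulk then cancels the lower singular directly. The main obstacle is the lower-bulk evaluation, where $\zeta$ is applied to $|x|=\sqrt{|\tilde x|^2+x_n^2}$ rather than to $|\tilde x|$; the Fubini swap that disentangles this is the only delicate step, after which the cancellations reduce to elementary binomial manipulation.
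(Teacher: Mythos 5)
Your proposal is correct and follows essentially the same route as the paper: decompose $\Rn$ into the bulk and singular pieces on each half-space, identify the half-space measures via local determination and Lemma~\ref{le:phi_j_lower_dim}, fix the uniform singular densities with Lemma~\ref{le:phi_j_v_s} in dimensions $n$ and $n-1$, and observe that the three non-hemisphere contributions cancel. The only cosmetic difference is that you evaluate the lower bulk in spherical coordinates about $-e_n$ with a Fubini swap, whereas the paper slices cylindrically in $(\tilde{x},x_n)$ and integrates by parts after the substitution $t=\sqrt{r^2+x_n^2}$; both yield the same two terms and the same cancellation.
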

	\begin{proof}
		Throughout the proof, we fix an arbitrary $s>0$, noting that it suffices to restrict to this case due to continuity. For every $\vartheta\in\On$ such that $\vartheta e_n=e_n$ it follows from the definition of $w_s$ together with the $\On$ equivariance of $\ot_{j,\zeta}^*$ that
		\[
		\ot_{j,\zeta}^*(w_s)=\ot_{j,\zeta}^*(w_s\circ \vartheta^{-1})=\vartheta \ot_{j,\zeta}^*(w_s).
		\]
		Thus, $\ot_{j,\zeta}^*(w_s)$ must be parallel to $e_n$, and we therefore focus on calculating the $n$th coordinate of this vector. Since $w_s\in\fconvfz$ it follows from Lemma~\ref{le:t_j_zeta_fconvz} that
		\[
		\big(\ot_{j,\zeta}^*(w_s)\big)_n = \int_{\Rn} \zeta(|x|)x_n\d\Phi_j(w_s;x).
		\]
		To calculate this integral, we split the domain $\Rn$ into several parts.
		\begin{itemize}
			\item Let $A_0=\{x \in\Rn : x_n =0\}$. It follows from \eqref{eq:subdiff_w_s_1} that
			\[
			\dim(\{x + t y : x\in A_0, y\in \partial w_s(s)\})<n
			\]
			for every $t\geq 0$. Thus, by \eqref{eq:def_phi_j}, we have $\Phi_j(w_s;\cdot)\equiv 0$ on $A_0$.
            
			\item Let $A_1= \{x \in\Rn : |x|<s , x_n > 0 \}\cup \{x\in\Rn : |(x_1,\ldots,x_{n-1})|<s , x_n <0\}$, which is an open subset of $\Rn$. Since $w_s\equiv 0$ on $A_1$, it follows from \eqref{eq:phi_j_hess} that $\Phi_j(w_s;\cdot)\equiv 0$ on $A_1$.
            
			\item The function $w_s$ is of class $C^2$ on $A_2 = \{x\in \Rn : |x|>s, x_n > 0\}$ and its Hessian matrix at any point $x$ in this set has $(n-1)$ eigenvalues equal to $1/|x|$ and the last eigenvalue equal to zero. Hence,
			\[
			[\Hess w_s(x)]_j = \binom{n-1}{j}\frac{1}{|x|^j}
			\]
			for every $x\in A_2$. By \eqref{eq:phi_j_hess} together with using polar coordinates and \eqref{eq:cauchy_proj} we now have
			\begin{align*}
				\int_{A_2} \zeta(|x|)x_n \d\Phi_j(w_s;x) &= \binom{n-1}{j} \int_{A_2} \frac{\zeta(|x|) x_n}{|x|^j}\d x\\
				&=\binom{n-1}{j} \int_s^\infty r^{n-j} \zeta(r)\d r \int_{\{z\in \sn : z_n>0\}} z_n \d \hm^{n-1}(z)\\
				&=\kappa_{n-1} \binom{n-1}{j}  \int_s^\infty r^{n-j}\zeta(r)\d r.
			\end{align*}
			
			\item The function $w_s$ is also of class $C^2$ on $A_3 = \{x\in\Rn : |(x_1,\ldots,x_{n-1})|>s, x_n< 0\}$ and the Hessian matrix $\Hess w_s(x)$ has $(n-2)$ eigenvalues equal to $1/|(x_1,\ldots,x_{n-1})|$ and two eigenvalues equal to zero on this set. Thus,
			\[
			[\Hess w_s(x)]_j = \begin{cases}
				\binom{n-2}{j} \frac{1}{|(x_1,\ldots,x_{n-1})|^j}\quad &\text{if } j\in\{1,\ldots,n-2\},\\
				0\quad &\text{if } j=n-1,
			\end{cases}
			\]
			for every $x\in A_3$. Therefore, by \eqref{eq:phi_j_hess} we trivially have
			\[
			\int_{A_3} \zeta(|x|)x_n \d\Phi_{n-1}(w_s;x)=0.
			\]
			Furthermore, for $j\in\{1,\ldots,n-2\}$ we use polar coordinates to obtain 
			\begin{align*}
				\int_{A_3} \zeta(|x|)x_n \d\Phi_j(w_s;x) &= \binom{n-2}{j} \int_{A_3} \frac{\zeta(|x|)x_n}{|(x_1,\ldots,x_{n-1})|^j} \d x\\
				&=-\binom{n-2}{j} \int_{\{y\in \R^{n-1}: |y|>s\}} \frac{1}{|y|^j}  \int_{0}^{\infty} \zeta(\sqrt{|y|^2+x_n^2})x_n \d x_n \d y\\
				&= -(n-1)\kappa_{n-1} \binom{n-2}{j}  \int_s^\infty r^{n-2-j} \int_0^\infty \zeta(\sqrt{r^2+x_n^2}) x_n \d x_n \d r\\
				&=-(n-1) \kappa_{n-1} \binom{n-2}{j}  \int_s^\infty r^{n-2-j} \int_r^\infty t \zeta(t) \d t \d r\\
				&=\kappa_{n-1} \binom{n-1}{j}\left(s^{n-1-j} \int_s^\infty t \zeta(t)\d t - \int_s^\infty r^{n-j} \zeta(r) \d r \right),
			\end{align*}
			where we have implicitly used that $n-1\geq 2$, together with the substitution $t=\sqrt{r^2+x_n^2}$ and integration by parts.
			
			\item Let $A_4=\{x\in \Rn : |x|=s, x_n> 0\}=s\sn\cap\{x\in \Rn : x_n> 0\}$. Since Hessian measures are locally determined (see \eqref{eq:loc_det}), it follows that $\Phi_j(w_s;\cdot)$ coincides with $\Phi_j(v_s;\cdot)$ on $\{x\in \Rn : x_n >0\}$ and, in particular, on $A_4$. Observe that $v_s$ is a radially symmetric function and therefore, $\Phi_j(v_s;\cdot)$ is invariant under rotations, which implies that $\Phi_j(v_s;\cdot)$, restricted to $s\sn$, must be the Haar measure on this set. This means that there exists a constant $c_{n,j,s}$ such that
			\[
			\int_{s \sn} b(z)\d\Phi_j(v_s;z)= c_{n,j,s}\int_{s\sn} b(z)\d\hm^{n-1}(z) 
			\]
			for every Borel measurable function $b$ on $s\sn$. By Lemma~\ref{le:phi_j_v_s} we have
			\[
			\kappa_n \binom{n}{j} s^{n-j} = \Phi_j(v_s; s\sn)= c_{n,j,s} \,\hm^{n-1}(s \sn) = c_{n,j,s}\, n \kappa_n s^{n-1},
			\] 
			and thus, $c_{n,j,s}=\frac{1}{n} \binom{n}{j}\frac{1}{s^{j-1}}$. Summing up, we obtain
			\begin{align*}
				\int_{A_4} \zeta(|x|)x_n \d\Phi_j(w_s;x)&=\int_{A_4} \zeta(|x|)x_n \d\Phi_j(v_s;x)\\
				&=\frac{1}{n}\binom{n}{j} \frac{1}{s^{j-1}}\int_{s\sn \cap \{x\in\Rn : x_n> 0\}} \zeta(|z|)z_n \d\hm^{n-1}(z)\\
				&=\frac{1}{n}\binom{n}{j}s^{n-j+1}\zeta(s)\int_{\{z\in\sn : z_n > 0\}} z_n \d\hm^{n-1}(z)\\
				&=\frac{\kappa_{n-1}}{n}\binom{n}{j} s^{n-j+1}\zeta(s)
			\end{align*}
			where we have used \eqref{eq:cauchy_proj}.
			
			\item Lastly, let $A_5=\{x\in\Rn : |(x_1,\ldots,x_{n-1})|=s, x_n <0\}$. Since
			\[
			w_s(x_1,\ldots,x_n)=v_s\vert_{\R^{n-1}}(x_1,\ldots,x_{n-1})
			\]
			for every $(x_1,\ldots,x_n)\in \{x\in\Rn : x_n < 0\}$, it follows from Lemma~\ref{le:phi_j_lower_dim} and the fact that Hessian measures are locally determined that
			\[
			\d\Phi_j(w_s;(x_1,\ldots,x_n)) = \d\Phi_j^{(n-1)}(v_s\vert_{\R^{n-1}};(x_1,\ldots,x_{n-1})) \d x_n
			\]
			on $A_5$. By Lemma~\ref{le:phi_j_v_s} we have $\Phi_j^{(n-1)}(v_s\vert_{\R^{n-1}};s\s^{n-2}) = \kappa_{n-1}\binom{n-1}{j}s^{n-1-j}$, and thus
			\begin{align*}
				\int_{A_5} \zeta(|x|)x_n \d\Phi_j(w_s;x)&=-\int_{0}^{\infty}\int_{s\s^{n-2}} \zeta(\sqrt{|y|^2+x_n^2})  x_n \d\Phi_j^{(n-1)}(v_s\vert_{\R^{n-1}};y)\d x_n\\
				&=-\kappa_{n-1}\binom{n-1}{j}s^{n-1-j}\int_0^\infty \zeta(\sqrt{s^2+x_n^2})x_n \d x_n\\
				&=-\kappa_{n-1}\binom{n-1}{j} s^{n-1-j}\int_s^\infty t \zeta(t) \d t. 
			\end{align*}
		\end{itemize}
		Since $\Rn$ is the disjoint union of the sets $A_0,\ldots,A_n$, we finally obtain
		\begin{align*}
			\int_{\Rn} &\zeta(|x|)x_n \d\Phi_j(w_s;x)\\
			&= \kappa_{n-1}\binom{n-1}{j}\int_s^\infty r^{n-j}\zeta(r)\d r + \kappa_{n-1}\binom{n-1}{j}\left(s^{n-1-j} \int_s^\infty t \zeta(t)\d t - \int_s^\infty r^{n-j} \zeta(r) \d r \right)\\
			&\quad+\frac{\kappa_{n-1}}{n}\binom{n}{j} s^{n-j+1}\zeta(s)-\kappa_{n-1}\binom{n-1}{j} s^{n-1-j}\int_s^\infty t \zeta(t) \d t\\
			&= \frac{\kappa_{n-1}}{n}\binom{n}{j} s^{n-j+1}\zeta(s),
		\end{align*}
		when $j\in\{1,\ldots,n-2\}$. Similarly, for $j=n-1$,
		\begin{align*}
			\int_{\Rn} \zeta(|x|)x_n \d\Phi_{n-1}(w_s;x) &=\kappa_{n-1}\binom{n-1}{n-1}\int_s^\infty r^{n-(n-1)}\zeta(r)\d r+\frac{\kappa_{n-1}}{n}\binom{n}{n-1} s^{n-(n-1)+1}\zeta(s)\\
			&\quad-\kappa_{n-1}\binom{n-1}{n-1} s^{n-1-(n-1)}\int_s^\infty t \zeta(t) \d t\\
			&=\kappa_{n-1} s^2 \zeta(s),
		\end{align*}
		which gives the desired result.
	\end{proof}
	
	To conclude this section, we show that the statement of Proposition~\ref{prop:retrieve_densities} also holds true for $j=n$, including the case $n=1$. For this, we first observe that
	\[
	w_s+s=h_{D^n}\vee s
	\]
	for every $s\geq 0$, where we write $D^n=B^n\cap \{x\in\Rn : x_n \geq 0\}$ and where we also use this as a definition of $w_s$ when $n=1$ (with $D^1=[0,1])$. It now follows from \cite[Lemma 2]{artstein_milman_annals_2009} and \eqref{eq:conj_support_indicator} that the convex conjugate $(w_s+s)^*\in\fconvs$ is the largest lower semicontinuous, convex function that is pointwise bounded from above by $\ind_{D^n}\wedge (\ind_{\{o\}}-s)$, or equivalently
	\begin{equation}
	\label{eq:w_s_conjugate}
	w_s^*(x) = \ind_{D^n}(x)+s|x|
	\end{equation}
	for every $x\in\Rn$ and $s\geq 0$.

	\begin{lemma}
		\label{le:retrieve_densities_n}
		If $\zeta\in T_n^n$, then
		\[
		\ot_{n,\zeta}^*(w_s)=\frac{\kappa_{n-1}}{n} s \zeta(s)\,e_n
		\]
		for every $s\geq 0$.
	\end{lemma}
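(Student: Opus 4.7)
\noindent
The plan is to exploit duality to reduce the statement to an elementary integral. Since $\zeta \in T_n^n$, Lemma~\ref{le:continuous_extension} guarantees that $y \mapsto \zeta(|y|) y$ extends to an element of $C_c(\Rn;\Rn)$. Combining Theorem~\ref{thm:main_existence}, Proposition~\ref{prop:int_maj_is_a_vector-val}, and the density of $\fconvf \cap C^2(\Rn)$ in $\fconvf$, I obtain
\[
\ot_{n,\zeta}^*(w_s) = \int_{\Rn} \zeta(|y|) y\, \d\MA(w_s;y)
\]
for every $s\geq 0$. Using $\MA(w_s;\cdot) = \MAp(w_s^*;\cdot)$ together with \eqref{eq:map_grad} applied coordinate-wise, this rewrites as
\[
\ot_{n,\zeta}^*(w_s) = \int_{\dom(w_s^*)} \zeta(|\nabla w_s^*(x)|)\, \nabla w_s^*(x)\, \d x.
\]

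\noindent
Next I would invoke \eqref{eq:w_s_conjugate}, which yields $w_s^*(x) = \ind_{D^n}(x) + s|x|$. Thus $\dom(w_s^*) = D^n$, and for $s > 0$ the gradient satisfies $\nabla w_s^*(x) = s\,x/|x|$ for a.e.\ $x \in D^n$. In particular, $|\nabla w_s^*(x)| \equiv s$ on the interior of $D^n$, so the integral collapses to
\[
\ot_{n,\zeta}^*(w_s) = s\,\zeta(s) \int_{D^n} \frac{x}{|x|}\, \d x.
\]

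\noindent
Finally, since $D^n$ is invariant under every rotation fixing $e_n$, the vector-valued integral is parallel to $e_n$. Passing to spherical coordinates and applying Cauchy's projection identity \eqref{eq:cauchy_proj}, I compute
\[
\int_{D^n} \frac{x_n}{|x|}\, \d x \;=\; \left(\int_0^1 r^{n-1}\, \d r\right)\int_{\{z\in\sn : z_n \geq 0\}} z_n\, \d\hm^{n-1}(z) \;=\; \frac{\kappa_{n-1}}{n},
\]
which gives the desired identity for $s > 0$. The case $s = 0$ then follows because $\lim_{s\to 0^+} s\,\zeta(s) = 0$ by the definition of $T_n^n$, while the left-hand side is continuous in $s$ since $w_s$ epi-converges to $w_0$ as $s \to 0^+$ and $\ot_{n,\zeta}^*$ is continuous. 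I do not anticipate any real obstacle: the whole argument is driven by the explicit form of $w_s^*$, and the $T_n^n$ hypothesis is exactly what is needed so that the Monge--Amp\`ere representation of $\ot_{n,\zeta}^*$ is free of singularities and formula \eqref{eq:map_grad} applies without caveats. The computation also subsumes the case $n = 1$ via $\kappa_0 = 1$ and $\sn = \{-1,1\}$, so no separate argument for low dimensions is required.
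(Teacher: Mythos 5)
Your proof is correct and follows essentially the same route as the paper: both arguments pass from the $\MA$-representation of $\ot_{n,\zeta}^*$ on $\fconvf$ to the conjugate Monge--Amp\`ere measure $\MAp(w_s^*;\cdot)$, then invoke \eqref{eq:map_grad} with the explicit form $w_s^*=\ind_{D^n}+s|\cdot|$ from \eqref{eq:w_s_conjugate} to reduce to the integral $\int_{D^n} x/|x|\,\d x$, which is evaluated via polar coordinates and \eqref{eq:cauchy_proj}. The only cosmetic differences are that you compute the full vector (arguing the tangential components vanish by the axial symmetry of $D^n$) rather than isolating the $n$th coordinate, and that you handle $s=0$ by a separate continuity argument whereas the paper's formula covers it directly via the convention $\zeta(|y|)y\vert_{y=o}=o$ from Lemma~\ref{le:continuous_extension}.
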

	\begin{proof}
		Let $\zeta\in T_n^n$ be given. As in the proof of Proposition~\ref{prop:retrieve_densities}, it follows from the symmetries of $w_s$ and the $\On$ equivariance of $\ot_{n,\zeta}^*$, that $\ot_{n,\zeta}^*(w_s)$ is parallel to $e_n$. By \eqref{eq:map_grad} and \eqref{eq:w_s_conjugate} we now have
		\begin{align*}
			\big(\ot_{n,\zeta}^*(w_s)\big)_n=\big(\ot_{n,\zeta}(w_s^*)\big)_n&= \int_{\Rn} \zeta(|y|) y_n \d\MAp(w_s^*;y) \\
			&=\int_{D^n} \zeta(|\nabla w_s^*(x)|) \frac{\partial w_s^*}{\partial x_n}(x) \d x\\
			&= \int_{D^n} \zeta(s) \frac{s x_n}{|x|}\d x\\
			&= s\zeta(s) \int_{\{z\in\sn : z_n \geq 0\}} \int_0^1 z_n r^{n-1} \d r \d\hm^{n-1}(z)\\
			&= \frac{1}{n} s\zeta(s) \int_{\{z\in\sn : z_n \geq 0\}} z_n \d\hm^{n-1}(z)\\
			&=\frac{\kappa_{n-1}}{n} s \zeta(s)
		\end{align*}
		for every $s\geq 0$, where we have used \eqref{eq:cauchy_proj} in the last step.
	\end{proof}

	\begin{remark}
		Proposition~\ref{prop:retrieve_densities} and Lemma~\ref{le:retrieve_densities_n} together with the definition of the classes $T_j^n$ show that $t_{j,\zeta}^*(w_0)=o$ for every $j\in\{1,\ldots,n\}$ and $\zeta\in T_j^n$. This follows independently from the fact that $w_0=h_{D^n}$ and that $K\mapsto t_{j,\zeta}^*(h_K)$ defines a continuous, translation invariant, rotation equivariant valuation on $\Kn$, which by Proposition~\ref{prop:hadwiger_schneider} must be trivial.
	\end{remark}
	
	\subsection{The Abel Transform}
	\label{se:abel}
	To mimic Klain's approach to Hadwiger's theorem \cite{klain_1995}, we need to understand how the functional Minkowski vectors $\ot_{j,\zeta}$ behave on functions with lower dimensional domains. For this we consider the \textit{Abel transform} of a function $\zeta\in C_b((0,\infty))$, which is given by
	\[
	\cA \zeta(s)=\int_{-\infty}^\infty \zeta(\sqrt{s^2+t^2})\d t
	\]
	for $s>0$.
	Applying the Abel transform $k$ times, $k\in\N$, gives
	\begin{equation}
		\label{eq:abel_k_times}
		\cA^k \zeta(s)=\int_{\R^k} \zeta(\sqrt{|x|^2+s^2})\d x
	\end{equation}
	for $s>0$ and we remark that twofold application of the Abel transform generates a differentiable function (cf.\ \cite[Lemma 2.4]{colesanti_ludwig_mussnig_8}). For $\xi\in C_b^1((0,\infty))$, it is easy to verify that the inverse Abel transform is given by
	\begin{equation}
	\label{eq:inverse_abel}
	\cA^{-1}\xi(s)=-\frac{1}{\pi}\int_s^\infty \frac{\xi'(t)}{\sqrt{t^2-s^2}} \d t
	\end{equation}
	for $s>0$. See, for example, \cite[Chapter 13]{bracewell}. For the following result, we write $C_c^\infty({[0,\infty)})$ for the space of continuous functions with compact support on $[0,\infty)$ functions that are infinitely differentiable on $(0,\infty)$ and have one-sided derivatives of all orders at $0^+$.	
	
	\begin{lemma}
		\label{le:abel_smooth}
		Let $\tilde{\psi}\in C_c^\infty({[0,\infty)})$. If $\xi\in C_c^\infty({[0,\infty)})$ is such that
		\begin{equation}
		\label{eq:xi_psi_square}
		\xi(s)=\tilde{\psi}(s^2)
		\end{equation}
		for $s\geq 0$, then there exists $\psi\in C_c^\infty({[0,\infty)})$ such that
		\[
		\cA^{-1}\xi(s)=\psi(s^2)
		\]
		for $s\geq 0$.
	\end{lemma}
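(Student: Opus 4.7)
The plan is to explicitly perform the substitution $u = t^2 - s^2$ in the defining integral \eqref{eq:inverse_abel} for $\cA^{-1}\xi$, exploiting the hypothesis $\xi(s) = \tilde{\psi}(s^2)$, and then to recognize the resulting expression as a smooth function of the variable $r = s^2$.

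First, I would compute $\xi'(t) = 2t\,\tilde{\psi}'(t^2)$ using the assumption on $\xi$. Substituting into \eqref{eq:inverse_abel} yields
\[
\cA^{-1}\xi(s) = -\frac{1}{\pi}\int_s^\infty \frac{2t\,\tilde{\psi}'(t^2)}{\sqrt{t^2-s^2}}\,\d t.
\]
The change of variables $u = t^2 - s^2$ (so $\d u = 2t\,\d t$) then transforms this into
\[
\cA^{-1}\xi(s) = -\frac{1}{\pi}\int_0^\infty \frac{\tilde{\psi}'(u + s^2)}{\sqrt{u}}\,\d u,
\]
which depends on $s$ only through $s^2$. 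It is therefore natural to define
\[
\psi(r) = -\frac{1}{\pi}\int_0^\infty \frac{\tilde{\psi}'(u+r)}{\sqrt{u}}\,\d u
\]
for $r \geq 0$, so that $\cA^{-1}\xi(s) = \psi(s^2)$.

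Next, I would verify that $\psi \in C_c^\infty([0,\infty))$. For the compact support, since $\tilde{\psi}$ has compact support in $[0,\infty)$, there exists $M > 0$ such that $\tilde{\psi}' \equiv 0$ on $[M,\infty)$; thus $\tilde{\psi}'(u+r) = 0$ whenever $u + r \geq M$, and in particular $\psi(r) = 0$ for $r \geq M$. For smoothness, since $\tilde{\psi}^{(k+1)}$ is bounded with compact support and $u \mapsto 1/\sqrt{u}$ is integrable on any bounded interval $[0,M]$, one may differentiate under the integral sign to obtain
\[
\psi^{(k)}(r) = -\frac{1}{\pi}\int_0^\infty \frac{\tilde{\psi}^{(k+1)}(u+r)}{\sqrt{u}}\,\d u
\]
for every $k \in \N$, and each such integral is absolutely convergent and continuous in $r \in [0,\infty)$ (including one-sided continuity at $0$) by the dominated convergence theorem.

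The main technical point to be careful about is ensuring that the substitution is legitimate and that interchanging derivative and integral is justified despite the integrable singularity of $1/\sqrt{u}$ at $u = 0$; this is handled by observing that the singularity is of type $u^{-1/2}$, which is locally integrable, while $\tilde{\psi}^{(k+1)}$ contributes only bounded compactly supported factors. No other obstacles are expected, and the resulting $\psi$ satisfies all the required properties.
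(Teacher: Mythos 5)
Your proof is correct and follows essentially the same approach as the paper: you perform a change of variables in the integral \eqref{eq:inverse_abel} to make the dependence on $s$ occur only through $s^2$, and then observe that the resulting expression (and all its derivatives, via differentiation under the integral with the integrable $u^{-1/2}$ singularity) extends smoothly to $s=0$ since $\tilde{\psi}$ has compact support. The only cosmetic difference is that the paper substitutes $r=\sqrt{t^2-s^2}$ to obtain $-\tfrac{2}{\pi}\int_0^\infty \tilde{\psi}'(s^2+r^2)\,\d r$, which is the same integral as yours after the further substitution $u=r^2$.
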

	\begin{proof}
		Let $\tilde{\psi}$ and $\xi$ be as in \eqref{eq:xi_psi_square}.	By \eqref{eq:inverse_abel} we have
		\[
		\cA^{-1}\xi(s) = -\frac{2}{\pi}\int_s^\infty \frac{\tilde{\psi}'(t^2)t}{\sqrt{t^2-s^2}} \d t = -\frac{2}{\pi}\int_0^\infty \tilde{\psi}'(s^2+r^2)\d r
		\]
		for $s>0$, where we have used the substitution $r=\sqrt{t^2-s^2}$. Since $\tilde{\psi}$ has compact support, this expression and all its derivatives continuously extend to $s=0$, completing the proof.
	\end{proof}

	\begin{lemma}
		\label{le:restriction_abel}
		Let $k\in\{1,\ldots,n-1\}$, $j\in\{1,\ldots,k\}$ and $\zeta\in C_c({[0,\infty)})$. If $u\in\fconvs$ is such that $\dom(u)\subseteq \R^k$, then
		\[
		\int_{\Rn} \zeta(|y|)y_i \d\Psi_j(u;y)=\int_{\R^k} (\cA^{n-k}\zeta)(|z|)z_i \d\Psi_j^{(k)}(u\vert_{\R^k};z)
		\]
		for every $i\in\{1,\ldots,k\}$ and
		\[
		\int_{\Rn} \zeta(|y|)y_i \d\Psi_j(u;y)=0
		\]
		for every $i\in\{k+1,\ldots,n\}$.
	\end{lemma}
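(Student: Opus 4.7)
The plan is to exploit that $\dom(u)\subseteq\R^k$ forces $u^*$ to be constant in the directions orthogonal to $\R^k$, apply the product structure of Hessian measures from Lemma~\ref{le:phi_j_lower_dim} to $u^*$, and then carry out Fubini in the remaining integral, identifying the fibre integral with the iterated Abel transform.

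Writing $y=(y',y'')\in\R^k\times\R^{n-k}$, I would first observe that
\begin{equation*}
u^*(y)=\sup_{x\in\Rn}\bigl(\langle x,y\rangle-u(x)\bigr)=\sup_{x'\in\R^k}\bigl(\langle x',y'\rangle-u(x',0)\bigr)=(u\vert_{\R^k})^*(y'),
\end{equation*}
where in the middle we use that $u\equiv\infty$ outside $\R^k$ and on the right the convex conjugate is taken with respect to the ambient space $\R^k$. In particular, $(u\vert_{\R^k})^*\in\fconvfk$ since $u^*\in\fconvf$. Combining this with \eqref{eq:psi_j_phi_j} and applying Lemma~\ref{le:phi_j_lower_dim} to $u^*$ then yields the product structure
\begin{equation*}
\d\Psi_j(u;(y',y''))=\d\Psi_j^{(k)}(u\vert_{\R^k};y')\,\d y''.
\end{equation*}

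For $i\in\{1,\ldots,k\}$, Fubini together with the definition \eqref{eq:abel_k_times} of the iterated Abel transform then gives
\begin{align*}
\int_{\Rn} \zeta(|y|)y_i\d\Psi_j(u;y) &= \int_{\R^k} y'_i \left(\int_{\R^{n-k}}\zeta\bigl(\sqrt{|y'|^2+|y''|^2}\bigr)\d y''\right)\d\Psi_j^{(k)}(u\vert_{\R^k};y')\\
&=\int_{\R^k}(\cA^{n-k}\zeta)(|y'|)\,y'_i\d\Psi_j^{(k)}(u\vert_{\R^k};y').
\end{align*}
For $i\in\{k+1,\ldots,n\}$, the analogous inner integral reads $\int_{\R^{n-k}}\zeta(\sqrt{|y'|^2+|y''|^2})\,y''_{i-k}\d y''$, which vanishes by symmetry since the integrand is odd in $y''_{i-k}$ on the symmetric domain $\R^{n-k}$. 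Integrability throughout is secured by the compact support of $\zeta$ together with the local finiteness of the Hessian measures involved.

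There is no genuine obstacle; the only point requiring verification is that $(u\vert_{\R^k})^*$ is finite-valued on $\R^k$ so that Lemma~\ref{le:phi_j_lower_dim} is applicable, which is immediate from the finiteness of $u^*$ on $\Rn$ and its independence of $y''$.
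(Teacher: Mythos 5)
Your proof is correct and follows essentially the same route as the paper: identify $u^*(y',y'')=(u\vert_{\R^k})^*(y')$ (the paper simply posits the existence of a suitable $w\in\fconvfk$ with $w^*=u\vert_{\R^k}$), pass from $\Psi_j(u;\cdot)$ to $\Phi_j(u^*;\cdot)$ via \eqref{eq:psi_j_phi_j}, use Lemma~\ref{le:phi_j_lower_dim} for the product structure, then apply Fubini and \eqref{eq:abel_k_times} in the case $i\le k$ and the odd-function argument in the case $i>k$. The only difference is that you make explicit the computation that $u^*$ depends only on the first $k$ coordinates, which the paper leaves implicit.
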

	\begin{proof}
		If $u\in\fconvs$ is such that $\dom(u)\subseteq \R^k$, then there exists $w\in\fconvfk$ such that
		\[
		u^*(x_1,\ldots,x_n)=w(x_1,\ldots,x_k)
		\]
		for every $(x_1,\ldots,x_n)\in\Rn$. Thus, by \eqref{eq:psi_j_phi_j}, Lemma~\ref{le:phi_j_lower_dim} together with Fubini's theorem, and \eqref{eq:abel_k_times}, we now have
		\begin{align*}
			\int_{\Rn} \zeta(|y|)y_i \d\Psi_j(u;y) &= \int_{\Rn} \zeta(|x|)x_i \d\Phi_j(u^*;x)\\
			&= \int_{\R^k} \int_{\R}\cdots\int_{\R} \zeta(\sqrt{|z|^2+|(x_{k+1},\ldots,x_n)|^2})z_i \d x_{k+1} \cdots \d x_n \d\Phi_j^{(k)}(w;z)\\
			&=\int_{\R^k} (\cA^{n-k}\zeta)(|z|)z_i \d\Phi_j^{(k)}(w;z)\\
			&=\int_{\R^k} (\cA^{n-k}\zeta)(|z|)z_i \d\Psi_j^{(k)}(u\vert_{\R^k};z)
		\end{align*}
		for every $i\in\{1,\ldots,k\}$, where we have used that $w^*=u\vert_{\R^k}$ when considering convex conjugation with respect to the ambient space $\R^k$. Similarly, for $i\in\{k+1,\ldots,n\}$, we obtain
		\begin{align*}
			\int_{\Rn} \zeta(|y|)y_i \d\Psi_j(u;y) &=\int_{\R^k} \int_{\R}\cdots\int_{\R} \zeta(\sqrt{|z|^2+|(x_{k+1},\ldots,x_n)|^2})x_i \d x_{k+1} \cdots \d x_n \d\Phi_j^{(k)}(w;z)\\
			&=0,
		\end{align*}
		where we have used that $x_i \mapsto \zeta(\sqrt{|z|^2+|(x_{k+1},\ldots,x_n)|^2})x_i$ is an odd function on $\R$.
	\end{proof}
	
	\subsection{Classification of Smooth Valuations}
	\label{se:smooth_vals}
	
	Our proof of Theorem~\ref{thm:main_class} is based on the extended Klain approach developed in \cite{colesanti_ludwig_mussnig_4}, which in turn is based on Klain's strategy of reproving Hadwiger's theorem in \cite{klain_1995}. See also \cite{brauner_hofstaetter_ortega-moreno_zonal}. Based on a classification of simple valuations, Klain used an induction argument that involves extending valuations defined on lower-dimensional convex bodies to valuations defined on general convex bodies. As the results of Section~\ref{se:abel} show, such an extension is not possible in general for continuous functional Minkowski vectors since the relevant densities may not be in the image of the Abel transform (see \cite{faifman_hofstaetter} for a related problem). Thus, in order to prove Theorem~\ref{thm:main_class}, we will first establish a classification result under additional smoothness assumptions. Here, a continuous, dually epi-translation invariant valuation $\oz\colon\fconvf\to\Rn$ is called \textit{smooth} when each of the coordinates of
	\[
	K\mapsto \oz(h_K(\cdot,-1))
	\]
	defines a smooth valuation on $\KN$ (see \cite[Proposition 6.4]{knoerr_smooth}). A continuous, translation invariant valuation $\oZ\colon\KN\to\R$ is in turn \textit{smooth} when $\vartheta\mapsto \oZ\circ \vartheta$ is an infinitely differentiable map from $\operatorname{GL}(n)$ into the Banach space of continuous, translation invariant valuations on $\KN$ (we refer to \cite[Section 6.5]{schneider_cb} for an overview). For our purposes, the following simple result will be sufficient. Cf.\ \cite[Theorem 4.1]{schuster_wannerer_jems_2018} or \cite[(2.1)]{colesanti_ludwig_mussnig_4}.
	
	\begin{lemma}
		\label{le:smooth_val_kn}
		Let $j\in\{1,\ldots,n\}$ and let $\varphi\colon\sN\to\R$ be continuous. The valuation
		\[
		K\mapsto \int_{\sN} \varphi(z)\d S_j(K,z),\quad K\in\KN,
		\]
		is smooth, if and only if $\varphi$ is smooth.
	\end{lemma}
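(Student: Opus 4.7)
The plan is to handle the two implications separately, with the forward direction essentially algebraic and the converse requiring a transfer of smoothness through the spherical representation $\psi \mapsto \int \psi \d S_j(\cdot,\cdot)$.

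For sufficiency, assume $\varphi \in C^\infty(\sN)$. I would first invoke the standard fact (see \cite[Section 1.7]{schneider_cb}) that every smooth spherical function is a difference of smooth support functions: there exist convex bodies $L_1, L_2 \in \KN$ whose support functions are $C^\infty$ on $\R^{n+1}\setminus\{o\}$ and such that $\varphi = h_{L_1} - h_{L_2}$ on $\sN$ (for instance, $L_i = cB^{n+1} + M_i$ for a large constant $c$ and suitable smooth $M_i$). Combined with the classical identity
\[
\int_{\sN} h_L(z)\d S_j(K,z) = (n+1)\,V(K[j], L, B^{n+1}[n-j])
\]
(cf.\ \cite[(5.30) and (5.52)]{schneider_cb}), this realizes $\oZ$ as a difference of two mixed volumes, each of which has a smooth entry; mixed volumes with a smooth argument define smooth translation invariant valuations (a classical fact, \cite[Section 6.5]{schneider_cb}), and smoothness of $\oZ$ follows.

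For necessity, suppose $\oZ = \int \varphi\d S_j(\cdot,\cdot)$ is smooth. Since $S_j(\vartheta K,\cdot) = \vartheta_* S_j(K,\cdot)$ for $\vartheta \in \SO(n+1)$, we have
\[
(\oZ\circ\vartheta^{-1})(K) = \int_{\sN}\varphi(\vartheta z)\d S_j(K,z).
\]
The linear map $\psi \mapsto \int \psi\d S_j(\cdot,\cdot)$ from $C(\sN)$ into the Banach space of continuous translation invariant valuations on $\KN$ is bounded; its kernel equals the finite-dimensional space of restrictions of linear functions on $\R^{n+1}$, by the Minkowski relation \eqref{eq:mink_rel} combined with Minkowski's existence theorem ensuring the image separates non-linear spherical functions. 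Choosing a closed complement yields a continuous right inverse, so smoothness of $\vartheta \mapsto \oZ\circ\vartheta^{-1}$ transfers to smoothness of $\vartheta \mapsto \varphi\circ\vartheta$ from $\SO(n+1)$ into $C(\sN)$, up to a smooth (linear) correction. Finally, because $\SO(n+1)$ acts transitively on $\sN$, smoothness of the orbit map $\vartheta \mapsto \varphi(\vartheta z_0)$ at any basepoint $z_0$ propagates to smoothness of $\varphi$ at every point of $\sN$.

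The main obstacle is the converse direction, and in particular the step producing a continuous right inverse to $\psi \mapsto \int \psi \d S_j$. The cleanest route is to pass to the quotient by the finite-dimensional kernel and verify that the induced map is an isomorphism onto its closed image, which is achieved via the Hahn--Banach theorem and the closed-range argument supplied by Minkowski's existence theorem. Alternatively, one can directly invoke \cite[Theorem 4.1]{schuster_wannerer_jems_2018}, where this equivalence is established in a more general framework, which would immediately yield the lemma.
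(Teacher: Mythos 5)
The forward direction of your proposal is sound: decomposing a smooth $\varphi$ as a difference of smooth support functions and using the mixed-volume identity $\int_{\sN} h_L\d S_j(K,\cdot)=(n+1)V(K[j],L,B^{n+1}[n-j])$ does show that $\oZ$ is a finite combination of mixed volumes with smooth fixed entries, which are smooth valuations.

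The converse contains a genuine gap. You claim that because $T\colon\psi\mapsto\int\psi\d S_j(\cdot,\cdot)$ is a bounded map from $C(\sN)$ into the Banach space of continuous translation invariant valuations with finite-dimensional kernel (linear functions), choosing a closed complement of the kernel "yields a continuous right inverse." A bounded operator with finite-dimensional kernel has a bounded inverse on its image only when the image is closed, and nothing in your argument establishes closedness of the range. In fact, this is precisely where the difficulty lies: $T$ acts on each $\SO(n+1)$-isotypical component $\mathcal H_k$ as a scalar multiplier whose size is not bounded away from zero uniformly in $k$ relative to the $\sup$-norm, so the quotient map is not bounded below in the Banach setting. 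Neither Hahn--Banach nor Minkowski's existence theorem supplies the missing lower bound; indeed, for intermediate $1<j<n$ the Christoffel--Minkowski problem is open, so "Minkowski's existence theorem" is not even applicable to identify the kernel (the kernel statement is true, but for a different reason, namely the uniqueness theory for area measures). What actually works — and what the sources the paper cites do — is to argue at the level of the $\SO(n+1)$-isotypical decomposition and use quantitative (polynomial) bounds on the multipliers of $T$ to show that rapid decay of the isotypical components of the smooth valuation $\oZ$ forces rapid decay of the spherical harmonic components of $\varphi$, hence $\varphi\in C^\infty$. Your fallback — invoking \cite[Theorem 4.1]{schuster_wannerer_jems_2018} directly — is exactly what the paper does, and is the clean way to close this lemma; the right-inverse argument as written, however, does not go through.
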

	
	For the announced classification result for smooth valuations, we first need to show that the relevant valuations are smooth. For this, we need the following consequence of \cite[Theorem 2]{whitney}.
	
	\begin{lemma}
		\label{le:whitney}
		Let $i\in\{1,\ldots,n\}$ and $\alpha\in T_n^n$. The map
		\[
		x\mapsto \alpha(|x|)x_i
		\]
		is of class $C^\infty$ on $\Rn$, if and only if there exists $\psi\in C_c^{\infty}({[0,\infty)})$ such that $\alpha(s)=\psi(s^2)$ for every $s>0$.
	\end{lemma}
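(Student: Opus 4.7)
The sufficiency direction is immediate: if $\alpha(s)=\psi(s^2)$ with $\psi\in C_c^\infty([0,\infty))$, then $\alpha(|x|)x_i=\psi(|x|^2)x_i$ is a product of two $C^\infty$ functions on $\Rn$ (using that $x\mapsto|x|^2$ is smooth and takes values in $[0,\infty)$, where $\psi$ is smooth). So the content lies entirely in the forward implication.

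For necessity, set $F(x)=\alpha(|x|)x_i$ and assume $F\in C^\infty(\Rn)$. The plan is to restrict $F$ to a well-chosen line and then invoke Whitney's theorem on smooth even functions (this is the cited result from \cite{whitney}). Concretely, consider the smooth function $g(t)=F(te_i)$ on $\R$. Since $F(o)=o$ by Lemma~\ref{le:continuous_extension}, Hadamard's lemma yields a smooth $\tilde g\in C^\infty(\R)$ with $g(t)=t\,\tilde g(t)$; for $t\ne 0$ one has
\[
\tilde g(t)=\frac{F(te_i)}{t}=\alpha(|t|),
\]
which is manifestly even in $t$. Hence $\tilde g$ is a smooth even function on $\R$.

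By Whitney's theorem, there exists $\psi\in C^\infty([0,\infty))$ such that $\tilde g(t)=\psi(t^2)$ for every $t\in\R$. Reading this off for $t=s>0$ gives $\alpha(s)=\psi(s^2)$, which is the desired representation. Finally, since $\alpha\in T_n^n\subset C_b((0,\infty))$ has bounded support, $\psi$ vanishes outside a compact subset of $[0,\infty)$, so $\psi\in C_c^\infty([0,\infty))$ as required.

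The main (minor) obstacle is simply recognizing the correct form of Whitney's theorem to invoke and setting up the reduction to a one-variable even function; the smoothness of $F$ on all of $\Rn$ (including at the origin) is precisely what converts the a priori non-smoothness of $s\mapsto\alpha(s)$ at $0^+$ into an evenness condition that Whitney's theorem handles. No further approximation or integral-transform machinery is needed.
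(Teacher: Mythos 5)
Your argument is correct and is precisely the intended route behind the paper's citation of \cite[Theorem 2]{whitney} (the paper only states the lemma as a ``consequence'' and leaves the verification to the reader). The restriction to the line $t\mapsto te_i$, the factorization $g(t)=t\,\tilde g(t)$ via Hadamard's lemma, the identification of $\tilde g$ as an even smooth function, and the passage to compact support from $\alpha\in T_n^n\subset C_b((0,\infty))$ all go through; the only blemish is the harmless typo ``$F(o)=o$'' where $F$ is scalar-valued, so one should write $F(o)=0$.
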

	
	\begin{proposition}
		\label{prop:condition_smooth_val}
		Let $j\in\{1,\ldots,n\}$ and $\alpha\in T_n^n$. The valuation
		\begin{equation}
			\label{eq:v_mapsto_smooth_val}	
			v\mapsto \int_{\Rn} \alpha(|x|)x \d\Phi_j(v;x),\quad v\in\fconvf,
		\end{equation}
		is smooth, if and only if there exists $\psi\in C_c^{\infty}({[0,\infty)})$ such that $\alpha(s)=\psi(s^2)$ for every $s>0$.
	\end{proposition}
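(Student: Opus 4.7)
The plan is to transfer the smoothness question for $\oz$ to a classical smoothness question for an integrand on the sphere $\sN$ via Theorem~\ref{thm:bodies_fcts}, and then convert the resulting condition on $\cT^{n-j}\alpha$ back into a condition on $\alpha$. First, I would observe that since $\alpha\in T_n^n\subseteq T_j^n$, Theorem~\ref{thm:main_existence} together with Proposition~\ref{prop:int_maj_is_a_vector-val} and \eqref{eq:phi_j_hess} (plus continuous extension from $\fconvf\cap C^2(\Rn)$) identifies the valuation in the statement with $\ot_{j,\alpha}^*$. Applied to $v = h_K(\cdot,-1)$ for $K\in\KN$ (so that $K^v=K$), Theorem~\ref{thm:bodies_fcts} yields, for each $i\in\{1,\ldots,n\}$,
\[
\big(\ot_{j,\alpha}^*(h_K(\cdot,-1))\big)_i = \binom{n}{j}\int_{\sN}\varphi_i(z)\,\d S_j(K,z),
\]
where $\varphi_i(z)= (\cT^{n-j}\alpha)(|\gnom(z)|)\,z_i$ for $z\in\sN_-$ and $\varphi_i\equiv 0$ on $\sN\setminus\sN_-$. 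Since $\cT^{n-j}\alpha\in T_n^n$ by Lemma~\ref{le:prop_t_j_n} and hence has compact support in $(0,\infty)$, while $|\gnom(z)|\to\infty$ as $z$ approaches the equator, the function $\varphi_i$ vanishes in a neighborhood of $\sN\setminus\sN_-$. In particular $\varphi_i$ is continuous on $\sN$, and by Lemma~\ref{le:smooth_val_kn} the valuation $\oz$ is smooth if and only if each $\varphi_i$ belongs to $C^\infty(\sN)$.

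Next, I would analyze the smoothness of $\varphi_i$. Away from the closure of the support of $\varphi_i$, which lies in a compact subset of $\sN_-$, the function vanishes identically, so smoothness there is automatic; only smoothness on $\sN_-$ remains. Working in the gnomonic chart, we have $\gnom^{-1}(y)=(y,-1)/\sqrt{1+|y|^2}$ for $y\in\Rn$, hence
\[
\varphi_i(\gnom^{-1}(y)) = (\cT^{n-j}\alpha)(|y|)\,\frac{y_i}{\sqrt{1+|y|^2}}.
\]
Since $y\mapsto (1+|y|^2)^{-1/2}$ is $C^\infty$ and nonvanishing on $\Rn$, smoothness of $\varphi_i$ on $\sN$ for each $i$ is equivalent to smoothness of $y\mapsto (\cT^{n-j}\alpha)(|y|)\,y_i$ on $\Rn$ for each $i\in\{1,\ldots,n\}$. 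By Lemma~\ref{le:whitney}, this is in turn equivalent to the existence of $\tilde\psi\in C_c^\infty({[0,\infty)})$ with $(\cT^{n-j}\alpha)(s) = \tilde\psi(s^2)$ for all $s>0$.

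To finish, I would show that $\cT^{n-j}$ is a bijection on the class $\mathcal E = \{\beta\in T_n^n : \beta(s)=\psi(s^2) \text{ for some } \psi\in C_c^\infty({[0,\infty)})\}$, reducing to the case $l=1$ by induction. If $\alpha(s)=\psi(s^2)$, the substitution $u=t^2$ in the defining integral of $\cT$ gives
\[
\cT\alpha(s)=\sqrt{1+s^2}\,\psi(s^2)+\frac{1}{2}\int_{s^2}^{\infty}\frac{\psi(u)}{\sqrt{1+u}}\,\d u,
\]
and both summands are smooth compactly supported functions of $s^2$, so $\cT\alpha\in\mathcal E$. The identical substitution applied to the formula for $\cT^{-1}$ from Lemma~\ref{le:prop_t_j_n} shows that $\cT^{-1}$ also preserves $\mathcal E$, completing the equivalence. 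The main obstacle is the smoothness analysis of $\varphi_i$ at the south pole $-e_{n+1}$, where $|\gnom(z)|$ vanishes and the argument of $\cT^{n-j}\alpha$ degenerates; this is precisely the point at which Lemma~\ref{le:whitney} forces the even-in-$s$ structure and turns the problem into the transform calculation.
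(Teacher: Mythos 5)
Your proof is correct and follows the same strategy as the paper: pass to $\KN$ via Theorem~\ref{thm:bodies_fcts}, apply Lemma~\ref{le:smooth_val_kn}, pull back through the gnomonic chart, and invoke Lemma~\ref{le:whitney}. You additionally spell out, via the substitution $u=t^2$, why $\cT$ and $\cT^{-1}$ preserve functions of the form $\psi(s^2)$ with $\psi\in C_c^{\infty}([0,\infty))$, a step the paper only alludes to as the \emph{properties of $\cT$ and its inverse}.
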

	\begin{proof}
		Throughout the proof, we consider the $i$th coordinate of \eqref{eq:v_mapsto_smooth_val} with some arbitrary, fixed $i\in\{1,\ldots,n\}$. If $v(x)=h_K(x,-1)$, $x\in\Rn$, with some $K\in\K^{n+1}$, then \eqref{eq:t_j_zeta_hessian_c2} and Theorem~\ref{thm:bodies_fcts} show that
		\[
			\int_{\Rn} \alpha(|x|)x_i \d\Phi_j(v;x) =\big(\ot_{j,\alpha}^*(v)\big)_i =\binom{n}{j}\int_{\sN_-} (\cT^{n-j} \alpha)(|\gnom(z)|) z_i \d S_j(K,z).
		\]
		Thus, by Lemma~\ref{le:smooth_val_kn}, the $i$th coordinate of \eqref{eq:v_mapsto_smooth_val} is smooth, if and only if
		\[
		\varphi(z)=(\cT^{n-j} \alpha)(|\gnom(z)|) z_i,\quad z\in \sN_-,
		\]
		is smooth, where we remark that we can trivially extend $\varphi$ from $\sN_-$ to $\sN$ with $\varphi(z)=0$ on $\sN\setminus \sN_-$ since $\alpha$ has bounded support. As the gnomonic projection and its inverse are smooth, smoothness of $\varphi$ is equivalent to smoothness of
		\[
		x\mapsto \varphi(\gnom^{-1}(x))= (\cT^{n-j}\alpha)(|x|)\sqrt{1+|x|^2} \,x_i
		\]
		on $\Rn$. By Lemma~\ref{le:whitney} together with the properties of $\cT$ and its inverse, this is equivalent to $\alpha(s)=\psi(s^2)$, $s>0$, for some $\psi\in C_c^{\infty}({[0,\infty)})$. 
	\end{proof}

	\begin{remark}
		We remark that the statement of Proposition~\ref{prop:condition_smooth_val} has previously been claimed in the more general \cite[Lemma 4.6]{colesanti_ludwig_mussnig_8}. However, the proof of the latter is erroneous, and it, therefore, cannot be used for the situation considered here. Cf.\ Remark~\ref{re:mistake_hadwiger4_measures}.
	\end{remark}
	
	Corresponding to the definition of smooth valuations on $\fconvf$, we call a continuous, epi-translation invariant valuation $\oz\colon\fconvs\to\Rn$ \textit{smooth} when
	\[
	v\mapsto \oz(v^*),\quad v\in\fconvf,
	\]
	defines a smooth valuation. We, therefore, immediately obtain the following dual version of Proposition~\ref{prop:condition_smooth_val}.
	
	\begin{proposition}
		\label{prop:condition_smooth_val_fconvs}
		Let $j\in\{1,\ldots,n\}$ and $\alpha\in T_n^n$. The valuation
		\begin{equation*}	
			u\mapsto \int_{\Rn} \alpha(|y|)y \d\Psi_j(u;y),\quad u\in\fconvs,
		\end{equation*}
		is smooth, if and only if there exists $\psi\in C_c^{\infty}({[0,\infty)})$ such that $\alpha(s)=\psi(s^2)$ for every $s>0$.
	\end{proposition}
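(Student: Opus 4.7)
The proof is immediate from the definition of smoothness on $\fconvs$ together with Proposition~\ref{prop:condition_smooth_val}. Recall that by definition, a continuous, epi-translation invariant valuation $\oz\colon\fconvs\to\Rn$ is smooth if and only if $v\mapsto \oz(v^*)$, $v\in\fconvf$, is a smooth valuation on $\fconvf$. Thus my plan is simply to translate the valuation $u\mapsto \int_{\Rn}\alpha(|y|)y\,\d\Psi_j(u;y)$ into the primal setting via convex conjugation, and then invoke Proposition~\ref{prop:condition_smooth_val}.

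More concretely, fix $\alpha\in T_n^n$ and let $\oz\colon\fconvs\to\Rn$ be given by $\oz(u)=\int_{\Rn}\alpha(|y|)y\,\d\Psi_j(u;y)$. By \eqref{eq:psi_j_phi_j} we have $\Psi_j(u;\cdot)=\Phi_j(u^*;\cdot)$, and since $(v^*)^*=v$ for every $v\in\fconvf$ (as each $v\in\fconvf$ is proper, lower semicontinuous, and convex), it follows that
\[
\oz(v^*)=\int_{\Rn}\alpha(|y|)y\,\d\Psi_j(v^*;y)=\int_{\Rn}\alpha(|y|)y\,\d\Phi_j(v;y)
\]
for every $v\in\fconvf$. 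That is, the dual valuation $v\mapsto \oz(v^*)$ on $\fconvf$ is precisely the valuation whose smoothness was characterized in Proposition~\ref{prop:condition_smooth_val}.

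Consequently, $\oz$ is smooth on $\fconvs$ if and only if $v\mapsto \oz(v^*)$ is smooth on $\fconvf$, which by Proposition~\ref{prop:condition_smooth_val} is equivalent to the existence of $\psi\in C_c^\infty({[0,\infty)})$ with $\alpha(s)=\psi(s^2)$ for every $s>0$. Since there is no obstacle beyond unwinding the definitions and invoking the already-proven primal version, the proof reduces to these two observations.
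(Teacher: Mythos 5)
Your proof is correct and takes exactly the same route as the paper: the paper explicitly introduces the notion of smoothness on $\fconvs$ via convex conjugation and then remarks that Proposition~\ref{prop:condition_smooth_val_fconvs} follows immediately as the dual version of Proposition~\ref{prop:condition_smooth_val}. Your explicit unwinding of $\Psi_j(v^*;\cdot)=\Phi_j(v;\cdot)$ via \eqref{eq:psi_j_phi_j} and $(v^*)^*=v$ simply fills in the detail that the paper leaves implicit.
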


	We now treat the one-dimensional case of the main result of this section.
	
	\begin{proposition}
		\label{prop:class_smooth_vals_dim_1}
		A map $\oz\colon\fconvsO\to\R$ is a smooth, epi-translation invariant, reflection equivariant valuation, if and only if there exists $\psi\in C_c^{\infty}([0,\infty))$ such that
		\begin{equation}
		\label{eq:smooth_val_one_dim}
		\oz(u)=\int_{\R} \psi(|y|^2)y\d\Psi_1(u;y)
		\end{equation}
		for every $u\in\fconvsO$.
	\end{proposition}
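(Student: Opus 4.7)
For sufficiency, given $\psi \in C_c^\infty([0,\infty))$ I set $\alpha(s) = \psi(s^2)$ for $s > 0$; then $\alpha$ continuously extends to $[0,\infty)$ with compact support, so in particular $\alpha \in T_1^1$. Proposition~\ref{prop:int_maj_is_a_vector-val_fconvs} with $n = j = 1$ then shows that
\[
u \mapsto \int_\R \alpha(|y|)\, y \d\Psi_1(u;y)
\]
is a continuous, epi-translation invariant, $\Oo$-equivariant (i.e., reflection equivariant) valuation on $\fconvsO$, epi-homogeneous of degree~$1$, and its smoothness is immediate from Proposition~\ref{prop:condition_smooth_val_fconvs}.

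For necessity, I start by applying Theorem~\ref{thm:mcmullen_fconvs} to decompose $\oz = \oz_0 + \oz_1$, with each $\oz_j$ continuous, epi-translation invariant, and epi-homogeneous of degree~$j$; reflection equivariance of each $\oz_j$ is inherited from $\oz$. I claim that smoothness also passes to each component: because $\lambda \mapsto \oz(\lambda \sq u)$ is a polynomial of degree at most~$1$, Lagrange interpolation yields the explicit formulas $\oz_1(u) = \oz(2 \sq u) - \oz(u)$ and $\oz_0(u) = 2\oz(u) - \oz(2 \sq u)$; since $\lambda \sq u = (\lambda u^*)^*$, the dual maps $v \mapsto \oz_j(v^*)$ become fixed linear combinations of smooth valuations on convex bodies (using that scaling by $\lambda > 0$ preserves smoothness under the $\operatorname{GL}$-action) and hence remain smooth. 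Theorem~\ref{thm:class_0_hom} (with $n = 1$ and reflection equivariance) then forces $\oz_0 \equiv 0$, while Theorem~\ref{thm:class_n_hom} (with $n = 1$ and reflection equivariance) produces some $\zeta \in T_1^1$ with
\[
\oz(u) = \oz_1(u) = \int_\R \zeta(|y|)\, y \d\Psi_1(u;y)
\]
for every $u \in \fconvsO$.

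Finally, the smoothness of $\oz_1$ combined with Proposition~\ref{prop:condition_smooth_val_fconvs} (with $n = j = 1$) furnishes $\psi \in C_c^\infty([0,\infty))$ such that $\zeta(s) = \psi(s^2)$ for every $s > 0$, yielding the claimed representation. The only step that is not entirely mechanical is the transfer of smoothness through the McMullen decomposition; the Lagrange-interpolation formulas above reduce this point to the standard fact that scaling preserves smoothness, so no genuine obstacle arises and the whole argument collapses onto the existing classifications of extreme-degree valuations together with the smoothness criterion of Proposition~\ref{prop:condition_smooth_val_fconvs}.
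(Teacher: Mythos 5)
Your proposal is correct and follows essentially the same route as the paper: the McMullen decomposition, Theorems~\ref{thm:class_0_hom} and~\ref{thm:class_n_hom} for the two homogeneous degrees, and Proposition~\ref{prop:condition_smooth_val_fconvs} for the smooth density. One remark: the Lagrange-interpolation argument you add to propagate smoothness to the components $\oz_0,\oz_1$ is sound but superfluous here, because Theorems~\ref{thm:class_0_hom} and~\ref{thm:class_n_hom} only require continuity, not smoothness; once they give $\oz_0\equiv 0$ and $\oz_1=\ot_{1,\zeta}$, you have $\oz=\oz_1$ identically, so $\oz_1$ is smooth simply because $\oz$ is, and Proposition~\ref{prop:condition_smooth_val_fconvs} can be applied directly to $\oz$.
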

	\begin{proof}
		It follows from Proposition~\ref{prop:int_maj_is_a_vector-val_fconvs} that \eqref{eq:smooth_val_one_dim} defines a continuous, epi-translation invariant, reflection equivariant valuation. Furthermore, by Proposition~\ref{prop:condition_smooth_val_fconvs}, the valuation is also smooth.
		
		Conversely, if $\oz$ is a smooth, epi-translation invariant, reflection equivariant valuation, then it follows from Theorem~\ref{thm:mcmullen_fconvs} that $\oz$ can be decomposed in epi-homogeneous terms with degrees $0$ and $1$. Theorem~\ref{thm:class_0_hom} and Theorem~\ref{thm:class_n_hom}, together with Proposition~\ref{prop:condition_smooth_val_fconvs}, now show that $\oz$ must be as in \eqref{eq:smooth_val_one_dim}.
	\end{proof}

	The following result is a reformulation of \cite[Lemma 8.3]{colesanti_ludwig_mussnig_3}, where it was obtained as a direct consequence of the functional Hadwiger theorem together with the properties of the transform $\cR$. In fact, Proposition~\ref{prop:oz_determined_by_ut} can be regarded as equivalent to the functional Hadwiger theorem, and we note that a new proof of this statement was recently presented in \cite[Section 5]{knoerr_zonal}. Here, we use the family of functions $u_s\in\fconvs$, $s\geq 0$, defined as $u_s(x)=v_s^*(x)=\ind_{B^n} + s |x|$ for $x\in\Rn$ (cf.\ \eqref{eq:v_s_conjugate}).
	\begin{proposition}
		\label{prop:oz_determined_by_ut}
		Let $n\geq 2$ and $j\in\{0,\ldots,n\}$. If $\oZ\colon\fconvs\to\R$ is a continuous, epi-translation and rotation invariant valuation that is epi-homogeneous of degree $j$ such that $\oZ(u_s)=0$ for every $s\geq 0$, then $\oZ\equiv 0$.
	\end{proposition}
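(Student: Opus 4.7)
The plan is to reduce the statement to an injectivity property of the integral transform $\cR^{n-j}$ via the functional Hadwiger theorem on the dual side. First, I would pass to the dual valuation $\oZ^*\colon \fconvf\to\R$ given by $\oZ^*(v)=\oZ(v^*)$, which, by the standard compatibilities between the two spaces, inherits continuity, dual epi-translation invariance, rotation invariance, and homogeneity of degree $j$. Applying Theorem~\ref{thm:hadwiger_fconvf}, together with the degree decomposition from Theorem~\ref{thm:mcmullen_fconvf} and a comparison of homogeneity degrees, yields some $\zeta\in D_j^n$ with $\oZ^*(v)=\oV_{j,\zeta}^*(v)$ for every $v\in\fconvf$. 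In the special case $j=0$, the valuation $\oV_{0,\zeta_0}^*$ is constant on $\fconvf$, so the single evaluation $\oZ(u_0)=0$ already forces $\oZ\equiv 0$; I therefore assume $j\geq 1$ from now on.

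Next, I would translate the vanishing hypothesis across the conjugation. Since $u_s=v_s^*$ by \eqref{eq:v_s_conjugate}, the assumption $\oZ(u_s)=0$ for every $s\geq 0$ rephrases as $\oV_{j,\zeta}^*(v_s)=0$ for every $s\geq 0$. At this point I would invoke the explicit evaluation from \cite[Lemma 2.15]{colesanti_ludwig_mussnig_5}, which asserts that $\oV_{j,\zeta}^*(v_s)=c_{n,j}\,\cR^{n-j}\zeta(s)$ with a nonzero constant $c_{n,j}$ depending only on $n$ and $j$. This identity is the radial analog of the computation of Proposition~\ref{prop:retrieve_densities} and rests on the explicit structure of the Hessian measures of $v_s$ recorded in Lemma~\ref{le:phi_j_v_s}.

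Consequently, $\cR^{n-j}\zeta$ vanishes identically on $[0,\infty)$. Since $\cR^{n-j}$ is a bijection from $D_j^n$ onto $C_c([0,\infty))$ by \cite[Lemma 3.8]{colesanti_ludwig_mussnig_6}, this forces $\zeta\equiv 0$, hence $\oZ^*\equiv 0$, and finally $\oZ\equiv 0$. The only substantive input is the closed-form evaluation of $\oV_{j,\zeta}^*(v_s)$ as a multiple of $\cR^{n-j}\zeta(s)$; once this identity and the bijectivity of $\cR^{n-j}$ are in hand, the proof is essentially a bookkeeping stitch between the Hadwiger classification and the injectivity of the transform, with no real obstacle beyond invoking these prerequisites.
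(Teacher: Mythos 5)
Your proposal is correct and follows exactly the route the paper itself attributes to this result: the paper states that Proposition~\ref{prop:oz_determined_by_ut} is a reformulation of \cite[Lemma 8.3]{colesanti_ludwig_mussnig_3}, which was ``obtained as a direct consequence of the functional Hadwiger theorem together with the properties of the transform $\cR$,'' and your argument reproduces precisely that chain (dualize, apply Theorem~\ref{thm:hadwiger_fconvf} and homogeneous decomposition to obtain a single $\oV_{j,\zeta}^*$, evaluate on $v_s=u_s^*$ to get $c_{n,j}\,\cR^{n-j}\zeta(s)$, and invoke the bijectivity of $\cR^{n-j}\colon D_j^n\to C_c([0,\infty))$). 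The $j=0$ special case, where $\oV_{0,\zeta_0}^*$ is constant and a single evaluation at $u_0$ suffices, is also handled correctly.
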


	Throughout the remainder of this section we embed $\R^{n-1}$ into $\Rn$ via the identification $\R^{n-1}=e_n^\perp$ and consider $\fconvsm$ as the set of all $u\in\fconvs$ such that $\dom(u)\subseteq \R^{n-1}$. 
	\begin{lemma}
		\label{le:restrict_oz}
		Let $n\geq 3$. If $\oz\colon\fconvs\to\Rn$ is a continuous, epi-translation invariant, rotation equivariant valuation, then $\oz(u)\in\R^{n-1}$ for every $u\in\fconvsm$. For $n=2$, the statement holds if we replace rotation equivariance with $\Ot$ equivariance.
	\end{lemma}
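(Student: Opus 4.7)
Writing $\oz(u) = \sum_{i=1}^n \oz_i(u)\, e_i$, the claim amounts to showing that the last coordinate $\oz_n(u) = \langle \oz(u), e_n\rangle$ vanishes for every $u \in \fconvsm$, since $\R^{n-1} = e_n^\perp$. The case $n=2$ is immediate: the reflection $\vartheta \in \Ot$ sending $e_2 \mapsto -e_2$ and fixing $e_1$ satisfies $u \circ \vartheta^{-1} = u$ for every $u \in \fconvsO$, since $\dom(u) \subseteq \R e_1$ is fixed pointwise by $\vartheta$. Applying $\Ot$-equivariance gives $\oz(u) = \vartheta\, \oz(u)$, whence $\oz_2(u) = 0$.

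For $n \geq 3$, define $\oZ\colon\fconvsm \to \R$ by $\oZ(u) := \oz_n(u)$. It inherits continuity, epi-translation invariance, and the valuation property from $\oz$, since $\fconvsm$ is closed under $\vee,\wedge$ (whenever the result lies in $\fconvs$) and under epi-translations of $\R^{n-1}$. The key step is to transfer the $\SOn$-equivariance of $\oz$ into a $\det$-covariance property for $\oZ$ under $\Om$. Given $\vartheta' \in \Om$, define $\vartheta \in \On$ by $\vartheta|_{\R^{n-1}} = \vartheta'$ and $\vartheta\, e_n = \det(\vartheta')\, e_n$; then $\det(\vartheta) = 1$, so $\vartheta \in \SOn$, and for $u \in \fconvsm$ the function $u \circ \vartheta^{-1}$ again lies in $\fconvsm$ with restriction $u|_{\R^{n-1}} \circ \vartheta'^{-1}$ to $\R^{n-1}$. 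Extracting the $e_n$-component of the identity $\oz(u \circ \vartheta^{-1}) = \vartheta\, \oz(u)$ then yields
\[
\oZ(u \circ \vartheta'^{-1}) = \det(\vartheta') \cdot \oZ(u) \qquad \text{for every } \vartheta' \in \Om.
\]
In particular, $\oZ$ is $\SOnm$-invariant on $\fconvsm$.

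Since $n-1 \geq 2$, we may apply the functional Hadwiger theorem on $\fconvsm$ --- the super-coercive version of Theorem~\ref{thm:hadwiger_fconvf}, obtained via convex conjugation --- to express $\oZ$ as a linear combination of (dual) functional intrinsic volumes. Each such valuation is $\Om$-invariant because its defining density depends only on $|\nabla u|$, so $\oZ$ itself must be $\Om$-invariant. Comparing this with the displayed identity for any $\vartheta' \in \Om$ with $\det(\vartheta') = -1$ forces $\oZ(u) = -\oZ(u)$, and therefore $\oZ \equiv 0$. The delicate point in the argument is the construction of $\vartheta$: one must absorb the orientation sign of $\vartheta'$ into the action on $e_n$ in order to land inside $\SOn$, and this produces precisely the sign-covariance of $\oZ$ that is then shown to be incompatible with the $\Om$-invariance coming from the Hadwiger classification.
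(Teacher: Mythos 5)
Your proof is correct, and it reaches the conclusion by a route that differs from the paper's in its organization, though both rest on the same ultimate ingredient, namely the functional Hadwiger theorem in dimension $n-1$. The paper proceeds by first invoking the homogeneous decomposition to reduce to a fixed epi-homogeneous degree $j$, then shows the restricted $n$th coordinate $\tilde{\oZ}$ is $\SOnm$-invariant using block rotations $\operatorname{diag}(\vartheta,1)$, separately shows that $\tilde{\oZ}$ vanishes on the radially symmetric test functions $\tilde{u}_s$ using $\eta=\operatorname{diag}(1,\ldots,1,-1,-1)$, and finally concludes via Proposition~\ref{prop:oz_determined_by_ut}, which characterizes degree-$j$ rotation invariant valuations by their values on $u_s$. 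Your version merges the two rotation arguments into a single equivariance bookkeeping: by extending $\vartheta'\in\Om$ to $\vartheta\in\SOn$ with $\vartheta e_n=\det(\vartheta')e_n$, you obtain the identity $\oZ(u\circ\vartheta'^{-1})=\det(\vartheta')\,\oZ(u)$ at one stroke. The $\det=1$ case gives $\SOnm$-invariance, you then apply the Hadwiger theorem itself (in dimension $n-1\geq 2$, dualized to $\fconvsm$) to get the $\Om$-invariance of $\oZ$, and the $\det=-1$ case produces the contradiction $\oZ=-\oZ$. This sidesteps both the homogeneous decomposition and the test functions $\tilde u_s$ at the cost of invoking the full Hadwiger theorem instead of the weaker consequence Proposition~\ref{prop:oz_determined_by_ut}; one can view your approach as the more streamlined packaging of the same underlying mechanism, since the rotation $\operatorname{diag}(1,\ldots,1,-1,-1)$ you implicitly use when $\vartheta'$ is the reflection flipping $e_{n-1}$ is exactly the paper's $\eta$. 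The $n=2$ argument is identical in both.
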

	\begin{proof}
	 	Let $\oz\colon\fconvs\to\Rn$ be given and define $\tilde{\oZ}\colon\fconvsm\to\R$ as the $n$th coordinate of the restriction of $\oz$ to $\fconvsm$. That is,
		\[
		\tilde{\oZ}(u)=(\oz\vert_{\fconvsm}(u))_n
		\]
		for $u\in\fconvsm$. We need to show that $\tilde{\oZ}$ vanishes identically.
		
		We start with the case $n\geq 3$. By Theorem~\ref{thm:mcmullen_fconvs}, applied coordinate-wise, we may assume without loss of generality that $\oz$ is epi-homogeneous of degree $j\in\{0,\ldots,n\}$. If $j=n$, then Theorem~\ref{thm:class_n_hom} immediately shows that $\oz$ vanishes on functions with lower dimensional domains and thus $\tilde{\oZ}\equiv 0$. For $j<n$, we observe that the properties of $\oz$ imply that also $\tilde{\oZ}$ is a continuous, epi-translation invariant valuation that is epi-homogeneous of degree $j$. Now for $\vartheta\in \SOnm$ let $\varTheta\in\SOn$ be given by
		\[
		\varTheta=\begin{pmatrix}
			\vartheta & 0\\
			0 & 1
		\end{pmatrix}.
		\]
		The rotation equivariance of $\oz$ shows that
		\[
		\tilde{\oZ}(u\circ \vartheta^{-1})=(\oz\vert_{\fconvsm}(u\circ \varTheta^{-1}))_n = (\varTheta \oz\vert_{\fconvsm}(u))_n = (\oz\vert_{\fconvsm}(u))_n = \tilde{\oZ}(u)
		\]
		for every $u\in\fconvsm$ and, therefore, $\tilde{\oZ}$ is also rotation invariant.
		Next, for $s\geq 0$ let $\tilde{u}_s\in\fconvsm$ be given by $\tilde{u}_s(x)=\ind_{B^{n-1}}(x)+s|x|$. For $\eta=\operatorname{diag}(1,\ldots,1,-1,-1)\in\SOn$ we have
		\begin{align*}
		\tilde{\oZ}(\tilde{u}_s)&=(\oz\vert_{\fconvsm}(u_s))_n\\
		&= (\oz\vert_{\fconvsm}(u_s\circ \eta^{-1}))_n\\
		&= (\eta \oz\vert_{\fconvsm}(u_s))_n\\
		&= -(\oz\vert_{\fconvsm}(u_s))_n\\
		&= -\tilde{\oZ}(u_s)
		\end{align*}
		for every $s\geq 0$. Proposition~\ref{prop:oz_determined_by_ut}, applied with respect to an ambient space of dimension $n-1\geq 2$, now shows that $\tilde{\oZ}(u)=0$ for every $u\in\fconvsm$.
		
		Lastly, when $n=2$ and $\oz$ is $\Ot$ equivariant, we use $\varTheta=\operatorname{diag}(1,-1)\in\Ot$ to obtain
		\[
		\tilde{\oZ}(u)=(\oz\vert_{\fconvsO}(u\circ \varTheta^{-1}))_2 = (\varTheta \oz\vert_{\fconvsO}(u))_2 = -(\oz\vert_{\fconvsO}(u))_2 = -\tilde{\oZ}(u)
		\]
		for every $u\in\fconvsO$, which shows that $\tilde{\oZ}\equiv 0$.
	\end{proof}

	\begin{theorem}
		For $n\geq 3$, a map $\oz\colon \fconvs\to\Rn$ is a smooth, epi-translation invariant, rotation equivariant valuation, if and only if there exist functions $\psi_1,\ldots,\psi_n\in C_c^{\infty}([0,\infty))$ such that
		\begin{equation}
		\label{eq:class_oz_smooth_fconvs}
		\oz(u)=\sum_{j=1}^n \int_{\Rn} \psi_j(|y|^2)y \d\Psi_j(u;y)
		\end{equation}
		for every $u\in\fconvs$. For $n\leq 2$, the same representation holds if we replace rotation equivariance with $\On$ equivariance.
	\end{theorem}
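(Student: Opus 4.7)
The plan for the ``if'' direction is immediate: each summand $u\mapsto \int_{\Rn} \psi_j(|y|^2) y \d\Psi_j(u;y)$ defines a continuous, epi-translation invariant, $\On$ equivariant valuation by Proposition~\ref{prop:int_maj_is_a_vector-val_fconvs}, and is smooth by Proposition~\ref{prop:condition_smooth_val_fconvs} applied to the density $\alpha(s)=\psi_j(s^2)\in T_n^n$.

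For the converse I would proceed by induction on $n$, with base case $n=1$ covered by Proposition~\ref{prop:class_smooth_vals_dim_1}. For the inductive step, first apply Theorem~\ref{thm:mcmullen_fconvs} coordinate-wise to obtain a decomposition $\oz=\oz_0+\cdots+\oz_n$ in which each $\oz_j$ is epi-homogeneous of degree $j$ and inherits continuity, epi-translation invariance, equivariance, and smoothness (the McMullen summands are extracted via polynomial interpolation in the epi-scaling parameter, which preserves all these properties). Theorem~\ref{thm:class_0_hom} gives $\oz_0\equiv o$, and Theorem~\ref{thm:class_n_hom} together with Proposition~\ref{prop:condition_smooth_val_fconvs} produces $\psi_n\in C_c^\infty([0,\infty))$ satisfying $\oz_n(u)=\int_{\Rn}\psi_n(|y|^2)y\d\Psi_n(u;y)$.

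The main work is at the intermediate degrees $1\le j\le n-1$. I would restrict $\oz_j$ to $\fconvsm$: by Lemma~\ref{le:restrict_oz} the $n$-th coordinate of the restriction vanishes, and using block-diagonal embeddings $\operatorname{diag}(\vartheta,\pm 1)\in\SOn$ one checks that the first $n-1$ coordinates define a continuous, epi-translation invariant, epi-homogeneous of degree $j$ valuation $\tilde\oz_j$ on $\fconvsm$ that is rotation equivariant for $n-1\ge 3$ and $\Om$ equivariant for $n-1\in\{1,2\}$. Granting smoothness of $\tilde\oz_j$ (see below), the inductive hypothesis together with the uniqueness of the homogeneous decomposition produces $\tilde\psi_j\in C_c^\infty([0,\infty))$ with
\[
\tilde\oz_j(u)=\int_{\R^{n-1}}\tilde\psi_j(|z|^2)\,z\d\Psi_j^{(n-1)}(u\vert_{\R^{n-1}};z)\qquad\text{for }u\in\fconvsm.
\]
Next, Lemma~\ref{le:abel_smooth} yields $\psi_j\in C_c^\infty([0,\infty))$ with $\cA(s\mapsto\psi_j(s^2))(s)=\tilde\psi_j(s^2)$. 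Setting $\hat\oz_j(u):=\int_{\Rn}\psi_j(|y|^2)y\d\Psi_j(u;y)$ and applying Lemma~\ref{le:restriction_abel} with $k=n-1$, the map $\hat\oz_j$ agrees with $\oz_j$ on $\fconvsm$; by rotation/$\On$ equivariance this agreement extends to all $u$ whose domain lies in a hyperplane, and then by iteration to every $u$ with $\dim\dom(u)<n$. The difference $\oz_j-\hat\oz_j$ is therefore a continuous, epi-translation invariant, rotation/$\On$ equivariant, simple valuation of epi-homogeneous degree $j<n$. Theorem~\ref{thm:class_simple} forces it to equal $\ot_{n,\zeta}$ for some $\zeta\in T_n^n$, and the degree mismatch between the two sides implies $\oz_j=\hat\oz_j$, closing the induction.

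The main obstacle I anticipate is the descent of smoothness from $\oz_j$ to $\tilde\oz_j$. My plan is to handle this through the correspondence $u\mapsto K^u$ underlying the definition of smoothness (so that $v\mapsto \oz(v^*)$ smooth on $\fconvf$ translates to smoothness of $K\mapsto\oz(h_K(\cdot,-1)^*)$ on $\K^{n+1}$), together with the observation that $\fconvsm$ corresponds under this mapping to convex bodies lying in a fixed $n$-dimensional hyperplane of $\R^{n+1}$. The required smoothness of the restriction then follows from the classical fact in Alesker's theory that the pullback of a smooth translation invariant valuation under the inclusion of an affine hyperplane is smooth.
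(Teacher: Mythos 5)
Your proof is correct and follows essentially the same strategy as the paper: induction on dimension via restriction to $\fconvsm$, the Abel transform to lift the lower-dimensional densities, and a simple-valuation argument (Theorem~\ref{thm:class_simple}) to capture the part not detected on functions with lower-dimensional domains. The one structural difference is that you first apply the McMullen decomposition (Theorem~\ref{thm:mcmullen_fconvs}) and treat each degree-homogeneous summand separately, whereas the paper restricts the \emph{whole} valuation $\oz$ to $\fconvsm$, applies the inductive hypothesis once to get all of $\tilde\psi_1,\ldots,\tilde\psi_{n-1}$ simultaneously, forms $\bar\oz=\oz-\sum_{j=1}^{n-1}\int\psi_j(|y|^2)y\d\Psi_j$, and shows $\bar\oz$ is simple. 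Your route is sound but requires two extra observations: that the McMullen summands inherit smoothness (true, because $\lambda\sq(h_K(\cdot,-1))^*=(h_{\lambda K}(\cdot,-1))^*$, so epi-scaling corresponds to $K\mapsto\lambda K$ on $\KN$, under which smoothness is stable), and that the homogeneous decomposition on the restriction is unique (also true); the paper's version is more economical and sidesteps both. You correctly flag the descent of smoothness from $\oz$ to its restriction as the genuine technical point — the paper disposes of it with a bare ``Clearly'' — and your proposed justification via the $(n+1)$-dimensional body correspondence and the fact that restriction of a smooth translation-invariant valuation to a hyperplane section is smooth is exactly the right reason.
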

	\begin{proof}
		If $\psi_1,\ldots,\psi_n\in C_c^{\infty}([0,\infty))$, then it follows from Proposition~\ref{prop:int_maj_is_a_vector-val_fconvs} together with Proposition~\ref{prop:condition_smooth_val_fconvs} that \eqref{eq:class_oz_smooth_fconvs} is a smooth, epi-translation invariant, $\On$ equivariant valuation.
		
		Conversely, let $\oz\colon\fconvs\to\R$ be a smooth, epi-translation invariant, rotation equivariant (or $\On$ equivariant for $n\leq 2$) valuation. We use induction on the dimension $n$ to show that $\oz$ is as in \eqref{eq:class_oz_smooth_fconvs}. For the base case $n=1$, the statement follows from Proposition~\ref{prop:class_smooth_vals_dim_1}. Now let $n\geq 2$ and assume that the statement is true for the case $n-1$. We denote by $\tilde{\oz}$ the restriction of $\oz$ to $\fconvsm$ and note that Lemma~\ref{le:restrict_oz} shows that $\tilde{\oz}$ takes values in $\R^{n-1}$. Clearly, $\tilde{\oz}$ is a smooth, epi-translation invariant valuation on $\fconvsm$. Now for $\vartheta\in\Om$ let $\varTheta\in\SOn$ be given by
		\[
		\varTheta=\begin{pmatrix}
			\vartheta & 0\\
			0 & \det(\vartheta)
		\end{pmatrix}.
		\]
		The rotation equivariance of $\oz$ shows that
		\[
		\tilde{\oz}(u\circ \vartheta^{-1})=\oz\vert_{\fconvsm}(u\circ \varTheta^{-1})=\varTheta \oz\vert_{\fconvsm}(u) = \vartheta \tilde{\oz}(u),
		\]
		for every $u\in\fconvsm$, where we have used that the $n$th coordinate of $\oz\vert_{\fconvsm}(u)$ vanishes. Thus, $\tilde{\oz}$ is also $\Om$ equivariant (and in particular $\SOnm$ equivariant for $n-1\geq 3$) and it follows from the induction assumption that there exist functions $\tilde{\psi}_1,\ldots,\tilde{\psi}_{n-1}\in C_c^{\infty}({[0,\infty)})$ such that
		\[
		\tilde{\oz}(u)=\sum_{j=1}^{n-1} \int_{\R^{n-1}} \tilde{\psi}_j(|y|^2)y \d\Psi_j^{(n-1)}(u;y)
		\]
		for every $u\in\fconvsm$. By Lemma~\ref{le:abel_smooth} and Lemma~\ref{le:restriction_abel} there exist functions $\psi_j\in C_c^{\infty}({[0,\infty)})$, $j\in\{1,\ldots,n-1\}$, such that
		\begin{equation}
		\label{eq:oz_induction_step}
		\oz(u)=\sum_{j=1}^{n-1}\int_{\Rn} \psi_j(|y|^2)y \d\Psi_j^{(n)}(u;y)
		\end{equation}
		for every $u\in\fconvs$ such that $\dom(u)\subseteq\R^{n-1}$.
		Now let $\bar{\oz}\colon\fconvs\to\Rn$ be given by
		\begin{equation}
		\label{eq:bar_oz}
		\bar{\oz}(u)=\oz(u)- \sum_{j=1}^{n-1}\int_{\Rn} \psi_j(|y|^2)y \d\Psi_j^{(n)}(u;y)
		\end{equation}
		for $u\in\fconvs$. It follows from the assumptions on $\oz$ together with the first part of the proof that $\bar{\oz}$ is a smooth, epi-translation invariant, rotation equivariant (or $\Ot$ equivariant when $n=2$) valuation. Moreover, if $u\in\fconvs$ is such that $\dom(u)\subseteq E$ for some $(n-1)$-dimensional linear subspace $E\subset \Rn$, then we can find a rotation $\vartheta_E\in\SOn$ such that $\dom(u\circ \vartheta_E^{-1})\subseteq \R^{n-1}$. The rotation equivariance of $\bar{\oz}$ and \eqref{eq:oz_induction_step} now show that
		\[
		\bar{\oz}(u) = \vartheta_E^{-1} \,\bar{\oz}(u\circ \vartheta_E^{-1})=o.
		\]
		Together with the epi-translation invariance of $\bar{\oz}$, this implies that $\bar{\oz}$ is simple. Thus, it follows from Theorem~\ref{thm:class_simple}, and Proposition~\ref{prop:condition_smooth_val_fconvs} that there exists $\psi_n\in C_c^{\infty}({[0,\infty)})$ such that
		\[
		\bar{\oz}(u)=\int_{\Rn}\psi_n(|y|^2)y\d\Psi_n^{(n)}(u;y)
		\]
		for every $u\in\fconvs$. Together with \eqref{eq:bar_oz}, this gives the desired representation \eqref{eq:class_oz_smooth_fconvs}.
	\end{proof}
	
	We close this section with the equivalent formulation of the last result for smooth valuations on $\fconvf$.
	
	\begin{theorem}
		\label{thm:class_smooth}
		For $n\geq 3$, a map $\oz\colon \fconvf\to\Rn$ is a smooth, dually epi-translation invariant, rotation equivariant valuation, if and only if there exist functions $\psi_1,\ldots,\psi_n\in C_c^{\infty}([0,\infty))$ such that
		\begin{equation*}
			\oz(v)=\sum_{j=1}^n \int_{\Rn} \psi_j(|x|^2)x \d\Phi_j(v;x)
		\end{equation*}
		for every $v\in\fconvf$. For $n\leq 2$, the same representation holds if we replace rotation equivariance with $\On$ equivariance.
	\end{theorem}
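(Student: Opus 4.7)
The plan is to obtain Theorem~\ref{thm:class_smooth} as an immediate dual reformulation of the $\fconvs$-version of the smooth classification (the theorem stated directly before it, which I will call Theorem $\star$). All the required dictionary between the two sides has already been set up in the paper: by definition, $\oz\colon\fconvf\to\Rn$ is a smooth, dually epi-translation invariant, rotation (respectively $\On$) equivariant valuation if and only if $\tilde{\oz}\colon\fconvs\to\Rn$ given by $\tilde{\oz}(u)=\oz(u^*)$ is a smooth, epi-translation invariant, rotation (respectively $\On$) equivariant valuation. Here one uses: convex conjugation is a continuous bijection between $\fconvs$ and $\fconvf$ that interchanges pointwise $\vee/\wedge$ with the lattice operations of $\fconvs$ (cf.\ \cite[Proposition 3.5]{colesanti_ludwig_mussnig_3}); it interchanges (dual) epi-translation invariance; and it is compatible with the action of $\On$. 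The notion of smoothness was defined precisely so that this equivalence holds.

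For the sufficiency direction I would first verify that the proposed operators are valid examples. If $\psi_j\in C_c^\infty([0,\infty))$, then $\alpha_j(s)=\psi_j(s^2)$ clearly lies in $T_n^n\subset T_j^n$ since it extends smoothly to $0$. Thus Proposition~\ref{prop:int_maj_is_a_vector-val} shows that each summand
\[
v\mapsto \int_{\Rn}\psi_j(|x|^2)x\d\Phi_j(v;x)
\]
is a continuous, dually epi-translation invariant, $\On$ equivariant valuation on $\fconvf$, and Proposition~\ref{prop:condition_smooth_val} gives its smoothness. Summing over $j$ yields the required properties for $\oz$.

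For the necessity direction, given $\oz$ as in the statement, define $\tilde{\oz}(u)=\oz(u^*)$ and apply Theorem $\star$ to obtain $\psi_1,\ldots,\psi_n\in C_c^\infty([0,\infty))$ with
\[
\tilde{\oz}(u)=\sum_{j=1}^n\int_{\Rn}\psi_j(|y|^2)y\d\Psi_j(u;y)
\]
for every $u\in\fconvs$. Specializing to $u=v^*$ with $v\in\fconvf$ and using the identity $\Psi_j(v^*;\cdot)=\Phi_j((v^*)^*;\cdot)=\Phi_j(v;\cdot)$ from \eqref{eq:psi_j_phi_j} gives
\[
\oz(v)=\tilde{\oz}(v^*)=\sum_{j=1}^n\int_{\Rn}\psi_j(|x|^2)x\d\Phi_j(v;x),
\]
as required.

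There is no substantive obstacle here beyond checking that each of the four properties (continuity, dual epi-translation invariance, rotation/$\On$ equivariance, smoothness) is preserved under the conjugation correspondence $\oz\leftrightarrow\tilde{\oz}$, together with the matching of the measures $\Phi_j(v;\cdot)$ and $\Psi_j(u;\cdot)$ via \eqref{eq:psi_j_phi_j}. Each of these compatibilities is either built into the relevant definition or has already been recorded in Section~\ref{se:dual_results} and Section~\ref{se:hessian_measures}, so the proof reduces to writing out the translation and invoking Theorem $\star$.
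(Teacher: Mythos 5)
Your proposal is correct and matches the paper's approach: the paper states Theorem~\ref{thm:class_smooth} as ``the equivalent formulation of the last result for smooth valuations on $\fconvf$'' and leaves the transfer by conjugation implicit, which is precisely the dictionary you spell out (using that smoothness on $\fconvs$ is defined via conjugation, the invariance/equivariance/valuation properties transfer by Section~\ref{se:dual_results}, and the measures match via \eqref{eq:psi_j_phi_j} with $(v^*)^*=v$ for $v\in\fconvf$).
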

	
	\subsection{Approximation and Support}
	The following result, which shows that continuous valuations can be approximated by smooth valuations, is a consequence of \cite[Theorem 1]{knoerr_smooth}.
	\begin{lemma}
		\label{le:approx_smooth}
		If $\oz\colon\fconvf\to\Rn$ is a continuous, dually epi-translation invariant valuation that is homogeneous of degree $j$, then there exists a sequence of smooth, dually epi-translation invariant valuations $\oz_k\colon\fconvf\to\Rn$, $k\in\N$, that converges to $\oz$ and such that each of the $\oz_k$ is homogeneous of degree $j$.
	\end{lemma}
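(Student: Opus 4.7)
The plan is to reduce to the scalar-valued case and apply \cite[Theorem 1]{knoerr_smooth} coordinate-wise. For each $i\in\{1,\ldots,n\}$, define the scalar-valued map $\oZ^{(i)}(v)=\langle\oz(v),e_i\rangle$ for $v\in\fconvf$. Since linear functionals on $\Rn$ are continuous, each $\oZ^{(i)}$ inherits continuity, dually epi-translation invariance, the valuation property, and homogeneity of degree $j$ from $\oz$. Thus $\oz$ is completely encoded by the $n$-tuple $(\oZ^{(1)},\ldots,\oZ^{(n)})$ of real-valued valuations satisfying the hypotheses of Knoerr's approximation theorem.

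Next, I would apply \cite[Theorem 1]{knoerr_smooth} to each $\oZ^{(i)}$ to obtain a sequence $\oZ^{(i)}_k\colon\fconvf\to\R$, $k\in\N$, of smooth, dually epi-translation invariant valuations converging to $\oZ^{(i)}$ (in the topology of the Banach space of continuous, dually epi-translation invariant valuations used in \cite{knoerr_smooth}). The point that requires verification is that the approximating sequence can be chosen to preserve the degree of homogeneity. In Knoerr's construction, the smoothing is realized by a convolution-type averaging of the associated valuation on $\K^{n+1}$ against a smooth, compactly supported approximate identity on $\operatorname{GL}(n+1)$ (or a similarly acting group). Because the scalar dilation $v\mapsto \lambda v$ on $\fconvf$ commutes with this averaging operation, the homogeneity degree is preserved under the construction; alternatively, one can invoke the McMullen-type decomposition from Theorem~\ref{thm:mcmullen_fconvf} and project the approximants onto their $j$-homogeneous component, which remains smooth since the projection is a continuous, equivariant operation.

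Finally, I would assemble the vector-valued approximants by setting
\[
\oz_k(v)=\sum_{i=1}^n \oZ^{(i)}_k(v)\,e_i,\qquad v\in\fconvf,\ k\in\N.
\]
Dually epi-translation invariance and homogeneity of degree $j$ follow coordinate-wise, and convergence $\oz_k\to\oz$ is immediate from the corresponding scalar convergences. Smoothness of $\oz_k$ in the sense introduced in Section~\ref{se:smooth_vals}, which is defined coordinate-wise through the map $K\mapsto\oz_k(h_K(\cdot,-1))$ on $\K^{n+1}$, is then equivalent to smoothness of each scalar component $\oZ^{(i)}_k$, and thus holds by construction.

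The main obstacle is the preservation of homogeneity under Knoerr's smoothing procedure; apart from this, the argument is a straightforward coordinate-wise reduction. If direct inspection of the convolution in \cite{knoerr_smooth} does not make the preservation manifest, the fallback is to post-compose each $\oZ^{(i)}_k$ with the projector onto the $j$-homogeneous summand coming from Theorem~\ref{thm:mcmullen_fconvf}, which is a bounded, continuous, dually epi-translation invariance-preserving operator and which sends smooth valuations to smooth valuations since it arises from a polynomial expansion.
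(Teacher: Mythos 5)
Your proposal is correct and follows exactly the route the paper has in mind: the paper states the lemma as a consequence of \cite[Theorem 1]{knoerr_smooth} without detail, and the coordinate-wise reduction plus projection onto the $j$-homogeneous summand (a finite linear combination of dilation operators $\oZ\mapsto\oZ(\lambda\,\cdot)$, obtained by polynomial interpolation from Theorem~\ref{thm:mcmullen_fconvf}) is the standard way to supply the missing steps. The fallback argument for homogeneity preservation is sound, since the dilation operators are continuous for the compact-open topology (compact sets are preserved by scaling, cf.\ Lemma~\ref{le:compact_subsets}), preserve smoothness, and fix $\oz$ itself because it is already $j$-homogeneous, so the projected sequence still converges to $\oz$.
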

	\noindent
	Here, the relevant spaces of valuations are equipped with the topology of uniform convergence on compact subsets, for which we have the following description \cite[Proposition 2.4]{knoerr_support}.
	
	\begin{lemma}
		\label{le:compact_subsets}
		A set $V\subset \fconvf$ is relatively compact, if and only if for every compact subset $A\subset\Rn$ there exists a constant $c_A$ such that
		\[
		\sup_{x\in A} |v(x)|\leq c_A
		\]
		for every $v\in V$.
	\end{lemma}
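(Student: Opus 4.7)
The plan is to prove both directions using classical properties of convex functions: the fact that epi-convergence on $\fconvf$ coincides with pointwise convergence (as recalled in the introduction) and that a locally bounded family of convex functions is automatically locally equi-Lipschitz. Since $\fconvf$ with the epi-convergence topology is metrizable, I will work with sequential (relative) compactness throughout.

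For the necessity direction, I would argue by contradiction. Suppose $V$ is relatively compact but the stated bound fails for some compact $A\subset\Rn$. Then there exists a sequence $v_k\in V$ with $\sup_{x\in A}|v_k(x)|\to\infty$. By relative compactness, a subsequence (not relabeled) epi-converges to some $v\in\fconvf$, hence converges pointwise. A classical fact about convex functions states that pointwise convergence on an open set implies uniform convergence on compact subsets of that set, so $v_k\to v$ uniformly on $A$. Since the finite convex limit $v$ is continuous and thus bounded on $A$, this forces $\sup_{x\in A}|v_k(x)|$ to be bounded for large $k$, a contradiction.

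For sufficiency, let $V$ be locally bounded and fix a sequence $v_k\in V$. The key step is that for any compact $A\subset\Rn$, a uniform pointwise bound on a slightly enlarged compact neighborhood $A'\supset A$ yields, via convexity, a uniform Lipschitz constant for $\{v_k\}$ on $A$ (this is the standard estimate that upper and lower bounds on convex functions control their slopes). Consequently $\{v_k\}$ is locally equi-Lipschitz, hence locally equicontinuous, and also locally uniformly bounded. Arzel\`a--Ascoli together with a diagonal argument along an exhaustion of $\Rn$ by compact sets produces a subsequence converging uniformly on compact sets to some continuous function $v$. As the uniform-on-compacts limit of convex functions, $v$ is convex, so $v\in\fconvf$; uniform-on-compacts convergence implies pointwise convergence, which on $\fconvf$ is epi-convergence. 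Hence $V$ has sequentially compact closure.

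The main technical point is the passage from a pointwise bound on a compact set to a local Lipschitz bound, which is the standard estimate $\operatorname{Lip}(v;A)\leq C(A,A')\,\operatorname{osc}(v;A')$ for convex functions; see, e.g., \cite{rockafellar_wets}. Everything else is Arzel\`a--Ascoli plus the identification of epi-convergence with pointwise convergence on $\fconvf$, so no genuine obstacle beyond bookkeeping is expected.
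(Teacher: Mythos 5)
Your proof is correct. The paper does not prove this lemma but cites it as Proposition~2.4 of \cite{knoerr_support}; your argument is a self-contained and standard proof of the same fact. For necessity, the contradiction via the equivalence of pointwise and locally uniform convergence for finite convex functions is sound; for sufficiency, the passage local boundedness $\Rightarrow$ local equi-Lipschitz (via the classical estimate that a two-sided bound $|v|\leq M$ on a $2r$-ball gives a $2M/r$-Lipschitz bound on the concentric $r$-ball) $\Rightarrow$ Arzel\`a--Ascoli with diagonal extraction is exactly the right mechanism, and the limit is convex and finite, hence in $\fconvf$. The appeal to metrizability of the epi-topology to reduce to sequential compactness is also valid (see \cite[Theorem 7.58]{rockafellar_wets}). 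No gaps.
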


	A version of Lemma~\ref{le:approx_smooth} is known in which the approximating sequence for rotation invariant $\oz$ can also be selected as rotation invariant \cite[Proposition 6.6]{knoerr_smooth}. For our purposes, however, we need a variant for rotation equivariant valuations.
	
	\begin{lemma}
		Let $\oz$ and $\oz_k$, $k\in\N$, be as in Lemma~\ref{le:approx_smooth}. If $\oz$ is rotation equivariant, then also the valuations $\oz_k$, $k\in\N$, can be chosen as rotation equivariant. Similarly, if $\oz_k$ is $\On$ equivariant, then also $\oz_k$, $k\in\N$, can be chosen to have this property.
	\end{lemma}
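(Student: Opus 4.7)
My plan is to symmetrize each $\oz_k$ over the rotation group by its Haar probability measure. Define
\[
\tilde{\oz}_k(v) = \int_{\SOn} \eta^{-1}\, \oz_k(v \circ \eta^{-1}) \d\eta
\]
(and analogously with $\On$ in place of $\SOn$ in the $\On$ equivariant case). First I would verify the structural properties inherited from $\oz_k$: continuity, the valuation property, dual epi-translation invariance, and $j$-homogeneity all follow from the corresponding properties of $\oz_k$ together with linearity of the integral, since pre-composition with a rotation preserves each of these conditions, and the Haar integral over a compact group of a continuous Banach-space-valued function exists. Rotation equivariance of $\tilde{\oz}_k$ follows by substituting $\mu = \eta\vartheta$ in the integral defining $\tilde{\oz}_k(v \circ \vartheta^{-1})$ and using right-invariance of the Haar measure, which produces the prefactor $\vartheta$.

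The main obstacle is to verify that smoothness survives this averaging. Since $\oz_k$ is smooth, each coordinate of $K \mapsto \oz_k(h_K(\cdot,-1))$ is a smooth valuation on $\KN$, meaning that $\vartheta \mapsto \oz_k(h_K(\cdot,-1)) \circ \vartheta$ is infinitely differentiable from $\operatorname{GL}(n+1)$ into the Banach space of continuous, translation invariant valuations. Writing $\bar{\eta}\in\SO(n+1)$ for the block-diagonal lift of $\eta\in\SOn$ that fixes the last coordinate, one has $h_K(\eta^{-1}\cdot,-1) = h_{\bar\eta K}(\cdot,-1)$, and consequently
\[
\tilde{\oz}_k(h_K(\cdot,-1)) = \int_{\SOn} \eta^{-1}\, \oz_k(h_{\bar\eta K}(\cdot,-1)) \d\eta.
\]
The smoothness of the maps $\eta \mapsto \bar\eta$ and $\vartheta \mapsto \bar\eta\vartheta$, together with compactness of $\SOn$ and differentiation under the Bochner integral, yield smoothness of the resulting valuation on $\KN$.

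Finally, I would establish that $\tilde{\oz}_k \to \oz$ uniformly on compact subsets of $\fconvf$. Since $\oz$ is already rotation equivariant, the identity $\eta^{-1}\oz(v\circ \eta^{-1}) = \oz(v)$ gives $\oz(v) = \int_{\SOn}\eta^{-1}\, \oz(v\circ \eta^{-1}) \d\eta$, so
\[
\big|\tilde{\oz}_k(v) - \oz(v)\big| \leq \int_{\SOn} \big|\oz_k(v\circ \eta^{-1}) - \oz(v\circ \eta^{-1})\big|\d\eta.
\]
For any compact $V \subset \fconvf$, the orbit $V' = \{v \circ \eta^{-1} : v \in V,\ \eta\in \SOn\}$ is again relatively compact by Lemma~\ref{le:compact_subsets}, since rotations preserve compact subsets of $\Rn$ and hence preserve the uniform bounds required there. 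The uniform convergence of $\oz_k$ to $\oz$ on $V'$ therefore bounds the integrand uniformly in $\eta$ and $v \in V$, yielding the desired approximation. The $\On$ equivariant case is handled identically, replacing $\SOn$ by $\On$ throughout.
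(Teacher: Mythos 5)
Your proof is correct and follows essentially the same approach as the paper: both symmetrize $\oz_k$ by averaging over the Haar probability measure on $\SOn$ (your $\tilde{\oz}_k(v) = \int_{\SOn}\eta^{-1}\oz_k(v\circ\eta^{-1})\d\eta$ is the same integral as the paper's after the inversion-invariance of Haar measure), prove equivariance by a right-translation change of variables, and establish convergence by passing to the rotation orbit of a compact set and using Lemma~\ref{le:compact_subsets}. The only difference is that you spell out the smoothness step (via the lift to $\SO(n+1)$ and differentiation under the Bochner integral), whereas the paper simply cites that the argument is analogous to \cite[Proposition 6.6]{knoerr_smooth}.
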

	\begin{proof}
		Let a continuous, dually epi-translation invariant, rotation equivariant valuation $\oz$ on $\fconvf$ be given that is homogeneous of degree $j$, and let $\oz_k$, $k\in\N$, be an approximating sequence of smooth valuations as in Lemma~\ref{le:approx_smooth}. Define $\bar{\oz}_k\colon\fconvf\to\Rn$ as
		\[
		\bar{\oz}_k(v)=\int_{\SOn} \vartheta \oz_k(v\circ \vartheta) \d\vartheta
		\]
		for $v\in\fconvf$ and $k\in\N$, where we integrate with respect to the Haar probability measure on $\SOn$. Clearly, each $\bar{\oz}_k$ is a smooth, epi-translation invariant valuation, where the proof of smoothness is analogous to \cite[Proposition 6.6]{knoerr_smooth}. Moreover,
		\[
		\bar{\oz}_k(v\circ \eta^{-1}) = \int_{\SOn} \vartheta \oz_k(v\circ \eta^{-1}\circ \vartheta) \d\vartheta = \int_{\SOn} \eta \,\varphi \oz_k(v\circ \varphi)\d\varphi = \eta\, \bar{\oz}_k(v)
		\]
		for every $\eta\in\SOn$ and $v\in\fconvf$, where we used $\varphi=\eta^{-1}\vartheta$. Thus, the valuations $\bar{\oz}_k$, $k\in\N$, are also rotation equivariant.
		
		It remains to show that $\bar{\oz}_k$ converges to $\oz$ as $k\to\infty$. For this, let $V$ be a compact subset of $\fconvf$ and let $\varepsilon>0$ be arbitrary. It easily follows from Lemma~\ref{le:compact_subsets} that also the set $\bar{V} = \{v\circ \vartheta : v\in V , \vartheta\in \SOn\}$ is compact. Thus, by the properties of the sequence $\oz_k$, there exists $k_0\in\N$ such that
		\[
		|\oz_k(v)-\oz(v)|<\varepsilon
		\]
		for every $v\in \bar{V}$ and $k\geq k_0$. Using that $\int_{\SOn} \vartheta \oz(v\circ \vartheta)\d\vartheta = \oz(v)$ as $\oz$ is rotation equivariant, we now have
		\begin{align*}
			|\bar{\oz}_k(v)-\oz(v)| &= \left|\int_{\SOn} \vartheta \oz_k(v\circ \vartheta) \d\vartheta - \int_{\SOn} \vartheta \oz(v\circ \vartheta)\d\vartheta \right|\\
			&\leq \int_{\SOn} \left|\vartheta\left(\oz_k(v\circ \vartheta)-\oz(v\circ \vartheta)\right)  \right| \d\vartheta\\
			&=\int_{\SOn} \left|\oz_k(v\circ \vartheta)-\oz(v\circ \vartheta) \right| \d\vartheta\\
			&< \varepsilon
		\end{align*}
		for every $v\in V$ and $k\geq k_0$, which shows that $\bar{\oz}_k$ converges uniformly to $\oz$ on compact subsets.
		
		Lastly, we remark that the proof for $\On$ equivariant valuations is analogous to the above.
	\end{proof}
	
	We continue with the \textit{support} $\supp(\oz)$ of a continuous, dually epi-translation invariant valuation $\oz\colon\fconvf\to\Rn$, for which we have the following description due to \cite[Proposition 6.3]{knoerr_support} and where we remark that each such valuation has compact support \cite[Corollary 6.2]{knoerr_support}.
	
	\begin{lemma}
		\label{le:description_support}
		Let $\oz\colon\fconvf\to\Rn$ be a continuous, dually epi-translation invariant valuation. The support of $\oz$ is minimal (with respect to inclusion) among all closed sets $A\subset\Rn$ with the following property: if $v,w\in\fconvf$ are such that $v=w$ on an open neighborhood of $A$, then $\oz(v)=\oz(w)$.
	\end{lemma}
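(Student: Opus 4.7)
The plan is to reduce the vector-valued statement to the scalar version, which is exactly \cite[Proposition 6.3]{knoerr_support}. Write $\oz = (\oZ_1, \ldots, \oZ_n)$ in coordinates with respect to the standard basis of $\Rn$. Each $\oZ_i\colon\fconvf\to\R$ is a continuous, dually epi-translation invariant, real-valued valuation, so by the scalar version of the lemma, it admits a (compact) support $\supp(\oZ_i)$ which is characterized by the stated minimality property. I would then \emph{define} the support of $\oz$ by
\[
\supp(\oz) = \bigcup_{i=1}^n \supp(\oZ_i),
\]
which is a finite union of compact sets, hence closed and compact; this matches the fact that vector-valued valuations of our type have compact support as well.

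Next, I would verify that $\supp(\oz)$ has the property stated in the lemma. If $v,w\in\fconvf$ agree on an open neighborhood $U$ of $\supp(\oz)$, then $U$ is in particular a neighborhood of each $\supp(\oZ_i)$, so the scalar minimality property gives $\oZ_i(v)=\oZ_i(w)$ for all $i$, and thus $\oz(v)=\oz(w)$. Conversely, let $A\subset\Rn$ be any closed set with the property that $v=w$ on an open neighborhood of $A$ implies $\oz(v)=\oz(w)$. Then, taking the $i$th coordinate of this equality, the same implication holds for $\oZ_i$ in place of $\oz$. By the minimality of $\supp(\oZ_i)$ among closed sets with this property (the scalar case), we obtain $\supp(\oZ_i)\subseteq A$ for every $i$, hence $\supp(\oz) = \bigcup_i \supp(\oZ_i) \subseteq A$. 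This establishes minimality for $\supp(\oz)$.

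The only delicate point is that one must fix a definition of $\supp(\oz)$ that is compatible with the coordinate-wise reduction; taking the union of the coordinate supports is the natural choice and is manifestly independent of the orthonormal basis used, because if $\vartheta\in\SOn$ and we replace $\oz$ by its components in the basis $\vartheta e_1,\ldots,\vartheta e_n$, the individual scalar supports may change but their union does not, as one can check using that $\supp$ of a linear combination of scalar valuations is contained in the union of the individual supports. Since no approximation or topological delicacy beyond the scalar result is involved, the proof is essentially bookkeeping once the scalar case is in hand.
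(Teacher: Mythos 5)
Your proposal is correct and matches what the paper does: the paper simply cites the scalar case (Knoerr, Proposition 6.3) and applies it to vector-valued valuations, which is precisely your coordinate-wise reduction with $\supp(\oz)=\bigcup_i\supp(\oZ_i)$. The basis-independence remark at the end, while true, is redundant once you have established that this union is the unique minimal closed set with the stated property.
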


	We give a simple proof of the following result, which can be independently derived from \cite[Corollary 2.11]{knoerr_unitarily}.

	\begin{lemma}
		\label{le:supp_ot_supp_zeta}
		Let $j\in\{1,\ldots,n\}$ and $\zeta\in T_j^n$. If $R=\max\{|x|: x\in \supp(\ot_{j,\zeta}^*)\}$, then $\supp(\zeta)\subseteq [0,R]$.
	\end{lemma}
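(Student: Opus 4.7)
The plan is to combine the retrieval formulas from Proposition~\ref{prop:retrieve_densities} and Lemma~\ref{le:retrieve_densities_n} with the locality property of the support given in Lemma~\ref{le:description_support}. Since the retrieval formulas give
\[
\ot_{j,\zeta}^*(w_s) = \frac{\kappa_{n-1}}{n}\binom{n}{j}\, s^{n-j+1}\,\zeta(s)\, e_n
\]
for $j\in\{1,\ldots,n-1\}$, and an analogous nonzero multiple of $s\,\zeta(s)\,e_n$ when $j=n$, it suffices to show that $\ot_{j,\zeta}^*(w_s)=o$ for every $s>R$. From this and the non-vanishing of the coefficient for $s>0$, the claim $\zeta(s)=0$ for $s>R$ (equivalently $\supp(\zeta)\subseteq[0,R]$) follows.

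The essential observation is that $w_s$ is identically zero on the whole open Euclidean ball
\[
B_s=\{x\in\Rn:|x|<s\},
\]
not merely on the two sets singled out in the piecewise definition. Indeed, for $x\in B_s$ with $x_n\geq 0$ the first branch gives $w_s(x)=0$, while for $x\in B_s$ with $x_n<0$ one has $|(x_1,\ldots,x_{n-1})|\leq|x|<s$, so the third branch also gives $w_s(x)=0$. (The one-dimensional case, where $(x_1,\ldots,x_{n-1})$ is empty, works identically via the convention $w_s+s=h_{D^n}\vee s$.) Since $s>R$, the set $B_s$ is an open neighborhood of the compact set $\supp(\ot_{j,\zeta}^*)\subseteq R\,\overline{B^n}\subset B_s$. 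The zero function $v\equiv 0$ trivially also vanishes on $B_s$, so Lemma~\ref{le:description_support} applied to $w_s$ and $v\equiv 0$ yields
\[
\ot_{j,\zeta}^*(w_s)=\ot_{j,\zeta}^*(0).
\]

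It remains to observe that $\ot_{j,\zeta}^*(0)=o$, which is immediate from Theorem~\ref{thm:main_existence}: since $0\in\fconvf\cap C^2(\Rn)$ with $\Hess 0\equiv 0$, one has $[\Hess 0]_j\equiv 0$ for $j\geq 1$, so the defining integral vanishes. Combining this with the previous display and the retrieval formula gives $s^{n-j+1}\zeta(s)=0$ (respectively $s\,\zeta(s)=0$ when $j=n$) for every $s>R$, hence $\zeta\equiv 0$ on $(R,\infty)$, which is the desired conclusion. There is essentially no obstacle once one notices that $w_s$ vanishes on the full ball $B_s$; the retrieval formulas are tailored to read off $\zeta(s)$ from the value of the valuation on $w_s$, and the locality characterization of the support does the rest.
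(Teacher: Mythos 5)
Your proof is correct and takes essentially the same route as the paper: both invoke the retrieval formulas (Proposition~\ref{prop:retrieve_densities} for $j<n$ and Lemma~\ref{le:retrieve_densities_n} for $j=n$) together with the locality characterization of the support (Lemma~\ref{le:description_support}), via the observation that $w_s$ agrees with the zero function on the ball of radius $s$. If anything, you are slightly more careful than the paper's three-line argument in spelling out that $w_s\equiv 0$ on all of $B_s$ and in explicitly citing Lemma~\ref{le:retrieve_densities_n} for the top-degree case.
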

	\begin{proof}
		Assume that there exists $\bar{s}>R$ such that $\zeta(\bar{s})\neq 0$. By definition, the function $w_{\bar{s}}$ coincides with the constant zero function $\mathbf{0}\in\fconvf$ on a neighborhood of $R B^n$. However, $\ot_{j,\zeta}^*(\mathbf{0})=o$, while Proposition~\ref{prop:retrieve_densities} and the choice of $\bar{s}$ show that $\ot_{j,\zeta}^*(w_{\bar{s}})\neq o$. By Lemma~\ref{le:description_support}, this is a contradiction.
	\end{proof}
	
	Our interest in the support of the valuations $\ot_{j,\zeta}^*$ stems from the following result, which was shown in \cite[Proposition 1.2]{knoerr_support}.
	
	\begin{proposition}
		\label{prop:compact_supports_sequence}
		If $\oz_k\colon\fconvf\to\Rn$, $k\in\N$, is a sequence of continuous, dually epi-translation invariant valuations that converges to a continuous, dually epi-translation invariant valuation $\oz\colon\fconvf\to\Rn$, then there exists a compact set $A\subset \Rn$ such that the supports of $\oz$ and $\oz_k$, $k\in\N$, are contained in $A$.
	\end{proposition}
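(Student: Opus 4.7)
The plan is to argue by contradiction. Assume no compact $A \subset \Rn$ contains $\supp(\oz) \cup \bigcup_k \supp(\oz_k)$. Since $\oz$ itself has compact support by \cite[Corollary 6.2]{knoerr_support}, after passing to a subsequence one obtains points $x_k \in \supp(\oz_k)$ with $|x_k| \to \infty$. Working coordinate-wise reduces the statement to real-valued valuations. Using Theorem~\ref{thm:mcmullen_fconvf}, combined with the fact that the projection onto each homogeneous component can be extracted as a finite linear combination of the values $\oz(\lambda v)$ for a few positive dilation factors $\lambda$, one can further reduce to the case that each $\oz_k$ is homogeneous of a common degree $j \in \{0,\ldots,n\}$.

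By the minimality characterization in Lemma~\ref{le:description_support}, for each $k$ there exist $v_k, w_k \in \fconvf$ which coincide on an open neighborhood of $\supp(\oz_k) \setminus U_k$, for an open neighborhood $U_k$ of $x_k$ that is disjoint from any fixed compact set $K \subset \Rn$ once $k$ is large, with $\oz_k(v_k) \neq \oz_k(w_k)$. Dually epi-translation invariance permits subtracting an affine function from both $v_k$ and $w_k$ without affecting $\oz_k$, and homogeneity of degree $j$ permits rescaling the pair to achieve $|\oz_k(v_k) - \oz_k(w_k)| = 1$. One then modifies $v_k, w_k$ outside a slightly larger neighborhood $\tilde U_k$ of $x_k$ (still disjoint from $\supp(\oz_k) \setminus U_k$), by taking pointwise maxima with appropriate affine functions agreeing with $v_k = w_k$ on $K$, to produce $\tilde v_k, \tilde w_k \in \fconvf$. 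These still coincide on a neighborhood of $\supp(\oz_k) \setminus U_k$, so by Lemma~\ref{le:description_support} we still have $\oz_k(\tilde v_k) - \oz_k(\tilde w_k) = \oz_k(v_k) - \oz_k(w_k)$ with absolute value $1$, while now being uniformly bounded on every compact subset of $\Rn$.

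By Lemma~\ref{le:compact_subsets}, the families $\{\tilde v_k\}$ and $\{\tilde w_k\}$ are then precompact in $\fconvf$. Passing to convergent subsequences, and noting that $\tilde v_k$ and $\tilde w_k$ agree outside the receding neighborhoods $\tilde U_k$, both sequences epi-converge to a common limit $v_\infty \in \fconvf$. The hypothesis that $\oz_k \to \oz$ uniformly on compact subsets of $\fconvf$, applied to the precompact set $\{\tilde v_k\}_k \cup \{\tilde w_k\}_k \cup \{v_\infty\}$, yields $\oz_k(\tilde v_k) - \oz_k(\tilde w_k) \to \oz(v_\infty) - \oz(v_\infty) = 0$, contradicting $|\oz_k(\tilde v_k) - \oz_k(\tilde w_k)| = 1$.

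The main obstacle is the joint normalization-and-localization step: one must simultaneously keep $|\oz_k(\tilde v_k) - \oz_k(\tilde w_k)|$ bounded away from zero while rendering $\{\tilde v_k\}, \{\tilde w_k\}$ precompact in $\fconvf$. The reduction to the homogeneous case is essential because only then does multiplicative rescaling interact predictably with $\oz_k$; without it one has only the additive freedom coming from dually epi-translation invariance, which is too rigid. Additionally, the capping-by-affine construction must be performed carefully, because $\supp(\oz_k)$ may itself be large (containing both points near the origin and the faraway $x_k$), and one needs to verify that the cap takes place strictly outside $\supp(\oz_k)$ so that Lemma~\ref{le:description_support} applies in reverse to equate $\oz_k(\tilde v_k)$ with $\oz_k(v_k)$.
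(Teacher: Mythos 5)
The paper does not prove this proposition; it is cited directly from \cite[Proposition 1.2]{knoerr_support}, where the argument is functional-analytic in nature (exploiting the Fr\'echet topology on the space of continuous, dually epi-translation invariant valuations together with the Goodey--Weil-type embedding). Your attempt is a genuinely different, hands-on contradiction argument, and it contains a real gap at precisely the point you flag as ``the main obstacle,'' which you raise but do not resolve.

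The gap is in the modification step producing $\tilde v_k,\tilde w_k$. To conclude $\oz_k(\tilde v_k)=\oz_k(v_k)$ from Lemma~\ref{le:description_support}, you must have $\tilde v_k = v_k$ on an open neighborhood of \emph{all} of $\supp(\oz_k)$, not merely of $\supp(\oz_k)\setminus U_k$. But $\supp(\oz_k)$ will in general contain points close to the origin (indeed, any valuation of the form $\ot^*_{j,\zeta}$ with $\zeta$ supported near $0$ has its support near the origin), so the requirement $\tilde v_k = v_k$ near $\supp(\oz_k)$ already pins down $\tilde v_k$ on neighborhoods of the origin, leaving no freedom to make $\{\tilde v_k\}$ uniformly bounded on compact sets. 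Furthermore, the operation ``pointwise maximum with an affine function'' only \emph{increases} the function, so it cannot supply the missing upper bound demanded by Lemma~\ref{le:compact_subsets}; conversely, taking pointwise minima with affine functions does not preserve convexity. The normalization by degree-$j$ homogeneity does not rescue this either: rescaling $(v_k,w_k)\mapsto(\lambda_k v_k,\lambda_k w_k)$ to enforce $|\oz_k(v_k)-\oz_k(w_k)|=1$ can force $\lambda_k\to\infty$, which makes the test functions \emph{worse}, not better. So after the normalization step you have no a priori control on $\{v_k\}$ on compact sets, and no valuation-compatible operation that shrinks them while preserving the value of $\oz_k(v_k)-\oz_k(w_k)$. (There is also a minor unaddressed case: in degree $j=0$ the rescaling trick is unavailable, though this is harmless since degree-$0$ continuous valuations on $\fconvf$ are constant and hence have empty support.) To close the gap one needs a uniform boundedness mechanism independent of the particular test functions; this is exactly what the functional-analytic argument of Knoerr provides and what the pointwise construction here is missing.
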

	
	\subsection{Proof of Theorem~\ref{thm:main_class}}
	For given $\zeta_1\in T_1^n,\ldots,\zeta_n\in T_n^n$ it follows from Theorem~\ref{thm:main_existence} that $\oz=\sum_{j=1}^n \ot_{j,\zeta_j}
	^*$ defines a continuous, dually epi-translation invariant, rotation equivariant valuation. In particular, this valuation is also $\On$ equivariant.
	
	Now let a continuous, dually epi-translation invariant, rotation equivariant (or $\On$ equivariant when $n\leq 2$) valuation $\oz\colon\fconvf\to\Rn$ be given. Considering Theorem~\ref{thm:mcmullen_fconvf} we may assume that $\oz$ is homogeneous of degree $j\in\{0,1,\ldots,n\}$. By Theorem~\ref{thm:class_0_hom} and Theorem~\ref{thm:class_n_hom} together with duality it is enough to consider $j\in\{1,\ldots,n-1\}$. Using Lemma~\ref{le:approx_smooth}, we can find a sequence of smooth, dually epi-translation invariant, rotation equivariant (or $\On$ equivariant) valuations $\oz_k\colon \fconvf\to\Rn$, $k\in\N$, each homogeneous of degree $j$, that converge to $\oz$. Theorem~\ref{thm:class_smooth} shows that for each $k\in\N$ there exists $\zeta_k\in C_c^{\infty}({[0,\infty)})$ (of the form $\zeta_k(t)=\psi_k(t^2)$) such that
	\[
	\oz_k(v)=\int_{\Rn} \zeta_k(|x|)x \d\Phi_j(v;x)=\ot_{j,\zeta_k}^*(v)
	\]
	for every $v\in\fconvf$, where the second equation follows from \eqref{eq:t_j_zeta_hessian_c2}. Since trivially $\zeta_k\in T_j^n$, it follows from	Proposition~\ref{prop:retrieve_densities} that
	\[
	\big(\oz_k(w_s)\big)_n=\binom{n}{j}\frac{\kappa_{n-1}}{n} \beta_k(s)
	\]
	for every $s\geq 0$ and $k\in\N$, where $\beta_k\in C_c({[0,\infty)})$ is such that $\beta_k(s)=s^{n-j+1}\zeta_k(s)$ for $s>0$ and $\beta_k(0)=0$. In particular, $\big(\oz_k(w_0)\big)_n=0$. Let now $\beta\colon {[0,\infty)}\to\R$ be such that
	\[
	\big(\oz(w_s)\big)_n = \binom{n}{j}\frac{\kappa_{n-1}}{n} \beta(s)
	\]
	for $s\geq 0$. For any compact set $C\subset {[0,\infty)}$ it follows from Lemma~\ref{le:compact_subsets} that the sequence $\oz_k$ converges uniformly to $\oz$ on $\{w_s : s\in C\}$ as $k\to\infty$. This implies that the functions $\beta_k(s)$ converge uniformly to $\beta$ on any such set $C$, which implies that $\beta\in C({[0,\infty)})$ and that $\beta(0)=0$. We remark that these properties can also be derived from the properties of $\oz$. Furthermore, Lemma~\ref{le:supp_ot_supp_zeta} and Proposition~\ref{prop:compact_supports_sequence} show that there exists $R>0$ such that $\supp(\zeta_k),\supp(\beta_k)\subseteq [0,R]$ for every $k\in\N$. Thus, we conclude that also $\beta\in C_c({[0,\infty)})$ and furthermore, that $\beta_k$ converges uniformly to $\beta$ on $[0,\infty)$.
	
	We now set $\zeta(s)=\beta(s)/s^{n-j+1}$ for $s>0$. The properties of $\beta$ and the definition of the class $T_j^n$ imply that $\zeta\in T_j^n$. We want to show that $\oz=\ot_{j,\zeta}^*$. By Theorem~\ref{thm:main_rep_maj} this is equivalent to
	\begin{equation}
		\label{eq:oz_maj_to_show}
		\oz(v)=\int_{\Rn} \xi(x) \d \MA_j(v;x)
	\end{equation} 
	for every $v\in\fconvs$, where $\xi(x)=\binom{n}{j}(\cR^{n-j} \zeta)(|x|) x$ for $x\in\Rn\setminus\{o\}$ and $\xi(o)=o$. In particular, Lemma~\ref{le:continuous_extension} and Lemma~\ref{le:r_trans_bij} show that $\xi\in C_c(\Rn;\Rn)$. Similarly, for $k\in\N$ let $\xi_k\in C_c(\Rn;\Rn)$ be given by $\xi_k(x)=\binom{n}{j} (\cR^{n-j} \zeta_k)(|x|) x$ for $x\in\Rn\setminus\{o\}$ and $\xi_k(o)=o$. Since $\beta_k$ converges uniformly to $\beta$ on $[0,\infty)$, it follows from
	\begin{align*}
		|\xi_k(x)-\xi(x)| &= \binom{n}{j} \left|(\beta_k(|x|)-\beta(|x|))\frac{x}{|x|} + (n-j)\left(\int_{|x|}^\infty \frac{\beta_k(t)-\beta(t)}{t^2}\d t \right) x \right|\\
		&\leq \binom{n}{j}\left( \|\beta_k-\beta\|_{\infty} + (n-j)\|\beta_k-\beta\|_{\infty} \int_{|x|}^\infty \frac{1}{t^2}\d t\; |x|\right)\\
		&= (1+ n-j) \binom{n}{j} \|\beta_k-\beta\|_{\infty}
	\end{align*}
	for $x\in\Rn\setminus\{o\}$ and $k\in\N$, that also $\xi_k$ converges uniformly to $\xi$ on $\Rn$ as $k\to\infty$. Moreover, $\supp(\xi_k),\supp(\xi)\subseteq R B^n$ for every $k\in\N$. Thus, using Theorem~\ref{thm:main_rep_maj} together with the dominated convergence theorem and the fact that $\MA_j(v;R B^n)<\infty$, we obtain
	\begin{align*}
		\oz(v)&=\lim_{k\to\infty} \ot_{j,\zeta_k}^*(v) =\lim_{k\to\infty} \int_{\Rn} \xi_k(x) \d\MA_j(v;x)= \int_{\Rn} \xi(x) \d\MA_j(v;x)
	\end{align*}
	for every $v\in\fconvf$, which shows \eqref{eq:oz_maj_to_show}.
	
	Lastly, if $\oz=\sum_{j=1}^n \ot_{j,\zeta_j}^*$ with $\zeta_j\in T_j^n$ for $j\in\{1,\ldots,n\}$, then by homogeneity together with Proposition~\ref{prop:retrieve_densities} and Lemma~\ref{le:retrieve_densities_n} we have
	\[
	\oz(\lambda\, w_s)=  \frac{\kappa_{n-1}}{n} \sum_{j=1}^{n} \binom{n}{j} \lambda^j s^{n-j+1} \zeta_j(s)  e_n
	\]
	for every $s,\lambda\geq 0$, which uniquely determines the densities $\zeta_j$.
	\qed	
	
	\subsection*{Acknowledgments}
	The authors wish to thank Jonas Knoerr, Martin Rubey, and Jacopo Ulivelli for helpful discussions and remarks. This project was supported by the Austrian Science Fund (FWF) 10.55776/P36210.
	
	\footnotesize

	\vfill
	
	\parbox[t]{8.5cm}{
		Mohamed A. Mouamine\\
		Institut f\"ur Diskrete Mathematik und Geometrie\\
		TU Wien\\
		Wiedner Hauptstra{\ss}e 8-10/104-06\\
		1040 Wien, Austria\\
		e-mail: mohamed.mouamine@tuwien.ac.at
	}
	
	\bigskip
	
	\parbox[t]{8.5cm}{
		Fabian Mussnig\\
		Institut f\"ur Diskrete Mathematik und Geometrie\\
		TU Wien\\
		Wiedner Hauptstra{\ss}e 8-10/104-06\\
		1040 Wien, Austria\\
		e-mail: fabian.mussnig@tuwien.ac.at
	}	
\end{document}